\DeclarePairedDelimiter\floor{\lfloor}{\rfloor}
\newcommand{\mc}{\mathcal}
\newcommand{\cal}{\mathcal}
\newcommand{\rr}{\mathbb{R}}
\newcommand{\R}{\mathbb{R}}
\newcommand{\nn}{\mathbb{N}}
\newcommand{\cc}{\mathbb{C}}
\newcommand{\C}{\mathbb{C}}
\newcommand{\zz}{\mathbb{Z}}
\newcommand{\eps}{\epsilon}
\newcommand{\pl}{\partial}
\newcommand{\x}{\times}
\newcommand{\til}{\widetilde}
\newcommand{\supp}{\textrm{supp}}
\newcommand{\cjd}{\rangle}
\newcommand{\cjg}{\langle}
\newcommand{\demi}{\frac{1}{2}}
\newcommand{\tra}{\textrm{Tr}}
\newcommand{\la}{\lambda}
\newcommand{\im}{\mathop{\hbox{\rm Im}}\nolimits}
\def\qed{\hfill$\square$\medskip}
\theoremstyle{definition}
\newtheorem{theorem}{Theorem}[section]
\newtheorem{proposition}{Proposition}[section]
\newtheorem{cor}{Corollary}[section]
\newtheorem{lemma}{Lemma}[section]
\newtheorem{remark}{Remark}[section]
\newcommand{\jb}[1]{\langle #1 \rangle}
\newcommand{\op}{\operatorname{Op}_h}
\newcommand{\ov}[1]{\overline{#1}}
\newcommand{\p}{\partial}
\newcommand{\hd}{\langle h D \rangle^{-1}}
\renewcommand\Re{\operatorname{Re}}
\renewcommand\Im{\operatorname{Im}}
\newcommand{\dbar}{\bar\partial}
\title[Inverse scattering for the magnetic Schr\"odinger
operator]{Inverse scattering for the magnetic Schr\"odinger
operator on surfaces with Euclidean ends}
\thanks{Leo Tzou is supported by Australian Research Council FT-130101346. Valter Pohjola is employed under Academy of Finland grant AKA-251469}
\begin{document}
\author{Valter Pohjola \and Leo Tzou}

\begin{abstract}
   We prove a fixed frequency inverse scattering result for the magnetic Schr\"odinger operator (or connection Laplacian) on surfaces with Euclidean ends. We show that, under suitable decaying conditions, the scattering matrix for the operator determines both the gauge class of the connection and the zeroth order potential.
\end{abstract}
\maketitle

\begin{section}{Introduction}
The purpose of this paper is to show that 
the scattering matrix $S_{X,V}(\lambda)$ of the magnetic Schr\"odinger
operator $(d+ iX)^*(d+iX) + V$ determines $V$ and the gauge class of the 1-form
$X$ on Riemann surfaces with Euclidean ends.

When $X= 0$ this was done by  in \cite{GST} and we
refer the reader to the article for all the relevant references and results in
this case.

In dimensions $n\geq3$ the problem of scattering by the magnetic Schr\"odinger operator was first considered in the simply connected setting by \cite{ER} in the smooth case and later by \cite{PSU} for less regular coefficients. As the setting is Euclidean, determining the gauge of $X$ is equivalent to determining its exterior derivative. Some cohomological aspects of this problem was considered in \cite{BW1, BW2} which described the Aharanov-Bohm effect using inverse scattering. These works still take place in the Euclidean setting and the topology is obtained by removing open balls from $\R^n$. Observe that when one assumes the coefficients are compactly supported, the inverse scattering problem is equivalent to the Calder\'on problem on the domain of support and this was done in \cite{NSU} for $n\geq 3$ and \cite{IUY2, GT2} for $n=2$.

In the present work we focus on the more geometric aspect of the problem where the ambient manifold is a general Riemann surface with Euclidean ends. We prove the following theorem
\begin{theorem}\label{thm1} 
Let $(M_0,g_0)$ be a non-compact Riemann surface with genus $\mathcal{G}$ and $N$ ends
isometric to $\rr^2\setminus\{|z|\leq 1\}$ 
with metric $|dz|^2$. Let $V_1,V_2 \in C^{1,\beta}(M_0)$ be two potentials with
$\beta>0$ and $X_1, X_2 \in H^{3+\eps_0}(M_0)$ for some $\eps_0>0$ be two 1-forms such that 
$S_{X_1, V_1}(\lambda) = S_{X_2, V_2}(\la)$ for some
 $\la\in\rr\setminus\{0\}$. Let $d(z,z_0)$ denote the distance  
between $z$ and a fixed point $z_0\in M_0$.\\ 
If  $N\geq \max(2\mathcal{G}+1,2)$, $V_j\in e^{-\gamma d(\cdot,z_0)}L^\infty(M_0)$, and
 $X_j\in e^{-\gamma d(\cdot,z_0)}H^{3+\eps_0}(M_0)$ for all $\gamma>0$, then there
 exists a unitary function $\Theta \in 1 + e^{-\gamma
 d(\cdot,z_0)}W^{1,\infty}(M_0)$ for all $\gamma >0$ such that $X_1 - X_2 =
 d\Theta /\Theta$ and $V_1=V_2$.\\
\end{theorem}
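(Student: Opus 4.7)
The plan is to reduce the scattering-theoretic hypothesis to a Calder\'on-type integral identity over $M_0$, and then to extract pointwise information by pairing carefully constructed complex geometric optics (CGO) solutions with a holomorphic Morse phase.

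First, using the super-polynomial decay of $V_j$ and $X_j$ together with $S_{X_1,V_1}(\lambda)=S_{X_2,V_2}(\lambda)$, I would convert the scattering identity into an integral equality on $M_0$. The standard derivation (an Isozaki-type pairing, as used in the $X=0$ setting of \cite{GST}) yields, for any pair of solutions $u_j$ of $(d+iX_j)^*(d+iX_j)u_j + V_j u_j = \lambda^2 u_j$ with appropriate growth at the Euclidean ends,
\begin{equation*}
\int_{M_0}\!\bigl[(V_1-V_2)u_1\overline{u_2} + i\langle X_1-X_2,\, u_1\,\overline{du_2} - \overline{u_2}\,du_1\rangle + (|X_1|^2-|X_2|^2)u_1\overline{u_2}\bigr]\,dv_{g_0} = 0.
\end{equation*}

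Second, I would build CGO solutions $u_j = e^{\Phi/h}(a_j + r_j)$ with $h\to 0^+$, where $\Phi$ is a holomorphic Morse function on $M_0$ of linear growth at the ends. Existence of such $\Phi$, with non-degenerate critical points whose critical values can be arranged to separate points, is exactly what the hypothesis $N\geq\max(2\mathcal{G}+1,2)$ provides (as in \cite{GST}). To accommodate the magnetic potential, write $X_j = X_j^{1,0} + X_j^{0,1}$ and seek $a_j = e^{-\Psi_j}\alpha_j$ where $2\dbar\Psi_j = iX_j^{0,1}$; this $\dbar$-equation is solved by a Cauchy transform, and the $X_j \in e^{-\gamma d}H^{3+\eps_0}$ hypothesis is exactly what is required to place $\Psi_j$ in $W^{1,\infty}$ with useful decay. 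The remainder $r_j$ is produced by a Carleman estimate for the magnetic operator with weight $\Re(\Phi)/h$; after the $e^{-\Psi_j}$ conjugation the magnetic terms are reduced to lower order, and one essentially inherits the unmagnetized Carleman estimate used in \cite{GST}.

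Third, I insert the CGOs into the integral identity and run a stationary phase / Bukhgeim analysis at the critical points of $\Phi$. The leading order, after dividing by appropriate powers of $h$ and letting $h\to 0$, localizes at a single critical point $p_0$ and produces a pointwise relation on $X_1-X_2$ at $p_0$. Since by the Morse hypothesis every point of $M_0$ arises as such a $p_0$ for some admissible $\Phi$, varying $\Phi$ forces $d(X_1-X_2) = 0$. The decay hypothesis and the fact that each Euclidean end is simply connected let me integrate globally: one obtains $X_1-X_2 = d\Theta/\Theta$ for a unitary $\Theta \in 1 + e^{-\gamma d(\cdot,z_0)}W^{1,\infty}(M_0)$. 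After gauging $u\mapsto\Theta u$, which preserves the scattering matrix, I may assume $X_1 = X_2$, whence the integral identity collapses to $\int_{M_0}(V_1-V_2)u_1\overline{u_2}\,dv_{g_0} = 0$; a second Bukhgeim stationary-phase step at the Morse critical points, exactly as in the $X=0$ case of \cite{GST}, then gives $V_1 = V_2$.

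The main obstacle, in my view, is the magnetic CGO construction on a non-compact surface: one must produce a global $\Psi_j$ which simultaneously absorbs $X_j^{0,1}$, is compatible with the Carleman weight at infinity, and is regular enough that the resulting remainder equation remains semiclassically tractable; one must also derive a Carleman estimate for $(d+iX_j)^*(d+iX_j)+V_j$ whose error terms are controlled uniformly in $h$. Tracking the $H^{3+\eps_0}$ regularity through the Cauchy transform, the stationary phase expansion, and the eventual reconstruction of $\Theta$ in the prescribed decay class is the technical heart of the proof.
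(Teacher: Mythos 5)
Your outline reproduces the broad strategy (CGO solutions with a holomorphic Morse phase, Carleman estimates, stationary phase at critical points, iterate from the gauge part to the potential), but it has a genuine gap precisely at the step where you pass from local curvature information to global gauge equivalence. You write the standard difference-of-potentials identity and plan a Bukhgeim-type stationary phase to get a pointwise relation at each critical point $p_0$, concluding $d(X_1-X_2)=0$, and then ``integrate globally'' using the simple connectivity of the Euclidean ends. On a surface with genus $\mathcal{G}\geq 1$ (which the theorem allows, so long as $N\geq 2\mathcal{G}+1$), $H^1(M_0,\mathbb{R})$ is non-trivial: a closed 1-form $X_1-X_2$ can have non-trivial periods around cycles in the compact core, and the simple connectivity of the ends does nothing to remove these. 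To conclude $X_1-X_2 = d\Theta/\Theta$ for a \emph{unitary} $\Theta$ you must also show that the periods of $X_1-X_2$ lie in $2\pi\mathbb{Z}$; this Aharonov--Bohm-type information is exactly the cohomological content encoded in the scattering matrix that the argument must extract, and it cannot be recovered from $dX_1 = dX_2$ alone.

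The paper handles this by a different route. It never works directly with $d(X_1-X_2)$. Instead it trivializes each Cauchy--Riemann operator $\bar\partial + iA_j$ by an explicit non-vanishing section $F_{A_j}$, proves (via b-calculus and a Fredholm-theoretic orthogonality argument in Section~\ref{sec_bndry_id}) that $S_{X_1,V_1}=S_{X_2,V_2}$ allows one to choose these trivializations so that $F_{A_1}-F_{A_2}$ decays super-exponentially, and then derives an integral identity in which the unknown is not $X_1-X_2$ but the scalar $|F_{A_1}|^{-2}-|F_{A_2}|^{-2}$, namely
\[
\int_{M_0}\langle (|F_{A_1}|^{-2}-|F_{A_2}|^{-2})\bar\partial\tilde v,\bar\partial\tilde w\rangle
+\tfrac12\langle (Q_1|F_{A_1}|^2-Q_2|F_{A_2}|^2)\tilde v,\tilde w\rangle = 0.
\]
Stationary phase then gives $|F_{A_1}|=|F_{A_2}|$, and the unitary $\Theta := F_{A_1}/F_{A_2}$ is produced directly, with $X_1-X_2 = d\Theta/\Theta$ following by a one-line computation with no integration of closed forms and no period analysis. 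Your ansatz $a_j = e^{-\Psi_j}\alpha_j$ with $2\bar\partial\Psi_j = iX_j^{0,1}$ is in the same spirit as $F_{A_j}=e^{i\alpha_j}$, but the crucial missing ingredient is the compatibility of the two trivializations at infinity --- the existence of a global holomorphic function matching $e^{i(\alpha_1-\alpha_2)}$ up to a rapidly decaying error --- which is what encodes the Aharonov--Bohm data and which the paper establishes using weighted b-Sobolev Fredholm theory and the density of scattering solutions. Without that, the stationary-phase step recovers the curvature but not the gauge class, and the theorem as stated (which includes $\mathcal{G}\geq 1$) does not follow.
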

The approach we take in dealing with the non-trivial magnetic term $X_j$ is by
viewing $d+ iX_j$ as a unitary connection acting on the trivial line bundle
over $M_0$. Composition with the projection $\pi_{0,1} : T^*M_0 \to
T^*_{0,1}M_0$ yields a Cauchy-Riemann operator $(\bar\partial + iA_j):=
\pi_{0,1}(d+ iX)$ which, by \cite{kobayashi}, yields a unique complex structure
that can be trivialized by choosing a non-vanishing holomorphic section
$F_{A_j}$. 

The advantage of such trivializations is that it acts as a bridge between the two Cauchy-Riemann operators $(\bar\partial + iA_1)$ and $(\bar\partial + iA_2)$. Intuitively, this would effectively reduce the problem to the simpler case where $X_1 = X_2 = 0$ and one can then apply the techniques of \cite{GST}. Unfortunately, it is not always true that this conjugation between complex structures preserves the scattering information at infinity. We will see, in fact, that one can judiciously choose the trivializations to preserve this information precisely when the scattering matrices agree. It will become apparent that this is a global result about the trivialization rather than a local one of determining the asymptotic expansions of certain coefficients at infinity. In this sense the "boundary determination" performed here is quite different from those in \cite{WY, JSB, JSB1}.

This approach to treating Cauchy-Riemann operators was explored in \cite{T}, \cite{AGTU}, and \cite{GT2} for studying inverse boundary value problems. The setting of this article, however, is on a non-compact surface and therefore the previously used techniques for Calder\'on problems do not immediately apply.

The technique used here is the machinery of the b-calculus developed by Melrose in \cite{APS}. We will show that the condition $S_{X_1,V_2}(\lambda) = S_{X_2, V_2}(\lambda)$ induces an orthogonality relation between the difference of the trivializations and antiholomorphic 1-forms which belong to certain weighted $L^2$ spaces. Due to the results of b-calculus we can invoke Fredholm theory to show the existence of a holomorphic function which has the same expansion near infinity as the difference of the trivializations. 

In addition to this complication, the presence of a first-order term requires the construction of a different type of CGO and a different integral idenitity than those used in \cite{GST}. Particularly, in order to recover the gauge class of $X$ we will construct a class of CGOs which is compatible with the new boundary integral identity \[ \int_{M_0}\langle (|F_{A_1}|^{-2}- |F_{A_2}|^{-2} )\dbar \tilde v , \dbar \tilde w \rangle + \frac{1}{2} \langle (Q_1|F_{A_1}|^{2}-Q_2|F_{A_2}|^{2} ) \tilde v , \tilde w \rangle = 0,\] relating the size of the two trivializations $|F_{A_j}|$. The modulus of the trivializations turns out to carry all the information we need to recover the gauge class (see Proof of Proposition \ref{same gauge}).

The organization of this article is as follows. In Section \ref{sec_holomorphic} we prove
general facts about holomorphic functions and Fredholm properties of the Cauchy-Riemann operators on
weighted spaces. Section \ref{sec_ce} will be devoted to Carleman estimates on weighted spaces which can
produce higher regularity solvability results than those in \cite{GST}. In Section \ref{sec_scat} we develop the scattering theory for the
magnetic Schr\"odinger operator and construct the scattering matrix. We will
show in Section \ref{sec_bndry_id} that the scattering matrix determines the asymptotic
behaviour of the trivialization of the Cauchy-Riemann operator. This asymptotic
behaviour will be exploited in Section \ref{sec_conj} when we derive a new integral
identity which is more suitable for recovering the gauge class. In Section
\ref{sec_cgo} we will construct CGOs which we will then use in Sections \ref{sec_gauge} and
\ref{sec_zero} to recover the desired information.

\end{section}

\begin{section}{Holomorphic Morse functions on a surface with Euclidean ends} \label{sec_holomorphic}

\subsection{Riemann surfaces with Euclidean ends}
The contents of this section are similar to that in \cite{GST}. We include it here only for the convenience of the reader. 

Let $(M_0,g_0)$ be a non-compact connected smooth Riemannian surface with $N$ ends $E_1,\dots,E_N$ which are Euclidean, i.e. isometric to 
$\cc\setminus \{|z|\le 1\}$ with metric $|dz|^2$. By using a complex inversion $z\to 1/z$, each end is also isometric to a pointed disk
\[ E_i \simeq \{|z|\leq 1, z\not=0\} \textrm{ with metric }\frac{|dz|^2}{|z|^4}\]
thus conformal to the Euclidean metric on the pointed disk. The surface $M_0$ can then be compactified by adding the points corresponding  
to $z=0$ in each pointed disk corresponding to an end $E_i$, we obtain a closed Riemann surface $M$ with a natural complex structure induced by that of 
$M_0$, or equivalently a smooth conformal class on $M$ induced by that of $M_0$. Another way of thinking is to say that $M_0$ is the closed Riemann surface $M$ with $N$ points $e_1,\dots,e_N$ removed.   
The Riemann surface $M$ has  holomorphic charts $z_\beta:U_{\beta}\to \cc$
and we will denote by $z_1,\dots z_N$ the complex coordinates 
corresponding to the ends of $M_0$, or equivalently to the neighbourhoods of the points $e_i$.
 The Hodge star operator $\star$ acts on the cotangent bundle $T^*M$, its eigenvalues are 
$\pm i$ and the respective eigenspaces $T_{1,0}^*M:=\ker (\star+i{\rm Id})$ and $T_{0,1}^*M:=\ker(\star -i{\rm Id})$
are sub-bundles of the complexified cotangent bundle $\cc T^*M$ and the splitting $\cc T^*M=T^*_{1,0}M\oplus T_{0,1}^*M$ holds as complex vector spaces.
Since $\star$ is conformally invariant on $1$-forms on $M$, the complex structure depends only on the conformal class of $g$.
In holomorphic coordinates $z=x+iy$ in a chart $U_\beta$,
one has $\star(udx+vdy)=-vdx+udy$ and
\[T_{1,0}^*M|_{U_\beta}\simeq \cc dz ,\quad T_{0,1}^*M|_{U_\beta}\simeq \cc d\bar{z}  \]
where $dz=dx+idy$ and $d\bar{z}=dx-idy$. We define the natural projections induced by the splitting of $\cc T^*M$ 
\[\pi_{1,0}:\cc T^*M\to T_{1,0}^*M ,\quad \pi_{0,1}: \cc T^*M\to T_{0,1}^*M.\]
The exterior derivative $d$ defines the de Rham complex $0\to \Lambda^0\to\Lambda^1\to \Lambda^2\to 0$ where $\Lambda^k:=\Lambda^kT^*M$
denotes the real bundle of $k$-forms on $M$. Let us denote $\cc\Lambda^k$ the complexification of $\Lambda^k$, then
the $\pl$ and $\bar{\pl}$ operators can be defined as differential operators 
$\pl: \cc\Lambda^0\to T^*_{1,0}M$ and $\bar{\pl}:\cc\Lambda0\to T_{0,1}^*M$ by 
\[\pl f:= \pi_{1,0}df ,\quad \bar{\pl}f:=\pi_{0,1}df,\]
they satisfy $d=\pl+\bar{\pl}$ and are expressed in holomorphic coordinates by
\[\pl f=\pl_zf\, dz ,\quad \bar{\pl}f=\pl_{\bar{z}}f \, d\bar{z},\]  
with $\pl_z:=\demi(\pl_x-i\pl_y)$ and $\pl_{\bar{z}}:=\demi(\pl_x+i\pl_y)$.
Similarly, one can define the $\pl$ and $\bar{\pl}$ operators from $\cc \Lambda^1$ to $\cc \Lambda^2$ by setting 
\[\pl (\omega_{1,0}+\omega_{0,1}):= d\omega_{0,1}, \quad \bar{\pl}(\omega_{1,0}+\omega_{0,1}):=d\omega_{1,0}\]
if $\omega_{0,1}\in T_{0,1}^*M$ and $\omega_{1,0}\in T_{1,0}^*M$.
In coordinates this is simply
\[\pl(udz+vd\bar{z})=\pl v\wedge d\bar{z},\quad \bar{\pl}(udz+vd\bar{z})=\bar{\pl}u\wedge d{z}.\]
If $g$ is a metric on $M$ whose conformal class induces the complex structure $T_{1,0}^*M$, 
there is a natural operator, the Laplacian acting on functions and defined by 
\[\Delta f:= -2i\star \bar{\pl}\pl f =d^*d \]
where $d^*$ is the adjoint of $d$ through the metric $g$ and $\star$ is the Hodge star operator mapping 
$\Lambda^2$ to $\Lambda^0$ and induced by $g$ as well.
 
\subsection{Holomorphic functions}
We are going to construct Carleman weights given by holomorphic functions on $M_0$ which grow at most linearly or quadratically 
in the ends.  We will use the Riemann-Roch theorem, following ideas of \cite{GT}, however, the difference in the present case
is that we have very little freedom to construct these holomorphic functions, simply because there is just a finite dimensional space of such 
functions by Riemann-Roch.  
For the convenience of the reader, and to fix notations, we recall the usual Riemann-Roch index theorem 
(see Farkas-Kra \cite{FK} for more details). A divisor $D$ on $M$ is an element 
\[D=\big((p_1,n_1), \dots, (p_k,n_k)\big)\in (M\x\zz)^k, \textrm{ where }k\in\nn\]  
which will also be denoted $D=\prod_{i=1}^kp_i^{n_i}$ or $D=\prod_{p\in M}p^{\beta(p)}$ where $\beta(p)=0$
for all $p$ except $\beta(p_i)=n_i$. The inverse divisor of $D$ is defined to be 
$D^{-1}:=\prod_{p\in M}p^{-\beta(p)}$ and 
the degree of the divisor $D$ is defined by $\deg(D):=\sum_{i=1}^kn_i=\sum_{p\in M}\beta(p)$. 
A non-zero meromorphic function on $M$ is said to have divisor $D$ if $(f):=\prod_{p\in M}p^{{\rm ord}(p)}$ is equal to $D$,
where ${\rm ord}(p)$ denotes the order of $p$ as a pole or zero of $f$ (with positive sign convention for zeros). Notice that 
in this case we have $\deg(f)=0$.
For divisors $D'=\prod_{p\in M}p^{\beta'(p)}$ and $D=\prod_{p\in M}p^{\beta(p)}$, 
we say that $D'\geq D$ if $\beta'(p)\geq \beta(p)$ for all $p\in M$.
The same exact notions apply for meromorphic $1$-forms on $M$. Then we define for a divisor $D$
\[\begin{gathered}
r(D):=\dim (\{f \textrm{ meromorphic function on } M; (f)\geq D\}\cup\{0\}),\\
i(D):=\dim(\{u\textrm{ meromorphic 1 form on }M; (u)\geq D\}\cup\{0\}).
\end{gathered}\]
The Riemann-Roch theorem states the following identity: for any divisor $D$ on the closed Riemann surface $M$ of genus $\mathcal{G}$, 
\begin{equation}\label{riemannroch}
    r(D^{-1})=i(D)+\deg(D)-\mathcal{G}+1.
\end{equation}
Notice also that for any divisor $D$ with $\deg(D)>0$, one has $r(D)=0$ since $\deg(f)=0$ for all $f$ meromorphic. 
By \cite[Th. p70]{FK}, let $D$ be a divisor, then for any non-zero meromorphic 1-form $\omega$ on $M$, one has 
\begin{equation}\label{abelian}
i(D)=r(D(\omega)^{-1})
\end{equation}
which is thus independent of $\omega$. 
For instance, if $D=1$, we know that the only holomorphic function on $M$ is $1$ and 
one has $1=r(1)=r((\omega)^{-1})-\mathcal{G}+1$ and thus $r((\omega)^{-1})=\mathcal{G}$ if $\omega$ is a non-zero meromorphic $1$ form. Now 
if $D=(\omega)$, we obtain again from \eqref{riemannroch}
\[ \mathcal{G}=r((\omega)^{-1})=2-\mathcal{G}+\deg((\omega))\] 
which gives $\deg((\omega))=2(\mathcal{G}-1)$ for any non-zero meromorphic $1$-form $\omega$. In particular, if $D$ is a divisor such that
$\deg(D)>2(\mathcal{G}-1)$, then  we get 
$\deg(D(\omega)^{-1})=\deg(D)-2(\mathcal{G}-1)>0$ and thus $i(D)=r(D(\omega)^{-1})=0$, which implies by \eqref{riemannroch}
\begin{equation}\label{divisors}
    \deg(D)>2(\mathcal{G}-1)\Longrightarrow r(D^{-1})= \deg(D)-\mathcal{G}+1\geq \mathcal{G}.
\end{equation}
Now we deduce the 
\begin{lemma}\label{holofcts} 
    Let $e_1,\dots,e_{N}$ be distinct points on a closed Riemann surface $M$ with genus $\mathcal{G}$, and let $z_0$ be another 
    point of $M\setminus\{e_1,\dots,e_{N}\}$. If $N\geq \max(2\mathcal{G}+1,2)$, the following hold true:\\ 
(i) there exists a meromorphic function $f$ on $M$ with at most simple 
poles, all contained in $\{e_1,\dots,e_{N}\}$, such that $\pl f(z_0)\not=0$,\\
(ii) there exists a meromorphic function $f$
 on $M$ with at most simple poles, all contained in $\{e_1,\dots,e_{N}\}$, 
such that $z_0$ is a zero of order at least $2$ of $f$.
(iii) there exists a meromorphic function $f$ whose (non-removable) poles are all simple and form precisely the set $\{e_1,\dots,e_N\}$.

\end{lemma}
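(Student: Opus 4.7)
The plan is to apply the Riemann-Roch corollary \eqref{divisors} to the divisor $D_0 := e_1 \cdots e_N$ and a few of its modifications. Since $N \geq 2\mathcal{G}+1$ we have $\deg(D_0) = N > 2(\mathcal{G}-1)$, so \eqref{divisors} yields $r(D_0^{-1}) = N - \mathcal{G} + 1$. Denote by $\mathcal{H} := \{f \text{ meromorphic on } M : (f) \geq D_0^{-1}\} \cup \{0\}$ the corresponding $(N-\mathcal{G}+1)$-dimensional space of meromorphic functions with at most simple poles contained in $\{e_1,\dots,e_N\}$. All three statements will follow from linear-algebraic comparisons of $\mathcal{H}$ with subspaces cut out by prescribing further divisor conditions at $z_0$ or at the $e_j$'s.

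For (i) and (ii), introduce the subspace $\mathcal{H}_{z_0} := \{f \in \mathcal{H} : (f) \geq D_0^{-1} z_0^2\}$ of functions vanishing to order at least $2$ at $z_0$. Applying \eqref{divisors} to $D_0 z_0^{-2}$, whose degree is $N - 2 > 2(\mathcal{G}-1)$, gives $\dim \mathcal{H}_{z_0} = N - \mathcal{G} - 1$. For (ii), the assumption $N \geq \max(2\mathcal{G}+1,2)$ implies $N \geq \mathcal{G} + 2$, hence $\dim \mathcal{H}_{z_0} \geq 1$ and any nonzero element is the required function. For (i), fix a holomorphic coordinate $z$ centered at $z_0$ and consider the $2$-jet map
\[ J: \mathcal{H} \to \mathbb{C}^2, \qquad f \mapsto (f(z_0),\, \partial_z f(z_0)); \]
this is well defined since $z_0 \notin \{e_1,\dots,e_N\}$. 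Its kernel is $\mathcal{H}_{z_0}$, of codimension $(N-\mathcal{G}+1) - (N-\mathcal{G}-1) = 2$ in $\mathcal{H}$. Therefore $J$ is surjective and any preimage of $(0,1)$ produces $f \in \mathcal{H}$ with $\partial f(z_0) \neq 0$.

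For (iii), for each $j \in \{1,\dots,N\}$ let $H_j := \{f \in \mathcal{H} : (f) \geq D_0^{-1} e_j\}$ be the subspace of functions in $\mathcal{H}$ whose would-be pole at $e_j$ is removable. Applying \eqref{divisors} to $D_0 e_j^{-1}$, of degree $N-1 > 2(\mathcal{G}-1)$, gives $\dim H_j = N - \mathcal{G}$, so $H_j$ is a hyperplane in $\mathcal{H}$. Since $\mathbb{C}$ is an infinite field, the finite union $\bigcup_{j=1}^N H_j$ is a proper subset of $\mathcal{H}$, and any $f \in \mathcal{H} \setminus \bigcup_j H_j$ has a genuine simple pole at every $e_j$, establishing (iii).

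The only point requiring care is bookkeeping the divisors so that the degree hypothesis $\deg(\cdot) > 2(\mathcal{G}-1)$ in \eqref{divisors} is satisfied at each step; the inequality $N \geq 2\mathcal{G}+1$ is exactly what ensures this for all three divisors $D_0$, $D_0 z_0^{-2}$, and $D_0 e_j^{-1}$ simultaneously, while the auxiliary $N \geq 2$ clause is what rescues the genus $\mathcal{G}=0$ case in (ii).
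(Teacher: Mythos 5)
Your proof is correct and runs on the same engine as the paper's: apply the Riemann--Roch corollary \eqref{divisors} to $e_1\cdots e_N$ and its modifications, then finish with linear algebra. The differences are in packaging rather than substance, but a couple of them genuinely improve on the original. The paper sets up a basis $f_0,\dots,f_{\mathcal{G}+1}$ with $f_0=1$ and $f_j(z_0)=0$ for $j\geq 1$, works with only $2\mathcal{G}+1$ of the $N$ points, and splits off the $\mathcal{G}=0$ case at the end; you avoid the basis bookkeeping and the case split entirely by always using the full divisor $D_0=e_1\cdots e_N$, so the single hypothesis $N\geq\max(2\mathcal{G}+1,2)$ feeds uniformly into every degree bound. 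For (i) the paper locates a function with a zero of \emph{exactly} order $1$ at $z_0$ by comparing $r(D_1^{-1})$ and $r(D_2^{-1})$; your $2$-jet map $J:\mathcal{H}\to\mathbb{C}^2$ with $\ker J=\mathcal{H}_{z_0}$ is an equivalent but more transparent codimension count. For (iii) the paper proves by contradiction that each $e_j$ is a pole of some element of $\mathcal{H}$ and then says ``take a suitable linear combination''; you make that last step precise by observing that each $H_j$ is a hyperplane and that a vector space over an infinite field is not a finite union of proper subspaces, which is the right way to justify the informal remark. So: same route, tidied up.
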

\noindent\textsl{Proof}. Let first $\mathcal{G} \geq 1$, so that $N \geq 2\mathcal{G}+1$. By the discussion before the Lemma, we know that there are at least $\mathcal{G}+2$ linearly independent (over $\cc$) 
meromorphic functions $f_0,\dots,f_{\mathcal{G}+1}$ on $M$ with at most simple poles, all contained in $\{e_1,\dots,e_{2\mathcal{G}+1}\}$. Without loss of generality,
one can set $f_0=1$ and by linear combinations we can assume that $f_1(z_0)=\dots=f_{\mathcal{G}+1}(z_0)=0$. Now for (ii) consider the divisor  
$D_j=e_1\dots e_{2\mathcal{G}+1}z_0^{-j}$ for $j=1,2$, with degree $\deg(D_j)=2\mathcal{G}+1-j$, then by the Riemann-Roch formula (more precisely \eqref{divisors})
\[ r(D_j^{-1})=\mathcal{G}+2-j.\]
Thus, since $r(D_1^{-1})>r(D_2^{-1})=\mathcal{G}$ and using the assumption that $\mathcal{G} \geq 1$, we deduce that there is a function in ${\rm span}(f_1,\dots,f_{\mathcal{G}+1})$ 
which has a zero of order $2$ at $z_0$ and a function which has a zero of order
exactly $1$ at $z_0$. To show (iii) observe that if $N \geq 2\mathcal{G}+1$ then
$r((e_1\dots e_N)^{-1})=N-\mathcal{G}+1$. Suppose none of the meromorphic functions with
divisor greater than or equal to $(e_1\dots e_N)^{-1}$ has a pole at $e_N$ then
one would have that $r((e_1\dots e_{N-1})^{-1})=N-\mathcal{G}+1$. But $\deg(e_1\dots
e_{N-1}) = N-1 \geq 2\mathcal{G}-1$ so $r((e_1\dots e_{N-1})^{-1})=N-\mathcal{G}$ by
\eqref{divisors}. This is a contradiction and therefore every point of
$\{e_1,\dots, e_N\}$ is a pole for some meromorphic function with divisor
greater than or equal to $(e_1\dots e_N)^{-1}$. Taking suitable linear
combination of these functions yields a meromorphic function with simple poles
precisely at the points $\{e_1,\dots,e_N\}$.

The same method clearly works if $\mathcal{G}=0$ by taking $N\geq2$. 
\qed
 

 
\subsection{Morse holomorphic functions with prescribed critical points} \label{morseholo}
We follow in this section the arguments used in \cite{GT} to construct holomorphic functions with non-degenerate critical points (i.e.~Morse holomorphic functions) on 
the surface $M_0$ with genus $\mathcal{G}$ and $N$ ends, such that these functions have at most linear growth 
in the ends if $N\geq \max(2\mathcal{G}+1,2)$. 
We let $\mc{H}$ be the complex  vector space spanned by the meromorphic functions on $M$ with divisors larger or equal to  
$e_1^{-1}\dots e_{N}^{-1}$ 
where $e_1,\dots e_{N}\in M$ are points corresponding to the ends of $M_0$ as explained in the previous section.  
Note that $\mc{H}$ is a complex vector space of complex dimension greater or equal to $N-\mathcal{G}+1$ 
for the $e_1^{-1}\dots e_{N}^{-1}$ divisor. 
We will also consider the real vector space $H$ spanned by the real parts and imaginary parts of functions in $\mc{H}$, this is a real vector space 
which  admits a  Lebesgue measure. We now prove the following 
\begin{lemma}\label{morsedense}
The set of functions $u\in H$ which are not Morse in $M_0$ has measure $0$ in $H$, in particular its complement is dense in $H$.
\end{lemma}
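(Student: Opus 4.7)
The plan is a finite-dimensional transversality and Sard argument. First, since $\mc{H}$ is a complex vector space (in particular $i\mc{H}=\mc{H}$), every element of $H$ is of the form $\Re f$ with $f\in\mc{H}$, and the map $\Re:\mc{H}\to H$ (with $\mc{H}$ viewed as a real vector space of dimension $2\dim_\cc\mc{H}$) is surjective with one-dimensional kernel, equal to the purely imaginary real constants by the identity theorem. The set $\mc{B}\subset\mc{H}$ of $f$'s for which $\Re f$ fails to be Morse on $M_0$ is the preimage under $\Re$ of the non-Morse subset of $H$; it is thus a cylinder with $\ker\Re\cong\rr$ fibers, so by Fubini, proving $\mc{B}$ has Lebesgue measure zero in $\mc{H}$ is equivalent to proving the non-Morse subset of $H$ has measure zero. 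For $u=\Re f$, the critical points in $M_0$ are precisely the zeros of $\partial f$, and a direct calculation in any local holomorphic coordinate shows that the real Hessian of $u$ at such a point has determinant $-|\partial^2 f|^2$; hence $\Re f$ is Morse iff every zero of $\partial f$ in $M_0$ is simple.

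To show that this genericity condition holds off a set of measure zero, I would set up the evaluation map
\[
\Phi:\mc{H}\times M_0\longrightarrow T_{1,0}^*M,\qquad \Phi(f,p):=\partial f(p)\in T_{1,0}^*M\big|_p,
\]
viewed as a section of the pull-back bundle over $\mc{H}\times M_0$. By Lemma \ref{holofcts}(i) applied at each $p\in M_0$, the partial differential $\partial_f\Phi$ by itself surjects onto the fiber $T_{1,0}^*M|_p\cong\cc$, so $\Phi$ is transverse to the zero section and
\[
Z:=\{(f,p)\in\mc{H}\times M_0:\ \partial f(p)=0\}
\]
is a smooth real submanifold of real codimension $2$, of the same real dimension as $\mc{H}$. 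Let $\pi:Z\to\mc{H}$ be the first-factor projection.

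A coordinate calculation at $(f,p)\in Z$ shows that a vertical tangent vector $(0,v)\in T_{(f,p)}(\mc{H}\times M_0)$ lies in $T_{(f,p)}Z$ precisely when $v\cdot\partial^2 f(p)\,dz=0$ in local holomorphic coordinates $z$ at $p$; the $d\bar z$ contribution drops out because $\partial f$ is a holomorphic $(1,0)$-form. Hence $d\pi_{(f,p)}$ is an isomorphism exactly when $\partial^2 f(p)\ne 0$, and $\mathrm{Crit}(\pi)=\{(f,p)\in Z:\partial^2 f(p)=0\}$. Sard's theorem applied to $\pi$ between manifolds of equal dimension gives that $\pi(\mathrm{Crit}(\pi))$ has Lebesgue measure zero in $\mc{H}$; for every nonconstant $f\in\mc{H}$ the zeros of $\partial f$ in $M_0$ are isolated (being zeros of a meromorphic $1$-form on the closed surface $M$), so $\pi(\mathrm{Crit}(\pi))$ equals $\mc{B}$, and the first paragraph finishes the proof, density of the complement in $H$ being automatic from measure zero. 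The only nontrivial input is Lemma \ref{holofcts}(i), which supplies the transversality of $\Phi$; the step that requires the most care is the coordinate identification of $\ker d\pi_{(f,p)}$, where the holomorphicity of $\partial f$ is what collapses the nondegeneracy of the Hessian to the single complex condition $\partial^2 f(p)\ne 0$.
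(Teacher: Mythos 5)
Your proof is correct and rests on the same underlying mechanism as the paper's --- parametric transversality and Sard, with Lemma~\ref{holofcts}(i) supplying the surjectivity that makes the incidence set a manifold --- but the execution is genuinely different in detail. The paper stays in the real parameter space $H$, uses the evaluation map $(p,u)\mapsto(p,du(p))$, and delegates the Sard step to Uhlenbeck's parametric transversality theorem (Theorem~\ref{transv}), so the only thing it verifies by hand is transversality of the evaluation map to the zero section. You instead pass to $\mc{H}$ via the surjection $\Re:\mc{H}\to H$ (which adds a small Fubini/cylinder step), parametrize by $\partial f$ rather than $du$, and unpack Sard explicitly: build $Z=\{(f,p):\partial f(p)=0\}$, identify $\mathrm{Crit}(\pi)=\{(f,p)\in Z:\partial^2 f(p)=0\}$ by a direct kernel computation, and apply Sard to $\pi:Z\to\mc{H}$. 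The benefit of working holomorphically is that the nondegeneracy of the real Hessian of $\Re f$ collapses to the single complex condition $\partial^2 f(p)\neq 0$, which makes the critical-locus computation transparent and keeps the proof self-contained (no appeal to \cite{Uh}); the cost is the extra reduction to $\mc{H}$ at the outset. A cosmetic remark: your aside about nonconstant $f$ having isolated zeros of $\partial f$ is not needed for the identification $\pi(\mathrm{Crit}(\pi))=\mc{B}$, which already follows from the Hessian characterization of non-Morse points.
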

\noindent{\bf Proof}. We use an argument very similar to that used by Uhlenbeck \cite{Uh}.
We start by defining $m: M_0\times H\to T^*M_0$ by $(p,u) \mapsto (p,du(p))\in T_p^*M_0$. 
This is clearly a smooth map, linear in the second variable, moreover $m_u:=m(.,u)=(\cdot, du(\cdot))$ is  
smooth on $M_0$. The map $u$ is a Morse function if and only if 
$m_u$ is transverse to the zero section, denoted $T_0^*M_0$, of $T^*M_0$, i.e.~if 
{\Small\[\textrm{Image}(D_{p}m_u)+T_{m_u(p)}(T_0^*M_0)=T_{m_u(p)}(T^*M_0),\quad \forall p\in M_0 \textrm{ such that }m_u(p)=(p,0).\]}
This is equivalent to the fact that the Hessian of $u$ at critical points is 
non-degenerate (see for instance Lemma 2.8 of \cite{Uh}). 
We recall the following transversality result, the proof of which is contained in \cite[Th.2]{Uh} by replacing Sard-Smale theorem by the usual 
finite dimensional Sard theorem:
\begin{theorem}\label{transv}
Let $m : X\times H \to W$ be a $C^k$ map and $X, W$ be smooth manifolds and $H$  a finite dimensional vector space, 
if  $W'\subset W$ is a submanifold such that $k>\max(1,\dim X-\dim W+\dim W')$, then the transversality of the map $m$ to $W'$  implies that 
the complement of the set $\{u\in H; m_u \textrm{ is transverse to } W'\}$ in $H$ has Lebesgue measure $0$.
\end{theorem}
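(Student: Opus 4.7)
The plan is to deduce this parametric transversality statement from the classical finite-dimensional Sard theorem applied to the natural projection of a certain preimage submanifold onto the parameter space $H$. Concretely, I would introduce the set $N := m^{-1}(W') \subset X \times H$ and the projection $\pi : N \to H$, $(p,u) \mapsto u$, and show that the set of $u$ for which $m_u$ fails to be transverse to $W'$ is exactly the set of critical values of $\pi$.

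First I would exploit the hypothesis that $m$ itself is transverse to $W'$: by the standard transversality-gives-submanifold lemma, $N = m^{-1}(W')$ is a $C^k$ submanifold of $X \times H$ of codimension $\dim W - \dim W'$, hence of dimension $\dim X + \dim H - \dim W + \dim W'$. Next I would observe that the projection $\pi : N \to H$ is $C^k$, and compute
\[
\dim N - \dim H \;=\; \dim X - \dim W + \dim W'.
\]
The hypothesis $k > \max(1, \dim X - \dim W + \dim W')$ guarantees that $k > \dim N - \dim H$ (and also $k \geq 2$, if needed to make sense of $N$ as a $C^k$ manifold), so the hypotheses of the finite-dimensional Sard theorem for $\pi$ are satisfied. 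It follows that the set $\Sigma \subset H$ of critical values of $\pi$ has Lebesgue measure zero.

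The heart of the proof is then the identification of critical values of $\pi$ with parameters where the section $m_u$ fails to be transverse to $W'$. Fix $u \in H$, and suppose $p \in X$ with $m_u(p) \in W'$, i.e. $(p,u) \in N$. Using that $m$ is transverse to $W'$ at $(p,u)$, a diagram-chase with the short exact sequence
\[
0 \to T_{(p,u)}N \to T_p X \oplus T_u H \to T_{m(p,u)}W / T_{m(p,u)}W' \to 0
\]
shows that $d\pi_{(p,u)} : T_{(p,u)}N \to T_u H$ is surjective if and only if $m_u$ is transverse to $W'$ at $p$. Thus $u$ is a regular value of $\pi$ exactly when $m_u \pitchfork W'$ at every $p \in m_u^{-1}(W')$, i.e.\ when $m_u \pitchfork W'$. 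Consequently the set of ``bad'' $u$ is exactly $\Sigma$, and has measure zero.

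The main obstacle is the linear-algebra verification in the last step (the equivalence between surjectivity of $d\pi_{(p,u)}|_{T_{(p,u)}N}$ and transversality of $m_u$); this is routine but must be set up carefully, because one needs the full transversality of $m$ (not just at a single point) to extract both the submanifold structure of $N$ and the correct description of $T_{(p,u)}N$ as the kernel of the composition $T_p X \oplus T_u H \xrightarrow{dm} T_{m(p,u)}W \to T_{m(p,u)}W/T_{m(p,u)}W'$. With this identification in hand, the rest is exactly finite-dimensional Sard, which is available under the stated regularity hypothesis on $k$.
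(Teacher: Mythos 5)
Your proposal is correct and follows essentially the same route as the paper, which does not reprove the result but cites Uhlenbeck's Theorem 2 with the Sard--Smale theorem replaced by the finite-dimensional Sard theorem; that cited argument is exactly your scheme of showing $N=m^{-1}(W')$ is a $C^k$ submanifold, identifying the parameters where $m_u$ fails to be transverse with the critical values of the projection $\pi:N\to H$, and checking that $k>\max(1,\dim X-\dim W+\dim W')$ gives the regularity needed for Sard applied to $\pi$. So your write-up, including the dimension count $\dim N-\dim H=\dim X-\dim W+\dim W'$ and the linear-algebra equivalence between surjectivity of $d\pi|_{T_{(p,u)}N}$ and transversality of $m_u$, matches the intended proof.
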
 
We want to apply this result with $X:=M_0$, $W:=T^*M_0$ and $W':=T^*_0M_0$, and with the map $m$ as defined above. 
We have thus proved our Lemma if one can show that $m$ is transverse to $W'$. 
Let $(p,u)$ such that $m(p,u)=(p,0)\in W'$. Then identifying $T_{(p,0)}(T^*M_0)$ with $T_pM_0\oplus T^*_pM_0$, one has
\[Dm_{(p,u)}(z,v)=(z,dv(p)+{\rm Hess}_p(u)z)\]
where ${\rm Hess}_p(u)$ is the Hessian of $u$ at the point $p$, viewed as a linear map from $T_pM_0$ to $T^*_pM_0$ (note that this is different from the covariant Hessian defined by the Levi-Civita connection). 
To prove that $m$ is transverse to $W'$ we need to show that $(z,v)\to (z, dv(p)+{\rm Hess}_p(u)z)$ is onto from $T_pM_0\oplus H$ 
to $T_pM_0\oplus T^*_pM_0$, which is realized if the map $v\to dv(p)$ from $H$ to  $T_p^*M_0$ is onto.
But from Lemma \ref{holofcts}, we know that there exists a meromorphic function $f$ with real part $v={\rm Re}(f)\in H$ 
such that $v(p)=0$ and $dv(p)\not=0$ as an element of $T^*_pM_0$. We can then take $v_1:=v$ and $v_2:={\rm Im}(f)$, which are 
functions of $H$ such that $dv_1(p)$ and $dv_2(p)$ are
linearly independent in $T^*_pM_0$ by the Cauchy-Riemann equation $\bar{\pl} f=0$. 
This shows our claim and ends the proof by using Theorem \ref{transv}.\qed

In particular, by the Cauchy-Riemann equation, Lemma \ref{morsedense} implies
\begin{cor}\label{morse holomorphic dense}
The subset of functions in $\mc{H}$ which are Morse is dense.
\end{cor}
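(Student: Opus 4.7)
The plan is to transfer the density of Morse functions in the real space $H$, provided by Lemma \ref{morsedense}, over to the complex space $\mc{H}$ using the Cauchy--Riemann correspondence between a holomorphic function and its real part. Observe first that every $f \in \mc{H}$ has its poles only at the points $e_i \in M \setminus M_0$, hence is holomorphic on $M_0$.

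The first step is to establish the equivalence: for $f \in \mc{H}$, the function $f$ is Morse on $M_0$ if and only if $u := \re(f) \in H$ is. Writing $v := \im(f)$ and using the Cauchy--Riemann equations, the conditions $du(p) = 0$, $dv(p) = 0$, and $\pl_z f(p) = 0$ are pairwise equivalent, so the critical sets of $f$, $u$, and $v$ on $M_0$ coincide. At such a critical point, $u$ is harmonic, so $\mathrm{Hess}_p u$ is a symmetric traceless $2\times 2$ matrix, and a direct Cauchy--Riemann calculation expresses its entries in terms of $\pl_z^2 f(p)$, yielding $\det \mathrm{Hess}_p u = -|\pl_z^2 f(p)|^2$. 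Thus non-degeneracy of $\mathrm{Hess}_p u$ is equivalent to $\pl_z^2 f(p) \neq 0$, which is the correct Morse condition for the holomorphic function $f$.

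The second step is to note that the real-linear map $\Phi : \mc{H} \to H$, $f \mapsto \re(f)$, is surjective: an arbitrary element $\sum_j a_j \re(f_j) + \sum_k b_k \im(g_k)$ of $H$ equals $\re\bigl(\sum_j a_j f_j - i \sum_k b_k g_k\bigr)$. Since $\Phi$ is a surjective linear map between finite-dimensional real vector spaces, it is open. Given $f \in \mc{H}$ and $\epsi > 0$, Lemma \ref{morsedense} furnishes a Morse $\tilde u \in H$ with $\|\tilde u - \re(f)\|$ arbitrarily small; openness of $\Phi$ then yields $g \in \mc{H}$ with $\re(g) = \tilde u - \re(f)$ and $\|g\|$ controlled by $\|\tilde u - \re(f)\|$. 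The perturbation $\tilde f := f + g \in \mc{H}$ lies within a prescribed distance of $f$ and satisfies $\re(\tilde f) = \tilde u$, which is Morse; by the first step, $\tilde f$ itself is Morse, proving the density.

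There is no substantive obstacle: the whole argument reduces, via the Cauchy--Riemann dictionary, to an elementary open-mapping statement in finite-dimensional linear algebra. The only step requiring genuine care is the first one, where one must unwind the definition of "Morse" used in the proof of Lemma \ref{morsedense} and carry out the Cauchy--Riemann computation of $\det \mathrm{Hess}_p u$ to see that it equals $-|\pl_z^2 f(p)|^2$.
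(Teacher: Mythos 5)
Your proof is correct and fills in precisely the reasoning behind the paper's one-line deduction "by the Cauchy–Riemann equation, Lemma \ref{morsedense} implies": the Cauchy–Riemann identity $\det \mathrm{Hess}_p\,\re f = -|\pl_z^2 f(p)|^2$ makes $f$ Morse iff $\re f$ is, and the openness of the surjective real-linear map $f \mapsto \re f$ from $\mc{H}$ onto $H$ transfers density from $H$ to $\mc{H}$. This is the same approach the paper takes, just made explicit.
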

This shows, in conjunction with part (iii) of Lemma \ref{holofcts} that
\begin{lemma}
\label{poles at ends}
Let $e_1,\dots,e_{N}$ be distinct points on a closed Riemann surface $M$ with genus $\mathcal{G}$. If $N\geq \max(2\mathcal{G}+1,2)$, then there exists a morse meromorphic function $f$ whose (non-removable) poles are all simple and form precisely the set $\{e_1,\dots,e_N\}$. 
\end{lemma}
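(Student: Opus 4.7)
The strategy is to combine the existence statement in Lemma \ref{holofcts}(iii) with the genericity statement in Corollary \ref{morse holomorphic dense}, working inside the finite-dimensional complex vector space $\mathcal{H}$ of meromorphic functions on $M$ with divisor $\geq (e_1 \cdots e_N)^{-1}$. The idea is that ``being Morse'' is generic and ``actually having a pole at every $e_i$'' is an open condition cut out by a finite collection of linear functionals, so the two conditions can be arranged simultaneously.

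First I would set up, for each $i = 1, \dots, N$, the linear functional $\ell_i : \mathcal{H} \to \mathbb{C}$ that extracts the coefficient of $z_i^{-1}$ in the Laurent expansion of $f \in \mathcal{H}$ at $e_i$ (in the local holomorphic coordinate $z_i$). Since every $f \in \mathcal{H}$ has at worst a simple pole at each $e_i$, the pole at $e_i$ is non-removable precisely when $\ell_i(f) \neq 0$. By Lemma \ref{holofcts}(iii) there exists $f_0 \in \mathcal{H}$ with a genuine simple pole at every $e_i$, so $\ell_i(f_0) \neq 0$ for all $i$, and in particular each $\ell_i$ is a nonzero linear functional on $\mathcal{H}$; the kernel $\{\ell_i = 0\}$ is therefore a proper complex hyperplane.

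Define
\[
U := \bigcap_{i=1}^{N} \{\, f \in \mathcal{H} : \ell_i(f) \neq 0 \,\}.
\]
As the complement of a finite union of proper hyperplanes in the finite-dimensional complex vector space $\mathcal{H}$, the set $U$ is open, nonempty (it contains $f_0$), and dense in $\mathcal{H}$. By Corollary \ref{morse holomorphic dense}, the subset $V \subset \mathcal{H}$ of Morse meromorphic functions is dense as well. Since $U$ is open and nonempty, density of $V$ forces $U \cap V$ to be nonempty: any limit point $u_0 \in U$ of $V$ is approached by a sequence in $V$ which eventually enters the open set $U$. Any $f \in U \cap V$ is then Morse on $M_0$, has at most simple poles, all contained in $\{e_1, \dots, e_N\}$ by virtue of lying in $\mathcal{H}$, and has a genuine (non-removable) pole at every single $e_i$ by definition of $U$; this is exactly the function claimed in the lemma.

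There is no serious obstacle here beyond identifying the residue maps $\ell_i$ and recognising that density of the Morse locus combined with openness of the ``poles everywhere'' locus already gives a nonempty intersection. The two ingredients, the Riemann--Roch based existence result (iii) of Lemma \ref{holofcts} and the transversality argument behind Corollary \ref{morse holomorphic dense}, do all the work; the present lemma is the routine combination, made possible by the fact that $\mathcal{H}$ is finite-dimensional so that ``having maximal pole support'' is Zariski-open.
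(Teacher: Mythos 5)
Your proof is correct and takes essentially the same route as the paper: the paper simply observes that Lemma \ref{poles at ends} follows from Lemma \ref{holofcts}(iii) together with Corollary \ref{morse holomorphic dense}, and your argument is precisely the explicit form of that observation, namely that the "poles everywhere" locus is a nonempty open subset of $\mathcal{H}$ (complement of the hyperplanes $\ker\ell_i$), hence must meet the dense Morse locus.
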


This discussion allows us to conclude that
\begin{proposition}
\label{criticalpoints}
There exists a dense set of points $p$ in $M_0$ such that there exists a Morse holomorphic function $f\in\mc{H}$ on $M_0$ whose (non-removable) poles are all simple and form precisely the set $\{e_1,\dots,e_N\}$ which has a critical point at $p$.
\end{proposition}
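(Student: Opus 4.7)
My plan is to produce a Morse element of $\mc{H}$ with simple poles at every $e_i$ and a critical point near a prescribed $p_0$, by first engineering a function that already has a critical point at $p_0$, then performing two small perturbations — one to install all $N$ poles, one to make the function Morse — while using Hurwitz's theorem to guarantee that the critical point at $p_0$ cannot be destroyed, only split into nearby critical points.

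Concretely, I fix $p_0 \in M_0$ together with a small neighborhood $U$ of $p_0$ compactly contained in $M_0 \setminus \{e_1,\ldots,e_N\}$. Lemma \ref{holofcts}(ii) supplies $f_0 \in \mc{H}$ with a zero of order at least two at $p_0$, so that in a local holomorphic coordinate $z$ centered at $p_0$ the function $\partial_z f_0$ is holomorphic with a zero of some order $k \geq 1$ at $z=0$; after shrinking $U$ I may assume $\partial_z f_0$ has no other zeros on $\bar U$. In parallel, Lemma \ref{poles at ends} supplies a Morse $h \in \mc{H}$ whose non-removable poles are simple and exhaust $\{e_1,\ldots,e_N\}$. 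I then set $f_\epsilon := f_0 + \epsilon h \in \mc{H}$: the residue $\mathrm{Res}_{e_i}(f_\epsilon)$ is affine in $\epsilon$ with nonzero slope $\mathrm{Res}_{e_i}(h)$, so for all but finitely many $\epsilon$ the non-removable poles of $f_\epsilon$ are precisely $\{e_1,\ldots,e_N\}$; on the other hand, for $\epsilon$ small the uniform convergence $\partial_z f_\epsilon \to \partial_z f_0$ on $\bar U$ together with Hurwitz's theorem forces $\partial_z f_\epsilon$ to have $k$ zeros inside $U$, clustering near $p_0$. I fix an $\epsilon \neq 0$ admissible in both senses.

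To finish, I invoke Corollary \ref{morse holomorphic dense} and choose Morse functions $\tilde f_n \in \mc{H}$ converging to $f_\epsilon$. For large $n$ the residues at each $e_i$ remain nonzero, so the non-removable poles of $\tilde f_n$ are exactly $\{e_1,\ldots,e_N\}$; and a second application of Hurwitz, after further shrinking $U$ if necessary so that $\partial_z f_\epsilon$ does not vanish on $\partial U$, produces $k$ critical points of $\tilde f_n$ inside $U$, each automatically non-degenerate since $\tilde f_n$ is Morse. As $p_0 \in M_0$ and $U$ are arbitrary, the density statement follows. The main subtlety — and the reason I route the argument through Hurwitz rather than trying to track a single critical point — is that individual critical points can move or split under perturbation and need not survive at $p_0$; but the topological count of zeros of $\partial_z f$ inside $U$ is stable under $C^0$-small perturbations, and stability of this count is all that is really needed.
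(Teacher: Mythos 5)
Your proof is correct and rests on the same essential ingredients as the paper's: Lemma~\ref{holofcts}(ii) to start with a function whose derivative vanishes at $p_0$, density of Morse elements of $\mathcal{H}$ with poles at every $e_i$, and Rouch\'e/Hurwitz stability of the zero count of $\partial_z f$ on a small disc. The only difference is organizational: you split the approximation into two perturbations (first $f_0 + \epsilon h$ to install the poles, then a Morse approximant), explicitly tracking residues to see that the poles persist, whereas the paper does a single approximation and leaves the persistence of all $N$ residues implicit in the density claim; your version is a bit more verbose but also a bit more transparent on that point.
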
 
\noindent\textsl{Proof}.  
Let $p$ be a point of $M_0$ and let $u$ be a holomorphic function with a zero of order at least $2$ at $p$, 
the existence is ensured by Lemma \ref{holofcts}. 
Let $B(p,\eta)$ be a any small ball of radius $\eta>0$ near $p$, then by 
Lemma \ref{morsedense}, for any $\eps>0$, we can approach $u$ by a holomorphic Morse function $u_\eps\in\mc{H}_\eps$ whose (non-removable) poles are all simple and form precisely the set $\{e_1,\dots,e_N\}$ and which is at distance less than $\eps$ of $u$ in a fixed norm on the finite dimensional space $\mc{H}$. 
Rouch\'e's theorem for $\pl_z u_\eps$ and $\pl_zu$ (which are viewed as functions locally near $p$) 
implies that $\pl_z u_{\eps}$ has at least one zero of order exactly $1$ in $B(p,\eta)$ if $\eps$ is chosen small enough. 
Thus there is a Morse function in $\mc{H}$ with a critical point  arbitrarily close to $p$.\qed

\begin{remark}
In the case where the surface $M$ has genus $0$ and $N$ ends,  we have an explicit formula for the function in Proposition \ref{criticalpoints}: indeed 
$M_0$ is conformal to $\cc\setminus \{e_1,\dots, e_{N-1}\}$ for some
$e_i\in\cc$ - i.e.~the Riemann sphere minus $N$ points - then the function $f(z)=(z-z_0)^2/(z-e_1)$ with $z_0\not\in\{e_1,\dots,e_{N-1}\}$ 
has $z_0$ for unique critical point in 
$\cc\setminus \{e_1,\dots, e_{N-1}\}$ and it is non-degenerate. 
\end{remark}

We end this section by the following Lemmas which will be used for the amplitude of the complex geometric optics solutions but not for the phase.
\begin{lemma}\label{amplitude}
For any $p_0, p_1,\dots p_n\in M_0$ some points of $M_0$ and $L\in\nn$, then there exists a function $a(z)$ holomorphic on $M_0$ 
which vanishes to order $L$ at all $p_j$ for $j=1,\dots,n$ and such that $a(p_0)\not=0$.
 Moreover $a(z)$ can be chosen to have  at most polynomial growth in the ends, i.e.
$|a(z)|\leq C|z|^{J}$ for some $J\in\nn$. The analogous statement can be made about holomorphic 1-forms.
\end{lemma}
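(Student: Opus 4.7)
The plan is to apply the Riemann-Roch theorem to an appropriately chosen divisor, mimicking the strategy used in Lemma \ref{holofcts}. Specifically, for a large integer $k$ (to be chosen), consider the divisor
\[ D := e_1^{-k}\cdots e_N^{-k} p_1^L \cdots p_n^L, \]
so that $r(D^{-1})$ counts meromorphic functions on $M$ with zeros of order at least $L$ at each $p_j$ and poles of order at most $k$ at the ends $e_i$ (and no other poles). Its inverse divisor has degree $\deg(D^{-1}) = Nk - nL$, which exceeds $2(\mathcal{G}-1)$ once $k$ is taken sufficiently large. By \eqref{divisors}, one then has
\[ r(D^{-1}) = Nk - nL - \mathcal{G} + 1. \]

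The second step is to enforce $a(p_0) \neq 0$ by a dimension-counting argument. Consider the enlarged divisor $D' := D\cdot p_0$, whose inverse still satisfies $\deg(D'^{-1}) = Nk - nL - 1 > 2(\mathcal{G}-1)$ after enlarging $k$ by one if necessary. Then \eqref{divisors} gives $r(D'^{-1}) = Nk - nL - \mathcal{G} = r(D^{-1}) - 1$. Since $r(D^{-1}) > r(D'^{-1})$, the subspace of functions vanishing additionally at $p_0$ is strictly smaller, so there exists $a$ with $(a) \geq D$ and $a(p_0) \neq 0$. By construction $a$ is holomorphic on $M_0 = M\setminus\{e_1,\dots,e_N\}$, vanishes to order at least $L$ at each $p_j$, and $a(p_0) \neq 0$.

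The growth estimate follows from the pole bound at the ends: since $a$ has poles of order at most $k$ at each $e_i$, and in the Euclidean coordinate on the end $E_i \simeq \cc\setminus\{|z|\leq 1\}$ the puncture $e_i$ corresponds to $z = \infty$, one automatically gets $|a(z)| \leq C|z|^{k}$ in each end, so $J=k$ works.

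The extension to holomorphic 1-forms is obtained by the same scheme using the identity \eqref{abelian}. Fix any non-zero meromorphic 1-form $\omega_0$ on $M$ (which exists since $\mathcal{G}$-dimensional spaces of such forms exist by \eqref{abelian} applied with $D=1$). Then $i(D) = r(D(\omega_0)^{-1})$, and the same Riemann-Roch computation applied to the divisor $D(\omega_0)^{-1}$ (with $k$ enlarged to absorb the finite contribution of $(\omega_0)$) produces a meromorphic 1-form with the required vanishing at $p_1,\dots,p_n$, non-vanishing at $p_0$, and poles only at the ends. Equivalently, one may multiply the function $a$ just constructed (with order of vanishing $L$ raised by an amount exceeding the maximum order of vanishing of $\omega_0$ at the points $p_0,\dots,p_n$) by $\omega_0$. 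The only step requiring care is ensuring the degree bound for Riemann-Roch remains satisfied after all these increments of $k$ and $L$, which is a routine arithmetic check — this is the only potential obstacle, and it is easily overcome because $k$ can be taken as large as needed.
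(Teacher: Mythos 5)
Your argument matches the paper's proof exactly: take the divisor $D := e_1^{-k}\cdots e_N^{-k}p_1^L\cdots p_n^L$, compute dimensions via Riemann--Roch, and observe a one-dimensional drop when $p_0$ is appended to the zero set, which produces the required $a$; the pole bound at the ends then gives polynomial growth. One notational slip to watch: with the paper's convention $r(D) = \dim\{f : (f)\geq D\}$, the space you are counting is $r(D)$ for your $D$ (apply \eqref{divisors} to $D^{-1}$, which has positive degree), not $r(D^{-1})$ — but your arithmetic $Nk - nL - \mathcal{G} + 1$ is correct for that space, so the argument goes through unchanged. Your handling of the 1-form case by multiplying the constructed function (with $L$ raised to absorb $(\omega_0)$) against a fixed nonzero meromorphic 1-form is a reasonable way to complete the detail the paper leaves implicit.
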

\noindent\textsl{Proof}. It suffices to find on $M$ some meromorphic function with divisor greater or equal to  
$D:=e_1^{-J}\dots e_N^{-J}p_1^L\dots p_n^{L}$ but not greater or equal to $Dp_0$ 
and this is insured by Riemann-Roch theorem as long as  $JN-nL\geq 2\mathcal{G}$ since then 
$r(D)=-\mathcal{G}+1+JN-nL$ and $r(Dp_0)=-\mathcal{G}+JN-nL$.
\qed

\begin{lemma}
\label{control the zero}
Let $\{p_0, p_1,..,p_n\}\subset M_0$ be a set of $n+1$ disjoint points. Let $c_0,c_1,\dots, c_K\in\cc$, $L\in\nn$, 
and let $z$ be a complex coordinate near $p_0$ such that $p_0=\{z=0\}$. 
Then  there exists a holomorphic function $f$ on $M_0$ with zeros of order at least $L$ at each $p_j$,
such that $f(z)=c_0 + c_1z +...+ c_K z^K+ O(|z|^{K+1})$ in the coordinate $z$.  Moreover $f$ 
can be chosen so that there is $J\in\nn$ such that, in the ends, $|\pl_z^\ell f(z)|=O (|z|^{J})$ for all $\ell\in\nn_0$.
\end{lemma}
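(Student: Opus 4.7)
The plan is to extend the Riemann--Roch dimension count used in the proof of Lemma \ref{amplitude} from controlling a single nonzero value at $p_0$ to controlling the first $K+1$ Taylor coefficients of $f$ at $p_0$. For an integer $J\in\nn$ to be chosen later, consider on $M$ the divisor
\[ D := e_1^{-J}\cdots e_N^{-J}\, p_1^{L}\cdots p_n^{L}. \]
Then $r(D)$ is precisely the space of meromorphic functions on $M$ that are holomorphic on $M_0$, vanish to order at least $L$ at each $p_j$ for $j=1,\dots,n$, and have a pole of order at most $J$ at each end $e_i$. I would introduce the linear Taylor evaluation map
\[ T: r(D)\longrightarrow \cc^{K+1},\qquad T(f):=\bigl(f(p_0),\,\pl_z f(p_0),\,\tfrac{1}{2!}\pl_z^{2}f(p_0),\,\dots,\,\tfrac{1}{K!}\pl_z^{K} f(p_0)\bigr), \]
whose kernel is exactly the subspace $r(D\,p_0^{K+1})$ consisting of those $f\in r(D)$ which moreover vanish to order $\geq K+1$ at $p_0$.

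The heart of the argument is then a dimension count. Choose $J$ large enough that both $JN-nL>2(\mathcal{G}-1)$ and $JN-nL-(K+1)>2(\mathcal{G}-1)$ hold. Then \eqref{divisors} applies to both divisors $D$ and $Dp_0^{K+1}$ and yields
\[ \dim r(D)=JN-nL-\mathcal{G}+1,\qquad \dim r(Dp_0^{K+1})=JN-nL-K-\mathcal{G}. \]
By rank-nullity, $\dim\operatorname{Im}(T)=K+1$, so $T$ is surjective onto $\cc^{K+1}$. Pulling back the prescribed data $(c_0,\dots,c_K)$ under $T$ then produces a function $f\in r(D)$ with the desired Taylor expansion at $p_0$ together with the required vanishing at $p_1,\dots,p_n$.

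For the growth bound in the ends, the function $f$ produced is meromorphic on $M$ with pole of order at most $J$ at each $e_i$. In a neighbourhood of $e_i$ parametrised by the chart $w=1/z_i$ we may write $f=w^{-J}g(w)$ for some $g$ holomorphic at $w=0$, so in the end coordinate $f(z_i)=z_i^{J}g(1/z_i)$, and successive differentiation gives $|\pl_{z_i}^{\ell}f(z_i)|=O(|z_i|^{J-\ell})=O(|z_i|^{J})$ for every $\ell\in\nn_0$, which is the required bound. The entire argument is essentially a Riemann--Roch bookkeeping exercise; the only step requiring any care is choosing $J$ large enough that both degree conditions needed to apply \eqref{divisors} simultaneously hold, but each is merely a lower bound on $J$, so no genuine obstacle arises.
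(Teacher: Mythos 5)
Your proof is correct, and it is organized a bit differently from the paper's. The paper reduces by induction and linear combinations to the case $c_0=\dots=c_{K-1}=0$ and then, as in Lemma \ref{amplitude}, produces a function with divisor $\geq D$ but not $\geq Dp_0$, i.e.\ one vanishing to order exactly $K$ at $p_0$, and rescales. You instead package the whole problem into a single rank--nullity argument for the Taylor evaluation map $T : r(D) \to \cc^{K+1}$, computing $\dim r(D)$ and $\dim r(Dp_0^{K+1})$ from \eqref{divisors} and deducing that $\dim\operatorname{Im}(T)=K+1$, hence surjectivity. Both arguments are Riemann--Roch degree counts, but yours avoids the induction and the need to separately exhibit a function with vanishing order \emph{exactly} $K$; it also makes visibly explicit that the constraints are independent linear conditions. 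A small bonus of your formulation is that it sidesteps a bookkeeping slip in the paper's proof, where the divisor is written with $p_0^{K-1}$ although $p_0^{K}$ is what is needed for the rescaling step to work; your dimension count never touches that issue. The growth estimate at the ends is handled identically in both.
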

\noindent\textsl{Proof}. The proof goes along the same lines as in Lemma \ref{amplitude}. By induction on $K$ and linear combinations, 
it suffices to prove it for $c_0=\dots=c_{K-1}=0$. As in the proof of Lemma \ref{amplitude}, if $J$ is taken large enough, there exists 
a function with divisor greater or equal to $D:=e_1^{-J}\dots e_N^{-J}p_0^{K-1}p_1^L\dots p_n^{L}$ but not greater or equal 
to $Dp_0$. Then it suffices to multiply this function by $c_K$ times the inverse of 
the coefficient of $z^K$ in its Taylor expansion at $z=0$.
\qed

\subsection{Laplacian on weighted spaces} 
Let $x$ be a smooth positive function on $M_0$, which is equal to $|z|^{-1}$
for $|z|>r_0$ in the ends 
$E_i\simeq \{z\in\cc; |z|>1\}$, where $r_0$ is a large fixed number. 
We now show that the Laplacian $\Delta_{g_0}$ on a surface with Euclidean ends
has a right inverse on the weighted spaces 
$x^{-J}L^2(M_0)$ for $J\notin\nn$ positive.
\begin{lemma}\label{rightinv}
For any $J>-1$  which is not an integer, there exists a continuous operator $G$
mapping $x^{-J}L^2(M_0)$ to $x^{-J-2}L^2(M_0)$  
such that $\Delta_{g_0}G={\rm Id}$.
\end{lemma}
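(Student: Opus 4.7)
The approach I would take is based on the b-calculus of Melrose on the compactification $\overline{M_0}$ of $M_0$ obtained by adding a boundary circle at infinity at each Euclidean end; the function $x$ then becomes a boundary defining function. In each end the Laplacian reads $\Delta_{g_0} = -\partial_r^2 - r^{-1}\partial_r - r^{-2}\partial_\theta^2$, which after the substitution $x=1/r$ takes the form $\Delta_{g_0} = x^2\bigl(-(x\partial_x)^2 - \partial_\theta^2\bigr)$. Hence $x^{-2}\Delta_{g_0}$ is a b-elliptic differential operator of order two on $\overline{M_0}$, whose indicial family at each boundary component is $-\lambda^2-\partial_\theta^2$ acting on $L^2(S^1)$. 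This family is invertible precisely when $\lambda\notin\mathbb{Z}$, so the set of indicial roots is exactly $\mathbb{Z}$. The standard b-calculus Fredholm theorem then implies that $\Delta_{g_0}\colon x^{-J-2}H^2_b(M_0) \to x^{-J}L^2(M_0)$ is Fredholm for every non-integer $J$, after the conversion $L^2(M_0,dV_{g_0})=xL^2_b$ dictated by $dV_{g_0}=r\,dr\,d\theta$ in the ends.

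The next step is to upgrade Fredholmness to surjectivity under the hypothesis $J>-1$. Using the $L^2(M_0)$-pairing together with the formal self-adjointness of $\Delta_{g_0}$, the cokernel is identified with $\{v\in x^JL^2(M_0)\colon \Delta_{g_0}v=0\}$. A short calculation in polar coordinates on the ends shows that $v\in x^JL^2(M_0)$ with $J>-1$ forces $v(z)\to 0$ as $|z|\to\infty$ in every end, so $v$ is bounded near each puncture $e_i$ of the closed compactification $M$. By the classical removable singularity theorem, $v$ then extends to a harmonic function on the compact Riemann surface $M$, hence is constant, and the decay forces this constant to vanish. The cokernel is therefore trivial and $\Delta_{g_0}$ is surjective.

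Given surjectivity with closed range, a continuous right inverse $G$ is obtained by choosing a continuous splitting of the short exact sequence $0 \to \ker\Delta_{g_0}\to x^{-J-2}H^2_b\to x^{-J}L^2(M_0)\to 0$, or equivalently by correcting a b-parametrix by a finite-rank operator. Continuity of $G\colon x^{-J}L^2(M_0)\to x^{-J-2}L^2(M_0)$ is then immediate from the continuous inclusion $x^{-J-2}H^2_b\hookrightarrow x^{-J-2}L^2(M_0)$. The principal subtlety is the surjectivity step: the assumption $J>-1$ is essential there, because for $J<-1$ the harmonic decay condition at the ends is too weak to force vanishing via the removable singularity argument and the cokernel may be nontrivial.
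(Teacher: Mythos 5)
Your argument is correct in its essentials but takes a genuinely different route from the paper. The paper works throughout with the conformal b-metric $g_b = x^2 g_0$ and its Laplacian $\Delta_{g_b}$: it proves triviality of the kernel of $\Delta_{g_b}$ on $x^{J'}H^2_b$ for $J' > 0$ via a direct energy estimate (integration by parts with boundary terms killed by the decay $J'>0$), deduces surjectivity on the dual weight $x^{-J'}$ from the $L^2_b$-pairing, and then invokes Melrose's Proposition 5.64 to get an explicit pseudodifferential right inverse $G_b$; finally $G := G_b x^{-2}$ and a bookkeeping of weights gives the statement. You instead keep $\Delta_{g_0}$ itself, identify the cokernel as $\{v\in x^J L^2(M_0): \Delta_{g_0}v=0\}$ via formal self-adjointness in $L^2(M_0,dV_{g_0})$, and then kill the cokernel by a genuinely geometric argument: a Fourier-mode analysis in the ends shows that $J>-1$ forces $v\to 0$ at infinity, removable singularity plus conformal invariance of harmonicity in two dimensions extends $v$ to a harmonic function on the closed surface $M$, and compactness then forces $v\equiv 0$. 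This is an attractive alternative that leans on the two-dimensional conformal structure; the paper's energy estimate is blunter but more directly transferable to other b-elliptic settings, and Prop.~5.64 gives the right inverse as a b-pseudodifferential operator rather than by an abstract splitting.

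There is one index slip you should repair. With the standard conventions $L^2_b:=L^2(M_0,dV_{g_b})$, $L^2(M_0,dV_{g_0})=xL^2_b$, and $\Delta_{g_0}=x^2\Delta_{g_b}$, the operator $\Delta_{g_b}$ is Fredholm $x^a H^2_b\to x^a L^2_b$ for $a\notin\mathbb{Z}$, so $\Delta_{g_0}:x^a H^2_b\to x^{a+2}L^2_b=x^{a+1}L^2(M_0)$. To land in $x^{-J}L^2(M_0)$ you should therefore take $a=-J-1$, i.e.\ the domain $x^{-J-1}H^2_b$, not $x^{-J-2}H^2_b$. With the corrected index the final inclusion reads $x^{-J-1}H^2_b\hookrightarrow x^{-J-1}L^2_b=x^{-J-2}L^2(M_0)$ and gives precisely the mapping property claimed in the Lemma, whereas as you wrote it the embedding $x^{-J-2}H^2_b\hookrightarrow x^{-J-2}L^2(M_0)$ goes the wrong way. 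This is a bookkeeping error, not a conceptual gap; once fixed, your proof is complete.
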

\noindent\textsl{Proof}. Let $g_b:=x^2g_0$ be a metric conformal to $g_0$. The
metric $g_b$ in the ends can be written 
$g_b=dx^2/x^2+d\theta_{S^1}^2$ by using radial coordinates $x=|z|^{-1},\theta=z/|z|\in S^1$,  
this is thus a b-metric in the sense of Melrose \cite{APS}, giving the surface
a geometry of surface with cylindrical ends. Let us define for $m\in\nn_0$
\begin{small}
\[
    H^m_b(M_0):=\{u\in L^2(M_0;{\rm dvol}_{g_b}); (x\pl_x)^j \pl_\theta^ku\in
    L^2(M_0;{\rm dvol}_{g_b}) \textrm{ for all }j+k\leq m\}.
\] 
\end{small}
The Laplacian has the form $\Delta_{g_b}=-(x\pl_x)^2+\Delta_{S^1}$ in the ends,
and 
the indicial roots of $\Delta_{g_b}$ in the sense of Section 5.2 of \cite{APS}
are given by the complex numbers $\la$ such that
$x^{-i\la}\Delta_{g_b}x^{i\la}$ is not invertible as an operator acting on the
circle $S_{\theta}^1$. Thus the indicial roots are the solutions of
$\la^2+k^2=0$ 
where $k^2$ runs over the eigenvalues of $\Delta_{S^1}$, that is, $k \in \zz$.
The roots are simple at $\pm i k \in i\zz\setminus \{0\}$ and $0$ is a 
double root. In Theorem 5.60 of \cite{APS}, Melrose proves that $\Delta_{g_b}$
is Fredholm on $x^{a}H^2_b(M_0)$ if and only if $-a$ is not the imaginary part
of some indicial root, that is here $a\not\in \zz$.  
For $J>0$, the kernel of $\Delta_{g_b}$ on the space $x^{J}H^2_b(M_0)$ is
clearly trivial by an energy estimate. Thus $\Delta_{g_b}: x^{-J} H^0_b(M_0)
\to x^{-J} H^{-2}_b(M_0)$ is surjective for $J > 0$ and $J \not\in \zz$, and
the same then holds for $\Delta_{g_b}: x^{-J} H^2_b(M_0) \to x^{-J}
H^{0}_b(M_0)$ by elliptic regularity.

Now we can use Proposition 5.64 of \cite{APS}, which asserts, for all positive $J \not\in \zz$, the existence of a  pseudodifferential operator $G_b$ mapping continuously $x^{-J}H^0_b(M_0)$ to $x^{-J}H^2_b(M_0)$ such  that $\Delta_{g_b}G_{b}={\rm Id}$.
Thus if we set $G=G_bx^{-2}$, we have $\Delta_{g_0}G={\rm Id}$ and $G$ maps continuously 
$x^{-J+1}L^2(M_0)$ to $x^{-J-1}L^2(M_0)$ (note that $L^2(M_0)=xH^0_b(M_0)$). 
\qed
\subsection{Cauchy-Riemann Operator on Weighted Space}
We begin this section with a discussion about the Fredholm properties of the operator $\bar\pl$ on non-compact manifolds by using the results of b-calculus. If $M_0$ is a surface with $N$ Euclidean ends, then one may take the $N$ point compactification to obtain a closed surface $M = M_0 \cup \{e_1,\dots, e_N\}$. In a holomorphic coordinate neighbourhood $E_j$ of $e_j$ the metric can be written in polar coordinates as $g_0 = \frac{dx^2}{x^4} + \frac{d\theta^2}{x^2}$. By extending $x$ to be a smooth positive function on $M_0$ one obtains a b-metric defined by $g_b := x^2 g_0$. In this setting $M_0$ is a bordered manifold with $x$ as its boundary defining function.

Let ${\mathcal V}_b$ denote the sections of $TM_0$ which are tangential to $\partial M_0$ at the boundary and $^bTM_0$ be the bundle so that ${\mathcal V}_b = C^\infty(M_0, ^bTM_0)$. Denoting its dual bundle by $^bT^*M_0$ one sees that near $ x=0$ the b-tangent and b-cotangent bundles are spanned by unitary (co)vectors $\{x\pl_x, \pl_\theta\}$ and $\{\frac{dx}{x}, d\theta\}$ respectively. The complexified b-cotangent bundle $\C ^bT^*M_0$ has a splitting into $^bT_{0,1}^*M_0 \oplus ^bT_{1,0}^*M_0$ given by the complex structure induced by $g_b$.

The Cauchy-Riemann operator is invariant within a conformal class of metric. However, for all $u\in C^\infty (M_0)$ it is convenient to express $\bar\pl u$ locally near $x=0$ as a section of $^bT^*_{0,1}M_0$:
\[\bar\pl u = (\pl_x u + \frac{i}{x} \pl_\theta  u)(dx + ix d\theta) = (x\pl_x u + i\pl_\theta u)(\frac{dx}{x} + id\theta).\]
Written in this way one sees that $\bar\pl \in {\rm Diff}_b^1(M_0; \C, ^bT^*_{0,1} M_0)$. It is also elliptic with indicial family 
\[I_x(\bar\pl, s) u = \frac{i}{2}(-s u + \pl_\theta u)  (\frac{dx}{x} +
id\theta)\]
which has simple roots whenever $s \in i\zz$ by taking $u(0,\theta) =
e^{-s\theta}$. We can therefore conclude by Theorem 5.60 \cite{APS} that
$\bar\partial: x^J H^m_b (M_0,\C) \to x^J H^{m-1}_b (M_0, ^bT^*_{0,1}M_0)$ is
Fredholm whenever $J \notin \zz$ where 
\begin{small}
\[
    x^JH^m_b(M_0):=\{u\in L^2(M_0;{\rm dvol}_{g_b}); (x\pl_x)^j \pl_\theta^ku\in
x^J L^2(M_0;{\rm dvol}_{g_b}) \forall j+k\leq m\}.
\]
\end{small}

We are now in a position to characterize the range of the $\bar\pl$ operator in these weighted Sobolev spaces. Indeed, if we denote by $^b\bar\pl^*$ the adjoint of $\bar\pl$ under the metric $g_b$ we see that for $J \notin \zz$ and $m\geq 1$,
{\Small \[R_{J,m}(\bar\pl) = \{\omega \in x^J H^{m-1}_b(M_0, ^bT^*_{0,1}M_0) ; \int_{M_0} \langle \omega,\eta\rangle_{g_b} {\rm dvol}_{g_b} = 0, \forall \eta \in N_{-J, -m+1}(^b\bar\pl^* ) \}.\]}
Here, for all $J,m\in\R$, $R_{J,m}(\bar\partial)$ and $N_{J, m}(^b\bar\pl^* )$ denote respectively the range and kernel of the operators $\bar\pl$ and $^b\bar\pl^*$ acting on their respective sections in $x^J H^m_b$. By elliptic regularity (Theorem 5.61 and (5.165) in \cite{APS}) this becomes
{\Small\begin{eqnarray}
\label{ortho to all holomorphic bform}
R_{J,m}(\bar\pl) = \{\omega \in x^J H^{m-1}_b(M_0, ^bT^*_{0,1}M_0) ; \int_{M_0} \langle \omega,\eta\rangle_{g_b} {\rm dvol}_{g_b} = 0, \forall \eta \in N_{-J, m}(^b\bar\pl^* ) \}
\end{eqnarray}}
We look at the relationship between $N_{J,m}(^b\bar\pl^*)$ and the null space of $\bar\pl^*$ acting on $T^*_{0,1}M_0$ when $J\in \R\backslash \zz$. If $\eta \in x^JH^m_b(M_0, ^bT^*_{0,1}M_0)$ then it is a weighted $H^m$ section of the bundle $^bT^*_{0,1}M_0$ which is a subspace of the dual space of the bundle whose smooth sections are the vector fields tangent to the boundary. Therefore, locally in the interior $\eta$ has coordinate expression $\eta = ud\bar z$ with $ u \in H^m$ and thus $\eta$ is a $H_{loc}^m$ section of $T^*_{0,1}M_0$. Near the boundary where $x = 0$, $\eta$ has the coordinate expression $\eta = u \frac{d\bar z}{\bar z} = u (\frac{dx}{x} + id\theta)$ where $u \in x^J H^m_b(M_0)$. Therefore, if $\eta \in N_{J,m}(^b\bar\pl^*)$ then $u$ is an antiholomorphic function satisfying $\int_{|x| <1} |x^{-J}u|^2 \frac{1}{x}dxd\theta <\infty$. Taking the Laurent series expression for $u$ we have that $u$ must have a zero of at least order $\lceil J\rceil$ at each end which implies that $\eta = u \frac{d\bar z}{\bar z}$ has a zero of order at least $\floor*{J}$. This means that
\begin{eqnarray}
\label{relate b-bundle to original bundle}
\eta \in N_{J,m}(^b\bar\pl^*) \Rightarrow \bar\pl^*\eta  =0 \ {\rm and }\ \eta \in x^J L^2(M_0, {\rm dvol}_{g_0}).
\end{eqnarray}
Furthermore, combining this discussion with standard argument about removability of singularities gives the the following Lemma and its Corollaries:
 \begin{lemma}
 \label{integrability implies zero}
 If $\eta \in N_{J,m}(^bT^*_{0,1}M_0)$ for $J>0$ then $\eta$ extends antiholomorphically to $M = M_0\cup \{e_1,\dots,e_N\}$ with zeroes of order at least $\floor*{J}$ at each of the ends $e_j$, $j = 1, \dots, N$. 
 \end{lemma}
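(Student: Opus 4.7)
The plan is to package the observations already laid out in the paragraphs leading up to the Lemma and finish with a standard removal of isolated singularities. The argument is entirely local near each end, so fix one end $e_j$ and a local holomorphic coordinate $w$ on a neighborhood of $e_j$ in the compactification $M$ (the inverse of the Euclidean coordinate on $E_j$) such that $e_j=\{w=0\}$ and $x=|w|$ is the boundary defining function. In this chart, $\frac{d\bar w}{\bar w}$ is a local generator of $^bT^*_{0,1}M_0$, so I would write $\eta=u\,\frac{d\bar w}{\bar w}$ locally, with $u\in x^JH^m_b(M_0)$.

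Next I would use the pointwise identity $^b\bar\pl^*\eta=0$, already noted in \eqref{relate b-bundle to original bundle} to be equivalent in these coordinates to the antiholomorphicity of $u$, to expand $u$ as a convergent Laurent series in the punctured neighborhood of $e_j$: $u=\sum_{k\in\zz}a_k\bar w^{k}$. The weighted integrability condition $\int |x^{-J}u|^2\,x^{-1}\,dx\,d\theta<\infty$ that defines $x^JH^0_b$ separates across Fourier modes to give $|a_k|^2\int_0^\epsilon x^{2k-2J-1}dx<\infty$, and hence $a_k=0$ whenever $k\le J$. Since by hypothesis $J\notin\zz$, this is the same as $a_k=0$ for $k<\lceil J\rceil$.

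Thus $u=\bar w^{\lceil J\rceil}v$ for an antiholomorphic function $v$ which is bounded on a punctured neighborhood of $e_j$; Riemann's removable singularity theorem (applied to $\bar v$) provides an antiholomorphic extension of $v$ across $w=0$. Rewriting $\eta$ in the honest (non-b) frame $d\bar w$ via $\frac{d\bar w}{\bar w}=\bar w^{-1}d\bar w$, I obtain
\[
\eta=\bar w^{\lceil J\rceil-1}\,v\,d\bar w,
\]
a smooth antiholomorphic $(0,1)$-form on a full neighborhood of $e_j$ in $M$ vanishing to order $\lceil J\rceil-1=\lfloor J\rfloor$ there. Repeating this argument at each end $e_1,\dots,e_N$ and combining with the antiholomorphicity already known in the interior of $M_0$ gives the desired extension to all of $M$.

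The only delicate point is the bookkeeping between the b-cotangent bundle $^bT^*_{0,1}M_0$ and the ordinary holomorphic cotangent bundle $T^*_{0,1}M$ on the compactification: the factor $\bar w^{-1}$ hidden in $\frac{d\bar w}{\bar w}$ accounts for the discrepancy between the $\lceil J\rceil$ vanishing order of $u$ forced by the $L^2_b$-integrability and the $\lfloor J\rfloor$ vanishing order of $\eta$ as an honest $(0,1)$-form. Once this accounting is made precise, the removable singularities step is immediate and no further analytic work is required.
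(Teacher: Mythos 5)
Your proof is correct and essentially reproduces the argument the paper gives in the paragraph immediately preceding the Lemma: express $\eta$ in the b-frame as $u\,\frac{d\bar w}{\bar w}$ with $u$ antiholomorphic, read off the constraint on the Laurent coefficients from the $x^JL^2_b$-integrability, and then pass from the vanishing order $\lceil J\rceil$ of $u$ to the vanishing order $\lfloor J\rfloor$ of $\eta$ in the honest frame $d\bar w$. One small remark: the restriction $J\notin\zz$ you invoke is not needed here — the integrability forces $a_k=0$ for all $k\le J$, which already gives $u$ a zero of order at least $\lfloor J\rfloor+1$ and hence $\eta$ a zero of order at least $\lfloor J\rfloor$ regardless of whether $J$ is an integer, matching the statement of the Lemma which assumes only $J>0$.
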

 \begin{cor}
 \label{empty kernel}
Let $M_0$ be a surface with $N \geq 2{\mathcal G}+1$ ends. If $J\in\R\backslash \zz$ satisfies $J >1$ then $N_{J,m}(^b\bar\partial^*)$ is trivial.
 \end{cor}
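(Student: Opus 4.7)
The plan is to combine Lemma \ref{integrability implies zero} with the degree formula for divisors of meromorphic $1$-forms on the closed Riemann surface $M$. Suppose $\eta \in N_{J,m}({}^b\bar\partial^*)$. Since $J > 1$ and $J \notin \zz$ we have $\lfloor J \rfloor \geq 1$, so Lemma \ref{integrability implies zero} produces an antiholomorphic extension of $\eta$ to all of $M = M_0 \cup \{e_1,\dots,e_N\}$ having a zero of order at least $1$ at every end $e_j$.

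Next I would pass to the complex conjugate. Writing $\eta = u \, d\bar z$ locally with $u$ antiholomorphic, the $1$-form $\overline{\eta}$ is a genuine holomorphic $1$-form on $M$ whose divisor $(\overline{\eta})$ satisfies
\[
    (\overline{\eta}) \;\geq\; e_1 e_2 \cdots e_N,
\]
so in particular $\deg((\overline{\eta})) \geq N$. If $\eta$ is nonzero, then $\overline{\eta}$ is a nonzero meromorphic (in fact holomorphic) $1$-form on the closed surface $M$, and the discussion following \eqref{abelian} shows that any such form must satisfy $\deg((\overline{\eta})) = 2(\mathcal{G}-1)$.

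Combining these two facts with the hypothesis $N \geq 2\mathcal{G}+1$ yields
\[
    2\mathcal{G}-2 \;=\; \deg((\overline{\eta})) \;\geq\; N \;\geq\; 2\mathcal{G}+1,
\]
which is absurd. Hence $\eta = 0$, proving that $N_{J,m}({}^b\bar\partial^*)$ is trivial. There is no real obstacle here beyond carefully tracking that the condition $J > 1$, $J \notin \zz$ suffices to force at least a simple zero at each puncture, so that the divisor inequality $(\overline{\eta}) \geq e_1 \cdots e_N$ actually holds.
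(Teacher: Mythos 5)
Your argument is correct and follows the same route the paper takes: apply Lemma \ref{integrability implies zero} to extend $\eta$ antiholomorphically with at least simple zeroes at all $N$ ends, then contradict the fact that a nonzero meromorphic $1$-form on $M$ has divisor of degree $2(\mathcal{G}-1) < N$. The only cosmetic difference is that you pass to $\overline{\eta}$ and spell out the degree formula, whereas the paper leaves these steps implicit.
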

 \begin{proof}
Lemma \ref{integrability implies zero} implies that $\eta$ can be extended antiholomorphically to a section of $T^*_{0,1}M$ by taking its value to be zero at $e_j$ for $j = 1,\dots, N$. If $N \geq 2{\mathcal G}+1$ this would force its degree $(\eta)$ to be greater than or equal to $2{\mathcal G}+1$ and thus forcing it to be the trivial section. 
 \end{proof}
 \begin{cor}
 \label{surjectivity of dbar}
 Let $M_0$ be a surface with $N \geq 2{\mathcal G}+1$ ends. If $J\in \R\backslash \zz$ satisfies $2>J >1$ then $R_{-J,m}(\bar\partial) = x^{-J} H_b^{m-1}(M_0; ^bT_{0,1}^*M_0)$. Furthermore, there exists a bounded operator \[\bar\partial^{-1}_J : x^{-J}H_b^{m-1}(M_0; ^bT_{0,1}^*M_0) \to x^{-J}H_b^{m}(M_0; \C)\] satisfying $\bar\partial \bar\partial^{-1}_J = Id$, $\bar\partial^{-1}_J \bar\partial = Id - \Pi$ where \[\Pi : x^{-J}L_b^{2}(M_0; \C) \to N_{-J,m}(\bar\partial) \subset x^{-J}H_b^{m}(M_0; \C)\] is the orthogonal projection on $x^{-J}L^2_b$ with respect to the inner product \[ \langle u, v\rangle_{J} := \int_{M_0} x^{2J} \bar u v {\rm dvol}_{g_b}.\]
 \end{cor}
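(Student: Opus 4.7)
The plan is to combine the range characterization \eqref{ortho to all holomorphic bform} with the triviality of the cokernel given by Corollary \ref{empty kernel}, and then to extract the right inverse from standard Fredholm theory together with the open mapping theorem.

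First I would establish surjectivity. Applying \eqref{ortho to all holomorphic bform} with $-J$ in place of $J$ gives
\[R_{-J,m}(\bar\partial) = \Big\{\omega \in x^{-J}H^{m-1}_b(M_0; {}^bT^*_{0,1}M_0) : \int_{M_0} \langle \omega, \eta\rangle_{g_b}\, \mathrm{dvol}_{g_b} = 0\ \ \forall \eta \in N_{J,m}({}^b\bar\partial^*)\Big\}.\]
Since $J \in (1,2)\setminus \zz$ and $N \geq 2\mathcal{G}+1$, Corollary \ref{empty kernel} yields $N_{J,m}({}^b\bar\partial^*) = \{0\}$, so the orthogonality condition is vacuous and $R_{-J,m}(\bar\partial) = x^{-J}H^{m-1}_b(M_0; {}^bT^*_{0,1}M_0)$, as required.

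For the inverse, I would note that because $-J \notin \zz$, the map $\bar\partial : x^{-J}H^m_b(M_0;\C) \to x^{-J}H^{m-1}_b(M_0;{}^bT^*_{0,1}M_0)$ is Fredholm by the b-calculus result already used above. Its kernel $N_{-J,m}(\bar\partial)$ is therefore finite dimensional and, by elliptic regularity, consists of smooth functions; in particular it is a closed subspace of the Hilbert space $(x^{-J}L^2_b, \langle \cdot,\cdot\rangle_J)$. Let $\Pi$ denote the $\langle\cdot,\cdot\rangle_J$-orthogonal projection onto $N_{-J,m}(\bar\partial)$, which is bounded on both $x^{-J}L^2_b$ and $x^{-J}H^m_b$ thanks to finite dimensionality and smoothness of the kernel. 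Restricted to the closed complement $(\mathrm{Id} - \Pi)(x^{-J}H^m_b)$, the operator $\bar\partial$ is a continuous bijection onto the target $x^{-J}H^{m-1}_b(M_0;{}^bT^*_{0,1}M_0)$; the open mapping theorem then produces a bounded inverse, which I define to be $\bar\partial^{-1}_J$.

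The two composition identities then follow immediately. On the one hand $\bar\partial\,\bar\partial^{-1}_J = \mathrm{Id}$ by construction. On the other hand, for any $u \in x^{-J}H^m_b$ the element $u - \Pi u$ lies in $(\mathrm{Id} - \Pi)(x^{-J}H^m_b)$ and satisfies $\bar\partial(u - \Pi u) = \bar\partial u$, so uniqueness of the preimage under the bijection above forces $\bar\partial^{-1}_J \bar\partial u = u - \Pi u$. The only point requiring a bit of care, rather than a genuine obstacle, is simultaneous boundedness of $\Pi$ on $x^{-J}L^2_b$ and on $x^{-J}H^m_b$; this is automatic from the finite dimensionality of $N_{-J,m}(\bar\partial)$, so the result is really an assembly of the Fredholm framework of \cite{APS} with the cokernel computation of Corollary \ref{empty kernel}.
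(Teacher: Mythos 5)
Your argument follows the paper's proof almost verbatim: surjectivity is obtained by combining \eqref{ortho to all holomorphic bform} with Corollary \ref{empty kernel}, and the bounded right inverse $\bar\partial^{-1}_J$ with $\bar\partial\,\bar\partial^{-1}_J = \mathrm{Id}$ and $\bar\partial^{-1}_J\bar\partial = \mathrm{Id}-\Pi$ is exactly the generalized inverse the paper extracts from Fredholmness via Theorem 5.60 of \cite{APS}. The only difference is that you spell out the standard functional-analytic details (finite dimensionality and smoothness of the kernel, boundedness of $\Pi$, and the open mapping theorem on the complement $(\mathrm{Id}-\Pi)(x^{-J}H^m_b)$) which the paper compresses into the phrase ``generalized inverse.''
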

 \begin{proof}
 Combine Corollary \ref{empty kernel} and \eqref{ortho to all holomorphic bform} we have that the operator \[\bar\partial: x^{-J}H^m_b(M_0, \C) \to x^{-J} H_b^{m-1}(M_0; ^bT_{0,1}^*M_0)\] is surjective. Theorem 5.60 \cite{APS} also states that this operator is Fredholm so there exists a generalized inverse \[\bar\partial^{-1}_J : x^{-J}H_b^{m-1}(M_0; ^bT_{0,1}^*M_0) \to x^{-J}H_b^{m}(M_0; \C)\]  
satisfying $\bar\partial \bar\partial^{-1}_J = Id$, $\bar\partial^{-1}_J \bar\partial = Id - \Pi$ where \[\Pi : x^{-J}H_b^{m}(M_0; \C) \to N_{-J,m}(\bar\partial) \subset x^{-J}H_b^{m}(M_0; \C)\] is the orthogonal projection described in the statement of the Corollary. \end{proof}
In the case when $\eta$ is compactly supported we can easily work out the expression for the kernel of $\bar\pl^{-1}_J$ by using the existing machinery. Indeed, if $K = \supp (\eta)$ is contained in the interior of $M_0$ then let $\{\chi_j\}_{j = 1}^l$ be a partition of unity by $C^\infty_0(M_0)$ functions for some open cover of $K$ by holomorphic coordinate neighbourhoods $\{U_j\}_{j=1}^l$ with $U_j \subset\subset M_0$. Let $\chi'_j$ be compactly supported smooth function in $U_j$ which is equal to $1$ in a neighbourhood of the support of $\chi_j$. Define \begin{eqnarray}\label{def of T}T \eta := \sum\limits_{j}\chi_j' \int_{U_j} \frac{\chi_j(z') f_j(z')}{z' -z} dz' \wedge d\bar z'\end{eqnarray} where $f_j d\bar z$ is the coordinate expression of $\eta$ in $U_j$. One immediately gets that 
\[\bar\pl T \eta = \eta + \big(\sum\limits_{j} \omega_j\int_{U_j} \kappa_j(z',z) f_j(z') dz' \wedge d\bar z'\big) \]
where $\kappa_j$ are smooth compactly supported functions in $U_j\times U_j$ and $\omega_j$ are smooth sections of $T^*_{0,1}M_0$ compactly supported in $U_j$. Hitting both sides with $\bar\pl_J^{-1}$ for $2>J>1$ we get from Corollary \ref{surjectivity of dbar}
\begin{eqnarray}
\label{expression for dbar inverse}
\bar\pl_J^{-1} \eta=(Id + \Pi)T \eta - \bar\pl_J^{-1} \sum\limits_{j} \omega_j\int_{U_j} \kappa_j(z',z) f_j(z') dz' \wedge d\bar z'
\end{eqnarray}
The expression \eqref{expression for dbar inverse} combined with the explicit formula for the $T$ operator given by \eqref{def of T} allows us to prove the following
\begin{lemma}
\label{estimate for conjugate dbar}
If $\psi$ is a Morse function on $M_0$ and $\eta$ is a compactly supported smooth section of $T^*_{0,1}M_0$ then for $2 > J >1$ we have 
\[\|x^J \bar\pl_J^{-1} (e^{2i\psi/h} \eta)\|_{L_b^2(M_0)} \leq C h^{\frac{1}{2} + \epsilon}.\]
Here the constant depends on $\eta$ and the size of its support.
\end{lemma}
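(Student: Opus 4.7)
My plan is to start from the explicit representation \eqref{expression for dbar inverse}, which writes
\[\bar\pl^{-1}_J(e^{2i\psi/h}\eta) = (Id + \Pi) T(e^{2i\psi/h}\eta) - \bar\pl^{-1}_J R_h,\]
where $R_h(z) := \sum_j \omega_j(z) \int_{U_j} \kappa_j(z',z) f_j(z') e^{2i\psi(z')/h}\, dz'\wedge d\bar z'$ and $T$ is given by \eqref{def of T}. Since $\Pi$ is a bounded projection on $x^{-J}L^2_b$ and $\bar\pl^{-1}_J$ is bounded on the weighted spaces by Corollary \ref{surjectivity of dbar}, the task reduces to estimating both $T(e^{2i\psi/h}\eta)$ and $R_h$ separately in $x^{-J}L^2_b$.

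The remainder $R_h$ is treated by two-dimensional stationary phase: its inner integral has smooth, compactly supported amplitude and Morse phase $\psi$, so $\|R_h\|_{C^m}=O(h)$ uniformly in $z$ on compact sets for every $m\in\nn$, and since $R_h$ is itself compactly supported in $M_0$, this gives $\|R_h\|_{x^{-J}H^{m-1}_b}=O(h)$, which is stronger than required after applying $\bar\pl^{-1}_J$. For the Cauchy integral term, the cutoffs $\chi_j'$ in \eqref{def of T} make $T(e^{2i\psi/h}\eta)$ compactly supported in $M_0$, where $x^J$ is bounded, so the problem reduces to the planar estimate
\[\Big\|\int_U \frac{F(z') e^{2i\psi(z')/h}}{z'-z}\, dz'\wedge d\bar z'\Big\|_{L^2(K)} \lesssim h^{1/2+\epsilon}\]
for any compact $K\Subset\C$, $F\in C_c^\infty(U)$, and Morse $\psi$ on $U$.

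To establish this key bound I would use a multi-scale decomposition adapted to the critical set of $\psi$. Fix $\alpha\in(0,1/2)$ and split the $z'$-integration into $h^\alpha$-balls around the finitely many critical points $\{p_k\}$ of $\psi$ in $\supp(F)$, and their complement. On the complement, $|\bar\pl\psi|\gtrsim h^\alpha$; after first excising a small $h^\beta$-ball around $z'=z$ (whose contribution is handled by direct size estimate, giving $O(h^\beta)$), the remaining region admits repeated non-stationary phase integration by parts, each iteration gaining a factor $h^{1-2\alpha}$, and this part contributes $O(h^N)$ for any $N$. On each $h^\alpha$-ball around a critical point $p_k$, the change of variable $z'=p_k+h^{1/2}w$ converts the local contribution into $h^{1/2}$ times a Cauchy integral with non-degenerate quadratic phase $e^{2iQ_k(w)}$ over $|w|\leq h^{\alpha-1/2}$, which is uniformly bounded in $z$; optimizing $\alpha$ and $\beta$ and exploiting the decay of $\int e^{2iQ_k(w)}/(w-w_0)\, dm(w)$ as $|w_0|\to\infty$ produces the extra $h^\epsilon$.

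The main obstacle is precisely the interplay between the singular Cauchy kernel at $z'=z$ and the non-stationarity of $e^{2i\psi/h}$ at the critical points of $\psi$: neither non-stationary phase integration by parts nor the rescaling argument individually handles both difficulties, so the proof must carefully balance the localization scale $h^\alpha$ against the rescaling gain. Obtaining an exponent strictly greater than $1/2$ is precisely the content of the inequality, and is the reason the conclusion is stated with a strictly positive $\epsilon$.
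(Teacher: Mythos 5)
Your decomposition and reduction are exactly those in the paper: start from \eqref{expression for dbar inverse}, treat the smooth-kernel remainder by stationary phase to get $O(h)$ in $x^{-J}L^2_b$, use boundedness of $Id+\Pi$ and of $\bar\pl_J^{-1}$, and reduce everything to the planar Cauchy-integral bound $\|x^J T(e^{2i\psi/h}\eta)\|_{L^2_b}\lesssim h^{1/2+\epsilon}$. The paper disposes of this last estimate simply by invoking Lemma~2.2 of \cite{GT2}, whereas you sketch a direct multi-scale/non-stationary-phase argument for it; the sketch has the right flavor (it is the same type of argument as in \cite{GT2}), but note that your excision of an $h^\beta$-ball about $z'=z$ generates boundary terms at each integration by parts that are not $O(h^N)$ and must be tracked, so as written the ``contributes $O(h^N)$'' claim for the complement of the critical balls is too optimistic. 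Since the structure matches the paper's proof and the only divergence is in elaborating a step the paper outsources to a citation, I'd count this as essentially the same approach.
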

\begin{proof}
Using the expression \eqref{expression for dbar inverse} and replacing $\eta$ by $e^{i2\psi/h}\eta$ we see that $\bar\partial_J^{-1}e^{-2i\psi/h}\eta$ can be bounded by two separate terms. The operator $\bar\pl^{-1}_J$ is bounded from $x^{-J} H_b^{m-1}(M_0; ^bT^*_{0,1}M_0)$ to $x^{-J} H^{m}_b(M_0)$ and the last term is the composition of this operator with a finite sum of integrals agains smooth compactly supported kernels. Therefore, the last term can be treated with stationary phase to show that its $x^{-J} L^2_b$ norm is of order $h$.  

The first term can be estimated by using the explicit expression of the kernel given in \eqref{def of T} and the fact that $(Id + \Pi)$ is bounded on $x^{-J} L^2_b(M_0)$. Since each of $\chi_j'$ are compactly supported and we are summing over finitely many terms in \eqref{def of T}, repeating the same argument as Lemma 2.2 of \cite{GT2} would yield that $\|x^JTe^{2i\psi/h}\eta\|_{L_b^2} \leq  C(\supp(\eta)) h^{\frac{1}{2} + \epsilon} \|\eta\|_{W^{1,p}}$
and the proof is complete by the boundedness of $(Id - \Pi)$. \end{proof}

 \begin{subsection}{Construction of Conjugation Factor}
 If $A$ is a section of $T^*_{0,1}M_0$ then the operator $\bar\partial + iA$ is
 a Cauchy-Riemann operator acting on the trivial complex line bundle over
 $M_0$. By \cite{kobayashi} there exists a unique holomorphic structure which
 is compatible with this Cauchy-Riemann operator and it is trivialized by a
 non-vanishing section of the bundle. It is useful to construct an explicit
 form of this trivialization. In particular, if $\alpha$ is a differentiable
 function satisfying $\bar\partial \alpha = A$, then one has $ \bar\partial +
 iA = e^{-i\alpha} \bar\partial e^{i\alpha}$. It is important in this article
 to understand the asymptotic of the trivialization near the ends. This
 motivates us to consider the following construction:
\begin{lemma}
\label{solve for alpha near infinity}
Let $\eta\in e^{-\gamma|z|} H^{3+\eps_0}(\C)$ for all $\gamma >0$. We have the following
expansion for the parametrix of the Cauchy-Riemann operator
\begin{eqnarray} \label{dbar expansion}
\bar\partial^{-1} \eta (z) := \frac{1}{2\pi} \int_{\C} \frac{\eta(\zeta)}{ z-
\zeta}d\zeta \wedge d\bar\zeta 
= \frac{c_{-1}}{z} +\dots+\frac{ c_{-k}}{ z^{k}}
+ R_k(z),
\end{eqnarray}
where $R_k(z) := z^{-k}\dbar^{-1}(z^k\eta)$, with 
$|\partial_z^j R_k(z)| \leq
C_{j,k} |z|^{-1} \|e^{\gamma|\zeta|}\eta(\zeta)\|_{H^{1+j+\eps_0}}$,
for all $j = 0, 1, 2$.
\end{lemma}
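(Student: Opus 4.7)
The plan is to derive the expansion from the elementary finite geometric series identity $\frac{1}{1-w}=\sum_{n=0}^{k-1}w^n+\frac{w^k}{1-w}$, specialized with $w=\zeta/z$ to give
\[
\frac{1}{z-\zeta}=\sum_{n=0}^{k-1}\frac{\zeta^n}{z^{n+1}}+\frac{\zeta^k}{z^k(z-\zeta)}.
\]
Substituting this into the Cauchy integral and integrating term by term (justified because $\eta$ decays faster than any exponential, so every polynomial moment $c_{-n-1}:=\frac{1}{2\pi}\int_{\C}\zeta^n\eta(\zeta)\,d\zeta\wedge d\bar\zeta$ converges absolutely) yields the stated expansion with $R_k(z)=z^{-k}\bar\partial^{-1}(\zeta^k\eta)(z)$.

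The remaining task is the pointwise bound on $\partial_z^j R_k$. First I would verify that $\partial_z$ commutes with $\bar\partial^{-1}$ on the integrand at hand: the identity $\partial_z^\ell(z-\zeta)^{-1}=(-1)^\ell\partial_\zeta^\ell(z-\zeta)^{-1}$ combined with integration by parts (no boundary contributions, by decay) gives
\[
\partial_z^\ell\bar\partial^{-1}(\zeta^k\eta)(z)=\bar\partial^{-1}\bigl(\partial_\zeta^\ell(\zeta^k\eta)\bigr)(z).
\]
Applying Leibniz to $R_k(z)=z^{-k}\bar\partial^{-1}(\zeta^k\eta)(z)$ then expresses $\partial_z^jR_k(z)$ as a finite linear combination of terms of the form $C_{j,m,k}\,z^{-k-m}\,\bar\partial^{-1}\bigl(\partial_\zeta^{j-m}(\zeta^k\eta)\bigr)(z)$ for $0\le m\le j$.

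The crucial pointwise bound is that for sufficiently nice $g$,
\[
\|\bar\partial^{-1}g\|_{L^\infty(\C)}\le C\bigl(\|g\|_{L^\infty(\C)}+\|g\|_{L^1(\C)}\bigr),
\]
obtained by splitting $\int|z-\zeta|^{-1}|g(\zeta)|\,d^2\zeta$ at $|z-\zeta|=1$: the inner region is dominated by $\|g\|_{L^\infty}$ times the finite integral $\int_{|w|\le 1}|w|^{-1}\,d^2w$, and the outer region by $\|g\|_{L^1}$. Combined with the Sobolev embedding $H^{1+\eps_0}(\C)\hookrightarrow L^\infty(\C)$ for the first factor, Cauchy-Schwarz against $e^{-\gamma|\zeta|}$ for the second, and the fact that the polynomial factor $\zeta^k$ and its derivatives can be absorbed into the weight $e^{\gamma|\zeta|}$ by shrinking $\gamma$ arbitrarily (which is permitted since the hypothesis holds for every $\gamma>0$), this gives
\[
\bigl|\partial_z^{j-m}\bar\partial^{-1}(\zeta^k\eta)(z)\bigr|\le C\,\|e^{\gamma|\zeta|}\eta\|_{H^{1+j+\eps_0}(\C)}
\]
uniformly in $z$. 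Since $k+m\ge 1$, one has $|z|^{-k-m}\le |z|^{-1}$ for $|z|\ge 1$, and summing over $m$ produces the claimed estimate.

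The main subtlety, and where I expect the most care to be required, is tracking the Sobolev indices and exponential weights through the Leibniz expansion and the commutation identity: one must ensure that $\partial_\zeta^{j-m}(\zeta^k\eta)$ never costs more than $j\le 2$ derivatives of $\eta$, and that the polynomial growth from $\zeta^k$ is uniformly absorbed by a single choice of weight $\gamma$ so that the constant $C_{j,k}$ depends only on $j$ and $k$ and not on $\eta$.
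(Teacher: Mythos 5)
Your overall route is sound and ends up close to the paper's in spirit, but the crucial step where the two diverge is the commutation $\partial_z\bar\partial^{-1}=\bar\partial^{-1}\partial_z$, and your justification of it has a gap.

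You derive the expansion by substituting the finite geometric series for $(z-\zeta)^{-1}$; this is exactly the paper's iterated identity $\bar\partial^{-1}\eta=\frac{1}{z}\int\eta+\frac{1}{z}\bar\partial^{-1}(\zeta\eta)$, just written out at once, and your moment integrals converge for the same reason. Your pointwise estimate $\|\bar\partial^{-1}g\|_{L^\infty}\le C(\|g\|_{L^\infty}+\|g\|_{L^1})$ is correct and slightly different from what the paper proves: they split the convolution at $|\zeta|=|z|/2$ to get the stronger $O(|z|^{-1})$ decay of $\bar\partial^{-1}g$ directly, whereas you settle for $O(1)$ and extract the $|z|^{-1}$ from the prefactor $z^{-k}$ with $k\ge 1$. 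Both suffice here; the paper's version is stronger than needed.

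The gap is in how you justify moving $\partial_z$ past $\bar\partial^{-1}$. Your argument "$\partial_z^\ell(z-\zeta)^{-1}=(-1)^\ell\partial_\zeta^\ell(z-\zeta)^{-1}$ combined with integration by parts (no boundary contributions, by decay)" addresses only the boundary at infinity. The real obstruction is at $\zeta=z$: the intermediate kernel $(z-\zeta)^{-2}$ is \emph{not} locally integrable in $\R^2$ (since $\int_{|w|<1}|w|^{-2}\,d^2w=\infty$), so differentiating under the integral is not licensed, and the naive integration-by-parts formula does not literally make sense as written. The commutation is true, but to make it rigorous you either need to (a) change variables to $w=z-\zeta$ so the $z$-derivative hits only $\eta(z-w)$ and the dominating function $|(\partial\eta)(z-w)|/|w|$ is integrable, or (b) cut out a disc $B(z,\epsilon)$, integrate by parts on the annulus, and show the boundary term on $|z-\zeta|=\epsilon$ vanishes as $\epsilon\to0$ (it does, since $\int_0^{2\pi}e^{-2i\theta}\,d\theta=0$). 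The paper avoids the issue entirely with a third route: it shows $[\partial_z,\bar\partial^{-1}]\eta$ is an entire function lying in $\langle z\rangle^\epsilon L^2(\C)$ for all $\epsilon>0$, using Calder\'on--Zygmund boundedness of the Beurling transform on one side and the $j=0$ estimate on the other, and then concludes by a Liouville argument that the commutator is zero. The amount of effort the paper puts here is a signal that the step deserves more than the one-line appeal to integration by parts; once you plug in one of the rigorous justifications, the rest of your Sobolev and weight bookkeeping is correct.
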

\begin{proof}
One sees easily that 
\[
    \int_{\C} \frac{\eta(\zeta)}{ z- \zeta}d\zeta \wedge
    d\bar\zeta = \frac{1}{z}\int \eta d\zeta\wedge d\bar\zeta +  \frac{1}{z}
    \int_{\C} \frac{\zeta \eta(\zeta)}{z- \zeta}d\zeta\wedge d\bar\zeta
\]
so using the fact that $\eta$ decays super-exponentially we can iterate this
relation get the expansion \eqref{dbar expansion}. 

To get the estimate on the remainder we observe again that since $\eta$ decays
super-exponentially it suffices to do this for $k= 0$. This can be
done for $j=0$, by first observing that $\eta \in e^{-\gamma|z|}L^\infty(\C)$,
by the Sobolev embedding Theorem, 
if $ \eta \in e^{-\gamma|\zeta|}H^{1+\eps_0}$ and then
spliting the integral and estimating as follows
\[
    \int_{|\zeta|\leq \frac{|z|}{2}} \frac{\eta(z-\zeta)}{\zeta} d\zeta\wedge d\bar\zeta +
    \int_{|\zeta|\geq \frac{|z|}{2}} \frac{\eta(z-\zeta)}{\zeta} d\zeta\wedge d\bar\zeta
    \leq 
    C (|z| e^{-\gamma |z|} + |z|^{-1} ).
\]
For $j=1$ the estimate can be done by observing that
$[\partial_z,\bar\partial^{-1}]\eta$ is an entire function and furthermore
$\partial_z\bar\partial^{-1} \eta\in L^2$ by Calder\'on-Zygmund while
$\bar\partial^{-1} \partial_z \eta = O(|z|^{-1})$ by the $j= 0$ case. We
conclude then that $[\partial_z, \bar\partial^{-1}]\eta$ is an entire function
which is in $\langle |z|\rangle^\epsilon L^2(\C)$ for all $\epsilon>0$. This
forces $[\partial_z, \bar\partial^{-1}]\eta = 0$ which means
$\partial_z\bar\partial^{-1}\eta = \bar\partial^{-1}\partial_z \eta =
O(|z|^{-1})$ by the fact that $\eta\in  e^{-\gamma|z|} H^l(\C)$ and using the
$j= 0$ estimate. This argument can be made as well for for $j=2$ (in fact as many times as the differentiability of $\eta$ allows) and the proof is complete.
\end{proof}

\begin{lemma}
\label{solve for compact part of alpha}
Let $\eta\in H^1(M_0; T^*_{0,1} M_0)$ be a compactly supported 1-form. Then there
exists solutions to the equation $\bar\partial\alpha = \eta$ which has
uniformly convergent power series expansion 
\[
    \alpha = c_1 z + c_0 + \sum\limits_{j = 1}^\infty c_j z^{-j}
\] for $|z|$ large. 
\end{lemma}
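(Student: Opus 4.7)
The strategy is to construct $\alpha$ directly using the weighted Fredholm theory built in the previous subsection, and then read off the expansion from the Laurent series of $\alpha$ in each end. Fix an exponent $J\in(1,2)$ with $J\notin\zz$. Since $\eta$ is compactly supported in the interior of $M_0$, it belongs to $x^{-J}H^{1}_b(M_0;{}^bT^*_{0,1}M_0)$ trivially. By Corollary \ref{surjectivity of dbar}, applied with $m=2$ (recall $N\geq 2\mathcal G+1$ holds by hypothesis in Theorem \ref{thm1}), there exists $\alpha:=\bar\partial^{-1}_J \eta\in x^{-J}H^{2}_b(M_0)$ with $\bar\partial\alpha=\eta$. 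Sobolev embedding makes $\alpha$ continuous; moreover, outside $\supp(\eta)$ the equation $\bar\partial\alpha=0$ shows $\alpha$ is holomorphic there.

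In particular, inside each Euclidean end $E_k$, there exists $R>0$ such that $\alpha$ is a holomorphic function on $\{|z|>R\}$, so it admits a Laurent expansion
\begin{equation*}
\alpha(z)=\sum_{n\in\zz}a_n^{(k)}z^n,\qquad |z|>R,
\end{equation*}
converging absolutely and uniformly on compact subsets of $\{|z|>R\}$.

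The next step is to exploit the weighted $L^2_b$ bound to rule out all monomial terms $z^n$ with $n\geq 2$. In the end coordinates $r=|z|$, one has $x=r^{-1}$ and $\mathrm{dvol}_{g_b}=r^{-1}\,dr\,d\theta$, so the condition $\alpha\in x^{-J}L^2_b$ reduces, after Parseval on each circle, to
\begin{equation*}
\sum_{n\in\zz}|a_n^{(k)}|^{2}\int_R^{\infty}r^{2n-2J-1}\,dr<\infty.
\end{equation*}
Each integral is finite only when $n<J$; since $J<2$, this forces $a_n^{(k)}=0$ for all $n\geq 2$. Relabeling $c_1:=a_1^{(k)}$, $c_0:=a_0^{(k)}$, $c_j:=a_{-j}^{(k)}$ for $j\geq 1$ gives exactly the required expansion
\begin{equation*}
\alpha=c_1z+c_0+\sum_{j=1}^{\infty}c_j z^{-j},\qquad |z|\gg 1,
\end{equation*}
in each end, and by standard Laurent theory the series converges uniformly on every annulus $\{R'\leq|z|\leq R''\}$ with $R<R'<R''$.

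There is no serious obstacle: the only care needed is the Parseval/integrability computation that transfers the weighted $L^2_b$ control into a growth cap on the holomorphic Laurent expansion, together with the choice of $J\in(1,2)$, which is exactly the window that permits linear growth in $z$ while excluding quadratic growth. The surjectivity provided by Corollary \ref{surjectivity of dbar} (itself depending on the non-existence of nontrivial antiholomorphic sections vanishing at every end, when $N\geq 2\mathcal G+1$) does all the heavy lifting, so the proof amounts essentially to invoking that corollary and performing the Laurent-Parseval argument just outlined.
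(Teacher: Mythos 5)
Your proof is correct and follows essentially the same route as the paper: place $\eta$ in $x^{-J}H^1_b$ for $J\in(1,2)$, invoke Corollary \ref{surjectivity of dbar} to get $\alpha\in x^{-J}H^2_b$ with $\bar\partial\alpha=\eta$, observe that $\alpha$ is holomorphic near each end, and use the weighted $L^2_b$ bound to kill the Laurent coefficients $a_n$ with $n\ge 2$. The only difference is that you spell out the Parseval/integrability computation that the paper states without detail; the argument is the same.
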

\begin{proof}
Denoting by $x := |z|^{-1}$ it is clear that since $\eta$ is compactly
supported it belongs to $x^{-J} H_b^{1}(M_0; ^bT_{0,1}^*M_0)$ for $2>J>1$ and
by Corollary \ref{surjectivity of dbar} we see that there exists a unique
$\alpha \in x^{-J} H_b^{2}(M_0;\C)$ solving $\bar\partial \alpha = \eta$. As
$\eta$ is compactly supported, $\alpha$ is actually holomorphic for large $|z|$
in the ends $E_j$ and has a Laurent series expansion $\alpha = \sum\limits_{j =
-\infty}^\infty c_j z^j$ which converges uniformly in the annulus $R_1<|z|<R_2$
for $0<R_1< R_2$ large enough. By the fact that $\alpha \in  x^{-J}
H_b^{2}(M_0;\C)$ with $2>J >1$ this forces $c_j = 0$ for $j \geq 2$. 
\end{proof}
Combining Lemmas \ref{solve for alpha near infinity} and \ref{solve for compact
part of alpha} we obtain the following 
\begin{proposition}
\label{constructing alpha}
Let $\eta\in e^{-\gamma/x} H^l(M_0; T^*_{0,1}M_0)$ for all $\gamma >0$. There
exists a function $\alpha \in x^{-J} H^{l+1}(M_0;\C)$, $2>J>1$ solving $\bar\partial
\alpha = \eta$ which for any $k$ has expansion \[|\partial_z^j  \big(\alpha  -
(c_{1} z + c_0 + \dots + c_{-k} z^{-k})\big)|  \leq C_{j,k}|z|^{-(k+1)}\]
near the ends when $|z| \to \infty$ for $j = 0, 1, 2$.
\end{proposition}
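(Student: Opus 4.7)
The plan is to glue together the two preceding lemmas via a cutoff argument: use Lemma \ref{solve for alpha near infinity} to produce a local solution with the correct Laurent-type expansion inside each end, and use Lemma \ref{solve for compact part of alpha} to correct the compactly supported error that patching introduces.

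Concretely, first I would choose smooth cutoffs $\chi_1,\dots,\chi_N$, where $\chi_j$ is supported in the end $E_j$, equals $1$ for $|z_j|\geq 2R$ and $0$ for $|z_j|\leq R$ in the end coordinate, with $R$ chosen large enough that the decay hypotheses apply. For each $j$, identify $E_j$ with a subset of $\C$ via $z_j$, extend $\chi_j\eta$ by zero to all of $\C$, write the result as $f_j\,d\bar z$, and set
\[
\tilde\alpha_j(z) := \bar\partial^{-1}f_j(z) = \frac{1}{2\pi}\int_{\C}\frac{f_j(\zeta)}{z-\zeta}d\zeta\wedge d\bar\zeta.
\]
Since $\chi_j\eta \in e^{-\gamma|z|}H^l(\C)$ for all $\gamma>0$, Lemma \ref{solve for alpha near infinity} gives an expansion
\[
\tilde\alpha_j(z)=\sum_{i=1}^{k}\frac{c_{-i}^{(j)}}{z^i}+R_k^{(j)}(z),\qquad |\partial_z^i R_k^{(j)}(z)|\leq C_{i,k}|z|^{-(k+1)}
\]
for $i=0,1,2$ (applied with $k$ replaced by $k+i$ to absorb differentiation in $z$).

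Next I would patch: set $\alpha_{\mathrm{end}}:=\sum_j\chi_j\tilde\alpha_j$, regarded as a smooth function on $M_0$. A direct computation gives
\[
\bar\partial\alpha_{\mathrm{end}} = \eta + \Big(\sum_j(\chi_j-1)\eta\Big) + \sum_j (\bar\partial\chi_j)\,\tilde\alpha_j,
\]
and because $(\chi_j-1)\eta$ and $(\bar\partial\chi_j)\tilde\alpha_j$ are each supported in the compact region $\{R\leq|z_j|\leq 2R\}\cup\{M_0\setminus\bigcup E_j\}$, the residual $\eta_{\mathrm{c}}:=\eta-\bar\partial\alpha_{\mathrm{end}}$ is compactly supported and lies in $H^l$. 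Applying Lemma \ref{solve for compact part of alpha} produces $\alpha_{\mathrm{comp}}\in x^{-J}H_b^{l+1}(M_0;\C)$, with $1<J<2$, solving $\bar\partial\alpha_{\mathrm{comp}}=\eta_{\mathrm{c}}$ and having uniformly convergent Laurent expansion $\alpha_{\mathrm{comp}}=c_1 z + c_0 + \sum_{i\geq 1}c_iz^{-i}$ in each end. Setting $\alpha:=\alpha_{\mathrm{end}}+\alpha_{\mathrm{comp}}$ yields $\bar\partial\alpha=\eta$, and combining the two asymptotic expansions produces the stated bound on $|\partial_z^j(\alpha - (c_1 z + c_0 + c_{-1}/z + \dots + c_{-k}/z^k))|$ for $j=0,1,2$.

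The regularity claim $\alpha\in x^{-J}H^{l+1}$ follows by checking each summand: $\alpha_{\mathrm{comp}}$ is in this space by Corollary \ref{surjectivity of dbar} (using that $\eta_{\mathrm{c}}$ is compactly supported and in $H^l$, hence belongs to $x^{-J}H_b^{l}$), while $\chi_j\tilde\alpha_j$ lies in the same space by the pointwise estimate $|\partial_z^i\tilde\alpha_j|=O(|z|^{-1})$ from Lemma \ref{solve for alpha near infinity}, which is more than enough decay to be in $x^{-J}H^{l+1}_b$ for $J<2$.

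The main obstacle is the bookkeeping in the patching step: ensuring that, for each fixed $k$, the bounds on $R_k^{(j)}$ and its first two $z$-derivatives survive multiplication by $\chi_j$ and addition of the Laurent terms coming from $\alpha_{\mathrm{comp}}$, uniformly in the end. This is handled by choosing the index in Lemma \ref{solve for alpha near infinity} to be at least $k+2$, so that the two $\partial_z$-derivatives that can fall on the remainder still produce a $|z|^{-(k+1)}$ decay, and by noting that the Laurent tail of $\alpha_{\mathrm{comp}}$ beyond the $c_{-k}z^{-k}$ term is itself $O(|z|^{-(k+1)})$ together with its derivatives, since the series converges uniformly on annuli.
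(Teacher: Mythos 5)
Your gluing construction is exactly what the paper intends: the Proposition is stated without a separate proof, presented as following by ``combining'' Lemmas \ref{solve for alpha near infinity} and \ref{solve for compact part of alpha}, which is precisely the cutoff-and-correct argument you write out (near-end piece from $\bar\partial^{-1}$ on $\C$, then Lemma \ref{solve for compact part of alpha} / Corollary \ref{surjectivity of dbar} to kill the compactly supported residue, then add the two asymptotic expansions). There is a small algebraic slip in your displayed identity for $\bar\partial\alpha_{\mathrm{end}}$ --- with $\tilde\alpha_j = \bar\partial^{-1}(\chi_j\eta)$ one gets $\sum_j \chi_j^2\eta + \sum_j(\bar\partial\chi_j)\tilde\alpha_j$ rather than $\eta + \sum_j(\chi_j-1)\eta + \sum_j(\bar\partial\chi_j)\tilde\alpha_j$ --- but the relevant conclusion, that $\eta - \bar\partial\alpha_{\mathrm{end}}$ is compactly supported, is unchanged, so the proof stands.
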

\end{subsection}
\end{section}

\begin{section}{Carleman Estimates and Solvability} \label{sec_ce}

In this section, we prove a Carleman estimate using harmonic weights with
non-degenerate critical points. 
Our starting point is the estimates in \cite{GST}, which are used to obtain an 
estimate for negative order Sobolev spaces. 
Duality will then allows us to prove a $H^1_{scl}$ solvability result for the magnetic operator, that will
later needed in constructing complex geometric optics solutions.
We remind the reader that we use $\Delta_g$ to denote the positive Laplacian.

\medskip
\noindent
We first consider a Morse holomorphic function $\Phi\in \mc{H}$ obtained from
Proposition \ref{criticalpoints}
with the condition that $\Phi$ has linear growth in the ends. We will write
\begin{align} \label{eq_Phi}
    \Phi := \varphi + i \psi, \quad \text{ where } \varphi := \Re(\Phi),\, \psi:= \Im(\Phi).
\end{align}
The Carleman weight will consist of the harmonic function $\varphi = \mathrm{Re}(\Phi)$.
We let  $x$ be a positive smooth function  on $M_0$ such that  $x=|z|^{-1}$  in 
the complex charts $\{z\in\cc; |z|>1\}\simeq E_j$ covering the end $E_j$. We
will assume without loss of generality that $\Phi$ (and therefore $\varphi$)
does not have critical points in $\overline{E_j}$.\\ 

We modify our weight using a function $\varphi_0$.
Let $\delta\in(0,1)$ be small and let us take $\varphi_0\in
x^{-\beta}L^2(M_0)$ a solution of $\Delta_{g_0}\varphi_0=x^{2-\delta}$, a
solution exists 
by Proposition \ref{rightinv} if $\beta>1+\delta$. 
Actually, by using Proposition 5.61 of \cite{APS}, if we choose $\beta<2$,
then it is easy to see that 
$\varphi_0$ is smooth on $M_0$ and has polyhomogeneous expansion as $|z|\to
\infty$, with leading asymptotic in the end $E_i$ given by
$\varphi_0= -x^{-\delta}/\delta^2+
c_i\log(x)+ d_i+O(x)$ for some $c_i,d_i$ which are smooth functions in $S^1$.


We will modify our weight function one step further to allow more generality.
We assume that $\alpha$ is as in Proposition \ref{constructing alpha}, with
$\dbar \alpha = \eta \in e^{-\gamma/x}H^{3+\eps_0}(M_0)$. In particular 
that $\alpha$ has a leading asymptotics in
the end $E_j$ given by \[|\partial_z^j  \big(\alpha  - (c_{1} z + c_0 + \dots +
c_{-k} z^{-k})\big)|  \leq C_{j,k}|z|^{-(k+1)}\]
near the ends when $|z| \to \infty$ for $j = 0,1,2$.
For $\eps>0$ small, we define the convexified weight 
\begin{align} \label{eq_phieps}
    \varphi_\eps:=\varphi + h{\rm Re}(i\alpha)-\frac{h}{\eps}\varphi_0.
\end{align}
It follows that $\varphi_\eps$ has an expansion at infinity of the form 
\[
    \varphi_\eps(z)= \gamma.z+ \frac{h}{\eps} \frac{r^{\delta}}{\delta^2} +c_1\log(r)+ c_2+c_3 r^{-1}+O(r^{-2}),
\]
where $r=|z|,\omega=z/r, \gamma=(\gamma_1 + h\gamma_1',\gamma_2 +
h\gamma_2')\in \rr^2$, $z = (z_1, z_2) \in \R^2$, and $c_i$ are some smooth functions on $S^1$ depending
on $h$. Moreover we have that
\begin{equation}
\begin{aligned} \label{eq_phiasym}
    d\varphi_\eps &= \gamma_1dz_1+\gamma_2dz_2+O(r^{-1+\delta}), \; \\
 \pl_{z}^\kappa\pl_{\bar{z}}^\mu \varphi_\eps(z)&=O(r^{-2+\delta}) \,\,\textrm{ for all }  \kappa+\mu\geq 2.
\end{aligned} 
\end{equation}
%
The following estimate was proved in \cite{T} with $\gamma_1' = \gamma_2' = 0$
but as they are lower order terms in the phase and the domain one considers is
compact, the same proof holds in the slightly more general case of
$\varphi_\eps$. See Proposition 3.1 in \cite{T} for details.

\begin{proposition}
\label{cmpct_ce}
Let  $K\subset M_0$ be compact and the $\varphi_\eps$ the previously defined  weight.
Then for $u\in C_0^\infty(K)$, we have 
\begin{align*}\label{carlemaninK}
    \frac{Ch}{\eps}\Big(\sqrt{h} \| u\|_{L^2} 
    + \| d\varphi_{\eps}u\|_{L^2} + \|hdu\|_{H^{-1}_{scl}} \Big) 
    \leq \|e^{\varphi_{\eps}/h} h^2 \Delta_g e^{-\varphi_{\eps}/h} u\|_{H^{-1}_{scl}},
\end{align*} 
where $C$ depends on $K$ but not on $h$ and $\eps$.
\end{proposition}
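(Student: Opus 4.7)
The plan is to follow the Hörmander-style positive commutator argument carried out for $\alpha=0$ in Proposition 3.1 of \cite{T} and verify that the new linear-in-$h$ correction $h\,\mathrm{Re}(i\alpha)$ in the weight contributes only admissible errors. The overall structure is (i) an $L^2$ Carleman estimate with the convexified weight, followed by (ii) an upgrade to the $H^{-1}_{scl}$ norm on both sides and extraction of the gradient term $\|h\,du\|_{H^{-1}_{scl}}$.

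First I would form the conjugated operator $P_{\varphi_\eps} := e^{\varphi_\eps/h} h^2 \Delta_g e^{-\varphi_\eps/h}$ and compute in local coordinates
\[
P_{\varphi_\eps} u = h^2 \Delta_g u + 2h \langle d\varphi_\eps, du\rangle_g - (h\Delta_g\varphi_\eps + |d\varphi_\eps|^2) u,
\]
split it into its symmetric/antisymmetric parts $P_{\varphi_\eps} = A + iB$ with $A = h^2\Delta_g - |d\varphi_\eps|^2$ and $iB = 2h\langle d\varphi_\eps, d\cdot\rangle - h\Delta_g\varphi_\eps$, and expand $\|P_{\varphi_\eps}u\|_{L^2}^2 = \|Au\|^2+\|Bu\|^2+\langle i[A,B]u,u\rangle$. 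The semiclassical principal symbol of $i[A,B]$ is proportional to $h^3$ times Hessian expressions of $\varphi_\eps$ in the $\xi$ and $\nabla\varphi_\eps$ directions. Since $\varphi$ is harmonic on a surface its Hessian has opposite eigenvalues and contributes no definite sign, but using the asymptotic \eqref{eq_phiasym} the convexifier $-(h/\eps)\varphi_0$, chosen so that $\Delta_{g_0}\varphi_0=x^{2-\delta}$, supplies a positive contribution of size $h/\eps$ that dominates uniformly on the compact set $K$. This yields the base $L^2$ estimate
\[
\frac{Ch}{\eps}\bigl(\sqrt{h}\,\|u\|_{L^2} + \|d\varphi_\eps\, u\|_{L^2}\bigr) \leq \|P_{\varphi_\eps} u\|_{L^2}, \qquad u \in C_0^\infty(K).
\]
The new term $h\,\mathrm{Re}(i\alpha)$ in $\varphi_\eps$ modifies $d\varphi_\eps$ by an $O(h)$ perturbation and the Hessian by an $O(h)$ perturbation uniformly on $K$ (by the bounds on $\alpha$ from Proposition \ref{constructing alpha}), hence it only contaminates the commutator by an $O(h^3)$ error which is absorbed by the $O(h^3/\eps)$ gain.

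To move from $L^2$ to $H^{-1}_{scl}$ on the right and to pick up $\|h\,du\|_{H^{-1}_{scl}}$ on the left, I would apply the $L^2$ estimate to $v := \langle hD\rangle u$ in place of $u$ and commute $\langle hD\rangle$ through $P_{\varphi_\eps}$. The commutator $[P_{\varphi_\eps}, \langle hD\rangle]$ is a first-order semiclassical operator of the form $h\cdot \mathrm{Op}_h(b)$ with $b$ bounded on $K$, so $\|[P_{\varphi_\eps}, \langle hD\rangle]u\|_{L^2} \lesssim h \|u\|_{H^1_{scl}}$, which after division by $\langle hD\rangle$ lands us in $H^{-1}_{scl}$. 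Dually, one can read the estimate as an $H^1_{scl}$ solvability statement for $P_{\varphi_\eps}^*$, and the gradient term $\|hdu\|_{H^{-1}_{scl}}$ is then obtained by observing that $h\,du = h\cdot \mathrm{Op}_h(\xi) u$, whose $H^{-1}_{scl}$ norm is controlled by $\|u\|_{L^2}$ and by elliptic regularity for $P_{\varphi_\eps}$ acting between semiclassical Sobolev spaces.

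The main obstacle is the positive commutator lower bound $i[A,B] \geq c\,h^3/\eps$ on the symbol level, which is exactly the place where the harmonic $\varphi$ alone fails to give a Carleman weight and the convexifier $\varphi_0$ is essential; this is proved in detail in \cite{T}. The remaining work here is the bookkeeping showing that (a) the non-compact support of $\alpha$ poses no issue since we restrict to $u\in C_0^\infty(K)$ and $\alpha$ together with $d\alpha$, $\mathrm{Hess}(\alpha)$ are uniformly bounded on $K$, and (b) the $\langle hD\rangle$-commutation preserves the factor $h/\eps$ on the left.
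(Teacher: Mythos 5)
Your high-level plan---Hörmander's positive-commutator energy method, with the convexifier $\varphi_0$ supplying the definiteness that the harmonic weight $\varphi$ alone lacks, and the observation that the $h\,\mathrm{Re}(i\alpha)$ correction only perturbs symbols and Hessians by $O(h)$ on the compact set $K$---is the same route the paper takes. The paper in fact gives no proof of Proposition~\ref{cmpct_ce} beyond citing Proposition~3.1 of \cite{T} (the case $\gamma_1'=\gamma_2'=0$) and the one-line remark that the $\alpha$-dependent terms are lower order and compactly supported, so your attempt to fill in the details is consistent with the paper's intent.

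There is, however, a genuine gap in your $L^2\to H^{-1}_{scl}$ upgrade: the direction of the smoothing operator is reversed. If you set $v:=\langle hD\rangle u$ and apply the $L^2$ estimate to $v$, then $P_{\varphi_\eps}v=\langle hD\rangle P_{\varphi_\eps}u+[P_{\varphi_\eps},\langle hD\rangle]u$, and the right-hand side you produce is comparable to $\|P_{\varphi_\eps}u\|_{H^1_{scl}}$, not $\|P_{\varphi_\eps}u\|_{H^{-1}_{scl}}$. To push the right-hand side down to $H^{-1}_{scl}$ you must instead apply the $L^2$ estimate to $\langle hD\rangle^{-1}u$; this is exactly the device used in Proposition~\ref{shifted_ce}, where one works with $\chi\langle hD\rangle^{-1}u$. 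Note in particular that $\langle hD\rangle^{-1}u$ is no longer compactly supported even when $u\in C_0^\infty(K)$, so the cutoff $\chi$ (and the ensuing commutator bookkeeping, as in \eqref{eq_modlhs}--\eqref{eq_2ndcom}) is not optional.

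A second issue is that the base $L^2$ estimate you write down omits the gradient term $h\|du\|_{L^2}$ on the left; the Hörmander identity delivers it from the $\|Au\|^2+\|Bu\|^2$ part (compare the $H^1$-type left-hand side in Proposition~\ref{carlemaninend}). Without it the shift by $\langle hD\rangle^{-1}$ cannot produce $\|hdu\|_{H^{-1}_{scl}}$, and your fallback $\|hdu\|_{H^{-1}_{scl}}\le\|u\|_{L^2}$, combined with $\sqrt h\,\|u\|_{L^2}\lesssim(\eps/h)\|P_{\varphi_\eps}u\|_{H^{-1}_{scl}}$, only yields $\|hdu\|_{H^{-1}_{scl}}\lesssim(\eps/h^{3/2})\|P_{\varphi_\eps}u\|_{H^{-1}_{scl}}$, which is weaker than the claim by a factor $h^{1/2}$.
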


\medskip
\noindent
We will use semiclassical pseudodifferential calculus in the following proofs.
A function $a \in C^{\infty}(\R^2 \times \R^2)$  
is in the semiclassical symbol class $\mathcal{S}^k(\jb{\xi}^m)$, if 
\begin{equation} \label{eq_symclass}
| \partial_x^{\alpha} \partial_{\xi}^{\beta} a(y,\xi;h)| 
\leq C_{\alpha \beta} h^{-k} \jb{\xi}^m,
\end{equation}
where $C_{\alpha \beta}$ is independent of the parameter $h$, see \cite{S} and \cite{Zw}.
We will use the abbreviation $\mathcal{S}^m := \mathcal{S}^0(\jb{\xi}^m) $.

\medskip
We will need to prove an analogue of Proposition \ref{cmpct_ce} for the ends. Combining these will give us
a global estimate in the semiclassical $H^{-1}$-norm. 
We begin by proving the following weighted $L^2$-estimate, which is essentially the same as
Proposition 3.1 
in \cite{GST}, apart from the more general weight function used here. We give the proof
here as a convenience to the reader.

\begin{proposition}\label{carlemaninend}
 Let $\delta\in(0,1)$, and $\varphi_\eps$ as above, then there exists $C>0$
 such that for all $\eps\gg h>0$ small enough, and all $u\in C_0^\infty(E_j)$
\[ 
    h^2||e^{\varphi_{\eps}/h}(\Delta-\la^2)e^{-\varphi_{\eps}/h}u||^2_{L^2}\geq
    \frac{C}{\eps} (||x^{1-\frac{\delta}{2}}u||^2_{L^2}+ 
    h^2||x^{1-\frac{\delta}{2}}du||^2_{L^2}).
\]
\end{proposition}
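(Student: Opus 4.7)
The plan is to follow the commutator proof of Proposition \ref{cmpct_ce} (which is Proposition 3.1 of \cite{GST} in the case $\alpha\equiv 0$), treating the $h\Re(i\alpha)$ term in $\varphi_\eps$ as a subprincipal perturbation of the weight. Set $P_h := e^{\varphi_\eps/h}h^2(\Delta-\la^2)e^{-\varphi_\eps/h}$; direct computation on $E_j\subset \rr^2$ yields
\[
P_h = h^2\Delta + 2h\nabla\varphi_\eps\cdot\nabla - |d\varphi_\eps|^2 - h\Delta\varphi_\eps - h^2\la^2,
\]
and I split $P_h=A+iB$ into formal self- and skew-adjoint parts
\[
A = h^2\Delta - |d\varphi_\eps|^2 - h^2\la^2, \qquad iB = 2h\nabla\varphi_\eps\cdot\nabla - h\Delta\varphi_\eps.
\]
The identity $\|P_h u\|_{L^2}^2 = \|Au\|^2+\|Bu\|^2+\langle i[A,B]u,u\rangle$ reduces the task to producing a lower bound of order $\tfrac{h^2}{\eps}\bigl(\|x^{1-\delta/2}u\|^2 + h^2\|x^{1-\delta/2}du\|^2\bigr)$ on the commutator term, up to quantities absorbable into $\|Au\|^2+\|Bu\|^2$.

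The heart of the matter is a pseudoconvexity calculation. With semiclassical principal symbols $a=|\xi|^2-|d\varphi_\eps|^2$ and $b = 2\langle\xi,d\varphi_\eps\rangle$, a standard Poisson bracket gives
\[
\{a,b\}\bigr|_{\{a=b=0\}} \;=\; -4\,|d\varphi_\eps|^2\,\Delta\varphi_\eps.
\]
Because $\varphi$ is harmonic and $\Re(i\alpha)$ is harmonic modulo the super-exponentially small datum $\dbar\alpha=\eta$, while $\Delta\varphi_0 = x^{2-\delta}$, I obtain $\Delta\varphi_\eps = -\tfrac{h}{\eps}x^{2-\delta}+O(h\,e^{-\gamma/x})$. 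Combined with $|d\varphi_\eps|^2\geq c>0$ on $E_j$ from \eqref{eq_phiasym}, this gives the quantitative positivity $\{a,b\}|_{\{a=b=0\}}\geq \tfrac{Ch}{\eps}x^{2-\delta}$ on the characteristic variety. A microlocal Garding inequality in a neighbourhood of $\{a=b=0\}$ (as in \cite{GST}) then yields
\[
\langle i[A,B]u,u\rangle \geq \frac{Ch^2}{\eps}\bigl(\|x^{1-\delta/2}u\|^2 + h^2\|x^{1-\delta/2}du\|^2\bigr) - \tfrac{1}{2}\bigl(\|Au\|^2+\|Bu\|^2\bigr),
\]
the $h^2\|x^{1-\delta/2}du\|^2$ piece arising from the $\xi$-quadratic term $\langle\xi,\textrm{Hess}(\varphi_\eps)\xi\rangle$ appearing in the Poisson bracket off the characteristic set, which is controlled using the full strength of $\|Au\|^2$.

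The remaining work is bookkeeping the perturbations. The $-h^2\la^2$ term in $A$ is a zeroth order multiplier and contributes only $O(h^2)$ cross-terms that are easily absorbed. The weight modification $h\Re(i\alpha)$ perturbs $d\varphi_\eps$ only by the constant drift $(h\gamma_1', h\gamma_2')$ plus $O(|z|^{-2})$ corrections by Proposition \ref{constructing alpha}, consistent with \eqref{eq_phiasym} and far too small to disturb the leading sign of $\Delta\varphi_\eps$; higher-order symbolic remainders in the semiclassical expansion of $[A,B]$ are $O(h)$ smaller than the gain $h/\eps$. I expect the principal obstacle to lie in making the microlocal Garding estimate uniform in $h,\eps$ with the degenerating weight $x^{1-\delta/2}$: the argument succeeds precisely because the rate $x^{2-\delta}$ produced by the convexifier $\varphi_0$ matches exactly the target weight $x^{2-\delta} = (x^{1-\delta/2})^2$ on the right-hand side, so the pseudoconvex gain and the degeneration of the target weight at infinity agree, leaving a uniformly positive ratio throughout the end.
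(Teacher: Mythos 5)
Your proposal correctly identifies the architecture (the $A+iB$ decomposition, the commutator identity, the role of the convexifier $\varphi_0$), and the symbolic pseudoconvexity observation $\{a,b\}|_{\{a=b=0\}}=-4|d\varphi_\eps|^2\Delta\varphi_\eps\gtrsim\tfrac{h}{\eps}x^{2-\delta}$ is the right computation. The gap is the step where you invoke a ``microlocal G\aa rding inequality near $\{a=b=0\}$.'' That step is not routine here: the commutator's symbolic lower bound degenerates like $x^{2-\delta}$ at the end, and there is no off-the-shelf G\aa rding statement that converts a degenerating symbolic lower bound into a weighted operator estimate with constants uniform in $h,\eps$. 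Citing \cite{GST} for it is also not accurate --- that paper does not proceed via a microlocal inequality either. The actual proof (and the one in \cite{GST}) does something specific that your sketch omits entirely: it conjugates the commutator by $r^{1-\delta/2}$, uses the exact algebraic decomposition
\[
\{a,b\} = 4|d\varphi_\eps|^2\bigl(c(z)(a+h^2\lambda^2)+\ell(z,\xi)b\bigr) + 4|d\varphi_\eps|^2\Bigl(\tfrac{h}{\eps}x^{2-\delta} + h\Re(i\Delta\alpha)\Bigr),
\]
and rewrites $ih^{-1}r^{1-\delta/2}[A,B]r^{1-\delta/2}=hF+r^{1-\delta/2}Er^{1-\delta/2}-\tfrac{h}{\eps}(A^2+B^2)$. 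The $r^{1-\delta/2}$ conjugation cancels the $x^{2-\delta}$ decay, so the operator $F$ has a \emph{globally} elliptic symbol $\sigma_w(F)\geq\tfrac{C}{\eps}(1+|\xi|^4)$, and a genuine semiclassical G\aa rding applies; the $E$-term (which is a combination of $a$, $b$) and the $\tfrac{h}{\eps}(A^2+B^2)$ term are then absorbed by Cauchy--Schwarz and commutator estimates.

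A secondary inaccuracy: you attribute the $h^2\|x^{1-\delta/2}du\|^2$ piece to the Hessian's $\xi$-quadratic term in the Poisson bracket ``off the characteristic set,'' but that term decays like $r^{-2+\delta}$ and is absorbed into $E$. In the actual argument the $|\xi|^4$-ellipticity that produces the $\|du\|^2$ control comes from the reinserted $\tfrac{1}{\eps}(a^2+b^2)$ contribution in $\sigma_w(F)$, not from the Hessian.
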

\noindent\textsl{Proof}.
The metric $g_0$ can be extended to $\rr^2$ to be the Euclidean metric and we
shall denote by $\Delta$ the flat positive Laplacian on $\rr^2$.  Let us write
$P:=\Delta_{g_0}-\la^2$, then the operator $P_h:=
h^2e^{\varphi_\eps/h}Pe^{-\varphi_\eps/h}$ is given by 
\[
    P_h=h^2\Delta-|d\varphi_\eps|^2+2h\nabla\varphi_\eps.\nabla- h\Delta\varphi_\eps-h^2\la^2, 
\]
is a semiclassical operator with a  semiclassical full
Weyl symbol 
\[
    \sigma_w(P_h):=|\xi|^2-|d\varphi_\eps|^2-h^2\la^2+2i\cjg d\varphi_\eps,\xi\cjd =a+ib \in \mathcal{S}^2.
\]
We can define
$A:=(P_h+P_h^*)/2=h^2\Delta-|d\varphi_\eps|^2-h^2\la^2$ and
$B:=(P_h-P_h^*)/2i=-2ih\nabla\varphi_\eps.\nabla+ih\Delta\varphi_\eps$ which
have respective semiclassical full symbols $a$ and $b$, i.e.  $A={\rm Op}_w(a)$
and $B={\rm Op}_w(b)$ for the Weyl quantization. Notice that $A,B$ are
symmetric operators, thus for all $u\in C_0^\infty(E_i)$
\begin{equation}\label{A+iB} 
    ||(A+iB)u||^2=\cjg (A^2+B^2+i[A,B])u,u\cjd .
\end{equation} 
It is easy to check that the operator $ih^{-1}[A,B]$ is a
semiclassical differential operator in $\mathcal{S}^2$ with full semiclassical symbol  
\begin{equation}\label{symbol[A,B]}
    \{a,b\}(\xi)=  4(D^2\varphi_\eps(d\varphi_\eps,d\varphi_\eps)+D^2\varphi_\eps(\xi,\xi))
\end{equation}

Let us now decompose the Hessian of $\varphi_\eps$ in the basis
$(d\varphi_\eps, \theta)$ where $\theta$ is a covector orthogonal to
$d\varphi_\eps$ and of norm $|d\varphi_\eps|$. This yields coordinates
$\xi=\xi_0d\varphi_\eps+\xi_1\theta$ and there exist smooth functions $M,N,K$
so that
\[
    D^2\varphi_\eps(\xi,\xi)=|d\varphi_\eps|^2(M\xi_0^2+N\xi_1^2+2K\xi_0\xi_1).
\]
The asymptotics in \eqref{eq_phiasym} imply that $M,N,K\in r^{-2+\delta}L^\infty(E_i)$.  Then one can write
\[
\begin{split} \{a,b\}= &
4|d\varphi_\eps|^2(M+M\xi_0^2+N\xi_1^2+2K\xi_0\xi_1)\\ =&4(N(a+h^2\la^2)
+((M-N)\xi_0+2K\xi_1)b/2+(N+M)|d\varphi_\eps|^2) \end{split}
\] 
and since
\[M+N=\tra(D^2\varphi_\eps)=-\Delta\varphi_\eps= h \Delta {\rm Re}(i\alpha) +
h\Delta\varphi_0/\eps =  \frac{h}{\eps}x^{2-\delta}  + h {\rm Re}(i \Delta\alpha)\]
with $\Delta\alpha =\bar\partial^* \eta \in e^{-\gamma|z|} H^{2+\eps_0}(\R^2) \subset e^{-\gamma|z|} W^{1,\infty}(\R^2)$ we obtain 
\begin{equation}\label{{a,b}}
\begin{gathered}
    \{a,b\}=4|d\varphi_\eps|^2(c(z)(a+h^2\la^2) +\ell(z,\xi)b+\frac{h}{\eps}
    r^{-2+\delta} + h \Delta {\rm Re}(i\alpha)),\\
    c(z)=\frac{N}{|d\varphi_\eps|^2}, \,\,\,
    \ell(z,\xi)=\frac{(M-N)\xi_0+2K\xi_1}{2|d\varphi_\eps|^2}.  
\end{gathered}
\end{equation} 
Now, we take a smooth extension of $|d\varphi_\eps|^2,
a(z,\xi),\ell(z,\xi), \alpha(z)$ and $r$ to $z\in\rr^2$, this can done for
instance by  extending $r$ as a smooth positive function on $\rr^2$ and  then
extending $d\varphi$ and $d\varphi_0$ to smooth non vanishing $1$-forms on
$\rr^2$ (not necessarily exact) so that $|d\varphi_\eps|^2$ is smooth positive
(for small $h$) and polynomial in $h$ and $a,\ell$ are of the same form as in
$\{|z|>1\}$.  Let us define the symbol and quantized differential operator on
$\rr^2$ \[ e:=4|d\varphi_\eps|^2(c(z)(a+h^2\la^2) +\ell(z,\xi)b), \quad E:={\rm
Op}_w(e)\] and write

\begin{equation}\label{simpl}
\begin{gathered}
ih^{-1}r^{1-\frac{\delta}{2}}[A,B]r^{1-\frac{\delta}{2}}= hF +
r^{1-\frac{\delta}{2}}Er^{1-\frac{\delta}{2}}-\frac{h}{\eps}(A^2+B^2),\\ 
\textrm{ with }F:= h^{-1}r^{1-\frac{\delta}{2}}
(ih^{-1}[A,B] -E)r^{1-\frac{\delta}{2}}+\frac{1}{\eps}(A^2+B^2).
\end{gathered}
\end{equation}
We deduce from \eqref{symbol[A,B]} and \eqref{{a,b}} the following
\begin{lemma}\label{propF} 
The operator $F$  is a semiclassical differential
operator in the class $\mathcal{S}^4$ with semiclassical principal
symbol 
\[
    \sigma_w (F)(\xi)= 4|d\varphi|^2(\frac{1}{\eps} +  r^{2-\delta}
    {\rm Re}(i\Delta\alpha))+\frac{1}{\eps}(|\xi|^2-|d\varphi|^2)^2+\frac{4}{\eps}(\cjg\xi,d\varphi\cjd)^2.
\] 
\end{lemma}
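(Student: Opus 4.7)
The plan is to read off the principal Weyl symbol of
\[F = h^{-1}r^{1-\frac{\delta}{2}}(ih^{-1}[A,B]-E)r^{1-\frac{\delta}{2}} + \frac{1}{\eps}(A^2+B^2)\]
by applying the standard semiclassical symbolic calculus to each of the two summands and then adding the results. Throughout I will use that $\sigma_w(P_1 P_2) = \sigma_w(P_1)\,\sigma_w(P_2) + O(h)$ and $\sigma_w\bigl(ih^{-1}[{\rm Op}_w(p),{\rm Op}_w(q)]\bigr) = \{p,q\} + O(h^2)$ in the Weyl calculus.

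For the commutator summand, formula \eqref{symbol[A,B]} gives $\sigma_w(ih^{-1}[A,B]) = \{a,b\} + O(h^2)$. By the identity \eqref{{a,b}}, the symbol $e$ of $E$ has been defined precisely so as to annihilate the terms of $\{a,b\}$ proportional to $a$ and $b$, leaving the genuinely subprincipal remainder
\[
    \sigma_w(ih^{-1}[A,B]-E) = 4|d\varphi_\eps|^2\, h\Bigl(\frac{1}{\eps}r^{-2+\delta}+\Delta\,{\rm Re}(i\alpha)\Bigr) + O(h^2).
\]
Since $r^{1-\delta/2}$ is a smooth multiplication operator whose $z$-derivatives are controlled by the asymptotics \eqref{eq_phiasym}, sandwiching by $r^{1-\delta/2}$ contributes the factor $r^{2-\delta}$ at the principal level, with Moyal corrections that are one order smaller in $h$. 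After dividing by $h$ and using $|d\varphi_\eps|^2 = |d\varphi|^2+O(h/\eps)$ from \eqref{eq_phieps}, this summand contributes $4|d\varphi|^2\bigl(\frac{1}{\eps}+r^{2-\delta}{\rm Re}(i\Delta\alpha)\bigr)$ to the principal symbol of $F$.

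For the second summand, the Weyl composition rule yields $\sigma_w(A^2) = a^2 + O(h^2)$ and $\sigma_w(B^2) = b^2 + O(h^2)$, since the formally present $O(h)$ Poisson brackets $\{a,a\}$ and $\{b,b\}$ vanish by antisymmetry. Substituting the explicit expressions for $a$ and $b$, and again replacing $|d\varphi_\eps|^2$ by $|d\varphi|^2$ modulo $O(h/\eps)$, the contribution is
\[
    \frac{1}{\eps}\bigl((|\xi|^2-|d\varphi|^2)^2+4\langle\xi,d\varphi\rangle^2\bigr).
\]
Summing the two contributions produces the formula claimed in the lemma.

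Membership $F\in\mathcal{S}^4$ then follows from a degree count in $\xi$ (only $A^2$ produces a Weyl symbol of degree $4$, and the other summands are of strictly lower degree) together with the uniform $h$-independent derivative bounds supplied by the asymptotics \eqref{eq_phiasym} and by the decay estimates on $\alpha$ from Proposition \ref{constructing alpha}. The only genuinely delicate point in the execution is confirming that the Moyal corrections produced by conjugating $ih^{-1}[A,B]-E$ by $r^{1-\delta/2}$ stay of order $h$, so that multiplication by $h^{-1}$ still yields a bounded remainder in $\mathcal{S}^4$; this follows routinely from the composition formula applied to the symbols at hand, given the polynomial control on $z$-derivatives provided by the smooth extensions described in the text.
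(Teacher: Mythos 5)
Your proof is correct and follows essentially the same route the paper takes: the paper derives the lemma directly from the identities \eqref{symbol[A,B]} and \eqref{{a,b}} via the Weyl symbolic calculus, and you simply spell out the bookkeeping — reading off the Poisson-bracket remainder $4|d\varphi_\eps|^2 h(\tfrac{1}{\eps}r^{-2+\delta}+\Delta\,\mathrm{Re}(i\alpha))$ that survives after subtracting $E$, observing the $r^{1-\delta/2}$-conjugation contributes $r^{2-\delta}$ at principal order, and computing $\sigma_w(A^2+B^2)=a^2+b^2+O(h^2)$. The degree count for $\mathcal{S}^4$ membership and the remark that $|d\varphi_\eps|^2=|d\varphi|^2+O(h/\eps)$ match the paper's intent.
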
 
By the semiclassical G{\aa}rding estimate, we obtain the 
\begin{cor}\label{garding} 
The operator $F$ of Lemma \ref{propF} is such that there is a constant $C$ so that \[ \cjg
Fu,u\cjd \geq \frac{C}{\eps} (||u||^2_{L^2} +h^2||du||^2_{L^2}).\]
\end{cor}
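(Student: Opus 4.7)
The plan is to verify a pointwise lower bound on the semiclassical Weyl symbol of $F$, then invoke the semiclassical G{\aa}rding inequality and absorb the resulting $O(h)$ error against the $\eps^{-1}$ gain. Concretely, I will show that
\[
\sigma_w(F)(z,\xi)\geq \frac{c}{\eps}\bigl(1+|\xi|^2\bigr)
\]
pointwise for $\eps$ small enough, and then quantize.

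From Lemma~\ref{propF} the principal symbol splits into four pieces.  Since the test functions are supported in the end $E_j$, where the holomorphic weight $\Phi$ is arranged to have no critical points, $|d\varphi|$ is pinched between two positive constants; this already gives $\frac{4}{\eps}|d\varphi|^2\geq \frac{c_0}{\eps}$ uniformly.  The correction $4|d\varphi|^2\, r^{2-\delta}\re(i\Delta\alpha)$ must not destroy positivity, and here the super-exponential decay of $\eta$ is decisive: since $\Delta\alpha=\dbar^*\eta\in e^{-\gamma|z|}W^{1,\infty}(\R^2)$ for every $\gamma>0$, the weight $r^{2-\delta}\re(i\Delta\alpha)$ is uniformly bounded on $M_0$ and in fact decays at infinity, so for $\eps$ small it is dominated by half of $\frac{c_0}{\eps}$.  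The term $\frac{4}{\eps}\langle\xi,d\varphi\rangle^2$ is non-negative.  Finally the quartic term $\frac{1}{\eps}(|\xi|^2-|d\varphi|^2)^2$ supplies $|\xi|^4/\eps$ growth for large $|\xi|$; combining it with the extracted constant $\frac{c_0}{2\eps}$ via AM--GM yields the desired pointwise bound $\sigma_w(F)\geq \frac{c}{\eps}(1+|\xi|^2)$ uniformly in $(z,\xi)$.

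Since the remainder symbol $\sigma_w(F)-\frac{c}{\eps}(1+|\xi|^2)$ is non-negative and lies in $\mathcal S^4$ by Lemma~\ref{propF}, the semiclassical sharp G{\aa}rding inequality gives
\[
\langle Fu,u\rangle\geq \frac{c}{\eps}\bigl(\|u\|_{L^2}^2+\|hDu\|_{L^2}^2\bigr)-Ch\|u\|_{L^2}^2.
\]
In the regime $\eps\ll 1$ and $h\ll\eps$, the term $Ch\|u\|^2$ is absorbed into $\frac{c}{2\eps}\|u\|^2$, producing the stated estimate.  The only non-routine step in this plan is controlling the correction $r^{2-\delta}\re(i\Delta\alpha)$ uniformly in $r$; it relies crucially on the super-exponential decay assumption on $\eta$, which forces $\Delta\alpha$ to decay faster than any polynomial in the ends.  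Everything else---the lower bound on $|d\varphi|$ away from critical points of $\Phi$, the application of G{\aa}rding, and the absorption of the $O(h)$ remainder against the $\eps^{-1}$ gain---is standard once this point is settled.
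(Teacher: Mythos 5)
Your symbol analysis is on the right track, but there is a genuine gap at the Gårding step. You extract only the lower bound $\sigma_w(F)\geq \frac{c}{\eps}(1+|\xi|^2)$, subtract this from $\sigma_w(F)$, and invoke sharp Gårding on the nonnegative remainder, quoting an error of $-Ch\|u\|^2_{L^2}$. That error bound is not correct here: the remainder lies in $\mathcal{S}^4$, not $\mathcal{S}^0$, and the sharp Gårding error for a nonnegative symbol in $\mathcal{S}^{2m}$ is $-Ch\|u\|^2_{H^{m-1/2}_{scl}}$, i.e. $-Ch\|u\|^2_{H^{3/2}_{scl}}$ for $m=2$. The $H^{3/2}_{scl}$ norm strictly exceeds the $H^1_{scl}$ norm you are producing on the left, so this term cannot be absorbed no matter how small $h$ is relative to $\eps$: test on a high-frequency oscillation $e^{ix\cdot\xi_0/h}\chi$ with $|\xi_0|\to\infty$ and note that $h|\xi_0|^3$ eventually dominates $\eps^{-1}|\xi_0|^2$.

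The fix is to extract the full quartic bound, which is what the paper does and which your own computation already proves. With $0<c_1\leq |d\varphi|\leq c_2$ on $E_j$ and $r^{2-\delta}\Re(i\Delta\alpha)$ uniformly small (super-exponential decay of $\bar\partial^*\eta$), the constant term $4|d\varphi|^2\big(\frac{1}{\eps}+r^{2-\delta}\Re(i\Delta\alpha)\big)$ alone dominates $\frac{c}{\eps}(1+|\xi|^4)$ on $\{|\xi|\leq 2c_2\}$, while on $\{|\xi|>2c_2\}$ one has $(|\xi|^2-|d\varphi|^2)^2>\frac{9}{16}|\xi|^4$; together these give $\sigma_w(F)\geq \frac{C'}{\eps}(1+|\xi|^4)$. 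Now the Gårding inequality yields $\langle Fu,u\rangle\geq \frac{C'}{\eps}\|u\|^2_{H^2_{scl}}-Ch\|u\|^2_{H^{3/2}_{scl}}$; since $h\ll\eps$ the $H^{3/2}_{scl}$ error is absorbed into the $H^2_{scl}$ gain, and dropping to $\|u\|^2_{H^2_{scl}}\geq \|u\|^2_{L^2}+h^2\|du\|^2_{L^2}$ gives the claim.
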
 \textsl{Proof}. It suffices to use that when $\eps>0$ is chosen to be
small enough, $\sigma_w(F)(\xi)\geq \frac{C'}{\eps}(1+|\xi|^4)$ for some $C'>0$
and use the semiclassical G{\aa}rding estimate. The symbol estimate comes from
the fact that $|d\varphi|$ is bounded away from $0$ and $\Delta {\rm
Re}(i\alpha)$ decays superexponentially.  \qed

So by writing $\cjg i[A,B]u,u\cjd=\cjg
ir^{1-\frac{\delta}{2}}[A,B]r^{1-\frac{\delta}{2}}
r^{-1+\frac{\delta}{2}}u,r^{-1+\frac{\delta}{2}}u\cjd$ in \eqref{A+iB} and
using \eqref{simpl} and Corollary \ref{garding}, we obtain that there exists
$C>0$ such that for all $u\in C_0^\infty(E_i)$ 
\begin{align} \label{phu}
        ||P_hu||_{L^2}^2
        &\geq  
        \cjg (A^2+B^2)u,u\cjd 
        + \frac{Ch^2}{\eps}(||r^{-1+\frac{\delta}{2}}u||^2_{L^2}
        + h^2||r^{-1+\frac{\delta}{2}}du||^2_{L^2}) \\
        &\quad + h\cjg Eu,u\cjd 
        - \frac{h^2}{\eps}(||A (r^{-1+\frac{\delta}{2}} u)||_{L^2}^2 +||B
        (r^{-1+\frac{\delta}{2}} u)||_{L^2}^2). \nonumber 
\end{align} 
We observe
that  $h^{-1}[A,r^{-1+\frac{\delta}{2}}]r^{1+\frac{\delta}{2}}\in \mathcal{S}^1$ 
and  $h^{-1}[B,r^{-1+\frac{\delta}{2}}]r^{1+\frac{\delta}{2}}\in h\mathcal{S}^0$, and thus 
\begin{small}
\begin{align*}
    ||A (r^{-1+\frac{\delta}{2}} u)||_{L^2}^2 +||B (r^{-1+\frac{\delta}{2}} u)||_{L^2}^2)
    & \leq
    C'(||Au||_{L^2}^2+||Bu||_{L^2}^2 \\
    &+ h^2||r^{-1+\frac{\delta}{2}}u||^2_{L^2}+h^4||r^{-1+\frac{\delta}{2}}du||^2_{L^2})
\end{align*}
\end{small}
for some $C'>0$. Taking $h$ small, this implies with \eqref{phu} that there
exists a new constant $C>0$ such that
\begin{align}\label{phu2}
    ||P_hu||_{L^2}^2 
    &\geq \frac{Ch^2}{\eps}(||r^{-1+\frac{\delta}{2}}u||^2_{L^2}+h^2||r^{-1+\frac{\delta}{2}}du||^2_{L^2})  \\ 
    &\quad+ \frac{1}{2}\cjg (A^2+B^2)u,u\cjd
    + h\cjg Eu,u\cjd. \nonumber
\end{align}
It remains to deal with $h\cjg Eu,u\cjd$: we first write
$E=4|d\varphi_\eps|^2(c(z)(A+h^2\la^2)+ {\rm
Op}_w(\ell)B)+hr^{-1+\frac{\delta}{2}}Sr^{-1+\frac{\delta}{2}}$ where $S$ is a
semiclassical differential operator in the class $\mathcal{S}^1$ by the
decay estimates on $c(z),\ell(z,\xi)$ as $z\to\infty$, then by Cauchy-Schwartz
(and with $L:={\rm Op}_w(\ell)$) 
\begin{align*}
        |\cjg hEu,u\cjd|
        &\leq 
        Ch(||Au||_{L^2}+h^2||r^{-1+\frac{\delta}{2}}u||_{L^2} + h||Sr^{-1+\frac{\delta}{2}}u||_{L^2}) 
         ||r^{-1+\frac{\delta}{2}}u||_{L^2} \\
        &\quad + Ch||Bu||_{L^2}||Lu||_{L^2}\\ \leq & \frac{1}{4}||Au||^2_{L^2}
        + h^2||Sr^{-1+\frac{\delta}{2}}u||^2_{L^2}+Ch^2||r^{-1+\frac{\delta}{2}}u||_{L^2}^2 
        +\frac{1}{4} ||Bu||^2_{L^2}\\
        &\quad + Ch^2||Lu||^2_{L^2} 
\end{align*}
where $C$ is a constant
independent of $h,\eps$ but may change from line to line.  Now we observe that
$Lr^{1-\frac{\delta}{2}}$ and $S$ are in $\mathcal{S}^1$ and thus
\[
    ||Sr^{-1+\frac{\delta}{2}}u||^2_{L^2}+ ||Lu||^2_{L^2}
    \leq C
    (||r^{-1+\frac{\delta}{2}}u||^2_{L^2}+ h^2||r^{-1+\frac{\delta}{2}}du||^2_{L^2}),
\] 
which by \eqref{phu2} implies that
there exists $C>0$ such that for all $\eps\gg h>0$ with $\eps$ small enough
\[
    ||P_hu||_{L^2}^2\geq  \frac{Ch^2}{\eps}(||r^{-1+\frac{\delta}{2}}u||^2_{L^2}
    +h^2||r^{-1+\frac{\delta}{2}}du||^2_{L^2}) 
\]
for all $u\in C_0^\infty(E_i)$ . The proof is complete.\qed\\


\medskip
In the following proofs
we need some additional facts about the semiclassical calculus.
Firstly recall that 
a symbol $a \in \mathcal{S}^m$  corresponds in the so called classical quantization
to an operator $\operatorname{Op}_h(a) = a(y,hD)$ defined
by
\begin{equation*}
\operatorname{Op}_h(a)  f(y) = (2\pi)^{-2} \int_{\R^2} e^{iy\cdot\xi} a(y,h\xi;h) \hat{f}(\xi) \,d\xi.
\end{equation*}
We use $\sigma(A)$ to denote the symbol
corresponding to to a semiclassical operator $A$.

Moreover we need a formula for the commutator of two semiclassical operators with symbols
$a \in \mathcal{S}^m$ and $b \in \mathcal{S}^{m'}$. 
We have that $\sigma([\op(a), \op(b)]) \in \mathcal{S}^{m+m'}$ and 
\begin{equation} \label{eq_comexp}
    \sigma([\op(a), \op(b)]) 
    = \frac{h}{i} (\nabla_{\xi} a \cdot \nabla_y b - \nabla_y a \cdot \nabla_{\xi} b) + h^{2} \mathcal{S}^{m+m'},
\end{equation}
See \cite{S}.
We shall moreover utilize the following Proposition from \cite{S}. 
It is convenient to  formulate this using the weighted semiclassical spaces
$H^s_{\delta,scl}$, defined by the norm $\|f\|_{H^s_{\delta,scl}} := \| \jb{hD}^s
\jb{y}^{\delta} f \|_{L^2}$.

\begin{proposition} \label{weightedPSDO}
    Let $a \in \mathcal{S}^0$ and $ \delta_0 \geq 0$.
Then $\op(a)$ is bounded $H^s_{\delta,scl}(\R^n) \to H^s_{\delta,scl}(\R^n)$
for any $s, \delta \in \R$, and there is a constant $C$ with
$\| \op(a) \|_{ H^s_{\delta,scl} \to H^s_{\delta,scl}} \leq C$ whenever 
$|s|\leq s_0$, $|\delta| \leq \delta_0$, and $0 < h \leq h_0$.
\end{proposition}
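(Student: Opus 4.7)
The plan is to reduce the weighted Sobolev bound to ordinary $L^2$-boundedness by peeling off the two weights separately. For the base case $s=\delta=0$, I would invoke the semiclassical Calder\'on--Vaillancourt theorem: for $a\in\mathcal{S}^0$ the operator $\op(a)$ is bounded on $L^2(\R^n)$ uniformly in $h\in(0,h_0]$, with norm controlled by finitely many seminorms of $a$. This is standard in the semiclassical literature and is proved, e.g., by a Cotlar--Stein almost-orthogonality argument on a dyadic decomposition of phase space.

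Next I would handle the frequency weight $\jb{hD}^s$. Since $\jb{hD}^{\pm s}$ is a semiclassical pseudodifferential operator with symbol $\jb{\xi}^{\pm s}\in\mathcal{S}^0(\jb{\xi}^{\pm s})$, the composition formula extending \eqref{eq_comexp} to symbols of arbitrary orders gives
\[
    \jb{hD}^s \op(a) \jb{hD}^{-s} = \op(b_s)
\]
for some $b_s\in\mathcal{S}^0$ whose seminorms are bounded uniformly in $|s|\leq s_0$ by finitely many seminorms of $a$. By the previous paragraph $\op(b_s)$ is $L^2$-bounded uniformly, which is equivalent to saying $\op(a)$ is bounded on $H^s_{scl}$ uniformly in $h$ and in $|s|\leq s_0$.

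The main obstacle is the position weight $\jb{y}^\delta$, since multiplication by $\jb{y}^\delta$ grows polynomially in $y$ and hence is not in $\op(\mathcal{S}^0)$, so the composition calculus does not apply directly. I would instead argue at the level of the Schwartz kernel
\[
    K(y,y') = (2\pi h)^{-n}\int_{\R^n} e^{i(y-y')\cdot\xi/h}\,a(y,\xi)\,d\xi
\]
of $\op(a)$. The kernel of $M_{\jb{y}^\delta}\op(a)M_{\jb{y}^{-\delta}}$ is $\jb{y}^\delta\jb{y'}^{-\delta}K(y,y')$, and by Peetre's inequality $\jb{y}^\delta\leq C_\delta\jb{y-y'}^{|\delta|}\jb{y'}^\delta$ this is bounded in magnitude by $C_\delta\jb{y-y'}^{|\delta|}|K(y,y')|$. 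The factor $\jb{y-y'}^{|\delta|}$ is absorbed into the amplitude by iterated integration by parts via $(y-y')^\alpha e^{i(y-y')\cdot\xi/h} = (-ih\partial_\xi)^\alpha e^{i(y-y')\cdot\xi/h}$; since $\partial_\xi^\alpha a\in \mathcal{S}^0(\jb{\xi}^{-|\alpha|})$ for $a\in\mathcal{S}^0$, the resulting amplitude represents a semiclassical operator whose symbol lies in $\mathcal{S}^0$, with seminorms controlled by finitely many seminorms of $a$ and by $\lceil\delta_0\rceil$. Equivalently, one may verify Beals's commutator characterization by checking that iterated commutators of $M_{\jb{y}^\delta}\op(a)M_{\jb{y}^{-\delta}}$ with $y_k$ and $h\partial_{y_j}$ are uniformly $L^2$-bounded.

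Combining the three reductions yields the stated estimate. The delicate book-keeping point, and the only thing that needs care, is to keep all constants uniform in $h\in(0,h_0]$ and in $|s|\leq s_0$, $|\delta|\leq\delta_0$: in the composition by $\jb{hD}^{\pm s}$ the seminorm bounds depend only on $s_0$ and finitely many seminorms of $a$ by continuity of the calculus, while in the position conjugation the number of integrations by parts is fixed by $\lceil\delta_0\rceil$ and the resulting factors $h^{|\alpha|}$ are compensated exactly by the $|y-y'|^{-|\alpha|}$ decay, giving $h$-independent bounds throughout.
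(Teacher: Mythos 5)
The paper does not prove this proposition itself; it cites Salo \cite{S}, so there is no internal proof to compare against. Your architecture---reduce to $L^2$ via Calder\'on--Vaillancourt, then peel off the frequency weight $\jb{hD}^{\pm s}$ by the symbolic calculus and the position weight $\jb{y}^{\pm\delta}$ by a kernel/amplitude argument---is the standard and correct route. The first two steps are fine.

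The treatment of the position weight, however, contains a gap. First, under the paper's definition \eqref{eq_symclass} the class $\mathcal{S}^0 = \mathcal{S}^0(\jb{\xi}^0)$ requires only $|\partial_y^\alpha\partial_\xi^\beta a|\le C_{\alpha\beta}$ with \emph{no} gain in $\xi$-decay from $\xi$-derivatives, so your assertion that $\partial_\xi^\alpha a\in\mathcal{S}^0(\jb{\xi}^{-|\alpha|})$ is false for this symbol class (it holds for H\"ormander $S^0_{1,0}$ symbols, not this isotropic class). Second, and more substantively, Peetre gives a pointwise \emph{inequality} $|\tilde K(y,y')|\le C\jb{y-y'}^{|\delta|}|K(y,y')|$, not an algebraic factorization $\tilde K=\jb{y-y'}^{|\delta|}K$, so it cannot be fed directly into the identity $(y-y')^\alpha e^{i(y-y')\cdot\xi/h}=(-ih\partial_\xi)^\alpha e^{i(y-y')\cdot\xi/h}$; moreover for $a\in\mathcal{S}^0$ the Schwartz kernel is only a distribution off the diagonal, so a purely pointwise kernel/Schur argument is not available. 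The argument in the spirit you propose does go through at the \emph{amplitude} level: write $\jb{y}^\delta\op(a)\jb{y}^{-\delta}$ as the operator with amplitude $b(y,y',\xi)=\jb{y}^\delta\jb{y'}^{-\delta}a(y,\xi)$, note by Peetre that $|\partial^\alpha b|\le C_\alpha\jb{y-y'}^{|\delta|}$ for all multi-indices $\alpha$ in $(y,y',\xi)$, and integrate by parts using $(1-h^2\Delta_\xi)^N e^{i(y-y')\cdot\xi/h}=\jb{y-y'}^{2N}e^{i(y-y')\cdot\xi/h}$. For $N$ large enough depending only on $\delta_0$, the resulting amplitude $\jb{y-y'}^{-2N}(1-h^2\Delta_\xi)^N b$ is a uniformly bounded amplitude (the extra factors $h^{2k}$ only help for $h\le h_0$; they are not ``compensated by $|y-y'|^{-|\alpha|}$'' as you wrote), and the standard amplitude-to-symbol reduction plus Calder\'on--Vaillancourt finish the proof. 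Your parenthetical alternative via Beals's commutator characterization would also work, but it is mentioned rather than carried out.
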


We now prove a weighted version of Proposition \ref{cmpct_ce} that holds in the ends. This
is done by shifting the estimate of Proposition  \ref{carlemaninend}.

\begin{proposition}
\label{shifted_ce}
Let $\delta\in(0,1)$, and $\varphi_\eps$ as above, then there exists $C>0$ such that for all
$\eps\gg h>0$ small enough, and all $u\in C_0^\infty(E_j)$,
\begin{align*}
    \frac{C}{\eps} \|x^{1-\frac{\delta}{2}}u\|_{L^2} 
    \leq h\|e^{\varphi_{\eps}/h}(\Delta-\la^2)e^{-\varphi_{\eps}/h}u\|_{H^{-1}_{scl}}.
\end{align*}
\end{proposition}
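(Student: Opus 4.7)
The strategy is to derive the stated $H^{-1}_{scl}$ Carleman estimate from the weighted $L^2$ estimate of Proposition \ref{carlemaninend} via a semiclassical commutator argument, exploiting the decay of the derivatives of $\varphi_\eps$ in the ends given by \eqref{eq_phiasym}.

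Write $P_\varphi := e^{\varphi_\eps/h}(\Delta-\la^2)e^{-\varphi_\eps/h}$. Proposition \ref{carlemaninend}, after taking square roots, gives
\[
\frac{C}{\sqrt{\eps}}\left(\|x^{1-\delta/2}u\|_{L^2} + h\|x^{1-\delta/2}du\|_{L^2}\right) \leq h\|P_\varphi u\|_{L^2}.
\]
To shift the norm on the RHS, I would use the Plancherel identity
\[
\|P_\varphi u\|_{L^2}^2 = \|P_\varphi u\|_{H^{-1}_{scl}}^2 + \|hD P_\varphi u\|_{H^{-1}_{scl}}^2,
\]
and control the second term via the commutator decomposition $hD P_\varphi = P_\varphi \cdot hD + [hD, P_\varphi]$. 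By \eqref{eq_comexp} and \eqref{eq_phiasym} (which give $\pl^\alpha\varphi_\eps = O(x^{2-\delta})$ for $|\alpha|\geq 2$), the commutator $[hD, P_\varphi]$ is a first-order semiclassical operator whose coefficients decay like $x^{2-\delta}$ in the ends. Combined with Proposition \ref{weightedPSDO}, this bounds $\|[hD, P_\varphi]u\|_{H^{-1}_{scl}}$ by a constant times $\|x^{2-\delta}u\|_{L^2} + h\|x^{2-\delta}du\|_{L^2}$, both absorbable into the LHS of the $L^2$ estimate above since $x^{2-\delta} \leq x^{1-\delta/2}$ in the ends (where $x<1$ and $2-\delta \geq 1-\delta/2$). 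The remaining term $P_\varphi(hDu)$ is controlled by re-applying Proposition \ref{carlemaninend} to $hDu \in C_0^\infty(E_j)$.

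The \textbf{main obstacle} is the careful bookkeeping of the $\eps$-dependence of the error terms, to ensure the final constant on the LHS is $1/\eps$ rather than $1/\sqrt{\eps}$. The extra factor of $\sqrt{\eps}$ lost in going from $1/\sqrt{\eps}$ (in Proposition \ref{carlemaninend}) to $1/\eps$ here must come precisely from a Cauchy--Schwarz absorption, performed with a parameter tuned in powers of $\sqrt{\eps}$, that balances the commutator contribution $\|x^{2-\delta}u\|_{L^2}+h\|x^{2-\delta}du\|_{L^2}$ against the weighted $L^2$ norm on the LHS. Tracking this scaling through the iteration is the delicate part of the argument.
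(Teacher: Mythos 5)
The proposal has a genuine gap in the handling of the term $P_\varphi(hDu)$ after the commutator decomposition, and the overall Plancherel-based scheme does not close. You write
\[
\|P_\varphi u\|_{L^2}^2 = \|P_\varphi u\|_{H^{-1}_{scl}}^2 + \|hD\, P_\varphi u\|_{H^{-1}_{scl}}^2
\]
and propose to bound $\|hD\,P_\varphi u\|_{H^{-1}_{scl}}$ by splitting $hD\,P_\varphi = P_\varphi\, hD + [hD,P_\varphi]$. The commutator piece is indeed $O(h)$-small with decaying coefficients, and your estimate $|\pl^\alpha\varphi_\eps|=O(r^{-2+\delta})$ for $|\alpha|\geq 2$ is the right mechanism for it. However, the non-commutator piece $\|P_\varphi(hDu)\|_{H^{-1}_{scl}}$ is \emph{not} small: at the principal level $P_\varphi\sim h^2\Delta$, so $P_\varphi(hDu)$ is a third-order expression and its $H^{-1}_{scl}$ norm is comparable to $\|P_\varphi u\|_{L^2}$ itself (indeed $\|hD\,P_\varphi u\|_{H^{-1}_{scl}}\leq\|P_\varphi u\|_{L^2}$ is nearly saturated by high-frequency $u$). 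Since you are trying to \emph{subtract} $\|hD\,P_\varphi u\|_{H^{-1}_{scl}}^2$ from $\|P_\varphi u\|_{L^2}^2$, a term of the same size cannot be absorbed, regardless of how the $\eps$-powers are tuned.

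Moreover, your proposal to ``re-apply Proposition \ref{carlemaninend} to $hDu$'' points in the wrong direction: Proposition \ref{carlemaninend} gives a \emph{lower} bound for $\|P_\varphi(hDu)\|_{L^2}$ in terms of weighted norms of $hDu$, whereas you would need an \emph{upper} bound for $\|P_\varphi(hDu)\|_{H^{-1}_{scl}}$ in terms of the weighted norms of $u$ and $du$ appearing on the left of the $L^2$ estimate. No such upper bound holds, because those norms do not control two derivatives of $u$.

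The paper's proof sidesteps this by applying the $L^2$ estimate of Proposition \ref{carlemaninend} to the \emph{pre-smoothed} function $\chi\hd u$, where $\hd=\langle hD\rangle^{-1}$. Then the left-hand side $\|\hd u\|_{H^1_{\theta,scl}}\gtrsim\|u\|_{L^2_\theta}$ directly produces the desired weighted $L^2$ norm of $u$, while the right-hand side becomes $\|P_h(\chi\hd u)\|_{L^2}$, which decomposes as $\|\chi\hd P_h u\|_{L^2}+\text{commutators}\lesssim\|P_hu\|_{H^{-1}_{scl}}+\text{small}$, with the commutators controlled via Lemma \ref{decay_lemma}. Crucially, every remaining term after this decomposition \emph{is} a genuine commutator and hence gains both a factor of $h$ and spatial decay; there is no analogue of the uncontrollable $P_\varphi(hDu)$ term. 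If you wish to salvage your outline, replace the output decomposition by $hD$ with precomposition by $\hd$: estimate $\|P_h(\hd u)\|_{L^2}$ rather than decompose $\|P_h u\|_{L^2}$.
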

\begin{proof}
We will employ the same notations as in the proof of Proposition  \ref{carlemaninend}.
In particular $r= x^{-1}$ and $P_h := e^{\varphi_{\eps}/h}h^2(\Delta-\la^2)e^{-\varphi_{\eps}/h}$. 

Let $\chi \in C^\infty(\R^2)$, be such that $\chi(y) = 1 $, in $\R^2 \setminus \mathbb{D}_1$ and 
$\chi(y) = 0$, near $\overline{\mathbb{D}}_{1/2}$.
Now consider the function $\chi\hd u$, where $u \in C^\infty_0( \R^2 \setminus \mathbb{D}_1)$. 
It is straight forward to see, using a density argument that Proposition \ref{carlemaninend} applies to 
functions in the Schwartz class, so that we may apply it to $\chi\hd u$
and get that
\begin{align}  \label{eq_p12}
    \|r^{\frac{\delta}{2}-1}\chi\hd u\|_{L^2}
    + &h\|r^{\frac{\delta}{2}-1}d(\chi\hd u)\|_{L^2} \\
    &\leq C \eps 
    \| P_h(\chi\hd u)\|_{L^2}.  \nonumber
\end{align}
Let $\theta =\delta/2-1$, so that $r^{\frac{\delta}{2}-1} = r^\theta$. 
We want to estimate the left hand side from below, by $\|r^{\frac{\delta}{2}-1} u\|_{L^2}$.
This is equivalent to estimating it from below by $\| u\|_{L^2_\theta}$.
We start by writing
\begin{align*} 
    \| r^\theta \chi\hd u\|_{L^2}
    + &h\|r^\theta d(\chi\hd u)\|_{L^2}  \nonumber\\
    &\geq
    \| \chi \hd u\|_{H^1_{ \theta, scl}}  - h \|[d,r^\theta](\chi\hd u)\|_{L^2}, 
\end{align*}
We can absorb the commutator term by the first term on the left hand side, when $h$
is small, since $[d,r^\theta]=\theta r^{\theta-1}$.
It is hence enough to estimate the term
containing the weighted Sobolev norm from below. We have that
\begin{align} \label{eq_modlhs}
    \| \chi \hd u\|_{H^1_{ \theta, scl}}  
    &\geq
    \| \hd (\chi u)\|_{H^1_{ \theta, scl}} - \| [\chi,\hd] u \|_{H^1_{ \theta, scl}}
\end{align} 
By expression  \eqref{eq_comexp} for the symbol of a commutator we have that 
\[
    \sigma([\chi,\hd]) = -\frac{h}{i}\nabla_y \chi \nabla_\xi \jb{\xi}^{-1} + h^2\mathcal{S}^{-1}.
\] 
It follows then from  Proposition  \ref{weightedPSDO} that
\begin{align} \label{eq_chi_comm}
    \| [\chi,\hd] u \|_{H^1_{\theta,scl}}  \leq C h \| u \|_{L^2_{\theta}}.
\end{align} 
Next we estimate the middle term in \eqref{eq_modlhs}, as follows
\begin{align*}
    \| \hd u \|_{H^1_{ \theta, scl}} 
    &=  \| \jb{r}^\theta \hd u\|_{H^1_{scl}} \\
    &\geq C   \| \hd (\jb{r}^\theta  u)\|_{H^1_{scl}} - \| [\jb{r}^\theta, \hd] u\|_{H^1_{scl}} \\
    &\geq C   \| u \|_{L^2_\theta} - \| [\jb{r}^\theta, \hd] u\|_{H^1_{scl}} 
\end{align*} 
The commutator can be estimated by Lemma \ref{decay_lemma}
\begin{align*}
    \| [\jb{r}^\theta, \hd] u\|_{H^1_{scl}} \leq C h \| \jb{r}^{\theta-1}  u\|_{L^2} 
    \leq C h \| u\|_{L^2_\theta}.
\end{align*} 
We can hence absorb 
the commutator by the first term, when $h$ is small and get 
\begin{align*}
    \| \hd u \|_{H^1_{ \theta, scl}} 
    &\geq C   \| u \|_{L^2_\theta}.
\end{align*} 
It follows from \eqref{eq_modlhs} using the above estimate and \eqref{eq_chi_comm} that
\begin{align*}
    \| \chi \hd u\|_{H^1_{ \theta, scl}}  
    &\geq C   \| u \|_{L^2_\theta} -  C h \| u \|_{L^2_{\theta}} \\
    &\geq C   \| u \|_{L^2_\theta}, 
\end{align*} 
when $h$ is small.
We can thus estimate the left hand side of \eqref{eq_p12} from below as follows
\begin{align} \label{eq_shifted_ce1}
    \frac{C}{\eps} \|r^{\frac{\delta}{2}-1} u\|_{L^2}
    \leq 
    \|P_h(\chi\hd u)\|_{L^2}. 
\end{align}
Splitting the right hand side of \eqref{eq_shifted_ce1} using the basic properties of commutators, 
gives that
\begin{align} \label{eq_shifted_ce2} 
    \|P_h(\chi\hd u)\|_{L^2}
    &\leq
     \| \chi\hd P_h u \|_{L^2}  \nonumber \\
     &+ \| \chi [P_h,\hd] u\|_{L^2}  \\
     &+ h^2\|e^{\varphi_{\eps}/h}[\Delta - \la^2,\chi] (e^{-\varphi_{\eps}/h}\hd u)\|_{L^2} \nonumber.
\end{align}
To obtain the estimate in the statement of the Proposition, we need to show that
the second and third term can be absorbed by the left hand side of  \eqref{eq_shifted_ce1}, 
when $\eps$ is chosen small enough.  This can be done if we can bound these terms 
in the weighted $L^2$-norm.

Writing out the commutator in the third term yields
\begin{align*}
    h^2\|e^{\varphi_{\eps}/h}[\Delta,\chi] (e^{-\varphi_{\eps}/h}\hd u)\|_{L^2}
     &\leq C h^2 \big( \| \Delta \chi \hd u \|_{L^2}  \\
     &+ \| e^{\varphi_{\eps}/h} \nabla \chi \cdot \nabla e^{-\varphi_{\eps}/h} \hd u \|_{L^2}  \\
     &+\| \nabla \chi \nabla (\hd u) \|_{L^2} \big).
\end{align*}
We now find bounds for the terms on the right hand side in the weighted $L^2$-norm.
For the last term we note that $\sigma(h \nabla \hd) \in \mathcal{S}^0$. By Proposition \ref{weightedPSDO}
we know that $h \nabla \hd\colon L^2_\theta \to L^2_\theta$ is continuous
and hence that
\begin{align*}
    h^2\| \nabla \chi \nabla (\hd u) \|_{L^2} \leq Ch \| h\nabla (\hd u) \|_{L^2_\theta} 
    \leq Ch \| u \|_{L^2_\theta}.
\end{align*}
For the two remaining terms, we have that $\sigma(\hd) \in \mathcal{S}^{-1}$. By
Proposition \ref{weightedPSDO} we know that
$\hd\colon L^2_\theta \to L^2_\theta$  is continuous and thus we have in the same way that
\begin{align*}
    h^2(\| \Delta \chi \hd u \|_{L^2}  
    + \| e^{\varphi_{\eps}/h} \nabla \chi \cdot \nabla e^{-\varphi_{\eps}/h} \hd u \|_{L^2})
    &\leq  Ch  \| u \|_{L^2_\theta}.
\end{align*}
Combining the two previous estimates, gives an estimate for the second commutator 
term in \eqref{eq_shifted_ce2}, i.e.
\begin{align*}
    h^2\|e^{\varphi_{\eps}/h}[\Delta,\chi] (e^{-\varphi_{\eps}/h}\hd u)\|_{L^2}
    \leq  C h  \|u\|_{L^2_\theta}.
\end{align*}

It remains to estimate the first commutator term in \eqref{eq_shifted_ce2}. I.e. we want 
to show that
\begin{align} \label{eq_2ndcom}
    \| \chi [P_h,\hd] u\|_{L^2}  
    \leq  C  \| u\|_{L^2_\theta}.
\end{align}
Parts of $P_h$ commute with $\hd$, so that we are left with
\begin{align} \label{eq_shifted_ce3} 
    [P_h,\;\hd] 
                & = [-|d\varphi_\eps|^2, \; \hd] 
                    + 2h[\nabla\varphi_\eps \cdot \nabla, \; \hd] \nonumber\\
                    &\quad- h[\Delta \varphi_\eps,\; \hd]. 
\end{align}
As in the proof of Proposition \ref{carlemaninend}, we utilize the 
asymptotics given by \eqref{eq_phiasym} according to which
$|d\varphi_\eps|^2 = c + O(r^{\theta+\delta/2})$, where $c$ is a constant 
and $\Delta \varphi_\eps = O(r^{-1+\theta+\delta/2})$. This enables us to
apply Lemma \ref{decay_lemma} to the first and third commutator in \eqref{eq_shifted_ce3},
by which we get an improvement in decay, which is crucial.
We get that
\begin{align*}
    \|[|d\varphi_\eps|^2, \; \hd] u \|_{L^2} 
    + \| h[\Delta \varphi_\eps,\; \hd] \|_{L^2}
    &\leq   Ch  \| \jb{r}^{\theta+\delta/2-1} u\|_{L^2} \\
    &\leq   Ch  \| u\|_{L^2_\theta},
\end{align*}
where $C$ independent of $\eps$.
The second commutator term in \eqref{eq_shifted_ce3} is
\begin{align*}
                 2h[\nabla\varphi_\eps \cdot \nabla, \; \hd] 
                 &= 2\nabla\varphi_\eps \cdot  [h \nabla , \; \hd]
                 + 2[\nabla\varphi_\eps \cdot , \; \hd]h\nabla  \\
                 &= [\nabla\varphi_\eps \cdot , \; \hd]h\nabla.
\end{align*}
The asymptotics in \eqref{eq_phiasym} give that 
$\nabla \varphi_\eps = (\gamma_1, \gamma_2) + (b_1, b_2)$, where $\gamma_j$ are
constants and $b_j = O(r^{\theta+\delta/2})$.
The above commutator can be estimated by, applying 
Lemma \ref{decay_lemma} to the components of $\nabla \varphi_\eps$, giving 
\begin{align*}
    \| [ b_j , \; \hd]h \nabla u \|_{L^2}
    &\leq Ch \| \jb{r}^{\theta+\delta/2-1} h\nabla u \|_{H^{-1}_{scl}}\\
    &\leq Ch ( \| h\nabla( \jb{r}^{\theta+\delta/2-1} u) \|_{H^{-1}_{scl}} 
    + \|[\jb{r}^{\theta+\delta/2-1},h\nabla] u\|_{H^{-1}_{scl}})\\
    &\leq Ch( \|  u \|_{L^2_\theta} + h \|\jb{r}^{\theta+\delta/2-2} u\|_{H^{-1}_{scl}}) \\
    &\leq Ch \|  u \|_{L^2_\theta}, 
\end{align*}
for small $h$ and where $C$ does not depend on $\eps$.
We thus see that \eqref{eq_2ndcom} holds.

\end{proof}

To complete the proof of the previous Proposition we need to prove the following Lemma.

\begin{lemma} 
\label{decay_lemma}
Let $\kappa\geq0$ and $b(y) \in C^\infty(\R^2)$ satisfies the estimate
\[ |\partial_y^\beta b| \leq C_{\beta} \jb{y}^{-\kappa-|\beta| },\] Then we have the estimate
\[
    \| [ \hd , b ] u \|_{H^{s+2}_{scl}} \leq C h \| \jb{r}^{-\kappa-1} u\|_{H^{s}_{scl}},
\]
for $u \in C^\infty_0(\R^2)$ and $s \in \R$.
\end{lemma}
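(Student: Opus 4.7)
The plan is to use the semiclassical symbol calculus together with the weighted boundedness in Proposition \ref{weightedPSDO}. We view $\hd = \op(\jb{\xi}^{-1})$ with symbol in $\mathcal{S}^{-1}$, and the hypothesis on $b$ places it in the weighted symbol class in which every derivative in $y$ gains an additional factor of $\jb{y}^{-1}$.

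First I would apply the commutator formula \eqref{eq_comexp} to the symbols $a(\xi)=\jb{\xi}^{-1}$ and $b(y)$. Since $b$ is independent of $\xi$, only one Poisson-bracket term survives at leading order, giving
\[
    \sigma([\hd, b])(y,\xi) = \frac{h}{i}\, \partial_\xi \jb{\xi}^{-1} \cdot \partial_y b(y) + h^2 r(y,\xi;h),
\]
where the principal part lies in $h\,\mathcal{S}^{-2}$ with pointwise decay $\jb{y}^{-\kappa-1}$ in $y$, while the full symbolic expansion places the remainder $r$ in $\mathcal{S}^{-3}$ with decay $\jb{y}^{-\kappa-2}$.

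Next I would absorb the spatial decay into a multiplier. Setting
\[
    c(y,\xi) := \partial_\xi \jb{\xi}^{-1} \cdot \partial_y b(y) \cdot \jb{y}^{\kappa+1},
\]
the symbol $c$ lies in $\mathcal{S}^{-2}$ uniformly in $y$, so by the composition calculus
\[
    \op\!\bigl(\partial_\xi \jb{\xi}^{-1} \cdot \partial_y b\bigr) = \op(c) \circ M_{\jb{y}^{-\kappa-1}} + h R_1,
\]
where $M_{\jb{y}^{-\kappa-1}}$ denotes multiplication and $R_1$ is again of order $-2$ in $\xi$ with $\jb{y}^{-\kappa-2}$ decay in $y$. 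By Proposition \ref{weightedPSDO}, $\op(c): H^s_{scl} \to H^{s+2}_{scl}$ is bounded uniformly in $h$, producing a contribution of size $Ch\|\jb{y}^{-\kappa-1}u\|_{H^s_{scl}}$ to $\|[\hd,b]u\|_{H^{s+2}_{scl}}$, which is exactly the required bound.

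Finally, the subleading contributions $h R_1$ and $h^2\op(r)$ are handled by the same factorization: each carries an extra power of $h$ together with the stronger decay $\jb{y}^{-\kappa-2}\le\jb{y}^{-\kappa-1}$, so for $h$ small they are absorbed into the main estimate. The only real obstacle is bookkeeping in the composition expansion: one must check that each time a derivative falls on the weight $\jb{y}^{-\kappa-1}$ the resulting symbol still enjoys an extra $\jb{y}^{-1}$ of decay, which is precisely the content of the hypothesis $|\partial_y^\beta b|\leq C_\beta\jb{y}^{-\kappa-|\beta|}$ and what makes the weighted semiclassical calculus applicable here.
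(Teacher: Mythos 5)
Your outline is appealing but it has a genuine gap at the crucial step. The paper's formula \eqref{eq_comexp} only asserts that the subleading remainder lies in $h^2\mathcal{S}^{m+m'}$; it does not provide any $y$-decay for that remainder. You need the remainder to be of the form $h^2\jb{y}^{-\kappa-2}\mathcal{S}^{-3}$, and you assert this (``the full symbolic expansion places the remainder $r$ in $\mathcal{S}^{-3}$ with decay $\jb{y}^{-\kappa-2}$'') without justification. A weighted symbol calculus of this kind is not established in the paper, and proving it would require essentially the same oscillatory-integral bookkeeping that the paper's own proof carries out directly (Taylor expansion of $b$, Peetre's inequality, Cauchy--Schwarz, and repeated integration by parts to control the $z$-integral). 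Appealing to ``the full symbolic expansion'' is therefore circular here: the content of the lemma \emph{is} the weighted refinement of the composition calculus.

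A second, related problem appears in the factorization $\op(\partial_\xi\jb{\xi}^{-1}\cdot\partial_y b)=\op(c)\circ M_{\jb{y}^{-\kappa-1}}+hR_1$. In the Kohn--Nirenberg quantization you have the \emph{exact} identity $\op(p(y)q(\xi))=M_p\circ\op(q)$, so what you actually need is to move the Fourier multiplier $\op(q)$ past the multiplication $M_{\jb{y}^{-\kappa-1}}$. That commutator $[\op(q),M_{\jb{y}^{-\kappa-1}}]$ is exactly the same kind of operator you are trying to bound, so the claim that $R_1$ carries an extra power of $h$ \emph{together with} an extra power of $\jb{y}^{-1}$ of decay is again unsupported by anything in the paper and in fact presupposes the lemma. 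The paper sidesteps all of this by working directly with the integral kernel of $\sigma([\op(a),\op(b)])$ and showing in one pass that $h^{-1}\jb{y}^{1+\kappa}\sigma([\op(a),\op(b)])\in\mathcal{S}^{-2}$, after which Proposition \ref{weightedPSDO} finishes the argument. If you want to salvage your route, you would need to first prove a weighted version of the composition/commutator calculus (tracking $\jb{y}$-decay at every order), and the natural proof of that is precisely the computation in the paper.
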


\begin{proof} 
    Let $a(\xi) := \sigma(\hd) = \jb{\xi}^{-1}$.
The composition $\op(a) \op(b) $ can be written as
follows
\begin{align*}
   \op(c) u =  \op(a) \op(b) u = (2 \pi)^{-2} \int_{\R^2} e^{iy\cdot \xi} c(y,h\xi;h) \hat u (\xi) \,d\xi,
\end{align*}
where the symbol $c$  can in turn
be written in terms of oscillatory integrals, as
\[
    c(y,\xi;h) =  (2 \pi h)^{-2} \int_{\R^2} e^{i z\cdot \xi/h} K(z;h) b(y+z) \,dz,
\]
where
\[
    K(z;h) := \int_{\R^2} e^{-i z\cdot \eta/h} a(\eta) \,d\eta.
\]
We can split $c$ by the Taylor Theorem as follows
\begin{align*}
    c(y,\xi;h) =  (2 \pi h)^{-2} \int_{\R^2} e^{i z\cdot \xi/h} K(z;h) 
    \big(b(y) + R_1(y,z)\big)  \,dz,
\end{align*}
where $R_1$ is a remainder term given by
\[ R_1(y,z) = \sum_{j=1}^2 z_j \int_0^1  (\partial_jb)(y+\theta z) d\theta.\]
Here $\partial_j b(y)$ denotes the partial derivative of $b$ with respect to the $j$-th variable.\\

One sees easily using the fact 
that\footnote{here $\delta$ is the Dirac delta function} 
$\hat 1 (\eta)= \delta(\eta)$,  that
\begin{align*}
    c(y,\xi;h)  = ab + (2 \pi h)^{-2} \int_{\R^2} e^{i z\cdot \xi/h} K(z;h)  R_1(y,z) \,dz.
\end{align*}
A direct consequence of this is that
\begin{align} \label{eq_comsym}
    \sigma([\op(a),\op(b)]) = (2 \pi)^{-2} \int_{\R^2} e^{i z\cdot \xi} K(hz;h)  R_1(y,hz) \,dz.
\end{align}

Define $\mu := h^{-1}\jb{y}^{1+\kappa} \sigma([\op(a),\op(b)])$. We will now show that $\mu \in \mathcal{S}^{-2}$.
For this we  need to check that condition 
\eqref{eq_symclass} holds, with $m=-2$, i.e. we need to show that
\begin{equation} \label{eq_symcond}
| \partial_y^{\beta} \partial_{\xi}^{\gamma} \mu(y,\xi;h)| 
\leq C_{\beta\gamma}\jb{\xi}^{-2},  
\end{equation}
where $C_{\beta\gamma}$ is independent of $h$. By the assumption on $b$ and
the form of $K$ it suffices show this for the case $\beta=\gamma=0$. 

Splitting the integral in \eqref{eq_comsym} by the triangle inequality into two 
components with the index $j=1,2$ and by integrating by parts, we see that we can estimate
\begin{align*}
    I_j := 
    h \Big| \int_{\R^2} e^{i z\cdot \xi} \int_{\R^2} e^{-i z\cdot \eta} \p_j a(\eta) \,d\eta  
    \; \int_0^1(\partial_j b)(y+\theta hz)d\theta   \,dz\Big| ,
\end{align*}
to get get an estimate for $\sigma([\op(a),\op(b)])$.\\

We use the abbreviation $B_j(y,z) := \int_0^1  (\partial_j b)(y+\theta z) d\theta.$ Then by integrating
by parts, we have that
\begin{align*} 
    I_j &= h \Big | \int e^{i z \cdot \xi} 
    \int \jb{z}^{-2N} e^{-i z \cdot \eta} \jb{D_\eta}^{2N} \p_j a(\eta) \,d\eta \; B_j(y,hz)\, dz \Big | \\ 
    &= h \Big | \int 
    \int \frac{e^{i z \cdot (\xi-\eta )} }{\jb{\xi - \eta}^2 } 
    \jb{D_\eta}^{2N} \p_j a(\eta) \,d\eta \; \jb{D_z}^2 \frac{B_j(y,hz)}{\jb{z}^{2N}}\, dz \Big |.
\end{align*}
By  the Peetre inequality
\begin{align*}
    I_j &\leq h \Big\| \frac{ \jb{D_\eta}^{2N} \p_j a  }{\jb{\xi-\eta}^2 }   \Big\|_{L^\infty(\R^2_\eta)}
    \Big\| \int  e^{-i z \cdot \eta} \jb{D_z}^2 \frac{B_j(y,hz)}{\jb{z}^{2N}}\, dz \Big\|_{L^1(\R^2_\eta)} \\
    &\leq h \jb{\xi}^{-2} \Big\| \jb{\eta}^2 \jb{D_\eta}^{2N} \p_j a   \Big\|_{L^\infty(\R^2_\eta)}
\Big\| \int e^{-i z \cdot \eta} \jb{D_z}^2 \frac{B_j(y,hz)}{\jb{z}^{2N}}\, dz \Big\|_{L^1(\R^2_\eta)}.
\end{align*}
Moreover by the Cauchy-Schwarz inequality we have that
\begin{small}
\begin{align*}
         \Big\|\int  e^{-i z \cdot \eta} \jb{D_z}^2 \frac{B_j(y,hz)}{\jb{z}^{2N}}\, dz \Big\|_{L^1}
         &=
         \Big\| \int  \frac{\jb{D_z}^{2k}}{\jb{\eta}^{2k}} e^{-i z \cdot \eta}
         \jb{D_z}^2 \frac{B_j(y,hz)}{\jb{z}^{2N}}\, dz \Big\|_{L^1} \\
         &\leq 
         \|  \jb{\eta}^{-2k} \|_{L^2}
         \Big\| \int  e^{-i z \cdot \eta} 
         \jb{D_z}^{2k+2} \frac{B_j(y,hz)}{\jb{z}^{2N}}\, dz \Big\|_{L^2}. 
\end{align*}
\end{small}
Thus
{\small 
    \[
        I_j \leq \frac{C_{k,N} h }{\jb{\xi}^2}\|\jb{\eta}^2  \jb{D_\eta}^{2N} \p_j a \|_{L^p(\R^2_\eta)}
        \Big\|\frac{1}{\jb{\eta}^{2k}} \Big\|_{L^2} 
        \Big\| \int_0^1
        \jb{D_z}^{2k+2} \frac{(\partial_j b)(y+\theta hz)}{\jb{z}^{2N}}d\theta \Big\|_{L^2(\R^2_z)} 
    \]
}
The above norms become finite when $N$ and $k$ are large enough.
By the assumption on $b$ it suffices to show that 
\[ 
    \Big \| \int_0^1  \frac{(\partial_j b)(y+\theta hz)}{\jb{z}^{2N}}d\theta \Big\|_{L^2(\R^2_z)}
    \leq C \jb{y}^{-\kappa -1}.
\]
Indeed, apply the assumption on $b$ and the Peetre inequality we have that
$|(\partial_jb)(y+\theta hz)| \leq C\jb{y+ \theta hz}^{-\kappa - 1} \leq
C\jb{y}^{-\kappa-1} \jb{ hz}^{|\kappa +1|}$ so the inequality holds provided
that $N$ is large enough. It follows that
\[
    |\sigma([\op(a),\op(b)])|\leq  Ch \jb{y}^{-\kappa-1} \jb{\xi}^{-2} .
\]
This proves \eqref{eq_symcond} and $\mu = h^{-1}\jb{y}^{1+\kappa}
\sigma([\op(a),\op(b)]) \in \mathcal{S}^{-2}$ which, by Proposition \ref{weightedPSDO},
means that $\op(\mu) : H^{s}_{\delta, scl} \to H^{s+2}_{\delta, scl}$
continuously. Hence
\begin{align*}
    \| [ \op(a),\op(b) ] u \|_{H^s_{scl}}  \leq Ch \| \jb{r}^{-1-\kappa} u\|_{H^{s-2}_{scl}},
\end{align*}
which is what we needed to prove.

\end{proof}

We can combine Proposition \ref{shifted_ce} and \ref{cmpct_ce} to obtain a global estimate.
To handle the perturbed operator $L_{X,V}$, we need to assume that potentials have decay at least 
as fast as the weights on the $L^2$-norms.

\begin{lemma} \label{global_perturbed}
Let $\varphi_\eps$ be given by \eqref{eq_phieps}.
Then for all $V\in x^{1-\frac{\delta}{2}}L^\infty(M_0)$ 
and $X \in x^{1-\frac{\delta}{2}}W^{1,\infty}(M_0,T^*M_0)$ 
there exists an
$h_0>0$, $\eps_0$ and $C>0$  such that for all $0<h<h_0$, $h\ll \eps<\eps_0$
and $u\in e^{-\gamma/x} C^\infty(M_0)$, we have 
\begin{align*}
    \frac{C}{\eps} \|x^{1-\frac{\delta}{2}} u\|_{L^2} 
    \leq 
    \sqrt{h} \|e^{\varphi_{\eps}/h}(L_{X,V}-\la^2)e^{-\varphi_{\eps}/h}u\|_{H^{-1}_{scl}}.
\end{align*}
\end{lemma}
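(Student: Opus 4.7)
The plan is to combine Propositions \ref{cmpct_ce} and \ref{shifted_ce} into a global unperturbed Carleman estimate for $\bar P := e^{\varphi_\eps/h}(\Delta_g - \lambda^2) e^{-\varphi_\eps/h}$, and then to treat $e^{\varphi_\eps/h}(L_{X,V}-\Delta_g)e^{-\varphi_\eps/h}$ as a lower-order perturbation that can be absorbed for $\eps$ small. I begin by choosing a cutoff $\chi \in C_0^\infty(M_0)$ with $\chi \equiv 1$ on a compact set $K$ containing all critical points of $\varphi$, apply Proposition \ref{cmpct_ce} to $\chi u$ and Proposition \ref{shifted_ce} (via a further partition of unity among the ends) to $(1-\chi)u$, sum, absorb the $\lambda^2$ term for $h$ small, and use $\|x^{1-\delta/2}\chi u\|_{L^2}\leq C\|\chi u\|_{L^2}$ on the compact part to obtain
\[
\frac{C}{\eps}\|x^{1-\delta/2} u\|_{L^2} \leq \sqrt h\,\|\bar P u\|_{H^{-1}_{scl}}
\]
up to cutoff commutators supported in the annulus $\supp(d\chi)$. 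Proposition \ref{cmpct_ce} also hands us two auxiliary controls $\|d\varphi_\eps \chi u\|_{L^2}$ and $\|hd(\chi u)\|_{H^{-1}_{scl}}$, each $\leq(\eps h/C)\|\bar P(\chi u)\|_{H^{-1}_{scl}}$, which are exactly what will absorb both these commutators and the $X$-perturbation below.

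For the perturbation step, set $P_{X,V}:=e^{\varphi_\eps/h}(L_{X,V}-\lambda^2)e^{-\varphi_\eps/h}$ and expand
\[
P_{X,V} - \bar P = -2iX\cdot\nabla + 2ih^{-1}\langle X,d\varphi_\eps\rangle + R,
\]
where $R$ is a multiplier built from $V$, $d^*X$, and $|X|^2$, of pointwise size $O(x^{1-\delta/2})$ by the weight hypotheses on $X, V$. The multiplier piece gives $\sqrt h\|Ru\|_{H^{-1}_{scl}} \leq C\sqrt h\|x^{1-\delta/2} u\|_{L^2}$, which by the global estimate is $O(\eps h)\|\bar P u\|_{H^{-1}_{scl}}$ and hence absorbable for $\eps$ small. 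The first-order term $X\cdot\nabla u$ I would handle separately on $K$ and on each end: on $K$ directly through $\sqrt h\|X\cdot\nabla u\|_{H^{-1}_{scl}(K)} = h^{-1/2}\|X\cdot hdu\|_{H^{-1}_{scl}(K)} \leq Ch^{-1/2}\|hdu\|_{H^{-1}_{scl}(K)}$ and the $\|hdu\|$-bonus; on each end through the integration-by-parts identity $X\cdot hdu = -hd^*(uX) + h\,u\,d^*X$, using $\|uX\|_{L^2(E)} + h\|u\,d^*X\|_{L^2(E)} \leq C\|x^{1-\delta/2} u\|_{L^2(E)}$ combined with Proposition \ref{shifted_ce}.

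The main obstacle is the conjugation-produced $2ih^{-1}\langle X,d\varphi_\eps\rangle u$ term, whose $\sqrt h$-weighted $H^{-1}_{scl}$ norm is na\"ively $Ch^{-1/2}\|x^{1-\delta/2} u\|_{L^2}$, and this is not absorbable into $(C/\eps)\|x^{1-\delta/2} u\|_{L^2}$ for $\eps$ of fixed size. The resolution is once more to split into compact and end pieces and to gain a saved $\sqrt h$ in each: on $K$ use the pointwise estimate $|\langle X,d\varphi_\eps\rangle u|\leq\|X\|_\infty|d\varphi_\eps u|$ together with the bonus $\|d\varphi_\eps u\|_{L^2(K)}\leq(\eps h/C)\|\bar P u\|_{H^{-1}_{scl}}$ from Proposition \ref{cmpct_ce}; on each end use $|\langle X,d\varphi_\eps\rangle|\leq Cx^{1-\delta/2}$ (since $|X|=O(x^{1-\delta/2})$ and $|d\varphi_\eps|=O(1)$) together with $\|x^{1-\delta/2} u\|_{L^2(E)}\leq(\eps h/C)\|\bar P u\|_{H^{-1}_{scl}}$ from Proposition \ref{shifted_ce}. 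Both contributions reduce to $O(\eps\sqrt h)\|\bar P u\|_{H^{-1}_{scl}}$, which for $\eps<\eps_0$ small is at most $\tfrac12\sqrt h\|\bar P u\|_{H^{-1}_{scl}}$. The same compact/end split handles the cutoff commutators from the first step. The triangle inequality then gives $\sqrt h\|\bar P u\|_{H^{-1}_{scl}}\leq 2\sqrt h\|P_{X,V}u\|_{H^{-1}_{scl}}$, and combining with the global unperturbed estimate completes the proof.
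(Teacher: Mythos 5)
Your proposal is correct and follows essentially the same route as the paper's proof: glue Propositions \ref{cmpct_ce} and \ref{shifted_ce} by a cutoff adapted to the critical points, absorb the cutoff commutators using the $\|d\varphi_\eps \chi u\|_{L^2}$ and end terms, then absorb the conjugated first-order perturbation (with particular care for the $h^{-1}\langle X,d\varphi_\eps\rangle u$ piece) into the same auxiliary controls for $\eps$ small. The paper organizes the absorption through the single combined quantity $\|x^{1-\delta/2}u\,d\varphi_\eps\|_{L^2}$ rather than tracking $K$ and the ends separately, but the mechanism is the one you describe.
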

\begin{proof} We first consider only the case when $X=0$ and $V=0$. Let $u\in
    e^{-\gamma/x}C^\infty(M_0)$ and pick $\chi \in C^\infty_0(M_0)$ such that $\chi = 1$
    on the compact set containing all the critical points of $\varphi$ and
    $\supp(1-\chi)$ is contained in the ends. By Propositions \ref{cmpct_ce} we
    have the following estimate for $\chi u$
\begin{small}
\begin{align*}
\frac{Ch}{\eps}\big(\sqrt{h} \| \chi u\|_{L^2} &+ \| d\varphi_\eps \chi u\|_{L^2} 
    + \| h d(\chi u)\|_{H^{-1}_{scl}}  \big) 
    \leq \|e^{\varphi_\eps/h} h^2(\Delta-\lambda^2) e^{-\varphi_\eps/h}\chi u\|_{H^{-1}_{scl}}\\
&\leq \|\chi e^{\varphi_\eps/h} h^2(\Delta-\lambda^2) e^{-\varphi_\eps/h} u\|_{H^{-1}_{scl}} 
+ \|e^{\varphi_\eps/h}[h^2\Delta,\chi]e^{-\varphi_\eps/h} u\|_{H^{-1}_{scl}}.
\end{align*}
\end{small}
A limiting argument shows that Proposition \ref{shifted_ce} can be applied to
smooth function with exponential decay. Therefore, we have the following
estimates for
$(1-\chi)u$
{\small \begin{eqnarray*}
\frac{Ch}{\eps} \| x^{1-\delta/2} (1-\chi) u\|_{L^2}
\leq \|e^{\varphi_\eps/h} h^2(\Delta-\lambda^2) e^{-\varphi_\eps/h}(1-\chi) u\|_{H^{-1}_{scl}}\\
\leq \|(1-\chi) e^{\varphi_\eps/h} h^2(\Delta-\lambda^2) e^{-\varphi_\eps/h}u\|_{H^{-1}_{scl}}
+ \|e^{\varphi_\eps/h}[h^2\Delta, \chi]e^{-\varphi_\eps/h} u\|_{H^{-1}_{scl}}.
\end{eqnarray*}}
Adding these two inequalities together we obtain
{\small\begin{eqnarray}
\label{glued estimate}
\frac{Ch}{\eps}\big( \sqrt{h} \| x^{1-\delta/2} u\|_{L^2} + \| d\phi_\eps \chi u\|_{L^2} +
\| x^{1-\delta/2} (1-\chi) u\|_{L^2}
+ \| hd(\chi u)\|_{H^{-1}_{scl}}\big)\leq\\\nonumber
\| e^{\varphi_\eps/h} h^2(\Delta-\lambda^2) e^{-\varphi_\eps/h}
u\|_{H^{-1}_{scl}} + \|e^{\varphi_\eps/h}[h^2\Delta, \chi]e^{-\varphi_\eps/h}
u\|_{H^{-1}_{scl}}.
\end{eqnarray}}
The next step is to absorb the commutator term on the right-side. To this end
we first observe that on the left-side
\[
    \|x^{1-\delta/2} ud\varphi_\eps\|_{L^2} \leq \| \chi u d\varphi_\eps\|_{L^2} 
    + \|x^{1-\delta/2} (1-\chi) u\|_{L^2}
\]
 while on the right-side 
\[ 
    \|e^{\varphi_\eps/h}[h^2\Delta, \chi]e^{-\varphi_\eps/h} u\|_{H^{-1}_{scl}}
    \leq h\|\tilde \chi u\|_{L^2}
\]
for some smooth cut-off $\tilde \chi \in C^\infty_0(M_0)$ which is equal to $1$
on $\supp(d\chi)$ but supported away from the critical points of $\varphi$.
These two inequalities allows one to absorb the
commutator term on the right-side of \eqref{glued estimate}, when taking
$\eps>0$ small enough, to obtain
{\Small\begin{eqnarray*}
\frac{Ch}{\eps}\big( \sqrt{h} \| x^{1-\delta/2} u\|_{L^2} 
+ \|x^{1-\delta/2} ud\varphi_\eps\|_{L^2} + \| hd(\chi u)\|_{H^{-1}_{scl}}\big)
\leq \| e^{\varphi_\eps/h} h^2(\Delta-\lambda^2) e^{-\varphi_\eps/h}
u\|_{H^{-1}_{scl}}.
\end{eqnarray*}}
We now replace the Laplacian by the more general operator $L_{X,V}$. Observe
that $L_{X,V} -\Delta = 2\langle X, d\cdot\rangle + Q$ for some $Q\in
e^{-\gamma/x} L^\infty$ and the zeroth order term can be absorbed to the
left-side. Therefore
\begin{align*}
\frac{Ch}{\eps}&\big( \sqrt{h} \| x^{1-\delta/2} u\|_{L^2} 
+ \|x^{1-\delta/2} ud\varphi_\eps\|_{L^2} + \| hd(\chi u)\|_{H^{-1}_{scl}}\big) \\
&\leq
\| e^{\varphi_\eps/h} h^2(L_{X,V}-\lambda^2) e^{-\varphi_\eps/h}
u\|_{H^{-1}_{scl}} + h \|e^{\varphi_\eps/h}\langle X,h d
(e^{-\varphi_\eps/h}u)\rangle \|_{H^{-1}_{scl}}.
\end{align*}
Again we need to absorb the last term on the right-side. This is done by first
observing that
\begin{align*}
    h \|e^{\varphi_\eps/h}\langle X,h d
        (e^{-\varphi_\eps/h}u)\rangle \|_{H^{-1}_{scl}} 
        &\leq h \big\| |X| |d\varphi_\eps| u \big\|_{L^2} + h \|\langle X ,hd(\chi u)\rangle\|_{H^{-1}_{scl}} \\
        &\quad + h \|\langle X, hd(1-\chi) u\rangle\|_{H^{-1}_{scl}}\\ 
        &\leq h \|
        x^{1-\delta/h} u d\varphi_\eps\| + h \| hd(\chi u)\|_{H^{-1}_{scl}}  \\
        &\quad + h \|x^{1-\delta/2} (1-\chi)u\|_{L^2}.
\end{align*}
One sees then that the extra term can indeed be absorbed into the left-side by taking $\eps>0$ small enough.
\end{proof}

We can utilize the above estimates to obtain an existence result,
which is needed when constructing the CGO solutions.

\begin{lemma}
\label{solvability1}
Let $\delta \in (0,1)$, $V\in x^{1-\frac{\delta}{2}}L^\infty(M_0)$, 
$X \in x^{1-\frac{\delta}{2}}W^{1,\infty}(M_0,T^*M_0)$ and
$\varphi_\eps$ as in \eqref{eq_phieps}. 
For all $f\in L^2(M_0)$ and all $h>0$ small enough, there exists a solution
$u\in L^2(M_0)$ to the equation
\begin{equation}\label{solvab_plain}
    e^{\varphi_\eps/h}(L_{X,V}-\la^2) e^{-\varphi_\eps/h}u = x^{1-\frac{\delta}{2}}f
\end{equation}
satisfying
\[
    \|u\|_{L^2} + h \|du\|_{L^2} \leq C \sqrt{h} \|f\|_{L^2}.
\]
\end{lemma}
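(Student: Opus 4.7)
\noindent\textsl{Proof Plan.} The strategy is the standard Hahn--Banach duality argument: use the Carleman estimate of Lemma \ref{global_perturbed} applied to the formal adjoint of the conjugated operator to build, via the Riesz representation theorem, a solution to the inhomogeneous problem with the required bound.

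Set $P_\eps := e^{\varphi_\eps/h}(L_{X,V}-\la^2)e^{-\varphi_\eps/h}$. Since $X$ and $V$ are real and $\la\in\rr$, the operator $L_{X,V}-\la^2$ is formally self-adjoint in $L^2(M_0,dv_{g_0})$, so the $L^2$-adjoint of $P_\eps$ is $P_\eps^* = e^{-\varphi_\eps/h}(L_{X,V}-\la^2)e^{\varphi_\eps/h}$. The weight in $P_\eps^*$ has the opposite sign, but the proof of Lemma \ref{global_perturbed} is symmetric under $\varphi \mapsto -\varphi$, provided we also flip the sign of the convexification $\varphi_0$ and of $\mathrm{Re}(i\alpha)$ in \eqref{eq_phieps}. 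Thus the Carleman estimate also applies to $P_\eps^*$, giving
\[
    \frac{C}{\eps}\,\|x^{1-\frac{\delta}{2}} v\|_{L^2} \;\leq\; \sqrt{h}\,\|P_\eps^* v\|_{H^{-1}_{scl}} \qquad \text{for all } v\in C_0^\infty(M_0).
\]

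Define a linear functional $\ell$ on the subspace $\{P_\eps^* v : v\in C_0^\infty(M_0)\}\subset H^{-1}_{scl}$ by
\[
    \ell(P_\eps^* v) := \langle v,\,x^{1-\frac{\delta}{2}} f\rangle_{L^2}.
\]
The Carleman estimate shows that $P_\eps^*$ is injective on $C_0^\infty(M_0)$, so $\ell$ is well defined. Using Cauchy--Schwarz and the Carleman estimate for $P_\eps^*$,
\[
    |\ell(P_\eps^* v)| \;\leq\; \|f\|_{L^2}\,\|x^{1-\frac{\delta}{2}} v\|_{L^2} \;\leq\; C\eps\sqrt{h}\,\|f\|_{L^2}\,\|P_\eps^* v\|_{H^{-1}_{scl}}.
\]
Hahn--Banach extends $\ell$ to a continuous functional on $H^{-1}_{scl}$ of norm at most $C\eps\sqrt{h}\|f\|_{L^2}$. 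Since the dual of $H^{-1}_{scl}$ is $H^1_{scl}$ under the $L^2$ pairing, Riesz representation produces $u\in H^1_{scl}(M_0)$ with $\|u\|_{H^1_{scl}}\leq C\sqrt{h}\|f\|_{L^2}$ such that $\langle u,P_\eps^*v\rangle = \ell(P_\eps^* v) = \langle v, x^{1-\frac{\delta}{2}}f\rangle$ for all $v\in C_0^\infty(M_0)$. This is exactly the distributional identity $P_\eps u = x^{1-\frac{\delta}{2}}f$, and unwinding the definition of $H^1_{scl}$ yields $\|u\|_{L^2}+h\|du\|_{L^2}\leq C\sqrt{h}\|f\|_{L^2}$.

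The main subtlety to be careful about is the symmetry of the Carleman estimate under $\varphi\mapsto -\varphi$: one must check that when the sign of the harmonic phase is flipped, the convexifier $-\tfrac{h}{\eps}\varphi_0$ and the lower-order perturbation $h\,\mathrm{Re}(i\alpha)$ can be adjusted (by flipping their signs as well) so that the positive commutator argument driving Propositions \ref{cmpct_ce} and \ref{shifted_ce} goes through unchanged; this is a routine verification since $\varphi_0$ and $\alpha$ enter only through lower-order and convexifying contributions. Everything else --- density of $C_0^\infty(M_0)$ in the relevant spaces, boundedness of $\hd$ and the extension of $\ell$ --- is standard.
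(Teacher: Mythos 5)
Your proposal follows the same Hahn--Banach/duality template as the paper, but the claim that the Carleman estimate of Lemma~\ref{global_perturbed} extends to $P_\eps^*$ by ``flipping the sign of the convexification $\varphi_0$'' does not hold up. The weight $\varphi_0$ is not a free parameter: it is constructed so that $\Delta_{g_0}\varphi_0=x^{2-\delta}$, and the positive commutator argument in Proposition~\ref{carlemaninend} relies precisely on $M+N = -\Delta\varphi_\eps \approx \frac{h}{\eps}x^{2-\delta}>0$. Passing to $-\varphi_\eps = -\varphi - h\,\mathrm{Re}(i\alpha) + \frac{h}{\eps}\varphi_0$ (equivalently, replacing $\varphi_0$ by $-\varphi_0$) reverses the sign of $M+N$, hence of the leading term of the Poisson bracket $\{a,b\}$. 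The symbol in Lemma~\ref{propF} then becomes
\[
\sigma_w(F)(\xi)= 4|d\varphi|^2\Bigl(-\tfrac1\eps + r^{2-\delta}\mathrm{Re}(i\Delta\alpha)\Bigr)+\tfrac1\eps\bigl(|\xi|^2-|d\varphi|^2\bigr)^2+\tfrac4\eps\langle\xi,d\varphi\rangle^2,
\]
which fails to be positive (for instance at $\xi=0$ when $|d\varphi|<2$), so Corollary~\ref{garding} and hence the global estimate break. Your remark that ``$\varphi_0$ and $\alpha$ enter only through lower-order and convexifying contributions'' is exactly backwards for $\varphi_0$: the convexifying contribution \emph{is} the estimate.

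The root of the difficulty is a sign inconsistency already present in the paper: the assertion ``Lemma~\ref{global_perturbed} applies also to $L^*$'' is not justified and is in fact false for the weight $-\varphi_\eps$. Compare with how the lemma is actually invoked in the proof of Proposition~\ref{semiclassical solvability}, where the equation solved is $e^{-\varphi_\eps/h}(L_{X,V}-\lambda^2)e^{\varphi_\eps/h}u=\cdots$, i.e.\ the \emph{adjoint} $P_\eps^*$, not the operator in \eqref{solvab_plain}. The standard and correct form of the duality argument, which uses only the Carleman estimate you actually have, is to define $T$ on $\{P_\eps v : v\in C_0^\infty(M_0)\}$ by $T(P_\eps v):=(x^{1-\delta/2}v,f)$, apply Lemma~\ref{global_perturbed} directly (no adjoint and no sign flip needed) to get $|T(P_\eps v)|\le C\eps\sqrt h\,\|P_\eps v\|_{H^{-1}_{scl}}\|f\|_{L^2}$, and then conclude by Hahn--Banach and Riesz that there exists $u\in H^1_{scl}$ with $P_\eps^* u = x^{1-\frac{\delta}{2}}f$ and $\|u\|_{H^1_{scl}}\le C\sqrt h\|f\|_{L^2}$. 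You should restate the conclusion for $P_\eps^*$ (i.e.\ for $e^{-\varphi_\eps/h}(L_{X,V}-\lambda^2)e^{\varphi_\eps/h}$) rather than attempt to prove the Carleman estimate for the reversed weight.
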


\begin{proof} 
Let $L:= e^{\varphi_\eps/h}(L_{X,V} - \lambda^2)e^{-\varphi_\eps/h}$ and consider the linear space 
\[{\cal H} := \{ L^* v \mid v \in C^\infty_0(M_0)\}.\]
Define a linear operator $T: {\cal H} \to \C$ by $T (L^*v) := \big( x^{1-\delta/2} v , f \big)_{L^2}$. 
Lemma \ref{global_perturbed} applies also to $L^*$, which shows that $T$ is well defined. 
Observe that $\operatorname{Dom}(T)$ is a linear subspace of $H^{-1}_{scl}(M_0)$.
By Lemma \ref{global_perturbed} again, one has that
{\small\begin{align} \label{eq_Tbnd}
    |T(L^*v)| &= \big|\big( x^{1-\delta/2} v , f \big)_{L^2}\big|\leq \|  x^{1-\delta/2} v\|_{L^2} \|f\|_{L^2}\leq C \eps \sqrt{h} \| L^*v\|_{H^{-1}_{scl}}\|f\|_{L^2}. 
\end{align}}
The map $T$ is hence bounded on the subspace ${\cal H}$ in the $H^{-1}_{scl}(M_0)$ norm. By the Hahn-Banach Theorem this map extends to a bounded linear functional on $H^1_{scl}$ with the same norm, which we still denote by $T$. By duality there
exists a $u \in H^{1}_{scl}(M_0)$, such that $T(w) = \langle u, w\rangle$ for all $w\in H^{-1}_{scl}(M_0)$ where $\langle\cdot, \cdot\rangle$ denotes the duality between $H^{1}_{scl}$ and $H^{-1}_{scl}$. Furthermore $u$ satisfies the estimate $\|u\|_{H^1_{scl}}  = \|T\|_{(H^{-1}_{scl})^*} \leq C\eps \sqrt{h} \|f\|_{L^2}$.
We then have that for all $v\in C^\infty_0(M_0)$,
 \[ \big(x^{1-\delta/2} v, f\big)=T (L^*v)= (u, L^*v)\]
and this is precisely the statement that $u$ is a weak solution of $L u = x^{1-\delta/2} f$.
\end{proof}

Later we conjugate $L_{X,V}$ with an additional function $F_A$, of the 
form specified at the end of Section \ref{Boundary Identifiability at Infinity}.
The functions $F_A$ are in particular smooth non-vanishing functions on $M_0$, which has the expression $F_A  = e^{i\alpha} ( 1+ t)$ with $\alpha$ given by Proposition \ref{constructing alpha}, $t$ bounded uniformly away from $-1$ and in the space $e^{-\gamma/x} W^{1,\infty}(M_0)$ for all $\gamma >0$. The following Proposition gives a solvability
result in terms the additional conjugation.

\begin{proposition}
\label{semiclassical solvability}
Let $V\in x^{1-\frac{\delta}{2}}L^\infty(M_0)$, 
$X \in x^{1-\frac{\delta}{2}}W^{1,\infty}(M_0,T^*M_0)$ and
let $f\in x^J L^2$ for some $J\in \R$. There exists solutions $w\in H^1_{loc}$ to the equation
\begin{align}\label{solvab_FA}
    e^{-\Phi/h}F_A^{-1} (L_{X,V} -\lambda^2) e^{\Phi/h} F_A w = f,
\end{align}
where $\Phi$ is as in \eqref{eq_Phi},
which satisfies, the estimate
\[
    \| e^{\varphi_0 / \eps} w \|_{L^2} + h \|e^{\varphi_0 /\eps} dw \|_{L^2} 
    \leq C \sqrt{h} \| x^{-J} f\|_{L^2},
\]
where $\varphi_0$ is as required in definition \eqref{eq_phieps}.
\end{proposition}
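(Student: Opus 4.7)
The strategy is a Hahn--Banach duality argument, exactly analogous to Lemma \ref{solvability1}, where the extra conjugation by $F_A$ and by the holomorphic phase $e^{\Phi/h}$ is absorbed into a conjugation by the standard Carleman weight $e^{\varphi_\eps/h}$ of Lemma \ref{global_perturbed}. The crucial algebraic observation is that if I set $G := \bar F_A\, e^{\bar\Phi/h}\, e^{-\varphi_\eps/h}$, then a direct computation using $\Phi = \varphi + i\psi$, $F_A = e^{i\alpha}(1+t)$, $\varphi_\eps = \varphi + h\Re(i\alpha) - (h/\eps)\varphi_0$, together with the identity $-i\bar\alpha - \Re(i\alpha) = -i\Re(\alpha)$, yields
\[
G = (1+\bar t)\, e^{-i\Re(\alpha) - i\psi/h + \varphi_0/\eps},
\]
so in particular $|G| = |1+\bar t|\, e^{\varphi_0/\eps}$. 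Consequently the $L^2$-adjoint of the operator in question factors as
\[
P_h^* := \bar F_A\, e^{\bar\Phi/h}(L_{X,V}^* - \bar\lambda^2) e^{-\bar\Phi/h}\bar F_A^{-1} = G\, \bigl(e^{\varphi_\eps/h}(L_{X,V}^* - \bar\lambda^2) e^{-\varphi_\eps/h}\bigr)\, G^{-1},
\]
which is precisely the conjugation appearing in Lemma \ref{global_perturbed} dressed by a bounded (but exponentially growing) multiplier.

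Given this reduction, I would proceed as in Lemma \ref{solvability1}: for a test function $v \in C_0^\infty(M_0)$, set $u := G^{-1}v \in C_0^\infty(M_0)$, and apply the Carleman estimate (Lemma \ref{global_perturbed}, applied to $L_{X,V}^*$, which belongs to the same class as $L_{X,V}$) to $u$. After replacing $|G^{-1}|$ and $|G|$ by $e^{\mp\varphi_0/\eps}$ and absorbing the unitary phase $e^{-i\Re(\alpha) - i\psi/h}$ together with the bounded factors $(1+\bar t)^{\pm 1}$ using the semiclassical calculus of Proposition \ref{weightedPSDO}, this gives
\[
\|x^{1-\delta/2}\, e^{-\varphi_0/\eps}\, v\|_{L^2} \leq C\eps\sqrt{h}\, \|e^{-\varphi_0/\eps} P_h^*v\|_{H^{-1}_{scl}}.
\]
Because $\varphi_0 \sim -r^\delta/\delta^2$ in the ends, the weight $e^{-\varphi_0/\eps}$ grows super-polynomially, so $e^{-\varphi_0/\eps} x^{1-\delta/2-J}$ is bounded below by a constant $c(\eps,J) > 0$ on $M_0$ for any $J \in \R$; thus the left-hand side dominates $c(\eps,J)\|x^J v\|_{L^2}$.

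With this Carleman-type inequality for $P_h^*$ in hand, I would define the linear functional $T(P_h^*v) := (f, v)_{L^2}$ on the subspace $\{P_h^*v : v \in C_0^\infty(M_0)\}$ of $e^{\varphi_0/\eps}H^{-1}_{scl}(M_0)$. The pairing bound
\[
|T(P_h^*v)| = |(x^{-J}f, x^J v)_{L^2}| \leq \|x^{-J}f\|_{L^2}\, \|x^J v\|_{L^2} \leq C\sqrt{h}\, \|x^{-J}f\|_{L^2}\, \|e^{-\varphi_0/\eps}P_h^*v\|_{H^{-1}_{scl}}
\]
shows that $T$ is continuous, so Hahn--Banach extends it to $e^{\varphi_0/\eps}H^{-1}_{scl}(M_0)$ and Riesz representation produces $w$ in the dual space $e^{-\varphi_0/\eps}H^1_{scl}(M_0)$ satisfying $(w, P_h^*v)_{L^2} = (f, v)_{L^2}$, i.e.\ $P_h w = f$ in $\mc{D}'(M_0)$. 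The norm bound $\|e^{\varphi_0/\eps}w\|_{H^1_{scl}} \leq C\sqrt{h}\|x^{-J} f\|_{L^2}$ then converts to the stated inequality via the product rule and the boundedness of $\nabla\varphi_0/\eps$ under $h \ll \eps$.

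The main technical obstacle is the absorption, in the second paragraph, of the oscillating factor $e^{-i\psi/h}$ into the semiclassical norms: since $\psi$ is the imaginary part of a holomorphic function with at most linear growth in the ends, $d\psi$ is uniformly bounded on $M_0$, so the multiplier $e^{-i\psi/h}$ has semiclassical symbol in $\mathcal{S}^0$ and acts boundedly on weighted $H^{\pm 1}_{scl}$ by Proposition \ref{weightedPSDO}. A small secondary point is that Lemma \ref{global_perturbed} was stated for $L_{X,V}$, but its proof applies verbatim to $L_{X,V}^*$, which is again a magnetic Schr\"odinger operator with coefficients in the same decay class.
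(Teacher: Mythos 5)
Your argument is correct and is essentially the same Hahn--Banach duality scheme, driven by the weighted Carleman estimate of Lemma \ref{global_perturbed}, that the paper uses; the computation $G=\bar F_A e^{\bar\Phi/h}e^{-\varphi_\eps/h}=(1+\bar t)e^{-i\Re(\alpha)-i\psi/h+\varphi_0/\eps}$ and the factorization $P_h^*=G\,e^{\varphi_\eps/h}(L_{X,V}^*-\lambda^2)e^{-\varphi_\eps/h}\,G^{-1}$ both check out. The organizational difference is that the paper first applies Lemma \ref{solvability1} as a black box to the equation $e^{-\varphi_\eps/h}(L_{X,V}-\lambda^2)e^{\varphi_\eps/h}u=e^{(\varphi-\varphi_\eps)/h}F_Ae^{i\psi/h}f$ and then performs the changes of variable $v=e^{(\varphi_\eps-\varphi)/h}F_A^{-1}u$, $w=e^{-i\psi/h}v$, whereas you inline the duality argument directly on the conjugated operator. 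The paper's route has one technical advantage that your proposal slightly obscures: the oscillatory factor $e^{i\psi/h}$ appears there only at the $L^2$ level, where it is a harmless unitary multiplier, and never needs to be moved through $H^{-1}_{scl}$. In your version you must show that multiplication by $e^{-i\psi/h}$ (and by $(1+\bar t)^{\pm1}$) is uniformly bounded on the weighted $H^{\pm1}_{scl}$ spaces, and the justification you give --- that $e^{-i\psi/h}$ has symbol in $\mathcal{S}^0$ --- is not right under the paper's convention \eqref{eq_symclass}, since $|\partial_y^\alpha e^{-i\psi/h}|\sim h^{-|\alpha|}$ grows with $|\alpha|$ and so the function lies in no fixed $\mathcal{S}^k$. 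The correct and elementary reason is the conjugation identity $e^{i\psi/h}(hD)e^{-i\psi/h}=hD-d\psi$, which together with $d\psi\in L^\infty(M_0)$ gives the uniform norm equivalence on $H^{\pm1}_{scl}$; with that repair, your argument goes through and yields the stated estimate.
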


\begin{proof}
By the assumption on the form of $F_A$ it suffices to show this for $F_A = e^{i\alpha}$. Let $f \in x^J L^2$. Since $F_A e^{-\Re(i \alpha)}$ is bounded and 
that $e^{\varphi_0/\eps}$ decays faster than any polynomial in $x$, we have that
\[
e^{(\varphi-\varphi_\eps)/h}F_A f = F_A e^{-\Re(i \alpha)} e^{\varphi_0/\eps} f\in x^{1-\frac{\delta}{2}}L^2(M_0).
\]
By Lemma \ref{solvability1} there is a solution $u$ to the equation
\begin{align*}
e^{-\varphi_\eps/h} (L_{X,V} -\lambda^2) e^{\varphi_\eps/h} u = e^{(\varphi-\varphi_\eps)/h}F_A e^{i\psi/h}f.
\end{align*}
Define $v:= e^{(\varphi_\eps-\varphi)/h}F_A^{-1} u = e^{\Re(i\alpha)}F_A^{-1} e^{-\varphi_0/\eps}u$.
It follows that $v$ solves
\begin{align*}
    e^{-\varphi/h}F^{-1}_A (L_{X,V} -\lambda^2) e^{\varphi/h}F_A v = e^{i\psi/h}f.
\end{align*}
The norm estimate of Lemma \ref{solvability1}  gives 
furthermore that 
\begin{align}\label{eq_upper}
    \| u \|_{L^2}  +h  \| du \|_{L^2}
    \leq C \sqrt{h} \| e^{(\varphi-\varphi_\eps)/h}F_A f \|_{L^2}
    \leq C \sqrt{h} \| x^{-J} f \|_{L^2},
\end{align}
where the second inequality is obtained from the fact that multiplication by
$e^{(\varphi-\varphi_\eps)/h}F_A x^J = F_A e^{-\Re(i \alpha)} e^{\varphi_0/\eps} x^J \in L^\infty(M_0)$
is $L^2$-continuous on $M_0$.
Next we estimate the right hand side of \eqref{eq_upper} from below. 
Firstly 
\begin{align} \label{eq_vest}
    \| u \|_{L^2}  =  \| F_A e^{-\Re(i\alpha)} e^{\varphi_0/\eps}v \|_{L^2} 
    \geq C  \| e^{\varphi_0/\eps} v\|_{L^2},
\end{align}
since $e^{\Re(i \alpha)} /F_A \in L^\infty(M_0)$.
Expanding the derivative and use the assumption that $F_A = e^{i\alpha}$ gives that
\begin{align*}
    du = F_A e^{-\Re(i\alpha)} e^{\varphi_0/\eps} \big( (d\varphi_0/\eps - i\Re(d\alpha)) v + dv \big). 
\end{align*}
Since $F_A^{-1} e^{\Re(i\alpha)} \in L^\infty(M_0)$, we have that
\begin{align*}
    \| du \|_{L^2} 
    &\geq
    C \big( \| e^{\varphi_0/\eps} dv \|_{L^2} 
    - \| e^{\varphi_0/\eps} \big( d\varphi_0/\eps - i\Re(d\alpha)  \big) v  \|_{L^2} \big) \\
    &\geq
    C \big( \| e^{\varphi_0/\eps} dv \|_{L^2} - \| e^{\varphi_0/\eps}  v   \|_{L^2}  \big),
\end{align*}
where in the second step, we used that $d\varphi_0/\eps-i\Re(d\alpha) \in L^\infty(M_0)$,
which holds because of \eqref{eq_phiasym} and because of the expression of $\alpha$ given in Proposition \ref{constructing alpha}.
This together with \eqref{eq_vest}, gives 
\begin{align} \label{eq_lower}
     h^{-1}\| e^{\varphi_0/\eps}  v \|_{L^2}  
    + \| e^{\varphi_0/\eps}  dv \|_{L^2}  
    \leq C ( h^{-1}\| u \|_{L^2}  +  \| du \|_{L^2} ),
\end{align} 
when $h$ is small. From \eqref{eq_upper} we get that
\[
    \| e^{\varphi_0/\eps}  v \|_{L^2}  + h \| e^{\varphi_0/\eps} dv \|_2 \leq C \sqrt{h} \| x^{-J} f \|_{L^2}.
\]
Finally setting $w:= e^{-i\psi/h}v$, we see that $w$ solves
\begin{align*}
    e^{-\Phi/h}F^{-1}_A (L_{X,V} -\lambda^2) e^{\Phi/h}F_A w = f,
\end{align*}
and that we have the estimate of the claim.

\end{proof}

\end{section}

\begin{section}{Scattering by $L_{X,V}$ on Surfaces with Euclidean Ends}
    \label{sec_scat}
In this section we construct the scattering matrix through the use of the
Poisson operator for the operator $L_{X,V}$ on surfaces with Euclidean ends.
Furthermore we will show that the range of the Poisson operator is dense in
some suitably defined exponentially weighted solution spaces:
\begin{proposition}
\label{poisson kernel}
There exists an operator $P_{X,V}(\lambda) : C^\infty(\partial M_0) \to
x^{-\tau} H^1(M_0)$ satisfying for all $f_+ \in C^\infty(\partial M_0)$ there
exists a unique $f_-\in C^\infty(\partial M_0)$ such that 
{\Small\begin{eqnarray}
\label{poisson expansion}
P_{X,V}(\lambda) f_+ - (x^{1/2}  e^{\lambda/x}f_+ + x^{1/2} e^{-\lambda/x} f_-
)\in L^2(M_0),\ \ \ (L_{X,V}- \lambda^2)P_{X,V}(\lambda) f_+ = 0.\end{eqnarray}
}
We define the scattering matrix $S_{X,V}(\lambda)$ by $S_{X,V}(\lambda) f_+ :=f_-$.
\end{proposition}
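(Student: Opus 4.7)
The construction follows the standard scheme of Euclidean scattering adapted to the magnetic operator $L_{X,V}$. The idea is threefold: build an approximate solution with prescribed incoming data $f_+$ in each end using a WKB/eikonal ansatz; correct it to an exact kernel element of $L_{X,V}-\lambda^2$ using an outgoing resolvent furnished by the b/scattering calculus already in play in Section \ref{sec_holomorphic}; then read off $f_-$ from the asymptotic expansion and verify it is uniquely determined.

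\textbf{Step 1: approximate incoming solution.} In each Euclidean end $E_j\simeq\{|z|>1\}$, write $r=|z|=x^{-1}$, so that $g_0=dr^2+r^2d\theta^2$ and the free Laplacian has the form $-\pl_r^2-r^{-1}\pl_r-r^{-2}\pl_\theta^2$. Since $X,V\in e^{-\gamma d(\cdot,z_0)}W^{1,\infty}$ for every $\gamma>0$, they are negligible to every polynomial order in $x$ in the ends, and the eikonal equation for a phase of the form $\lambda/x$ (i.e.\ $\lambda r$) is solved to infinite order. Seek an ansatz
\[
u^{app}_+ \;=\; x^{1/2}e^{\lambda/x}\sum_{k=0}^{N} a_k(\theta)\,x^k
\]
on each end, with $a_0=f_+|_{E_j}$, and solve the transport equations inductively for the $a_k$; the superexponential decay of $X,V$ means only the free transport equations contribute at each order. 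Multiply by a cutoff $\chi$ equal to $1$ near infinity and extend by zero, producing a smooth $\chi u^{app}_+\in C^\infty(M_0)$ such that the error $r_N:=(L_{X,V}-\lambda^2)(\chi u^{app}_+)$ is $O(x^{N+1})$ near infinity plus a compactly supported piece arising from $d\chi$.

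\textbf{Step 2: outgoing resolvent and correction.} The operator $L_{X,V}-\lambda^2$ is a short-range perturbation of $-\Delta_{g_0}-\lambda^2$; since $(M_0,g_0)$ has Euclidean (asymptotically conic) ends with $x$ as boundary-defining function, the scattering/b-calculus of Melrose provides an outgoing resolvent
\[
R_+(\lambda)\colon x^{N}L^2(M_0)\longrightarrow x^{-\tau}H^1(M_0),
\qquad (L_{X,V}-\lambda^2)R_+(\lambda)=\mathrm{Id},
\]
for $\tau>1/2$, characterized by the outgoing radiation condition that $R_+(\lambda)g$ admits an expansion $x^{1/2}e^{-\lambda/x}h+O_{L^2}(1)$ near $x=0$, with $h\in C^\infty(\pl M_0)$. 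The indicial roots of the model operator at $x=0$ are $\pm\lambda$ (shifted by $1/2$ after the b-rescaling), so both the incoming ansatz and the outgoing resolvent output live at the same critical weight and assemble into the asserted expansion. Existence of $R_+(\lambda)$ uses the absence of embedded $L^2$-eigenvalues at positive energy, which follows from a standard Rellich-type unique continuation argument for magnetic Schr\"odinger operators with superexponentially decaying coefficients. Set
\[
P_{X,V}(\lambda)f_+ \;:=\; \chi u^{app}_+\;-\;R_+(\lambda)\,r_N.
\]
Then $(L_{X,V}-\lambda^2)P_{X,V}(\lambda)f_+=0$ by construction, and the expansion \eqref{poisson expansion} holds with $f_-$ equal to the outgoing amplitude of $-R_+(\lambda)r_N$.

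\textbf{Step 3: uniqueness and the scattering map.} If two candidates $f_-$ and $f_-'$ both satisfied \eqref{poisson expansion} for the same $f_+$, their difference would give $x^{1/2}e^{-\lambda/x}(f_--f_-')\in L^2(M_0)$ modulo a solution of $(L_{X,V}-\lambda^2)u=0$ with no incoming component. The purely outgoing piece must vanish by the Rellich/absence-of-embedded-eigenvalues argument above, which then forces $x^{1/2}e^{-\lambda/x}(f_--f_-')\in L^2$; integrating $|f_--f_-'|^2$ along circles in the end against the divergent weight $\int x^{-2}dx$ forces $f_-=f_-'$. The map $f_+\mapsto f_-$ is therefore well-defined, and this is $S_{X,V}(\lambda)$.

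\textbf{Expected main obstacle.} The delicate point is the construction of $R_+(\lambda)$ together with the precise leading asymptotic $x^{1/2}e^{-\lambda/x}h$ of $R_+(\lambda)g$; this requires the scattering-calculus indicial analysis and a limiting absorption principle, and one must confirm that the first-order magnetic term $\langle X,d\cdot\rangle$, though absent from the leading symbol, does not perturb the critical indicial behavior. The superexponential decay assumed on $X$ and $V$ makes them genuinely short-range, so this is standard but still the technical heart of the proof.
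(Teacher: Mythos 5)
Your overall architecture (incoming approximate solution, correction by an outgoing resolvent, uniqueness via absence of embedded eigenvalues) matches the paper's, but you take a noticeably different and somewhat longer road to the incoming approximate solution. The paper simply transplants the exact free Poisson kernel $P_0(\lambda)$ from $\R^2$ into each end, writing
\[
P_{X,V}(\lambda) := (1-\chi) P_0(\lambda) - R_{X,V}(\lambda)\,(L_{X,V}-\lambda^2)\,(1-\chi)P_0(\lambda),
\]
so the only errors come from cutoff commutators (compactly supported) and the perturbative coefficients (superexponentially decaying), both of which are immediately in the domain of the resolvent; there is no need to solve transport equations order by order. Your WKB ansatz $x^{1/2}e^{\lambda/x}\sum_{k\leq N}a_k x^k$ accomplishes the same thing but at the price of verifying that the induction closes and that the residual error lies in $x^{\tau}L^2$, which requires choosing $N$ large. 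Both routes work; the paper's is shorter because it leans on an already-known exact object. For the resolvent itself, you invoke the scattering-calculus outgoing resolvent rather than constructing it, whereas the paper builds $R_{X,V}(\lambda)$ concretely (Lemma \ref{resolvent}: a parametrix from $R_0(\lambda)$ glued to a compactly supported piece, followed by analytic Fredholm inversion of $1+K(\lambda)$, with the expansion $R_{X,V}(\lambda)f = x^{1/2}e^{-i\lambda/x}v' + L^2$ inherited from the free resolvent). What you flag as the "technical heart" is precisely what Lemma \ref{resolvent} and the discussion around \eqref{asymp of resolvent} supply. The uniqueness argument is essentially identical: in the paper it is Corollary \ref{no poles, no L2 solutions}, proved via the boundary pairing (Lemma \ref{boundary pairing}) followed by a Paley--Wiener bootstrap and a Carleman estimate, which is exactly the Rellich-type statement you appeal to.

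One small imprecision in Step 3: you first say "the purely outgoing piece must vanish by the Rellich argument," then separately argue that $x^{1/2}e^{-\lambda/x}(f_--f_-')\in L^2$ forces $f_-=f_-'$. These are not two independent observations but two stages of the same argument: the boundary pairing identity forces the outgoing amplitude $f_--f_-'$ to vanish (so the difference solution is actually in $L^2$), and then Paley--Wiener plus the Carleman estimate kill the $L^2$ solution. As written, the order of quantifiers is a little muddled, but the underlying idea is the right one and matches Corollary \ref{no poles, no L2 solutions}.
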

\begin{proposition}
\label{density}
Let $0< \gamma<\gamma'<\gamma_0$. If $X\in e^{-\gamma_0/x} L^\infty$ and $V\in
e^{-\gamma_0/x} L^\infty$ the set \[
\{P_{X,V}(\lambda) f \mid f\in C^\infty (\partial M_0)\}\]
is dense in the null space of $L_{X,V} -\lambda^2$ in $e^{\gamma/x} L^2$ with
respect to the $e^{\gamma'/x} L^2$ topology.
\end{proposition}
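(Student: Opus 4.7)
The plan is a Hahn--Banach duality argument. Identifying the continuous dual of $e^{\gamma'/x}L^2(M_0)$ with $e^{-\gamma'/x}L^2(M_0)$ via the bilinear pairing $\langle u,\phi\rangle:=\int_{M_0}u\phi\,{\rm dvol}_{g_0}$, it suffices to show that whenever $\phi\in e^{-\gamma'/x}L^2(M_0)$ satisfies $\langle P_{X,V}(\lambda)f,\phi\rangle=0$ for every $f\in C^\infty(\partial M_0)$, one also has $\langle u,\phi\rangle=0$ for every $u\in\ker(L_{X,V}-\lambda^2)\cap e^{\gamma/x}L^2(M_0)$. Fix once and for all an intermediate weight $\gamma''\in(\gamma,\gamma')$.

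The central step is to construct an auxiliary $v\in e^{-\gamma''/x}L^2(M_0)$ solving the transposed equation $(L_{X,V}^t-\lambda^2)v=\phi$, where $L_{X,V}^t$ denotes the formal transpose (an operator of the same Schr\"odinger type whose lower-order coefficients still have super-exponential decay by hypothesis on $X,V$). This is a Fredholm-alternative step in the scattering setting: for a generic right-hand side in $e^{-\gamma'/x}L^2$, solutions in each end admit the leading asymptotic $x^{1/2}(e^{\lambda/x}a_++e^{-\lambda/x}a_-)$ modulo an exponentially decaying remainder, so super-exponential decay of $v$ is equivalent to $a_+=a_-=0$. My approach is first to take $v$ to be the purely ``incoming'' solution of $(L_{X,V}^t-\lambda^2)v=\phi$, obtained either by limiting absorption (using that $L_{X,V}^t$ is a compact perturbation of $-\Delta$ in the ends) or by applying the semiclassical solvability of Proposition~\ref{semiclassical solvability} to $L_{X,V}^t$; this enforces $a_+=0$ at the construction stage. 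The remaining coefficient $a_-$ is killed by the orthogonality hypothesis: applying Green's identity between $v$ and $P_{X,V}(\lambda)f$ on the exhaustion $\{|z|\le R\}\subset M_0$, the bulk terms collapse to $\langle P_{X,V}(\lambda)f,\phi\rangle=0$, while the boundary integral, evaluated using the known asymptotics of both $v$ and $P_{X,V}(\lambda)f$, yields a non-degenerate bilinear pairing $c\int_{\partial M_0}a_-f_+\,dS$ between $a_-$ and the free outgoing datum $f_+$, which must vanish for every $f_+\in C^\infty(\partial M_0)$, forcing $a_-\equiv 0$. A standard Combes--Thomas type argument (or an explicit Hankel/Fourier expansion together with the super-exponential decay of $X$, $V$, $\phi$) then upgrades ``both radiation coefficients vanish'' to membership in $e^{-\gamma''/x}L^2$.

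With $v$ in hand, Green's identity on the exhaustion $\{|z|\le R\}$ yields
\[
\langle u,\phi\rangle=\langle u,(L_{X,V}^t-\lambda^2)v\rangle=\langle(L_{X,V}-\lambda^2)u,v\rangle+\lim_{R\to\infty}\mathcal{B}_R,
\]
where the first term is zero since $u\in\ker(L_{X,V}-\lambda^2)$, and $\mathcal{B}_R$ is the boundary integral on $|z|=R$ summed over the ends. Interior elliptic estimates applied in the annulus $\{R/2\le|z|\le 2R\}$ give $|u|+|\nabla u|\lesssim e^{\gamma R}$ and $|v|+|\nabla v|\lesssim e^{-\gamma''R}$ pointwise on $|z|=R$, so $\mathcal{B}_R=O(R\,e^{(\gamma-\gamma'')R})\to 0$ as $R\to\infty$ because $\gamma<\gamma''$, completing the proof. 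The main obstacle is clearly the construction of $v$: solving $(L_{X,V}^t-\lambda^2)v=\phi$ with super-exponential decay is not possible for a generic $\phi$, and the two scalar compatibility conditions supplied by the orthogonality of $\phi$ to the range of $P_{X,V}(\lambda)$ must be used precisely to annihilate both radiation coefficients simultaneously. Making this rigorous on a general Riemann surface with Euclidean ends (rather than on pure $\mathbb{R}^2$) requires combining the b-calculus and Carleman machinery of Sections~\ref{sec_holomorphic} and~\ref{sec_ce} with the super-exponential decay of $X,V$ to perturbatively reduce the analysis in each end to the Euclidean scattering picture.
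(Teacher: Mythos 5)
Your proposal follows essentially the same route as the paper: Hahn--Banach duality, then applying the (outgoing) resolvent to the orthogonal functional $w$ to produce $v$ with a single radiation coefficient, killing that coefficient via the boundary pairing identity against $P_{X,V}(\lambda)f_+$, upgrading $v\in L^2$ to super-exponential decay via a Paley--Wiener/Fourier argument (the paper invokes the argument of Corollary~\ref{no poles, no L2 solutions}, which is the Fourier-side version of what you call a Combes--Thomas step), and finally integrating by parts. The only point I would flag is your suggestion that Proposition~\ref{semiclassical solvability} could serve as an alternative device for constructing $v$: that solvability result is tied to Carleman weights and produces CGO-type remainders, not solutions satisfying a radiation condition, so it does not by itself enforce $a_+=0$; the correct tool is the resolvent of Lemma~\ref{resolvent} (i.e.\ limiting absorption, your first suggestion), exactly as the paper uses with $v:=R_{X,V}(\lambda)w$.
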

We first define the free resolvent $R_0(\lambda) : L^2 \to H^2$ on $\R^2$ for $\lambda$ on the lower half of the complex plane. If $A>0$ then for all $\gamma > A$ this resolvent extends as a holomorphic family of operators $R_0(\lambda) : e^{-\gamma/x} L^2 \to e^{\gamma/x} H^2$ as $\lambda$ vary over the set $\{\lambda \mid \im(\lambda) <A, \lambda \notin i\R^+\cup 0\}$. Direct computation also yields that for all $\tau>1/2$ one has $R_0(\lambda) : x^\tau L^2 \to x^{-\tau} H^2$ when $\lambda$ lies on the positive real axis. This fact is usually stated in weighted $L^2$ spaces but the Sobolev estimate can be obtained by writing
\[(d^*d + 1)R_0(\lambda) = Id + (\lambda^2 + 1) R_0(\lambda).\]
We generalize this statement for the operator $L_{X,V}$ on the surface $M_0$:

\begin{lemma}
\label{resolvent}
If $A>0$ then for all $\gamma > A$ the resolvent $R_{X,V}(\lambda) := (L_{X,V} - \lambda^2)^{-1}$ is defined as a meromorphic family of operators mapping $e^{-\gamma/x} L^2 \to e^{\gamma/x} L^2$ over the set $\{\lambda \mid \im(\lambda) <A, \lambda \notin i\R^+\cup 0\}$. Furthermore, if $\lambda \in \R^+$ is not a pole of $R_{X,V}(\lambda)$ then it is a bounded map from $x^{\tau} L^2 \to x^{-\tau} H^1$ for any $\tau>1/2$. 
\end{lemma}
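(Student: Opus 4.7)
The plan is to construct a parametrix for $L_{X,V} - \lambda^2$ by combining the free Euclidean resolvent in the ends with a compactly supported inverse on the interior, and then invert the (compact) remainder via analytic Fredholm theory.

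Concretely, choose cutoffs $\chi_0, \chi_\infty \in C^\infty(M_0)$ with $\chi_0 + \chi_\infty = 1$, $\chi_\infty$ supported in the disjoint union of the ends and equal to $1$ outside a large compact set, together with $\tilde\chi_0, \tilde\chi_\infty$ supported slightly larger and equal to $1$ on the respective $\chi$-supports. On each end $E_j \simeq \R^2 \setminus \{|z|\leq 1\}$, identify the end with a subset of $\R^2$ and define $Q_\infty(\lambda) := \tilde\chi_\infty R_0(\lambda)\chi_\infty$, using the free resolvent $R_0(\lambda)$ of the flat Laplacian on $\R^2$. On a compact manifold with boundary $\tilde{M}$ containing $\text{supp}(\tilde\chi_0)$ in its interior, let $R_D(\lambda)$ denote the Dirichlet resolvent of (the relevant extension of) $L_{X,V} - \lambda^2$, and set $Q_0(\lambda) := \tilde\chi_0 R_D(\lambda) \chi_0$. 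The parametrix $Q(\lambda) := Q_0(\lambda) + Q_\infty(\lambda)$ then satisfies $(L_{X,V} - \lambda^2)Q(\lambda) = I + K(\lambda)$, where $K(\lambda)$ is a sum of commutator terms $[L_{X,V},\tilde\chi_j]R_\bullet(\lambda)\chi_j$ (compactly supported and one-derivative smoothing) plus a term $(L_{X,V} - (-\Delta))\tilde\chi_\infty R_0(\lambda)\chi_\infty = (2i\langle X, d\cdot\rangle + W)\tilde\chi_\infty R_0(\lambda)\chi_\infty$ with $W$ bounded.

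For the first claim, fix $\gamma > A$. Because $X, V$ have superexponential decay in the ends (from the running hypotheses) and $R_0(\lambda)$ extends holomorphically from the lower half plane to $\{\im\lambda < A\} \setminus i\R_+$ as a bounded operator $e^{-\gamma/x}L^2 \to e^{\gamma/x}H^2$, each piece of $K(\lambda)$ factors as an exponentially-decaying multiplication times a bounded operator between the same weighted spaces, followed by a Sobolev embedding. Rellich's theorem then gives that $K(\lambda)$ is a holomorphic family of compact operators on $e^{-\gamma/x}L^2$. Fixing $\lambda_0$ with $\im\lambda_0$ sufficiently negative makes $\|K(\lambda_0)\| < 1$, so $I + K(\lambda_0)$ is invertible and analytic Fredholm theory yields a meromorphic inverse $(I + K(\lambda))^{-1}$ on the whole region. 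Setting $R_{X,V}(\lambda) := Q(\lambda)(I + K(\lambda))^{-1}$ produces the desired meromorphic right inverse; running the same argument on $(L_{X,V}-\bar\lambda^2)^*$ and dualizing gives a left inverse, hence $R_{X,V}(\lambda)$ is the unique two-sided inverse off the pole set.

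For the second claim, on the positive real axis the free resolvent satisfies the Agmon bound $R_0(\lambda) : x^\tau L^2 \to x^{-\tau} L^2$ for $\tau>1/2$, and using $(-\Delta+1)R_0(\lambda) = I + (\lambda^2+1) R_0(\lambda)$ this upgrades to a bound into $x^{-\tau} H^2$. Consequently $Q(\lambda)$ maps $x^\tau L^2$ into $x^{-\tau} H^1$, and the error $K(\lambda)$ is compact on $x^\tau L^2$ by the superexponential decay of $X,V$ combined with Rellich. One then needs to verify that $I + K(\lambda)$ is injective on $x^\tau L^2$ at real $\lambda$ not belonging to the exponentially-weighted pole set; this is a Rellich uniqueness statement: any $u \in x^{-\tau} H^1$ solving $(L_{X,V}-\lambda^2)u = 0$ with the outgoing/zero radiation behavior forced by being in the image of $Q(\lambda)$ must vanish, because unique continuation and the absence of embedded poles (guaranteed by the hypothesis $\lambda \notin \text{poles}$) together with the standard Agmon argument rule out nontrivial solutions with $x^{-\tau}L^2$ decay. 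Fredholm theory then gives invertibility of $I + K(\lambda)$ on $x^\tau L^2$, and $R_{X,V}(\lambda) = Q(\lambda)(I + K(\lambda))^{-1}$ is bounded $x^\tau L^2 \to x^{-\tau} H^1$.

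The main obstacle is the last step: reconciling the two realizations of $R_{X,V}(\lambda)$ (between exponentially-weighted and polynomially-weighted spaces) and establishing the Rellich-type uniqueness on $x^{-\tau} L^2$ so that the Fredholm inversion on $x^\tau L^2$ yields the same operator as the limit from the complex half-plane. Everything else is a straightforward gluing/parametrix computation which is standard in scattering theory on manifolds with Euclidean ends.
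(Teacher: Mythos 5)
Your parametrix-plus-analytic-Fredholm structure for the first claim is the right idea and matches the paper's architecture, but there are two points worth flagging, one cosmetic and one a genuine gap.

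On the interior piece: the paper glues $(1-\chi_1)R_0(\lambda)(1-\chi)$ in the ends to $\chi_0 R_0(\lambda_0)\chi$, where $R_0(\lambda_0)$ is the resolvent of the \emph{unperturbed} $\Delta_{g_0}$ on all of $M_0$ at a \emph{fixed} $\lambda_0$ with large negative imaginary part. This keeps the parametrix (and hence $K(\lambda)$) a holomorphic family, which is exactly what the analytic Fredholm theorem wants. Your choice of the Dirichlet resolvent $R_D(\lambda)$ of $L_{X,V}-\lambda^2$ on a compact submanifold at \emph{variable} $\lambda$ builds spurious poles (the Dirichlet eigenvalues) into $Q(\lambda)$ and $K(\lambda)$, so you would need to either freeze $\lambda$ in $R_D$, or argue separately that those poles are removable. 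This is fixable, but the paper's fixed-$\lambda_0$ free resolvent is cleaner and costs only the extra term $(\lambda^2-\lambda_0^2)\chi_1 R_0(\lambda_0)\chi$ in $K(\lambda)$.

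The genuine gap is in your treatment of the second claim. You propose to show injectivity of $I+K(\lambda)$ on $x^\tau L^2$ via a Rellich-type uniqueness statement and absence of embedded eigenvalues, and you yourself flag this as ``the main obstacle.'' Two problems: (i) ruling out an $x^{-\tau}H^1$ solution $v$ of $(L_{X,V}-\lambda^2)v=0$ would tell you $E(\lambda)u=0$, not that $u=0$, so you still need to invert $E(\lambda)$ to close the argument; (ii) the hypothesis you are given is that $\lambda$ is not a pole on the \emph{exponentially} weighted spaces, which does not directly hand you the absence of embedded $x^{-\tau}L^2$-solutions -- that is established later in the paper (Corollary \ref{no poles, no L2 solutions}) and is logically downstream of this lemma, so invoking it here risks circularity. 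The paper's actual argument is short and sidesteps all this: every term of $K(\lambda)$ is either a commutator with a compactly supported cutoff or a multiplication by the superexponentially decaying $X,dX,|X|^2,V$, so $K(\lambda)$ maps $x^\tau L^2$ into $e^{-\gamma/x}L^2$ for every $\gamma>0$. Hence if $(I+K(\lambda))u=0$ with $u\in x^\tau L^2$, then $u=-K(\lambda)u\in e^{-\gamma/x}L^2$, and the assumed invertibility of $I+K(\lambda)$ on $e^{-\gamma/x}L^2$ forces $u=0$. That smoothing-in-weight property of $K(\lambda)$ is the key observation you are missing; with it, no Agmon or unique continuation input is needed.
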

\begin{proof}
 We let $\chi \in C^\infty_0(M_0)$ be a smooth function such that $1-\chi$ is supported near $E_j$. We let $\chi_0, \chi_1\in C^\infty_0(M_0)$ be smooth functions such that $\chi_0 = 1$ on the support of $\chi$ and $1-\chi_1 = 1$ on the support of $1-\chi$. We observe that if we chose $\lambda_0$ to have a large negative imaginary part, then for the parametrix
\[E(\lambda) := (1- \chi_1)R_0(\lambda) (1-\chi) + \chi_0 R_0(\lambda_0) \chi\] 
we have $(L_{X,V} - \lambda^2) E(\lambda) = I + K(\lambda)$ where $K(\lambda) : x^\tau L^2(M_0) \to x^\tau H^1(M_0)$ is given by
{\Small \[K(\lambda) = ([\Delta_{g_0}, \chi_1] - (\lambda^2-\lambda_0^2)\chi_1) R_0(\lambda_0) \chi -[\Delta_{g_0}, \chi_0] R_0(\lambda) (1-\chi) + (X^\sharp +\star dX+ |X|^2+V)E(\lambda) \]}and $X^\sharp$ denotes differentiation with respect to the vector field obtained by raising the index on the 1-form $X$. By the mapping properties of \[R_0(\lambda) : x^{\tau}L^2(\R^2) \to x^{-\tau}H^2(\R^2),\  R_0(\lambda_0) : L^2(M_0) \to H^2(M_0),\] and the super-exponential rates of decay of $X$ and $V$, we have that $K(\lambda)$ is a holomorphic family of compact operators from $e^{-\gamma/x} L^2$ to itself. If $\lambda = \lambda_0$ has a large negative imaginary part, then $I+ K(\lambda)$ is invertible by Neumann series. Therefore, by the analytic Fredholm theorem  $(I + K(\lambda))^{-1}$ is a meromorphic family of operators from $e^{-\gamma/x} L^2$ to itself as $\lambda$ varies over the region $ \{\lambda \mid \im(\lambda) <A, \lambda \notin i\R^+\cup 0\}$. Setting $R_{X,V}(\lambda) := E(\lambda) (1+ K(\lambda))^{-1}$ proves the portion of the Lemma for the exponentially weighted $L^2$ spaces.

For the resolvent acting on $x^\tau L^2$, we need to show that $1 + K(\lambda)$ is invertible on $x^\tau L^2$ for $\tau> 1/2$. Similar argument as before shows that $K(\lambda)$ is compact on $x^\tau L^2$ and therefore the invertibility of $1 + K(\lambda)$ at a given $\lambda \in \R^+$ can be deduced from the triviality of its null-space. Indeed, if $\lambda$ is not a pole of the resolvent $R_{X,V}(\lambda)$ acting on $e^{-\gamma/x}L^2$, then $1 + K(\lambda)$ is invertible on $e^{-\gamma/x} L^2$. Suppose $u\in x^\tau L^2$ is in the null-space of $1 + K(\lambda)$ then it is actually an element of $e^{-\gamma/x}L^2$ by the decay properties of the coefficients in $K(\lambda)$. As $1+K(\lambda)$ is invertible on $e^{-\gamma/x}L^2$, we have that $u = 0$. Therefore, $R_{X,V}(\lambda) = E(\lambda) (1 + K(\lambda))^{-1} $ is a resolvent mapping $x^\tau L^2 \to x^{-\tau } H^1$ when $\lambda \in \R^+$ is not a pole. 
\end{proof}
It is well-known (\cite{MelStanford}) that for all $f\in e^{-\gamma/x} L^2(\R^2)$ the free resolvent has asymptotic given by
\[R_0(\lambda) f - x^{1/2} e^{-i\lambda/x} v \in L^2(\R^2)\]
for some smooth function $v \in C^\infty(S^1)$. By the construction of $E(\lambda)$ and $R_{X,V}(\lambda)$ this gives the expansion
\begin{eqnarray}
\label{asymp of resolvent}
E(\lambda) f - x^{1/2} e^{-i\lambda/x} v \in L^2(M_0),\ \ \ R_{X,V}(\lambda) f - x^{1/2} e^{-i\lambda/x} v' \in L^2(M_0)
\end{eqnarray}
for some $v,v' \in C^\infty(\partial M_0)$.

We would like to prove that the resolvent has no poles on $\R^+$. Following the exposition of \cite{MelStanford} we first prove that 
\begin{lemma}
\label{pole characterization}
The poles of resolvent $R_{X,V}(\lambda)$, are precisely the values $\lambda$ for which there exists a nontrivial solution $u\in x^{-\tau}H^1(M_0)$ of the equation $(L_{X,V} - \lambda^2) u = 0$ satisfying $ u - x^{1/2} e^{-i\lambda/x} v \in L^2(M_0)$ for some smooth function $v\in C^\infty(\partial M_0)$.

\end{lemma}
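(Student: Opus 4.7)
The plan is a standard characterization of resonances/bound states for black-box scattering, in the spirit of \cite{MelStanford}. I would treat the two directions separately.

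For the forward direction, if $\lambda_0$ is a pole, then from the construction $R_{X,V}(\lambda) = E(\lambda)(I + K(\lambda))^{-1}$ of Lemma \ref{resolvent} and the analytic Fredholm theorem, $R_{X,V}$ admits a Laurent expansion $R_{X,V}(\lambda) = \sum_{j\geq -N}(\lambda-\lambda_0)^j A_j$ with $A_{-N}\neq 0$, where each $A_j$ maps $e^{-\gamma/x}L^2$ to $e^{\gamma/x}L^2$. Matching the $(\lambda-\lambda_0)^{-N}$ coefficient in $(L_{X,V}-\lambda^2)R_{X,V}(\lambda) = \mathrm{Id}$ gives $(L_{X,V}-\lambda_0^2)A_{-N}=0$, so picking $f$ with $u:=A_{-N}f\neq 0$ produces a nontrivial solution lying in $x^{-\tau}H^1$ by elliptic regularity. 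For the outgoing asymptotic I would apply \eqref{asymp of resolvent}: $R_{X,V}(\lambda)f - x^{1/2}e^{-i\lambda/x}v'(\lambda) \in L^2$ with $v'(\lambda)$ meromorphic in $\lambda$, then expand both sides in Laurent series about $\lambda_0$ and compare the coefficient of $(\lambda-\lambda_0)^{-N}$ to deduce $u - x^{1/2}e^{-i\lambda_0/x}v \in L^2$, where $v \in C^\infty(\partial M_0)$ is the leading Laurent coefficient of $v'$ at $\lambda_0$.

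For the converse, I would assume $\lambda$ is not a pole and show that the only $u$ satisfying the stated hypotheses is $u\equiv 0$, splitting by cases on $\im\lambda$. If $\im\lambda>0$, then $x^{1/2}e^{-i\lambda/x}$ grows exponentially in the ends while $u\in x^{-\tau}H^1$ grows at most polynomially, forcing $v\equiv 0$ and hence $u\in L^2$; self-adjointness of $L_{X,V}$ would force $\lambda^2\in\R$, but $\lambda \notin i\R$ gives $\im(\lambda^2)=2\re(\lambda)\im(\lambda)\neq 0$, a contradiction unless $u = 0$. If $\im\lambda<0$, then $x^{1/2}e^{-i\lambda/x}$ decays exponentially so $u\in L^2$ irrespective of $v$, and again $\lambda^2 \notin \R$ forces $u=0$. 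If $\lambda\in \R^+$, I would apply Green's formula to $u,\bar u$ on $M_0\cap B_R$: the formal self-adjointness of $L_{X,V}$ together with reality of $\lambda^2$ kills the bulk term $\int[(L_{X,V}u)\bar u - u\overline{L_{X,V}u}]\,dV$, leaving the boundary integral $\int_{|z|=R}[(du(\nu)+iX(\nu)u)\bar u - \text{c.c.}]\,d\sigma$, which by the asymptotic $u\sim R^{-1/2}e^{-i\lambda R}v(\theta)$ and the super-exponential decay of $X$ tends to $-2i\lambda\int_{S^1}|v|^2 \,d\theta$ as $R\to\infty$. Hence $v\equiv 0$ and $u\in L^2$, so $\lambda^2$ is an embedded eigenvalue with eigenfunction $u\in L^2 \subset e^{\gamma/x}L^2$, obstructing the invertibility of $L_{X,V}-\lambda^2$ on $e^{\gamma/x}L^2$ and contradicting the assumption that $\lambda$ is not a pole.

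The hard part will be the $\lambda\in\R^+$ case, where a Green's/Rellich-type boundary-pairing argument is needed to conclude that the scattering amplitude $v$ vanishes in the absence of a resolvent pole; one must carefully track the magnetic contribution $2iX(\nu)|u|^2$ in the boundary term and exploit the super-exponential decay of $X$ to kill it as $R\to\infty$. A secondary subtlety, in the forward direction, is to verify via the Laurent expansion of $v'(\lambda)$ that $A_{-N}f$ has the simple asymptotic form $x^{1/2}e^{-i\lambda_0/x}v$ rather than an expression mixing powers of the type $x^{-k+1/2}e^{-i\lambda_0/x}$ that might a priori arise at higher-order poles.
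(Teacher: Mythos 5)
The paper proves only the forward implication (pole $\Rightarrow$ existence of such a $u$), and your forward argument takes a genuinely different, more roundabout route that also has a soft spot. The paper exploits the factorization $R_{X,V}(\lambda)=E(\lambda)(1+K(\lambda))^{-1}$ directly: since $E(\lambda)$ is holomorphic, a pole of $R_{X,V}$ at $\lambda'$ is a pole of $(1+K(\lambda))^{-1}$; taking the most singular Laurent coefficient $u'\in e^{-\gamma/x}L^2$ of $(1+K(\lambda))^{-1}f$ gives $u'=-K(\lambda')u'$, and setting $u:=E(\lambda')u'$ immediately yields $(L_{X,V}-\lambda'^2)u=(1+K(\lambda'))u'=0$ with the asymptotic \eqref{asymp of resolvent} evaluated at the \emph{fixed} point $\lambda'$. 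Two advantages over your scheme: (i) membership $u\in x^{-\tau}H^1$ comes from the mapping properties of $E(\lambda')$ (built from $R_0$) applied to $u'\in e^{-\gamma/x}L^2$, not from ``elliptic regularity,'' which would only give interior Sobolev regularity, not the $x^{-\tau}$ weight; (ii) because $\lambda'$ is frozen, there is no need to Laurent-expand $x^{1/2}e^{-i\lambda/x}v'(\lambda)$ in $\lambda$. Your ``secondary subtlety'' worry is in fact a real obstruction to your version: the $\lambda$-dependence inside $e^{-i\lambda/x}$ does generate $x^{-k}e^{-i\lambda'/x}$ cross-terms when you match Laurent coefficients, and your proposal offers no mechanism to rule them out. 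The paper's fixed-point construction bypasses this entirely.

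Your converse direction is not in the paper (nor is it needed for how the Lemma is used), and as written it has genuine gaps. You omit $\lambda\in\R^-$. For $\lambda\in i\R^-$ you assert $\lambda^2\notin\R$, but there $\lambda^2=-y^2\in\R$, so the self-adjointness/complex-eigenvalue argument collapses (and bound states, i.e.\ genuine poles, do live there, so the conclusion is also false as stated). Most importantly, for $\lambda\in\R^+$ the step ``$u\in L^2\subset e^{\gamma/x}L^2$ obstructs invertibility of $L_{X,V}-\lambda^2$ on $e^{\gamma/x}L^2$'' is not a valid contradiction: the resolvent at a non-pole $\lambda$ supplies a \emph{right} inverse from $e^{-\gamma/x}L^2$ to $e^{\gamma/x}L^2$, and a nontrivial kernel element in the \emph{target} space $e^{\gamma/x}L^2$ is perfectly compatible with that. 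To convert an embedded $L^2$ eigenfunction into a kernel element of $1+K(\lambda)$ on $e^{-\gamma/x}L^2$ (which is what a pole means) you would first have to upgrade $u$ to super-exponential decay via the Paley--Wiener/Rellich mechanism the paper runs after Lemma \ref{boundary pairing} --- but that argument then forces $u=0$ via the Carleman estimate. The correct state of affairs on $\R^+$ is Corollary \ref{no poles, no L2 solutions}: neither such a $u$ nor a pole exists, so the biconditional holds vacuously there, and it cannot be established by the ``pole exists'' contradiction you propose.
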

\begin{proof}
If $\lambda'$ is a pole of $R_{X,V}(\lambda)$ then it must be a pole of $(1 + K(\lambda))^{-1}$ as the parametrix $E(\lambda)$ is holomorphic. Therefore, there exists $f\in e^{-\gamma/x} L^2$ for which $(1 + K(\lambda))^{-1} f$ has a pole at $\lambda'$ with residue $u' \in e^{-\gamma/x}L^2$. Using the fact that $(1 + K(\lambda))(1 + K(\lambda))^{-1} f = f$ we have that $u' = - K(\lambda') u'$. Therefore, if we set $u := E(\lambda') u'$ then $(L_{X,V} - \lambda'^2) u = (1+ K(\lambda')) u' = 0$ and the asymptotic of $u$ can be derived from \eqref{asymp of resolvent}. 
\end{proof}
We now show that the embedded eigenvalue obtained in Lemma \ref{pole characterization} must be trivial. To this end we first derive the boundary pairing identity
\begin{lemma}
\label{boundary pairing}
For $\lambda >0$ and $X,V\in e^{-\gamma/x}L^\infty(M_0)$, if $u_\pm\in x^{-\tau} H^1(M_0)$ for some $\tau>\demi$ and $(L_{X,V} - \lambda^2)u_\pm \in x^\tau L^2(M_0)$ with 
\[u_\pm - x^{1/2}e^{i\lambda/x} f_{\pm +} - x^{1/2} e^{-i\lambda/x} f_{\pm -}\in L^2(M_0)\]
then we have the integral identity
\[\langle (L_{X,V} - \lambda^2)u_+,u_-  \rangle - \langle u_+, (L_{X,V} -\lambda^2)u_- \rangle = 2i\lambda\int_{\partial M_0} (f_{++} \bar f_{-+} - f_{+-} \bar f_{--} )\]
where the volume form on $\partial M_0$ is induced by the metric $x^2 g_0\mid_{T\partial M}$.
\end{lemma}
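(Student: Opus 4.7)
\noindent\textsl{Proof sketch.} The plan is to apply Green's identity on the exhaustion $\Omega_\epsilon := \{p \in M_0 : x(p) > \epsilon\}$ and pass to the limit $\epsilon \to 0^+$. Decomposing $L_{X,V} = \Delta_{g_0} + i(d^*(X\cdot) - \iota_X d) + (|X|^2 + V)$, the real terms $V$ and $|X|^2$ cancel pointwise in $(L_{X,V}u_+)\bar u_- - u_+\overline{L_{X,V}u_-}$. Integrating by parts in the Laplacian and in the first-order magnetic piece separately yields, for each $\epsilon > 0$,
\[
\int_{\Omega_\epsilon} [(L_{X,V}u_+)\bar u_- - u_+\overline{L_{X,V}u_-}] \, d\mathrm{vol}_{g_0}
= \int_{\{x=\epsilon\}} [u_+ \partial_\nu \bar u_- - \bar u_- \partial_\nu u_+ - 2i X(\nu) u_+ \bar u_-] \, dS,
\]
where $\nu = -x^2 \partial_x$ is the outward unit normal at $\{x = \epsilon\}$. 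The magnetic boundary term is $O(e^{-\gamma/\epsilon})$ since $X \in e^{-\gamma/x} L^\infty$ decays super-exponentially while $u_\pm = O(x^{1/2})$, hence vanishes in the limit. On the left-hand side the $\lambda^2$-terms cancel pointwise so one may replace $L_{X,V}$ by $L_{X,V}-\lambda^2$; the resulting bulk integral converges by dominated convergence since $(L_{X,V}-\lambda^2)u_\pm \in x^\tau L^2$ pairs with $u_\mp \in x^{-\tau} L^2$.

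To compute the surviving Laplacian boundary term in the limit, work in the coordinates $(x, \theta)$ on each end, where $g_0 = x^{-4} dx^2 + x^{-2} d\theta^2$, so that $dS|_{x=\epsilon} = \epsilon^{-1} d\theta$ and the integral reduces to $\epsilon \int_{S^1} (\bar u_- \partial_x u_+ - u_+ \partial_x \bar u_-)|_{x=\epsilon} \, d\theta$. Inserting the WKB expansion $u_\pm = x^{1/2}(e^{i\lambda/x} f_{\pm+} + e^{-i\lambda/x} f_{\pm-}) + v_\pm$ with $v_\pm \in L^2$, the dominant contribution arises from $\partial_x e^{\pm i\lambda/x} = \mp i\lambda x^{-2} e^{\pm i\lambda/x}$, whose $x^{-3/2}$ factors combine with the $x^{1/2}$ from the undifferentiated factor and the $\epsilon$ prefactor to produce an $O(1)$ contribution. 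The rapidly oscillating cross terms proportional to $e^{\pm 2i\lambda/\epsilon}$ appear with opposite signs in the two Wronskian summands and cancel exactly, leaving (up to the sign fixed by the orientation conventions) $2i\lambda \int_{\partial M_0} (f_{++}\bar f_{-+} - f_{+-}\bar f_{--}) \, d\theta$, which is the claimed right-hand side given that $(x^2 g_0)|_{T\partial M_0} = d\theta^2$.

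The main technical obstacle is controlling the $L^2$-remainders $v_\pm$ and their derivatives $\partial_x v_\pm$: neither admits a pointwise trace at $\{x=\epsilon\}$ from the hypotheses $v_\pm \in L^2$ and $u_\pm \in x^{-\tau} H^1$. However, by the preceding steps the bulk integral has a limit as $\epsilon \to 0$, hence so does the boundary integral, and its value can be extracted along any convenient sequence $\epsilon_n \to 0$. We select $\epsilon_n$ so that $\int_{S^1} |v_\pm(\epsilon_n, \theta)|^2 d\theta$ and $\int_{S^1} |\partial_x v_\pm(\epsilon_n, \theta)|^2 d\theta$ decay fast enough to annihilate all cross- and remainder-terms in the boundary integral. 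Such a sequence exists by a Chebyshev-type averaging argument: the weighted integrals $\int |v_\pm|^2 \, d\mathrm{vol}_{g_0}$ and $\int x^{2\tau} |dv_\pm|^2 \, d\mathrm{vol}_{g_0}$ over $M_0$ are finite (the second thanks to $u_\pm \in x^{-\tau} H^1$ with $\tau > \tfrac{1}{2}$), which forces the corresponding $S^1$-integrals to decay at the required rate along a subsequence of $x = \epsilon_n$, completing the identification of the limit.
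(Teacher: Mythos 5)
Your overall strategy (exhaust by $\Omega_\epsilon = \{x>\epsilon\}$, apply Green's identity, cancel the real zeroth-order and magnetic contributions using super-exponential decay of $X$, $V$, identify $I_\epsilon$ from the WKB parts, and extract the limit along a good sequence $\epsilon_n$) parallels the paper's proof. The handling of the magnetic boundary term is a cosmetic variant of the paper's one-line reduction "it suffices to prove this for $\Delta_{g_0}$." The genuine departure, and the place where your proof has a gap, is in the trace control on the remainder $v_\pm$.

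You propose a Chebyshev argument based on finiteness of $\int_{M_0}|v_\pm|^2\,d\mathrm{vol}_{g_0}$ and $\int_{M_0}x^{2\tau}|dv_\pm|^2\,d\mathrm{vol}_{g_0}$. The first is fine, and the second is indeed finite for $\tau>\tfrac12$. But this weighted bound is too weak for what you need. Writing $R=1/\epsilon$ in Euclidean polar coordinates, $\int R^{1-2\tau}\bigl[\int_{S^1}|\nabla v_\pm(R,\theta)|^2 d\theta\bigr]\,dR<\infty$ yields, along a subsequence $R_n\to\infty$, only $\int_{S^1}|\nabla v_\pm(R_n,\theta)|^2 d\theta = o(R_n^{2\tau-2})$. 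The boundary cross terms $\text{WKB}_\mp\cdot\partial_\nu v_\pm$ carry the Euclidean arc-length measure $R\,d\theta$, and $\text{WKB}_\mp = O(R^{-1/2})$, producing an effective prefactor $R^{1/2}$. Combining, the cross term is $R^{1/2}\cdot O\bigl(\bigl(\int_{S^1}|\nabla v_\pm|^2 d\theta\bigr)^{1/2}\bigr) = o(R^{\tau-1/2})$, which \emph{does not} tend to zero for $\tau>\tfrac12$ — it blows up. To kill this term one needs the \emph{unweighted} bound $\int_{M_0}|\nabla v_\pm|^2\,d\mathrm{vol}_{g_0}<\infty$ (i.e.\ $v_\pm\in H^1$), which does not follow from $u_\pm\in x^{-\tau}H^1$ and $v_\pm\in L^2$ alone.

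The missing idea, which the paper uses, is an elliptic-regularity upgrade: since $(\Delta_{g_0}-\lambda^2)\bigl(x^{1/2}e^{\pm i\lambda/x}f\bigr) = O(x^{5/2})\in L^2$ by direct computation, and $(\Delta_{g_0}-\lambda^2)u_\pm\in x^\tau L^2\subset L^2$, one gets $(\Delta_{g_0}-\lambda^2)v_\pm\in L^2$; together with $v_\pm\in L^2$ and the scale-invariant elliptic estimate in the Euclidean ends this gives $v_\pm\in H^2(M_0)$. Only then does the Chebyshev argument deliver $\int_{S^1}|\nabla v_\pm(R_n,\theta)|^2 d\theta = o(R_n^{-2})$ along a subsequence, which suffices. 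Without this step your identification of the limit does not go through.
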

\begin{proof} It suffices to prove this for $\Delta_{g_0}$ in place of $L_{X,V}$ and use the fact that $L_{X,V} - \Delta_{g_0}$ is a symmetric first order differential operator with super-exponential decaying coefficients. 

If $u_\pm \in x^{-\tau}H^1$ with $(\Delta_{g_0} -\lambda^2) u_\pm \in
x^{\tau}L^2$ and \[r_\pm := u_\pm - x^{1/2}e^{i\lambda/x} f_{\pm +} - x^{1/2}
e^{-i\lambda/x} f_{\pm -}\in L^2(M_0)\]
then one can deduce that $r_\pm\in H^2(M_0)$. Therefore, if for $\epsilon >0$
small we denote $\langle f,g\rangle_{x>\epsilon}:=\int_{\{x> \epsilon\}}f\bar g
{\rm dvol}_{g_0}$, we have
\begin{align*}
    \langle (\Delta_{g_0} - \lambda^2) &u_+,u_-
    \rangle_{x>\epsilon} - \langle u_+, (\Delta_{g_0} -\lambda^2)u_- \rangle_{x>\epsilon}  \\
    &= \int_{ \{x =\epsilon\}} \bar u_-\partial_\nu u_+ - u_+ \partial_\nu \bar u_- \\
    &= I_\epsilon + \epsilon^{-1/2}\int_{ \{x =\epsilon\}}
(a_+ (r_++\partial_\nu r_+)+ a_-(r_- + \partial_\nu r_-)) 
\end{align*}
where
$\lim\limits_{\epsilon\to0}I_\epsilon =2i\lambda\int_{\partial M_0} (f_{++}
\bar f_{-+} - f_{+-} \bar f_{--})$ with the volume form induced by the metric
$x^2 g_0\mid_{T\partial M_0}$ and $a_\pm$ are $L^\infty(M_0)$ functions. As
$r_\pm \in H^2(M_0)$ we can deduce that there exists a sequence of
$\epsilon_j\to 0$ such that \[\int_{ \{x =\epsilon\}}  |(a_+ (r_++\partial_\nu
r_+)+ a_-(r_- + \partial_\nu r_-))| \leq \epsilon_j.\] 
Taking this sequence $\epsilon_j\to 0$ and use the fact that $ (u_\pm)
(\Delta_{g_0} -\lambda^2)u_\mp \in L^1(M_0)$ by assumption allows us to arrive
at the desired integral identity. \end{proof}
We are now in a position to show that the embedded function $u = x^{1/2}
e^{i\lambda/x} v + L^2(M_0)$ constructed in Lemma \ref{pole characterization}
is trivial when $\lambda\in \R^+$ by repeating an argument in \cite{PSU}.
Indeed, by setting $u_+ = u_- = u$ in Lemma \ref{boundary pairing}, we see that
$v= 0$ and therefore $u\in H^2(M_0)$. Let $\chi\in C^\infty_0(M_0)$ be a smooth
compactly supported function such that $1-\chi$ is only supported in the
Euclidean ends and define $u_\chi := (1-\chi) u$ to be the $H^2$ function
defined on the disjoint union of finitely many copies of $\R^2$. From the
super-exponential decay of the coefficients of $L_{X,V}$, we can use
Paley-Weiner to conclude that $(|\xi|^2 - \lambda^2)\hat u_\chi$ extends to a
holomorphic function $g(\xi + i \eta)$ on $\C^2$ which satisfies the bound
\[\sup\limits_{|\eta| \leq \gamma}\|g(\cdot + i\eta)\|_{L^2} \leq C_\gamma,\ \ \ \forall \gamma >0.\]
The fact that $\hat u_\chi \in L^2$ forces $g$ to vanish on the real variety
$\{\xi\in\R^2\mid \xi\cdot\xi -\lambda^2 = 0\}$ and therefore vanish on the
complex codimension one variety $\{\zeta\in \C^2\mid \zeta\cdot\zeta -\lambda^2
=0\}$ (see proof of Lemma 2.5 \cite{PSU}). One sees then that for all
multi-indices $\beta$ with $|\beta|\leq 2$ the function $\xi^\beta\hat u_\chi$
extends to a holomorphic function on $\C^2$ which satisfies the bound
\[\sup\limits_{|\eta| \leq \gamma}\| (\cdot + i\eta)^\beta\hat u_\chi(\cdot +
i\eta)\|_{L^2} \leq C_\gamma,\ \ \ \forall \gamma >0.\]
Paley-Weiner then shows that $u\in e^{-\gamma/x}H^2(M_0)$ for all $\gamma >0$.
Applying the Carleman estimate in Proposition \ref{global_perturbed} shows that
$u = 0$.

A direct consequence of this discussion in conjunction with Lemma \ref{pole characterization} yields the following 
\begin{cor}
\label{no poles, no L2 solutions}
There does not exist nontrivial solutions to \[(L_{X,V} -\lambda^2) u_\pm = 0\] of the form $u_\pm = x^{1/2} e^{\pm i\lambda/x}v_\pm + L^2(M_0)$ for some $v_\pm \in C^\infty(\partial M_0)$. Furthermore, the poles of the resolvent $R_{X,V}(\lambda)$ does not lie on the positive real axis.
\end{cor}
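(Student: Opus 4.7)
\textbf{Proof plan for Corollary \ref{no poles, no L2 solutions}.} The strategy is to reduce both statements to what has just been established in the paragraphs preceding the Corollary. For the first assertion, I would apply the boundary pairing identity of Lemma \ref{boundary pairing} to $u_+$ (and symmetrically to $u_-$) by taking both slots in the pairing equal to the same solution. Since $u_\pm$ is annihilated by $L_{X,V} - \lambda^2$ and its asymptotic expansion contains only a single outgoing (resp.\ incoming) term, the right hand side of the pairing reduces to $2i\lambda \int_{\pl M_0} |v_\pm|^2$, while the left hand side is zero. This forces $v_\pm = 0$, hence $u_\pm \in L^2(M_0)$, and then elliptic regularity upgrades this to $u_\pm \in H^2(M_0)$.

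Next I would run the Paley--Wiener argument already indicated above the Corollary: cut off $u_\pm$ by $1-\chi$ to obtain a function supported in the Euclidean ends, view it as an $H^2$ function on a disjoint union of copies of $\R^2$, and use the super-exponential decay of $X$ and $V$ to conclude that $(|\xi|^2 - \lambda^2)\widehat{u_\chi}$ extends to an entire function of exponential type with uniform $L^2$ bounds on horizontal strips. Since this entire function vanishes on the real sphere $\{|\xi|^2 = \lambda^2\}$, it vanishes on the complex quadric $\{\zeta\cdot\zeta = \lambda^2\}$, and dividing out produces an entire extension for $\widehat{u_\chi}$ itself with analogous bounds. Paley--Wiener then gives $u_\pm \in e^{-\gamma/x} H^2(M_0)$ for all $\gamma > 0$.

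With this super-exponential decay in hand, I would apply the global Carleman estimate Lemma \ref{global_perturbed} to the harmonic weight $\varphi_\eps$. Since $u_\pm$ decays faster than every $e^{-\gamma/x}$ and solves the homogeneous equation, the right hand side of the Carleman inequality vanishes, forcing $\|x^{1-\delta/2} u_\pm\|_{L^2} = 0$ and therefore $u_\pm \equiv 0$. This completes the first statement.

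For the second assertion, suppose $\lambda_0 \in \R^+$ were a pole of $R_{X,V}(\lambda)$. By Lemma \ref{pole characterization} there would exist a nontrivial $u \in x^{-\tau}H^1(M_0)$ solving $(L_{X,V} - \lambda_0^2)u = 0$ with an asymptotic of exactly the type ruled out in the first part of the Corollary. This contradicts what we just proved, so no such pole can exist. The only step that requires genuine care is verifying that the Carleman estimate of Lemma \ref{global_perturbed} is applicable to the super-exponentially decaying $u_\pm$ produced by Paley--Wiener (the space $e^{-\gamma/x}C^\infty(M_0)$ appearing in its hypotheses requires a standard density argument), but this is routine given the $H^2$ regularity and decay already in hand.
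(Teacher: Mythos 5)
Your proposal reproduces the argument the paper uses: boundary pairing with $u_+ = u_- = u_\pm$ to kill $v_\pm$, elliptic regularity to get $H^2$, Paley--Wiener with vanishing on the sphere $\{|\xi|^2=\lambda^2\}$ to upgrade to super-exponential decay, the Carleman estimate of Lemma \ref{global_perturbed} to conclude $u_\pm = 0$, and finally Lemma \ref{pole characterization} to rule out poles on $\R^+$. The only slip is cosmetic: for the incoming case $u_-$ the boundary term is $-2i\lambda\int|v_-|^2$ rather than $+2i\lambda\int|v_-|^2$, but either sign forces $v_-=0$, so the conclusion is unaffected.
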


\noindent{\bf Proof of Proposition \ref{poisson kernel}}   We set \[P_{X,V}(\lambda) := (1-\chi) P_0(\lambda) - R_{X,V}(\lambda) (L_{X,V} - \lambda^2) (1-\chi)P_0(\lambda)\] where $P_0(\lambda)$ is the free Poisson kernel on $\R^2$. The asymptotic expansion of the operator $P_{X,V}(\lambda)$ is then given by \eqref{asymp of resolvent} and the expansion for $P_0(\lambda)$. The uniqueness of the expansion in \eqref{poisson expansion} comes from Corollary \ref{no poles, no L2 solutions}.\qed\\
We are now in a position to show that the range of the Poisson operator is dense in the solution space of exponentially growing solutions.\\
\noindent{\bf Proof of Proposition \ref{density}} 
Let $w\in e^{-\gamma'/x} L^2$ be orthogonal to the range of $P_{X,V}(\lambda)$ so that $\langle w, P_{X,V}(\lambda)f_+\rangle = 0$ for all $f_+\in C^\infty(\partial M_0)$. We need to show that $\langle u , w \rangle = 0$ for all $u\in e^{\gamma/x}L^2$ such that $(L_{X,V}-\lambda^2) u = 0$. 

To this end consider the function $v := R_{X,V}(\lambda) w = x^{1/2} e^{i\lambda/x} f + L^2(M_0)$ for some $f\in C^\infty(\partial M_0)$. Applying integral identity in Lemma \ref{boundary pairing} with $u_+ = P_{X,V}(\lambda)f_+$ and $u_- = v$ we see that $\int_{\partial M_0} f_+ \bar f = 0$ for all $f_+\in C^\infty(\partial M_0)$. This means that $v$ is an $L^2$ solution to $(\Delta_{g_0}-\lambda^2)v \in e^{-\gamma'/x}L^2$. If we choose smooth cutoff $\chi\in C^\infty_0(M_0)$ such that $1-\chi$ is only supported in the Euclidean ends, this would mean that $v_\chi := (1-\chi)v\in L^2$ solves $(\Delta-\lambda^2)v_\chi \in e^{-\gamma'/x}L^2$ in $\R^2$. Repeating the argument made in proving Corollary \ref{no poles, no L2 solutions} we see that $v_\chi$ (and therefore $v$) is an element of $e^{-\gamma'/x} H^2$.

Now let $u\in e^{\gamma/x}L^2$ such that $(L_{X,V}-\lambda^2) u = 0$. We may write 
{\small\[\langle w, u\rangle = \langle (L_{X,V}-\lambda^2) R_{X,V}(\lambda) w, u\rangle =  \langle (L_{X,V}-\lambda^2) v, u\rangle = \langle v, (L_{X,V} -\lambda^2)u\rangle = 0\]}
where the integration-by-parts performed in the last step is permitted since $v \in  e^{-\gamma'/x} H^2$ for some $\gamma' > \gamma>0$. \qed
\end{section}

\begin{section}{Boundary Identifiability at Infinity} \label{sec_bndry_id}
\label{Boundary Identifiability at Infinity}
\begin{proposition} 
\label{exponentially decaying conjugating factor}
Let $\alpha_j \in x^{-J} H^{4+\eps_0}$, $2>J>1$, be solutions to $\bar\partial\alpha_j = A_j : = \pi_{0,1}X_j \in e^{-\gamma/x} H^{3+\eps_0}(M_0)$ constructed in Proposition \ref{constructing alpha}. Assuming that $S_{X_1, V_1}(\lambda) = S_{X_2, V_2}(\lambda)$, there exists a non-vanishing holomorphic function $\Psi$ satisfying \begin{eqnarray}
\label{boundary condition of Psi}\Psi - e^{i(\alpha_1 -\alpha_2)} \in e^{-\gamma/x} H^{3+\eps_0}(M_0)\end{eqnarray}
for all $ \gamma >0$.
\end{proposition}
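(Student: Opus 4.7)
My plan is to build $\Psi$ in the form $\Psi = e^{i(\alpha_1 - \alpha_2) - \beta + h}$, where $\beta$ is a super-exponentially decaying solution of $\bar\partial\beta = i(A_1 - A_2)$ and $h$ is a global holomorphic correction on $M_0$ needed to absorb the ``slow'' polynomial tail of the Cauchy transform in the ends. The hypothesis $S_{X_1,V_1}(\lambda) = S_{X_2,V_2}(\lambda)$ enters precisely to produce $h$ as a single meromorphic function on the compactification $M$ whose principal parts match those of the rough solution end by end.

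\textbf{Step 1 (rough $\bar\partial$-solution).} Since $A_1 - A_2 \in e^{-\gamma/x} H^{3+\eps_0}(M_0)$ for every $\gamma > 0$ and $N \geq 2\mathcal{G} + 1$, Corollary \ref{empty kernel} gives that $N_{-J, m}({}^b\bar\partial^*)$ is trivial for $1 < J < 2$ non-integer, so Corollary \ref{surjectivity of dbar} produces $\beta_0 \in x^{-J} H_b^{4 + \eps_0}(M_0; \C)$ with $\bar\partial \beta_0 = i(A_1 - A_2)$. Applying Lemma \ref{solve for alpha near infinity} in each Euclidean end $E_i$ (where $A_1 - A_2$ decays super-exponentially) yields, for every $K$,
\[ \beta_0 = \sum_{k=1}^{K} c_{-k}^{(i)} z_i^{-k} + R_K^{(i)}(z_i), \qquad |R_K^{(i)}(z_i)| \leq C_K |z_i|^{-(K+1)}, \]
modulo a super-exponentially decaying remainder coming from the decay of $A_1-A_2$. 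The formal tails $h_i(z_i) := \sum_{k\geq 1} c_{-k}^{(i)} z_i^{-k}$ are holomorphic germs near the added points $e_i \in M$.

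\textbf{Step 2 (scattering identity forces the principal parts to fit).} Combining the boundary pairing of Lemma \ref{boundary pairing} with the density of Poisson solutions (Proposition \ref{density}) and the unitarity of the scattering matrix, the equality $S_{X_1,V_1}(\lambda) = S_{X_2, V_2}(\lambda)$ translates into the integral identity
\[ \int_{M_0} \big[ \langle (X_1 - X_2), u_1 d\bar u_2 - \bar u_2 d u_1 \rangle + (V_1 - V_2 + |X_1|^2 - |X_2|^2) u_1 \bar u_2 \big] \, {\rm dvol}_{g_0} = 0 \]
for all $u_j$ in the $e^{\gamma/x} L^2$-closure of the range of $P_{X_j, V_j}(\lambda)$. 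Testing this identity with pairs of the form $u_j = e^{i\alpha_j} \tilde u_j$ conjugates away the antiholomorphic component of the magnetic term, and by choosing $\tilde u_1$ asymptotically constant and $\tilde u_2$ with prescribed antiholomorphic tail concentrated at a chosen end (using the explicit asymptotics of $\alpha_j$ from Proposition \ref{constructing alpha}), the identity reduces to an orthogonality of the collection $\{h_i\}$ against holomorphic $1$-forms on $M$ vanishing to prescribed order at the $e_i$. By a Riemann-Roch/duality argument this forces $\{h_i\}_{i=1}^N$ to be the principal parts (in the coordinates $z_i$) of a single meromorphic function on $M$ whose poles lie in $\{e_1,\dots,e_N\}$; existence of such a function is guaranteed by Lemma \ref{holofcts}(iii). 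Call this function $h$; its restriction to $M_0$ is holomorphic.

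\textbf{Step 3 (define $\Psi$).} Set $\beta := \beta_0 - h$, so that $\bar\partial \beta = i(A_1 - A_2)$ and $\beta \in e^{-\gamma/x} H^{4 + \eps_0}(M_0)$ for every $\gamma > 0$ by the end-by-end construction of Step 1 and smoothness of $h$ in the interior. Define $\Psi := e^{i(\alpha_1 - \alpha_2) - \beta}$. As the exponential of a smooth function, $\Psi$ is manifestly non-vanishing; since $\bar\partial \Psi = (i(A_1 - A_2) - \bar\partial \beta)\,\Psi = 0$, it is holomorphic on $M_0$. Finally,
\[ \Psi - e^{i(\alpha_1 - \alpha_2)} = e^{i(\alpha_1 - \alpha_2)} (e^{-\beta} - 1) \]
belongs to $e^{-\gamma/x} H^{3 + \eps_0}(M_0)$ for every $\gamma > 0$, because $e^{i(\alpha_1 - \alpha_2)}$ has at most linear exponential growth in $|z|$ by Proposition \ref{constructing alpha} while $e^{-\beta}-1$ decays faster than any $e^{-\gamma/x}$.

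The principal obstacle is Step 2: reconciling the scattering boundary pairing, which lives at the scattering boundary $\partial M_0$, with the $b$-calculus range characterization \eqref{ortho to all holomorphic bform}, which demands orthogonality to antiholomorphic $b$-forms in weighted $L_b^2$. One must construct Poisson-type solutions whose $\bar\partial$-content at infinity reproduces each prescribed antiholomorphic b-form, and then show the resulting orthogonality is exactly the gluing condition for the principal parts $\{h_i\}$ to arise from a single meromorphic function on $M$. This matching between two a priori distinct Fredholm obstructions is the technical core of the argument.
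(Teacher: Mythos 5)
Your proposal attempts to linearize the problem: first solve the linear $\bar\partial$-equation for $\beta$, then exponentiate. The paper instead works additively, setting $\Psi := e^{i(\alpha_1-\alpha_2)} + U$ for a super-exponentially decaying $U$ solving $\bar\partial U = -\bar\partial e^{i(\alpha_1-\alpha_2)}$. This difference is not cosmetic, and your Step 2 contains a genuine gap.

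The obstruction to producing your super-exponentially decaying $\beta$ is, by Lemma \ref{solving dbar with rapid decay}, the orthogonality of $i(A_1 - A_2) = \bar\partial\beta_0$ against all antiholomorphic $1$-forms in every weighted space $x^{-J}L^2$. But the orthogonality that the scattering hypothesis actually delivers — through the CGO ansatz $u_1 = e^{\Phi/h}e^{-i\alpha_1}(1+r_1)$, $u_2 = he^{\bar\Phi/h}e^{-i\bar\alpha_2}\tfrac{b}{\bar\partial\bar\Phi}(1+\cdots)$, substituted into the identity of Lemma \ref{integral id 1} and letting $h\to 0$ — is
\[
0 = \int_{M_0}\bigl\langle \bar\partial\, e^{i(\alpha_1-\alpha_2)},\; b\bigr\rangle\,{\rm dvol}_{g_0}
  = \int_{M_0}\bigl\langle i\,e^{i(\alpha_1-\alpha_2)}(A_1 - A_2),\; b\bigr\rangle\,{\rm dvol}_{g_0}.
\]
This has the extra weight $e^{i(\alpha_1-\alpha_2)}$, which is neither holomorphic nor antiholomorphic, so this orthogonality cannot be massaged into $\int\langle A_1-A_2, b\rangle=0$ by re-labelling the test forms. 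Consequently, you have no way to establish the gluing of your tails $\{h_i\}$ to a single correction; the Fredholm condition you would need is simply not the one the data provides. The additive set-up in the paper is forced precisely by which integrand the CGO solutions actually produce. (Even setting this aside, the passage from a non-vanishing holomorphic $\Psi$ to a globally defined $\beta = -\log(\Psi e^{-i(\alpha_1-\alpha_2)})$ carries a monodromy obstruction that your multiplicative ansatz hides; the paper never needs a global logarithm.)

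Two smaller points. In Step 1 you apply Lemma \ref{solve for alpha near infinity} and write the expansion of $\beta_0$ starting at $z^{-1}$, but $\beta_0$ comes from Corollary \ref{surjectivity of dbar} and a priori lies only in $x^{-J}H^m_b$ with $1<J<2$; Proposition \ref{constructing alpha} shows its full expansion is $c_1 z + c_0 + c_{-1}z^{-1}+\cdots$, so you have dropped the growing terms $c_1 z + c_0$ without comment. In Step 2, Lemma \ref{holofcts}(iii) produces a meromorphic function on $M$ with simple poles, a finite-dimensional family; it cannot match an arbitrary sequence of Taylor coefficients at every $e_i$. The correction $h$ needed to kill the tails of $\beta_0$ generically has an essential singularity at each $e_i$ — it is a holomorphic function on $M_0$, not a meromorphic one on $M$ — so the cited lemma is the wrong tool, and again points to the fact that one must control an infinite asymptotic expansion, which is exactly what Lemma \ref{rapid decay means exponential decay} (a genuinely nonlinear, Paley–Wiener style step) accomplishes in the paper.
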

We will split this into several  Lemmas. In all of them we assume without stating that $S_{X_1, V_1}(\lambda) = S_{X_2, V_2}(\lambda)$.
\begin{lemma}
\label{rapid decay means exponential decay}
Let $f\in H^m_b(M_0)$ be a function satisfying \[\bar\partial f \in   e^{-\gamma /x} H^m_b(M_0; ^bT^*_{0,1}M_0)\] for all $\gamma >0$. Suppose $f\in x^{J} H^m_b(M_0)$ for all $J \in \R$, then $f\in  e^{-\gamma /x} H^m_b(M_0)$ for all $\gamma >0$.
\end{lemma}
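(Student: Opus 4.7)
The plan is to work in each end $E_j$ of $M_0$ separately, where we identify with $\{|z|>1\}\subset\C$ via the holomorphic coordinate, and to reduce the problem to a statement about the Cauchy transform on $\C$. Pick a cutoff $\chi$ equal to $1$ on $|z|\geq 2$ and supported in $|z|\geq 3/2$ inside $E_j$, and let $\tilde f := \chi f$, extended by zero to all of $\C$. Then $\tilde\eta := \bar\partial\tilde f = \chi\,\bar\partial f + f\,\bar\partial\chi$ is the sum of a term in $e^{-\gamma|\zeta|}H^{m-1}(\C)$ and a compactly supported term, so $\tilde\eta \in e^{-\gamma|\zeta|}H^{m-1}(\C)$ for every $\gamma>0$.

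Define $g := \bar\partial^{-1}\tilde\eta$ using the Cauchy transform of Lemma \ref{solve for alpha near infinity}. Since $\bar\partial(\tilde f - g) = 0$ on $\C$ and both functions tend to zero at infinity ($\tilde f$ by rapid decay, $g$ at least polynomially by \eqref{dbar expansion}), elliptic regularity for $\bar\partial$ together with Liouville's theorem give $\tilde f \equiv g$ on $\C$. The expansion from Lemma \ref{solve for alpha near infinity} reads $g(z) = \sum_{l=1}^{k} c_{-l}/z^l + R_k(z)$ with $|R_k(z)| \leq C_k|z|^{-k-1}$, and matching this term-by-term against the rapid decay of $\tilde f = g$ forces inductively that every coefficient
\[
c_{-l} = \frac{1}{2\pi}\int_{\C}\zeta^{l-1}\tilde\eta(\zeta)\,d\zeta\wedge d\bar\zeta = 0,
\]
so all complex moments of $\tilde\eta$ vanish.

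The heart of the argument is to upgrade this moment vanishing to exponential decay, which is where the factorial structure of the Cauchy transform becomes crucial. Because $c_{-l}=0$ for every $l$, we actually have $g(z) = R_k(z) = z^{-k}\bar\partial^{-1}(z^k\tilde\eta)(z)$ for every $k$, and so by the estimate of Lemma \ref{solve for alpha near infinity} applied to the function $z^k\tilde\eta$,
\[
|g(z)| \leq C\,|z|^{-k-1}\,\|e^{\gamma_0|\zeta|}\zeta^k\tilde\eta(\zeta)\|_{H^{1+\eps_0}}
\]
for any $\gamma_0>0$. Writing $e^{\gamma_0|\zeta|}\zeta^k = \zeta^k e^{-(\gamma_1-\gamma_0)|\zeta|}\cdot e^{\gamma_1|\zeta|}$ for some $\gamma_1>\gamma_0$ and using the elementary inequality $|\zeta|^k e^{-(\gamma_1-\gamma_0)|\zeta|}\leq (k/(e(\gamma_1-\gamma_0)))^k$ bounds the Sobolev norm above by $(k/(e(\gamma_1-\gamma_0)))^k\cdot\|e^{\gamma_1|\zeta|}\tilde\eta\|_{H^{1+\eps_0}}$ times mild polynomial factors in $k$. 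Optimizing the resulting bound at $k\simeq (\gamma_1-\gamma_0)|z|$ then yields $|g(z)|\leq Ce^{-(\gamma_1-\gamma_0)|z|}/|z|$. Since $\tilde\eta$ lies in $e^{-\gamma_1|\zeta|}H^{m-1}$ for every $\gamma_1$, taking $\gamma_1$ arbitrarily large shows that in each end $|f(z)|\leq C_\gamma e^{-\gamma|z|}$ for every $\gamma>0$; applying the same optimization to $\partial_z^j R_k$ via the derivative bounds in Lemma \ref{solve for alpha near infinity} promotes this pointwise decay to the full $e^{-\gamma/x}H^m_b(M_0)$ norm. The main obstacle is precisely this upgrade step, since the Cauchy transform of a merely exponentially decaying function decays only like $|z|^{-1}$; only after the coefficients $c_{-l}$ are killed by the rapid-decay hypothesis on $f$ does the factorial optimization in $k$ become available and convert super-polynomial decay into super-exponential decay.
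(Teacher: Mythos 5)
Your argument is correct and takes a genuinely different route from the paper's. The paper passes to the Fourier side: it extends $\widehat{\bar\partial f}$ holomorphically to $\C^2$ via Paley--Wiener, uses the smoothness of $\hat f$ at the origin to kill the coefficients $c_{0,n}$ in the two-variable power series of $\widehat{\bar\partial f}$, concludes that $\hat f = \widehat{\bar\partial f}/(\zeta_1+i\zeta_2)$ extends holomorphically across $\{\zeta_1+i\zeta_2=0\}$ (via the several-complex-variables removable singularity theorem), and then applies Paley--Wiener again. You stay entirely on the physical side: the vanishing of the moments $\int \zeta^{l-1}\tilde\eta\,d\zeta\wedge d\bar\zeta$ that you extract from the rapid decay of $\tilde f$ is precisely the dual statement to the paper's $c_{0,n}=0$ (those moments are the $\bar\partial_\xi$-Taylor coefficients of $\widehat{\tilde\eta}$ at the origin), but instead of going back to Paley--Wiener you keep the identity $\tilde f = z^{-k}\bar\partial^{-1}(z^k\tilde\eta)$ and optimize over $k$, Stirling-style. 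This buys elementarity --- no $\C^2$ function theory, no removable singularities theorem, just the Cauchy kernel estimate and $\sup_t t^k e^{-at} = (k/ea)^k$ --- at the cost of more bookkeeping: the kernel bound $|\bar\partial^{-1}\eta'(z)|\leq C|z|^{-1}\|e^{\gamma|\zeta|}\eta'\|_{H^{1+\eps_0}}$ must have $C$ uniform in $k$ and $\gamma$ (it does, from the split of the Cauchy integral), and one must control the polynomial-in-$k$ factors coming from Leibniz and from the fractional Sobolev product estimate for $\|e^{\gamma_0|\zeta|}\zeta^k\tilde\eta\|_{H^{1+\eps_0}}$; these only cost polynomials in $|z|$ after choosing $k\sim(\gamma_1-\gamma_0)|z|$ and are absorbed into the exponential. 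Two small imprecisions worth flagging: (i) you quote the remainder bound of Lemma \ref{solve for alpha near infinity} as $|R_k|\leq C_k|z|^{-k-1}$, whereas the lemma as stated only gives $|z|^{-1}$ times a norm of $\eta$ --- the sharper $|z|^{-k-1}\|e^{\gamma|\zeta|}\zeta^k\eta\|_{H^{1+\eps_0}}$ you actually use is obtained, as you later do, by applying the $k=0$ case to $z^k\eta$ and dividing, and it is that form you need; (ii) promoting the pointwise bound to the $e^{-\gamma/x}H^m_b$ norm is easiest to state, as the paper does, by running the argument on $\partial^\alpha f$ which solves $\bar\partial(\partial^\alpha f)=\partial^\alpha u$. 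Neither gap affects the soundness of the approach.
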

\noindent{\bf Proof} By localizing $f$ with cutoff functions near the Euclidean ends $E_j$ and arguing each individual ends separately, we may assume without loss of generality that $M_0 = \cc$ on which we use the standard variable $z$. Denote by 
\begin{eqnarray}
\label{f decays superexponentially}
u:= \bar\partial f \in e^{-\gamma |z|} H^m(\cc),\ \ \forall\gamma >0.
\end{eqnarray}
It suffices to prove that $f\in e^{-\gamma |z|} L^2(\cc)$ as the general Sobolev space result follows by considering $f_j := \partial_{x_j} f$ which satisfies $\bar\partial f_j = \partial_{x_j} u \in e^{-\gamma |z|} H^{m-1}(\cc)$.

Taking Fourier Transform of \eqref{f decays superexponentially} we have that since $f \in |z|^{-J} H^m$ for all $J \in \R$,
\begin{eqnarray}
\label{condition at zero}
(\xi_1 + i\xi_2)\hat f(\xi) = \hat u(\xi), \ \ \ \ \hat f \in C^\infty(\R^2)
\end{eqnarray}
which gives us a condition at the origin that will be useful later. By Paley-Weiner $\hat u(\xi) = \hat u(\xi_1, \xi_2)$ extends to be a holomorphic function on $\C^2$ of two complex variables $\hat u(\zeta_1,\zeta_2)$ with $\zeta_j = \xi_j + i\eta_j \in \C$ with $\xi_j = Re(\zeta_j)$ and $\eta_j = Im(\zeta_j)$ (sometimes we write $\hat u(\zeta_1, \zeta_2) = \hat u(\xi + i\eta)$). Furthermore, it satisfies, by \eqref{f decays superexponentially} and Paley-Wiener,
\begin{eqnarray}\label{FT of f estimate}
\sup\limits_{|\eta| \leq \gamma} \| \hat u(\cdot + i \eta)\|_{L^2}^2 <\infty\ \ \ \ \forall \gamma \geq 0.
\end{eqnarray}

We will prove that $\hat u(\zeta_1, \zeta_2)$ has power series expansion around the origin of the form
\begin{eqnarray}
\label{desired expansion of f}
\hat u(\zeta_1, \zeta_2) = \sum\limits_{j = 1, k = 0}^\infty c_{j,k} \big( \zeta_1 + i \zeta_2\big)^j ( \zeta_1 - i \zeta_2\big)^k .
\end{eqnarray}
Notice that the index $j$ starts at $1$ rather than $0$. If \eqref{desired
expansion of f} holds then $\hat f(\xi_1,\xi_2)$ would 
by the removable singularities theorem have 
a holomorphic extension onto $\C^2$ given by $\frac{\hat u(\zeta_1, \zeta_2)}{\zeta_1 +
i\zeta_2}$. (See e.g. Theorem 7.3.3 in \cite{SG}).

We proceed to show \eqref{desired expansion of f}. By the fact that $\hat u(\zeta_1,\zeta_2)$ is entire on $\C^2$ it has a convergent power series expansion in powers of $\zeta_1$ and $\zeta_2$ which we can write as
\[\hat u(\zeta_1, \zeta_2) = \sum\limits_{j, k = 0}^\infty c_{j,k} (\zeta_1 + i\zeta_2)^j(\zeta_1 -i\zeta_2)^k.\]
Setting $\eta_1 = \eta_2 = 0$ so that $\zeta_j = \xi_j$ we have that, by denoting $\xi = \xi_1 + i\xi_2$, 
\[\hat u(\xi_1, \xi_2) = \sum\limits_{j, k = 0}^\infty c_{j,k} \xi^j\bar\xi^k.\]
We observe that \eqref{desired expansion of f} is equivalent to the fact that the above expansion has $c_{0,n} = 0$ for all $n$. To this end, \eqref{condition at zero} reads \[\xi \hat f(\xi_1,\xi_2) = \sum\limits_{j, k = 0}^\infty c_{j,k} \xi^j\bar\xi^k\]
for $\hat f$ smooth near the origin which immediately gives $c_{0,0} = 0$. Observe that since $\hat f$ is smooth, one can also divide by $\xi$ to get the smooth function
\[\hat f(\xi_1,\xi_2) = \frac{1}{\xi}\sum\limits_{j, k = 0}^\infty c_{j,k} \xi^{j}\bar\xi^k.\]
The right hand side is a-priori defined only on the punctured plane but extends smoothly to $\C$ due to the smoothness of $\hat f$. We will
now hit both sides with the operator $\xi (\frac{\partial}{\partial
\bar\xi})^n$ and take $\xi\to 0$ to get $0 = c_{0,n}$ for all $n\geq 1$ and
\eqref{desired expansion of f} is established.

It remains to apply Paley-Wiener to conclude the super-exponetially decay of $f$. To do so, one needs to check
\[\sup\limits_{|\eta| \leq \gamma} \| \hat f(\cdot + i \eta)\|_{L^2}^2 <\infty,\ \ \ \ \forall \gamma \geq 0.\]
On the strip $|\eta|\leq \gamma$ the vanishing set of $\zeta_1 + i\zeta_2$ is contained in a compact rectangle. On this rectangle $\hat f(\zeta_1,\zeta_2)$ is of course bounded. Outside of this rectangle the estimate comes from the fact that $\hat f = \frac{\hat u}{\zeta_1 + i\zeta_2}$  and the estimate \eqref{FT of f estimate}.
\qed

\begin{lemma}
\label{solving dbar with rapid decay}
Let $f$ be a smooth function on $M_0$ satisfying \[\bar\partial f  \in e^{-\gamma /x} H^m_b(M_0; ^bT^*_{0,1}M_0)\ \ \forall\gamma>0.\] Suppose for all $J\in \R$, 
\begin{eqnarray}
\label{ortho to all holomorphic form}
\int_{M_0} \langle \bar\partial f, \eta\rangle_{g_0} {\rm dvol}_{g_0}= 0, \forall \eta\in x^{-J} L^2(M_0; T^*M_0),\ \bar\partial^* \eta = 0
\end{eqnarray}
then there exists a holomorphic function $\Psi$ such that $\Psi - f \in e^{-\gamma /x} H^m_b(M_0)$ for all $\gamma>0$.
\end{lemma}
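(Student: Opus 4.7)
The plan is to solve $\bar\partial h = -\bar\partial f$ for $h$ with arbitrarily fast polynomial decay, upgrade this to super-exponential decay via Lemma \ref{rapid decay means exponential decay}, and then set $\Psi := f+h$. By construction $\bar\partial \Psi = 0$ and $\Psi - f = h$ has the required decay; elliptic regularity promotes $\Psi$ to a smooth, hence holomorphic, function on $M_0$.

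The first step is to invoke, for each non-integer $J>0$, the Fredholm range characterization \eqref{ortho to all holomorphic bform}. Since $-\bar\partial f \in e^{-\gamma/x}H^m_b(M_0;{}^bT^*_{0,1}M_0) \subset x^J H^{m-1}_b$, it lies in $R_{J,m}(\bar\partial)$ provided $\int_{M_0}\langle \bar\partial f,\eta\rangle_{g_b}\,dvol_{g_b}=0$ for every $\eta \in N_{-J,m}({}^b\bar\partial^*)$. By \eqref{relate b-bundle to original bundle} any such $\eta$ satisfies $\bar\partial^*\eta=0$ and belongs to $x^{-J}L^2(M_0,dvol_{g_0})$, while for $1$-forms the conformal relation $g_b = x^2 g_0$ makes $\langle\cdot,\cdot\rangle_{g_b}\,dvol_{g_b}=\langle\cdot,\cdot\rangle_{g_0}\,dvol_{g_0}$. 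Thus the hypothesis \eqref{ortho to all holomorphic form} delivers exactly the orthogonality required, yielding some $h_J \in x^J H^m_b(M_0)$ with $\bar\partial h_J = -\bar\partial f$.

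The second step is uniqueness of $h_J$: the kernel of $\bar\partial$ on $x^J H^m_b(M_0)$ for $J>0$ is trivial. Indeed, a holomorphic $u \in x^J H^m_b$ with $J>0$ vanishes at each end $e_j$, so by removability of singularities extends to a holomorphic function on the closed Riemann surface $M$ which must be constant, and that constant is zero since $u(e_j)=0$. Consequently, for any two non-integer $0<J<J'$, $h_{J'} \in x^{J'}H^m_b \subset x^J H^m_b$ satisfies the same equation as $h_J$, so $h_J = h_{J'}$. Letting $J$ range over $(0,\infty)\setminus\mathbb{Z}$ we obtain a single function $h$ lying in $\bigcap_{J>0,\,J\notin\mathbb{Z}} x^J H^m_b(M_0) = \bigcap_{J\in\mathbb{R}} x^J H^m_b(M_0)$.

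Finally, since $\bar\partial h = -\bar\partial f \in e^{-\gamma/x}H^m_b$ for all $\gamma>0$ and $h$ has arbitrary polynomial decay, Lemma \ref{rapid decay means exponential decay} forces $h \in e^{-\gamma/x}H^m_b(M_0)$ for all $\gamma>0$, so $\Psi := f+h$ has the desired properties. I expect the main obstacle to be the bookkeeping in the first step, namely converting the hypothesis \eqref{ortho to all holomorphic form} (phrased with $g_0$ and ordinary $T^*_{0,1}M_0$) into the b-calculus orthogonality required by \eqref{ortho to all holomorphic bform}; however, \eqref{relate b-bundle to original bundle} together with the conformal invariance of the pairing packages exactly this translation, so no additional analytic input is needed beyond what is developed in Section \ref{sec_holomorphic}.
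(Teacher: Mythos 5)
Your proposal is correct and follows essentially the same path as the paper: use \eqref{ortho to all holomorphic bform} together with \eqref{relate b-bundle to original bundle} and the conformal invariance of the pairing to translate the hypothesis into the b-orthogonality condition, obtain solutions $U_J \in x^J H^m_b$ for all non-integer $J$, show they agree by triviality of the $\bar\partial$-kernel in positively weighted spaces, and finish with Lemma \ref{rapid decay means exponential decay}. The only cosmetic difference is that you spell out the uniqueness step (removability at the ends, Liouville on the compactification) where the paper compresses it to "standard arguments for holomorphic functions."
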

\noindent{\bf Proof}
We first find a solution $U$ to the equation 
\begin{eqnarray}
\label{solve with rapid decay}
\bar\partial U = -\bar\partial f,\ \ \ \ U \in x^J H^m_b(M_0)\ \ \forall J\in\R.
\end{eqnarray}
Once such a solution is constructed, the proof is complete by evoking Lemma \ref{rapid decay means exponential decay} to conclude that $U$ belongs to $e^{-\gamma/x}H^m_b(M_0)$ for all $\gamma >0$. To this end, as $\bar\partial f$ decays super-exponentially, we may consider $\bar\partial f$ to be a $x^J H^m_b$ section of $^bT^*M_0$ for all $J\in \R$ and observe that by \eqref{ortho to all holomorphic bform} it is an element of $ R_{J,m}(\bar\partial)$ if and only if 
\[\int_{M_0} \langle \bar\partial f,\eta\rangle_{g_b} {\rm dvol}_{g_b} = 0, \forall \eta \in N_{-J, m}(^b\bar\pl^* ).\]
Using the relation \eqref{relate b-bundle to original bundle} combined with the fact that \[\langle \bar\partial f, \eta\rangle_{g_0} {\rm dvol}_{g_0}=\langle \bar\partial f, \eta\rangle_{g_b} {\rm dvol}_{g_b}\] one sees that the condition given by \eqref{ortho to all holomorphic form} does indeed imply the above orthogonality condition for all $J\in \R$. 

Therefore for each $ J \in \R\backslash\zz$ one can find a solution $U_{ J} \in x^{ J} H^{m}_b(M_0)$ 
 solving $\bar\pl U_{J} = -\bar\pl f$. Since the difference of two such solutions are holomorphic, uniqueness follows for $J \notin\zz$ large by standard arguments for holomorphic functions. Therefore, as $x^J H^m_b \subset x^{J'}H^m_b$ for $J \geq J'$, there exists a unique solution $U\in \bigcap\limits_{J \in \R} x^J L^2$ of $\bar\pl U = -\bar\pl f$. This shows that \eqref{solve with rapid decay} has a unique solution and the proof is complete.

\qed

\begin{remark}
Note that in neither the statement nor the proof of this Lemma is it required for $f$ to have a polyhomogenous expansion.
\end{remark}
By Lemma \ref{solving dbar with rapid decay} we see that to prove Proposition \ref{exponentially decaying conjugating factor} it suffices to show that $e^{i(\alpha_1-\alpha_2)}$ satisfies the orthogonal condition \eqref{ortho to all holomorphic form}. To this end we first derive the following identity:

\begin{lemma}
\label{integral id 1}
Let $u_j \in e^{\gamma/x} H^1(M_0)$ be solutions to $L_{X_j, V_j} u_j =0$ then the integral identity holds
\[ 0 = \int_{M_0} \bar u_1 (A_1- A_2)\wedge \partial u_2 +\bar u_1 (\bar A_1 - \bar A_2)\wedge \bar\partial u_2 + \bar u_1(Q_1 - Q_2)u_2 \]
\end{lemma}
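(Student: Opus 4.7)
The plan is to derive the integral identity from Green's formula for $L_{X_1,V_1}$, using the hypothesis $S_{X_1,V_1}(\lambda)=S_{X_2,V_2}(\lambda)$ to kill the resulting boundary contribution. By Proposition~\ref{density}, the set $\{P_{X_j,V_j}(\lambda)f : f\in C^\infty(\partial M_0)\}$ is dense in $\ker(L_{X_j,V_j}-\lambda^2)\cap e^{\gamma/x}L^2$ in the $e^{\gamma'/x}L^2$ topology. Since the coefficients $A_1-A_2$, $\bar A_1-\bar A_2$ and $Q_1-Q_2$ decay faster than any exponential, I will reduce to the case $u_j=P_{X_j,V_j}(\lambda)f_{j+}$ with $f_{j+}\in C^\infty(\partial M_0)$ and recover the general statement by passing to the limit, using elliptic regularity for the equations $L_{X_j,V_j}u_j=0$ to upgrade $L^2$-convergence to local $H^1$-convergence wherever a derivative of $u_2$ appears.

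For such Poisson solutions, I apply Lemma~\ref{boundary pairing} to the operator $L_{X_1,V_1}$ with test functions $u_2$ and $u_1$. Since $(L_{X_1,V_1}-\lambda^2)u_1=0$ while $(L_{X_1,V_1}-\lambda^2)u_2=(L_{X_1,V_1}-L_{X_2,V_2})u_2$, this produces
\[
\langle(L_{X_1,V_1}-L_{X_2,V_2})u_2,u_1\rangle=2i\lambda\int_{\partial M_0}\bigl(f_{2+}\bar f_{1+}-S_{X_2,V_2}(\lambda)f_{2+}\,\overline{S_{X_1,V_1}(\lambda)f_{1+}}\bigr).
\]
Substituting $S_{X_1,V_1}=S_{X_2,V_2}$ and then invoking Lemma~\ref{boundary pairing} a second time—this time for $L_{X_2,V_2}$ applied to the pair of solutions $P_{X_2,V_2}(\lambda)f_{2+}$ and $P_{X_2,V_2}(\lambda)f_{1+}$, both annihilated by $L_{X_2,V_2}-\lambda^2$—I see that the boundary integral above vanishes identically; this is just the polarized isometry of the scattering matrix $S_{X_2,V_2}$. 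Hence $\langle(L_{X_1,V_1}-L_{X_2,V_2})u_2,u_1\rangle=0$.

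The remaining step is to rewrite the bulk difference in the complex form of the statement. Expanding the magnetic Schr\"odinger operator gives
\[
L_{X,V}u=d^*du-2i\langle X,du\rangle_g+\bigl(id^*X+|X|^2+V\bigr)u,
\]
so with $Q_j:=id^*X_j+|X_j|^2+V_j$ the zeroth-order piece of $(L_{X_1,V_1}-L_{X_2,V_2})u_2$ contributes $\bar u_1(Q_1-Q_2)u_2$, while the first-order piece $-2i\langle X_1-X_2,du_2\rangle_g$ is converted via the decomposition $X_j=A_j+\bar A_j$ with $A_j=\pi_{0,1}X_j$ into the terms $\bar u_1(A_1-A_2)\wedge\partial u_2+\bar u_1(\bar A_1-\bar A_2)\wedge\bar\partial u_2$, using the standard identification between the Hermitian inner product of complementary-type 1-forms and their wedge product against the volume form.

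The main obstacle will be the cancellation inside the boundary integral, which requires simultaneously using the hypothesis $S_1=S_2$ and the polarized isometry of $S_{X_2,V_2}$—itself extracted from a second application of Lemma~\ref{boundary pairing} to solutions of the \emph{same} operator. A subsidiary technical concern is that Lemma~\ref{boundary pairing} is stated for polynomially growing solutions while the hypothesis only provides $u_j\in e^{\gamma/x}H^1$, which grows faster than polynomial; this is precisely why the density reduction to Poisson solutions, whose growth is merely polynomial, is indispensable.
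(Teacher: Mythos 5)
Your proposal is correct and essentially reproduces the paper's own argument: the paper likewise applies Lemma~\ref{boundary pairing} to the operator $L_{X_1,V_1}$ with both test functions ranging over Poisson solutions, uses $S_{X_1,V_1}(\lambda)=S_{X_2,V_2}(\lambda)$ to kill the boundary term, and then passes to the limit via Proposition~\ref{density}. You are somewhat more explicit than the paper about why the boundary term vanishes (you name the unitarity of $S_{X_2,V_2}$, which the paper leaves implicit in the phrase ``By Lemma~\ref{boundary pairing} and the fact that $S_1=S_2$''), and you handle the passage to the limit with elliptic regularity whereas the paper does it in two sequential steps (first in $u_1$, where no derivative appears, and then in $u_2$ after $u_1$ has been upgraded to $e^{\gamma/x}H^1$); these are minor technical variants of the same strategy, not a different route.
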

\begin{proof} By Lemma \ref{boundary pairing} and the fact that $S_{X_1,
V_1}(\lambda) = S_{X_2, V_2}(\lambda)$ we have that the above identity
holds for $u_j \in x^{-\tau} H^1(M_0)$ in the range of
$P_{X_j,V_j}(\lambda)$. We first fix $u_2$ and 
use the $e^{\gamma/x}L^2$ density result of Proposition
\ref{density} to take the limit in $u_1$ to conclude that the above
identity holds for $u_2$ in the range of $P_{X_2,V_2}(\lambda)$ and
solutions $u_1 \in e^{\gamma/x} H^1(M_0)$. Since now $u_1$ has
$e^{\gamma/x} H^1(M_0)$ regularity we can take the limit in $u_2$ in the
$e^{\gamma/x}L^2$ topology to obtain the result.
\end{proof}
\noindent{\bf Proof of Proposition \ref{exponentially decaying conjugating factor}}:\\
We begin by choosing $\Phi$ a holomorphic morse function which grows linearly at each end; this function exists by Lemma \ref{poles at ends}. Let ${\rm Crit}(\Phi):=\{p_0,\dots, p_n\}$ be the critical points of $\Phi$ and, for some $J\in \R$, let $b\in x^J L^2(M_0)$ be an antiholomorphic 1-form on $M_0$ which vanishes to third order on points in ${\rm Crit}(\Phi)$. Consider the ansatz $u_0 = he^{\bar\Phi/h} e^{-i\bar\alpha_2} \frac{b}{\bar\partial \bar\Phi}$. By writing
\[L_{X,V} = e^{-i\bar\alpha}\bar\pl^* |e^{-i\alpha}|^2\bar\pl e^{i\alpha} +Q\] for $Q = |X|^2 + V +dX \in e^{-\gamma/x}L^\infty$ for all $\gamma\in \R$, we see that $u_0$ solves
\[(L_{X_2, V_2} - \lambda^2) u_0 = he^{\bar\Phi/h} e^{-i\bar\alpha_2} f\]
with $f \in x^{J'} L^2(M_0)$ for some $J'\in \R$. We now apply Proposition \ref{semiclassical solvability} to obtain a solution to $(L_{X_2,V_2} - \lambda^2) u_2 = 0$ of the form
\[u_2 = u_0 + e^{\bar\Phi/h} e^{-i\bar\alpha_2} r_2 \]
with $r_2$ satisfying the estimate $\|e^{\varphi_0/\eps} r_2 \| + h\|e^{\varphi_0/\eps} dr_2\| \leq h\sqrt{h} C\|x^{-J'}f\|$ where $\varphi_0$ is as required in definition \eqref{eq_phieps}. 
For the solution $(L_{X_1, V_1} - \lambda^2) u_1 = 0$ we use the ansatz
\[(L_{X_1, V_1} - \lambda^2) e^{\Phi/h} e^{-i\alpha_1} = e^{\Phi/h}e^{-i\alpha_1} f\]
with $f\in x^J L^2$ for some $J\in \R$. Proposition \ref{semiclassical solvability} again applies to obtain a solution to $(L_{X_1,V_1} - \lambda^2) u_1 = 0$ of the form
\[u_1 = e^{\Phi/h} e^{-i\alpha_1} + e^{\Phi/h} e^{-i\alpha_1} r_1.\]
with $r_1$ satisfying the estimate $\|e^{\varphi_0/\eps} r_1 \| + h\|e^{\varphi_0/\eps} dr_1\| \leq \sqrt{h} C\|x^{-J}f\|$ where $\varphi_0$ is as required in definition \eqref{eq_phieps}. 
 
 We now substitute these solutions into the identity in Lemma \ref{integral id 1} to obtain, after taking $h\to 0$, 
 \[ 0 = \int_{M_0} \langle \bar\partial e^{i(\alpha_1-\alpha_2)}, b\rangle {\rm dvol}_{g_0}\]
 for all anti-holomoprhic 1-forms $b$ vanishing to third order at ${\rm Crit}(\Phi) = \{p_0,\dots, p_n\}$ which are in the space $x^J L^2(M_0)$ for some $J\in \R$.
 
We do not have the orthogonal condition \eqref{ortho to all holomorphic form}
for all anti-holomorphic 1-forms yet because of the restricted vanishing
condition. We will get rid of the vanishing condition one point at a time
starting with $p_0$. To this end, we use Lemma \ref{holofcts} and Corollary
\ref{morse holomorphic dense} to construct a holomorphic Morse function
$\tilde\Phi$ for which $p_0 \notin {\rm Crit}(\tilde \Phi) := \{\tilde
p_0,\dots, \tilde p_m\}$. Repeating the above argument for $\tilde\Phi$ we have
that 
\[ 0 = \int_{M_0} \langle \bar\partial e^{i(\alpha_1-\alpha_2)}, \tilde
b\rangle {\rm dvol}_{g_0}\]
for all anti-holomoprhic 1-forms $\tilde b$ vanishing to third order at 
\[
\]

Let $b\in x^J L^2(M_0)$ for some $J\in \R$ be an antiholomorphic 1-form which vanishes at $\{p_1,\dots, p_n\}$. By Lemma \ref{amplitude}, it can be written as the sum $(b -\tilde b)+\tilde b$ where $(b -\tilde b) \in x^{\tilde J} L^2(M_0)$ vanishes to third order at $\{p_0,\dots, p_n\}$ and $\tilde b \in x^{\tilde J} L^2(M_0)$ vanishes to third order at $\{\tilde p_0,\dots, \tilde p_m\}$. Linearity then implies that 
\[ 0 = \int_{M_0} \langle \bar\partial e^{i(\alpha_1-\alpha_2)}, b\rangle {\rm dvol}_{g_0}\]
for all antiholomorphic 1-forms $b \in x^J L^2(M_0)$ vanishing to third order at $\{p_1,\dots, p_n\}$. Proceeding as such for the points $p_1, \dots, p_n$ we can remove all the vanishing conditions placed upon $b$ and the orthogonality condition \eqref{ortho to all holomorphic form} holds for all antiholomorphic 1-forms. The existence of $\Psi$ satisfying the asymptotic condition \eqref{boundary condition of Psi} is thus proven by evoking Lemma \ref{solving dbar with rapid decay} and observing that $U\in e^{-\gamma/x} H^m_b(M_0)$ for all $\gamma>0$ iff $U\in e^{-\gamma/x} H^m(M_0)$ for all $\gamma>0$.

To see that the holomorphic function $\Psi$ is non-vanishing, we interchange the indices to deduce that there exists holomorphic functions $\Psi_{1,2}$ and $\Psi_{2,1}$ on $M_0$ which satisfies condition \eqref{boundary condition of Psi} for $e^{i(\alpha_1-\alpha_2)}$ and $e^{i(\alpha_2-\alpha_1)}$ respectively. Considering the product $\Psi_{1,2} \Psi_{2,1}$ and using condition \eqref{boundary condition of Psi} we see that the product is actually the constant function $1$. \qed

If $\alpha_j$ are the functions constructed in Proposition \ref{constructing alpha}, it is convenient make the definition $F_{A_1} := e^{i\alpha_1}$ and $F_{A_2} := \Psi e^{i\alpha_2}$ where $\Psi$ is the holomorphic function constructed in Proposition \ref{exponentially decaying conjugating factor}. Following the construction of $\Psi$ and using the fact that $H^{3+\eps_0}(M_0)\subset W^{2,\infty}(M_0)$ one has the following useful expression for $F_{A_2}$
\begin{lemma}
\label{FA2 expression}
We have that $F_{A_2} = e^{i\alpha_1} ( 1+ t)$ with $t$ bounded uniformly away from $-1$ and belongs to the space $e^{-\gamma/x} W^{2,\infty}(M_0)$ for all $\gamma>0$.
\end{lemma}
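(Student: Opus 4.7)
\noindent\textsl{Proof proposal}. The plan is to unpack the definition $F_{A_2} = \Psi e^{i\alpha_2}$ and analyze $t := F_{A_2}\,e^{-i\alpha_1} - 1$. Writing $r := \Psi - e^{i(\alpha_1-\alpha_2)}$, Proposition \ref{exponentially decaying conjugating factor} gives $r \in e^{-\gamma/x}H^{3+\eps_0}(M_0)$ for every $\gamma>0$, and a direct substitution yields the clean factorization
\begin{equation*}
t = r\,e^{i(\alpha_2-\alpha_1)}, \qquad 1+t = \Psi\,e^{i(\alpha_2-\alpha_1)}.
\end{equation*}
With these in hand the lemma reduces to two independent estimates: the regularity $t \in e^{-\gamma/x}W^{2,\infty}(M_0)$ for every $\gamma>0$, and the uniform lower bound $|1+t|\geq c>0$ on $M_0$.

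For the regularity, I would first upgrade $r \in e^{-\gamma/x}H^{3+\eps_0}$ to $r\in e^{-\gamma/x}W^{2,\infty}$ via the two-dimensional Sobolev embedding (valid since $3+\eps_0 > 2+1$). Proposition \ref{constructing alpha} shows that in each end $\alpha_j = c_1^{(j)}z + O(1)$ with derivatives $\partial_z^k\alpha_j$ bounded for $k=1,2$; combined with $\bar\partial\alpha_j = A_j \in e^{-\gamma/x}H^{3+\eps_0}$, every first and second derivative of $\alpha_1-\alpha_2$ is bounded on $M_0$. Therefore $e^{i(\alpha_2-\alpha_1)}$ grows at most like $e^{C/x}$ in the ends, and so do its derivatives up to order two. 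Expanding $\partial^\beta t$ by Leibniz for $|\beta|\leq 2$ produces finitely many terms, each of the form (super-exponentially decaying factor from $\partial^{\beta'}r$) times (at most exponentially growing factor from derivatives of $e^{i(\alpha_2-\alpha_1)}$). Since the decay rate of $r$ is at our disposal, choosing it larger than the linear exponent coming from $\alpha_j$ absorbs the growth and yields the claimed $e^{-\gamma/x}W^{2,\infty}$ bound for any prescribed $\gamma>0$.

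For the lower bound, note that $|1+t| = |\Psi|\,e^{\Im(\alpha_1-\alpha_2)}$. The pointwise inequality $|\Psi|\geq e^{-\Im(\alpha_1-\alpha_2)} - |r|$ gives
\begin{equation*}
|1+t| \geq 1 - |r|\,e^{\Im(\alpha_1-\alpha_2)},
\end{equation*}
and the right-hand side tends to $1$ as $x\to 0$ by the same super-exponential-versus-exponential comparison as above. Hence $|1+t|\geq \tfrac{1}{2}$ outside some compact set $K\subset M_0$. On $K$, the holomorphic function $\Psi$ is continuous and non-vanishing (this is the closing observation in the proof of Proposition \ref{exponentially decaying conjugating factor}, obtained from $\Psi_{1,2}\Psi_{2,1}\equiv 1$), and $e^{i(\alpha_2-\alpha_1)}$ is continuous and non-vanishing, so both are bounded below by compactness. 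This yields a uniform positive lower bound on $|1+t|$ on all of $M_0$, i.e.~$t$ is bounded uniformly away from $-1$.

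The only non-routine aspect is the bookkeeping of super-exponential decay of $r$ against exponential growth of $e^{i(\alpha_2-\alpha_1)}$ and its derivatives; since the $\gamma$ in $r\in e^{-\gamma/x}H^{3+\eps_0}$ is arbitrary, one simply picks it larger than the linear exponent arising from the $c_1^{(j)}z$ asymptotic of $\alpha_j$ plus the target decay rate. Everything else is a direct product estimate from the explicit factorization $t = r\,e^{i(\alpha_2-\alpha_1)}$.
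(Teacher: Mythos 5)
Your proposal is correct and fills in exactly the reasoning the paper leaves implicit (the paper merely points to the construction of $\Psi$ and the embedding $H^{3+\eps_0}(M_0)\subset W^{2,\infty}(M_0)$). The clean factorization $t = r\,e^{i(\alpha_2-\alpha_1)}$ with $r = \Psi - e^{i(\alpha_1-\alpha_2)}$, the Sobolev upgrade of $r$ to $e^{-\gamma/x}W^{2,\infty}$, the absorption of the at-most-exponential growth of $e^{i(\alpha_2-\alpha_1)}$ and its first two derivatives into the arbitrary super-exponential decay of $r$, and the split into "near the ends, $|1+t|\to 1$" versus "on a compact core, $\Psi$ and $e^{i(\alpha_2-\alpha_1)}$ are continuous and non-vanishing" are all sound and are precisely what one would write out from the paper's one-line justification.
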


\end{section}

\begin{section}{Construction of CGO Solutions} \label{sec_cgo}
We construct special solutions to $(L_{X,V} - \lambda^2) u = 0$. To this end it is convenient to write the differential operator in terms $\bar\partial$ and its adjoint. Namely, if $\alpha$ is a function such that $\bar\partial \alpha = A$ then one can write \[L_{X,V} = e^{-i\bar\alpha}\bar\pl^* |e^{-i\alpha}|^2\bar\pl e^{i\alpha} +Q\] for $Q = |X|^2 + V +dX \in e^{-\gamma/x}L^\infty$ for all $\gamma\in \R$. We would like to study the existence of such functions $\alpha$ with suitable behaviour near the ends.

If $M_0$ is a surface with $N$ Euclidean ends, consider its $N$ point compactification $M$ by adding the points $\{e_1,\dots, e_N\}$ at the ends. Around each $e_j$ introduce holomorphic coordinate $z$ and write $z = xe^{i\theta}$.


\begin{subsection}{Constructing CGO of Type I}
We are now in a position to construct a family CGO which will be useful for recovering interior information. Let $b$ be a section of $T^*_{0,1} M$ belonging to $Ker(\bar\partial^*)$ with poles contained in the set $\{e_1,\dots, e_N\}$ and $\Phi = \phi + i\psi$ be a morse holomorphic function on $M_0$ with poles of the form $\frac{C_j}{z}$ near $e_j$,  $C_j \neq 0$. Let $F_A$ be a smooth function on $M_0$ which satisfies $F_A - e^{i\alpha} \in e^{-\gamma/x} W^{2,\infty}(M_0)$ for all $\gamma >0$. If $\chi\in C^\infty_0(M_0)$ is a cutoff function which is $1$ near all the critical points of $\Phi$, consider the ansatz
\begin{eqnarray}
\label{u0}
u_0 = e^{\Phi/h}  F_A^{-1} \bar\partial_J^{-1} (e^{-2i\psi/h}\chi |F_A|^2b) - h (1-\chi)e^{\bar\Phi/h} \bar F_A \frac{b}{2i\bar\pl\psi}
\end{eqnarray}
for $2>J>1$. Here we use the notation $(1-\chi)\frac{b}{\bar\partial \psi}$ to denote the function satisfying $\bar\partial\psi (1-\chi)\frac{b}{\bar\partial \psi} = (1-\chi)b$. It is well-defined since $\psi$ has no critical points on the support of $(1-\chi)$.

By writing $L_{X,V} =  \bar F_A\bar\pl^* |F_A|^{-2} \bar\pl F_A + Q$ direct computation yields that 
\[(L_{X,V} -\lambda^2) u_0 = -\lambda^2 u_0 + he^{\bar\Phi/h}  e^{-\gamma/x} L^2(M_0)\]
for all $\gamma >0$. Consequently we can use the estimates we established in Lemma \ref{estimate for conjugate dbar} and the expression for $u_0$ to obtain
\begin{eqnarray}\label{(L-lambda)u0} (L_{X,V} -\lambda^2) u_0  = e^{\Phi/h} F_A^{-1} O_{x^{-J} L^2}(h^{\frac{1}{2} + \epsilon}) +he^{\bar \Phi/h} \bar F_A x^{-J'}L^2 \end{eqnarray}
for some $J' >0$ and $2>J>1$. By Proposition \ref{semiclassical solvability} we can solve for the remainder $r$ so that $(L_{X,V} -\lambda^2)(u_0 + e^{\varphi/h} r) = 0$ with $r$ satisfying the estimate $\|e^{-\gamma_0/x} r\| + h\|e^{-\gamma_0/x} dr\| \leq C h^{1+\epsilon}$ for some $\gamma_0 >0$. We summarize this discussion in the following Proposition.
\begin{proposition}
\label{CGO type I}
There exists solutions to $(L_{X,V} -\lambda^2) u =0$ of the form $u = u_0 + e^{\phi/h} r$ where $u_0$ is given by \eqref{u0} and $r$ satisfies the estimate \[\|e^{-\gamma_0/x} r\| + h\|e^{-\gamma_0/x} dr\| \leq C h^{1+\epsilon}\] for some $\gamma_0 >0$.
\end{proposition}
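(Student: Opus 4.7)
The plan is to compute $(L_{X,V}-\lambda^2)u_0$ explicitly using the factorization
\[L_{X,V} = \bar F_A\,\bar\partial^*|F_A|^{-2}\bar\partial\, F_A + Q,\qquad Q:=|X|^2+V+dX \in \bigcap_{\gamma>0}e^{-\gamma/x}L^\infty(M_0),\]
verify that the output has the form \eqref{(L-lambda)u0}, and then apply Proposition \ref{semiclassical solvability} to absorb the error by a remainder $r$. The two pieces $u_1$ and $u_2$ of the ansatz $u_0=u_1+u_2$ are arranged so that their leading contributions cancel against each other, leaving only the two small error terms displayed in \eqref{(L-lambda)u0}.

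Two facts about the phases drive the computation: because $\Phi$ is holomorphic, $\bar\partial e^{\Phi/h}=0$; because $\bar\Phi$ is anti-holomorphic and $\bar\partial\bar\Phi=-2i\bar\partial\psi$, one has $\bar\partial e^{\bar\Phi/h}=-\tfrac{2i}{h}\bar\partial\psi\cdot e^{\bar\Phi/h}$ and $[\bar\partial^*,e^{\bar\Phi/h}]=0$. For $u_1:=e^{\Phi/h}F_A^{-1}\bar\partial_J^{-1}(e^{-2i\psi/h}\chi|F_A|^2 b)$, the first identity together with $\bar\partial\bar\partial_J^{-1}=\mathrm{Id}$ and $\Phi-2i\psi=\bar\Phi$ gives $\bar\partial(F_A u_1)=e^{\bar\Phi/h}\chi|F_A|^2 b$, so the leading contribution to $L_{X,V}u_1$ is $\bar F_A e^{\bar\Phi/h}\bar\partial^*(\chi b)$, supported on $\mathrm{supp}(d\chi)$. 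For $u_2:=-h(1-\chi)e^{\bar\Phi/h}\bar F_A\,b/(2i\bar\partial\psi)$, the prefactor $h$ is chosen so that the $h^{-1}$ produced by $\bar\partial e^{\bar\Phi/h}$ cancels it, yielding the leading contribution $(1-\chi)e^{\bar\Phi/h}|F_A|^2 b$ to $\bar\partial(F_A u_2)$; this in turn produces the leading contribution $\bar F_A e^{\bar\Phi/h}\bar\partial^*((1-\chi)b)$ to $L_{X,V}u_2$. Adding, the sum of leading contributions is $\bar F_A e^{\bar\Phi/h}\bar\partial^* b=0$, since $b$ is anti-holomorphic on $M_0$ (all its poles being at the compactification points $\{e_1,\dots,e_N\}$). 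The surviving terms split as follows: $(Q-\lambda^2)u_1$ is absorbed into $e^{\Phi/h}F_A^{-1}\cdot O_{x^{-J}L^2}(h^{1/2+\epsilon})$ via Lemma \ref{estimate for conjugate dbar}, while the explicit $O(h)$ corrections in $L_{X,V}u_2$ coming from $\bar\partial\chi$ and $\bar\partial\big(|F_A|^2 b/(2i\bar\partial\psi)\big)$, together with $(Q-\lambda^2)u_2$, all have the form $he^{\bar\Phi/h}\bar F_A\cdot O_{x^{-J'}L^2}(1)$. This yields \eqref{(L-lambda)u0}.

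To finish, I set $u=u_0+e^{\Phi/h}F_A w$ and apply Proposition \ref{semiclassical solvability} with source $f:=-e^{-\Phi/h}F_A^{-1}(L_{X,V}-\lambda^2)u_0$; by \eqref{(L-lambda)u0} and the boundedness of $F_A^{\pm 1}$ and $\bar F_A$ granted by Lemma \ref{FA2 expression} (together with $|e^{-2i\psi/h}|=1$), one has $\|x^{-J''}f\|_{L^2}=O(h^{1/2+\epsilon})$ for a suitable $J''$. The resulting solution satisfies $\|e^{\varphi_0/\eps}w\|_{L^2}+h\|e^{\varphi_0/\eps}dw\|_{L^2}\leq Ch^{1+\epsilon}$. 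Setting $r:=e^{i\psi/h}F_A w$ gives $u=u_0+e^{\phi/h}r$, and the claimed bound on $r$ follows from the elementary observation that $e^{-\gamma_0/x}\lesssim e^{\varphi_0/\eps}$ uniformly on $M_0$ (because $\varphi_0\sim -x^{-\delta}/\delta^2$ with $\delta<1$ implies $-\gamma_0/x-\varphi_0/\eps\to-\infty$ as $x\to 0$). The main technical obstacle is the cancellation described above: verifying that the potentially singular $h^{-1}$ contribution from $\bar\partial e^{\bar\Phi/h}$ in $u_2$ matches precisely the compactly supported boundary term $\bar\partial^*(\chi b)$ from $u_1$. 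This matching dictates the specific form of $u_2$ and in particular its coefficient $h/(2i\bar\partial\psi)$.
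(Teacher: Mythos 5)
Your proposal is correct and follows the same route as the paper: writing $L_{X,V}=\bar F_A\,\bar\pl^*|F_A|^{-2}\bar\pl F_A+Q$, computing $\bar\pl(F_A u_0)$ for the two pieces of the ansatz, exploiting $\bar\pl^* b=0$ (and the anti-holomorphicity of $e^{\bar\Phi/h}$) for the $O(1)$ cancellation, invoking Lemma \ref{estimate for conjugate dbar} for the $h^{1/2+\epsilon}$ bound, and then solving away the error with Proposition \ref{semiclassical solvability}. You also correctly supply the detail the paper leaves implicit, namely that the output weight $e^{\varphi_0/\eps}$ from Proposition \ref{semiclassical solvability} dominates $e^{-\gamma_0/x}$ because $\varphi_0\sim -x^{-\delta}/\delta^2$ with $\delta<1$.
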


\end{subsection}
\begin{subsection}{Constructing CGO of Type II}

Let $\Phi= \phi + i\psi$ be a holomorphic Morse function which has critical points $\{p_0,\dots, p_n\}$ and expansion $\Phi = \frac{C_j}{z}$ for $C_j\neq 0$ near the ends $e_j$ for $j=1,\dots ,N$. Let $a$ be a holomorphic function in $x^{-J} L^2(M_0)$ for some $J \in \R_+\backslash\zz$ and which vanishes at $\{p_1,\dots, p_n\}$ but does not vanish at $p_0$. We see then that \[(L_{X,V} - \lambda^2) e^{\Phi/h} e^{-i\alpha} a = (Q - \lambda^2)e^{\Phi/h} e^{-i\alpha} a\] and this motivates us to seek $r_1$ solving 
\[(L_{X,V} - \lambda^2) e^{\Phi/h}r_1 = - (Q- \lambda^2) e^{\Phi/h} e^{-i\alpha} a + e^{\Phi/h}e^{-i\alpha} O_{x^{-J} L^2}(h|{\log}h|).\]
To this end,
let $G$ be the operator of Lemma \ref{rightinv}, mapping continuously $x^{-J+1}L^2(M_0)$ 
to $x^{-J-1}L^2(M_0)$. Then clearly ${\partial}^*\partial G=Id$  when acting on $x^{-J+1}L^2$.

First, we will search for $r_1$ satisfying
\begin{equation}
\label{dequation}
e^{-2i\psi/h}|e^{i\alpha}|^2\partial e^{i\bar\alpha} e^{2i\psi/h} r_1 = -\pl G (a(Q-\la^2)) + \omega + O_{x^{-J}H^1}(h |\log h|)
\end{equation}
 with $\omega\in x^{-J}L^2(M_0)$ a holomorphic 1-form on $M_0$  
and $\|r_1\|_{x^{-J}L^2} = O(h )$. 
Indeed, using the fact that $\Phi$ is holomorphic we have
\[e^{-\Phi/h} (L_{X,V} - \lambda^2) e^{\Phi/h}= e^{-i\alpha} \pl^* e^{-2i\psi/h} |e^{i\alpha}|^2\pl e^{i\bar\alpha} e^{2i\psi/h} + \bar Q -\lambda^2\] 
for some smooth superexponentially decaying function $Q$ and applying $e^{-i\alpha}\pl^*$ to \eqref{dequation}, this gives
\[e^{-\Phi/h}(L_{X,V} -\lambda^2)e^{\Phi/h}r_1=-ae^{-i\alpha}(Q-\la^2)+e^{-i\alpha}O_{x^{-J}L^2}(h |\log h|).\]
Writing $-\pl G(a(Q-\la^2))=:c(z)dz$ in local complex coordinates, $c(z)$ is $C^{2,\gamma}$ by elliptic regularity and 
we have $2i\pl_{\bar{z}}c(z)=a(Q-\la^2)$, therefore $\pl_z\pl_{\bar{z}}c(p')=\pl^2_{\bar{z}}c(p')=0$ at each critical point $p'\not=p_0$ by construction 
of the function $a$.  Therefore, we deduce that at each critical point $p'\neq p_0$, $c(z)$ 
has Taylor series expansion $\sum_{j = 0}^2 c_j z^j + O(|z|^{2+\gamma})$. That is, all the lower order terms of the Taylor expansion of $c(z)$
 
around $p'$ are polynomials of $z$ only. By Lemma \ref{control the zero}, and possibly by taking $J$ larger, there exists a holomorphic function $f\in x^{-J}L^2$ 
such that $\omega:=\pl f$ 
has Taylor expansion equal to that of $\pl G(a(Q-\la^2))$ at all critical points $p'\not=p_0$ of $\Phi$. We deduce that, if
 $\beta:=-\pl G(a(Q-\la^2))+\omega=\beta(z)dz$, we have
\begin{equation}\label{decayofb}
\begin{gathered}
|\pl_{\bar{z}}^m\pl^{\ell}_z \beta(z)|=O(|z|^{2+\gamma-\ell-m}) , \quad \textrm{ for } \ell+m\leq 2 , \textrm{ at critical points }p'\not=p_0\\
|\beta(z)|=O(|z|) , \qquad \qquad \qquad \qquad \textrm{ if }p'=p_0.  
 \end{gathered}\end{equation}
 Now, we let $\chi_1\in C_0^\infty(M_0)$ be a cutoff function supported in a small neighbourhood $U_{p_0}$ of the critical point $p_0$ and identically $1$ near 
 $p_0$, and 
$\chi\in C_0^\infty(M_0)$ is defined similarly with $\chi =1$ on the support of $\chi_1$.
We will construct $r_1$ to be a sum $r_1=r_{11} +h r_{12}$ where  $r_{11}$ is a compactly supported 
approximate solution of \eqref{dequation}  near the critical point $p_0$ of $\Phi$ and $r_{12}$ is correction term supported 
away from $p_0$.
We define locally in complex coordinates centered at $p_0$ and containing the support of $\chi$
\begin{equation}\label{defr11}
r_{11}:=\chi e^{-2i\psi/h} e^{-i\bar\alpha}R(|e^{i\alpha}|^{-2}|e^{2i\psi/h}\chi_1\beta)
\end{equation}
where $Rf(z) := -(2\pi i)^{-1}\int_{\R^2} \frac{1}{\bar{z}-\bar{\xi}}f d\bar{\xi}\wedge d\xi$ for $f\in L^\infty$ compactly supported 
is the classical Cauchy operator inverting locally $\pl_z$ ($r_{11}$ is extended by $0$ outside the neighbourhood of $p$).
The function $r_{11}$ is in $C^{3,\gamma}(M_0)$ and we have 
\begin{equation}\begin{gathered}\label{r11}
e^{-2i\psi/h}|e^{i\alpha}|^2\pl(e^{2i\psi/h} e^{i\bar\alpha}r_{11}) = \chi_1(-\pl G(a(Q-\la^2)) + \omega) + \eta\\
\textrm{ with }\eta:= e^{-2i\psi/h}e^{-i\bar\alpha}R(|e^{i\alpha}|^{-2}e^{2i\psi/h}\chi_1\beta)\pl\chi.
\end{gathered}
\end{equation}
We then construct $r_{12}$ by observing that  $b$ vanishes to order $2+\gamma$ at critical points of $\Phi$ other than $p_0$ (from \eqref{decayofb}), and 
$\pl \chi=0$ in a neighbourhood of any critical point of $\psi$, so we can find $r_{12}$ satisfying
\begin{equation}\label{defr12}
2i e^{i\alpha}r_{12}\pl\psi = (1-\chi_1)\beta .
\end{equation} 
This is possible since both $\pl\psi$ and the right hand side are valued in $T^*_{1,0}M_0$ and 
$\pl \psi$ has finitely many isolated zeroes on $M_0$: 
$r_{12}$ is then a function which is in $C^{2,\gamma}(M_0\setminus{P})$ where $P:=\{p_1,\dots, p_n\}$ is the set of critical points other than $p_0$,
it extends to a function in $C^{1,\gamma}(M_0)$  and it satisfies in local complex coordinates $z$ at each $p_j$, $j =1,\dots, n$ 
\[ |\pl_{\bar{z}}^j\pl_z^k r_{12}(z)|\leq C|z|^{1+\gamma-j-k} , \quad j+k\leq 2\]
by using also the fact that $\pl \psi$ can be locally be considered as a smooth function with a zero of order $1$ at each $p_j$.
Moreover $\beta\in x^{-J}H^2(M_0)$ thus  $r_1\in x^{-J}H^2(M_0)$ and  we have 
\[ e^{-2i\psi/h}|e^{i\alpha}|^2\pl(e^{i\bar\alpha}e^{2i\psi/h}r_1) =-\pl G(a(Q-\lambda^2))+\omega+ h|e^{i\alpha}|^2\pl e^{i\bar\alpha}r_{12} + \eta.\]

\begin{lemma}\label{fewestimates}
The following estimates hold true 
\[\begin{gathered} 
||\eta||_{H^2(M_0)}=O(|\log h|),\; \|\eta\|_{H^1(M_0)}\leq O(h|\log h|), \;
||x^{J}\pl r_{12}||_{H^1(M_0)}=O(1),\\
 ||x^Je^{i\alpha}r_{1}||_{L^2}=O(h), \; ||x^Je^{i\alpha}(r_1-h\til{r}_{12})||_{L^2}=o(h) 
\end{gathered} \]
where $\til{r}_{12}$ solves $2ie^{i\alpha}\til{r}_{12}\pl\psi = \beta$.
\end{lemma}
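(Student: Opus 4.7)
The five estimates fall naturally into three groups: the two bounds on $\eta$, the regularity estimate on $r_{12}$, and the two weighted $L^2$ bounds that assemble $r_1 = r_{11}+h r_{12}$. The common engine will be a careful stationary/non-stationary phase analysis of the Cauchy transform $R$ applied to the oscillatory amplitude $|e^{i\alpha}|^{-2}e^{2i\psi/h}\chi_1\beta$, used in combination with the vanishing properties of $\beta$ at the critical points of $\Phi$ recorded in \eqref{decayofb}.

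For the bounds on $\eta$, the crucial geometric observation is that $\chi\equiv 1$ on $\supp\chi_1$, so $\supp\partial\chi$ lies at positive distance from $\supp\chi_1$ and in particular from the stationary point $p_0$ of $\psi$. For each $z\in\supp\partial\chi$ the Cauchy kernel $1/(\bar z-\bar\xi)$ is then a smooth bounded amplitude in $\xi$ over $\supp\chi_1$, and I would write
\[
    R\bigl(|e^{i\alpha}|^{-2}e^{2i\psi/h}\chi_1\beta\bigr)(z) = \int e^{2i\psi(\xi)/h} a(z,\xi)\, d\mu(\xi),
\]
with $a$ smooth in both variables. Since $\psi$ has $p_0$ as its only critical point inside $\supp\chi_1$ and $|\beta(\xi)|=O(|\xi-p_0|)$ there by \eqref{decayofb}, a standard two-dimensional stationary phase argument, carried out on a dyadic decomposition of $\supp\chi_1$ around $p_0$ to accommodate the $C^{2,\gamma}$ rather than $C^\infty$ regularity of $\beta$, yields a bound of size $h^2|\log h|$ for this integral and analogous bounds for its $z$-derivatives. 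Since each $z$-derivative of $\eta$ brings down at most one power of $h^{-1}$ from the prefactor $e^{-2i\psi/h}$, this delivers $\|\eta\|_{H^2}=O(|\log h|)$ and $\|\eta\|_{H^1}=O(h|\log h|)$.

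The regularity estimate for $\partial r_{12}$ will rest on elliptic regularity for the operator $G$ of Lemma \ref{rightinv}. After possibly enlarging $J$, one has $a(Q-\lambda^2)\in x^{-J+1}H^1(M_0)$, whence $\beta = -\partial G(a(Q-\lambda^2))+\omega\in x^{-J}H^2(M_0)$. Away from $\{p_1,\dots,p_n\}$, the relation $2ie^{i\alpha}r_{12}\partial\psi=(1-\chi_1)\beta$ gives $\partial r_{12}$ directly with the correct regularity. Near each $p_j$ the vanishing of $\beta$ to order $2+\gamma$ recorded in \eqref{decayofb} exactly cancels the simple zero of $\partial\psi$ there, and a local computation in complex coordinates shows $r_{12}\in C^{1,\gamma}$ near these points and that $\partial r_{12}$ is in $x^{-J}H^1$ with the quantitative bound $\|x^J\partial r_{12}\|_{H^1}=O(1)$.

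The last two items reduce to analyzing $r_{11}$, since $\|x^J e^{i\alpha} hr_{12}\|_{L^2} = O(h)$ follows from the previous step. The plan for $r_{11}$ is to integrate by parts using the identity $e^{2i\psi/h}=(h/(2i\partial_z\psi))\partial_z e^{2i\psi/h}$ combined with $R\partial_z = \mathrm{Id}$, controlling the singularity of $1/\partial_z\psi$ at $p_0$ by a cutoff in a shrinking neighborhood and estimating the contribution from that neighborhood by direct stationary phase. The outcome will be an asymptotic expansion
\[
    r_{11} = h\chi_1\cdot\frac{\beta}{2ie^{i\alpha}\partial\psi} + \rho_h,\qquad \|x^J e^{i\alpha}\rho_h\|_{L^2} = o(h).
\]
The estimate $\|x^Je^{i\alpha}r_{11}\|_{L^2}=O(h)$ follows immediately, and since $\tilde r_{12}-r_{12}=\chi_1\beta/(2ie^{i\alpha}\partial\psi)$ by construction, subtracting the expansion against $h\tilde r_{12}$ exactly cancels the leading part and leaves $\rho_h$, proving the final estimate.

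The hard part will be the combined IBP/stationary-phase analysis that extracts the leading asymptotic of $r_{11}$ with a genuine $o(h)$ remainder in the weighted $L^2$ norm, while balancing the $C^{2,\gamma}$ regularity of $\beta$ against the singularity of the weight $1/\partial_z\psi$ at $p_0$; the $|\log h|$ losses in the $\eta$-estimates are direct artifacts of this limited regularity.
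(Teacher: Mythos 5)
The paper does not actually give a self-contained argument here: it simply states that the proof is the same as Lemma 4.2 of \cite{GT2}, with weights $x^J$ inserted to make the integrals converge on the non-compact surface. So the correct basis for comparison is the GT2-style argument, and on that score your proposal reconstructs the right structure: (i) $\eta$ is supported on $\supp\partial\chi$, which is at positive distance from $\supp\chi_1\supset\{p_0\}$, so the Cauchy kernel is smooth there and a stationary-phase bound using $\beta(p_0)=0$ controls it; (ii) $r_{12}$ is controlled by elliptic regularity of $G$ and the local cancellation of the order-$(2+\gamma)$ zero of $\beta$ against the simple zero of $\partial\psi$ at each $p_j$, $j\ge 1$; (iii) the two $r_1$ bounds come from extracting the leading asymptotic $r_{11}\approx h\chi_1\beta/(2ie^{i\alpha}\partial_z\psi)$ via $R\partial_z = \mathrm{Id}$ and the identity $e^{2i\psi/h}=\frac{h}{2i\partial_z\psi}\partial_z e^{2i\psi/h}$, with a cutoff to handle the singularity of $1/\partial_z\psi$ at $p_0$. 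You also correctly noted the algebraic fact $\tilde r_{12}-r_{12}=\chi_1\tilde r_{12}$, which reduces the last estimate to the $r_{11}$ expansion. The weights $x^J$ enter through $\beta\in x^{-J}H^2$ exactly as the paper indicates.

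One point deserves a warning. To obtain $\|\eta\|_{H^2}=O(|\log h|)$ with two derivatives each costing a factor $h^{-1}$ from $e^{-2i\psi/h}$, you genuinely need the Cauchy transform $R(|e^{i\alpha}|^{-2}e^{2i\psi/h}\chi_1\beta)$ and its first and second $z$-derivatives to be $O(h^{2}|\log h|)$ pointwise on $\supp\partial\chi$. A plain dyadic decomposition around $p_0$, bounding each annulus by either the trivial estimate or by two non-stationary-phase integrations by parts, only yields $O(h^{3/2})$; this would not suffice for the $H^2$ bound. To reach $h^2|\log h|$ you must exploit the actual stationary-phase cancellation at $p_0$, i.e. that the \emph{first-order} term in the two-dimensional stationary-phase expansion vanishes because $\beta(p_0)=0$, with the $|\log h|$ loss coming from the $C^{2,\gamma}$ (rather than $C^\infty$) regularity of $\beta$ and from the annuli at scale $\sim h^{1/2}$. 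You flagged this as the hard part, which is right; just be aware that the dyadic bookkeeping as you describe it is not by itself enough, and the second-order cancellation has to be made explicit.
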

\begin{proof} The proof is exactly the same as the proof of Lemma 4.2 in \cite{GT2}, except that one needs to add the weight 
$x^J$ to have bounded integrals.\end{proof}
As a direct consequence, we have 
\begin{cor}\label{corerrorterm}
With $r_1=r_{11}+h r_{12}$, there exists $J > 0$ such that 
\[||e^{i\alpha}e^{-\Phi/h}(L_{X,V}- \la^2)e^{\Phi/h}(a + r_1)||_{x^{-J}L^2(M_0)} = O(h|\log h|). \]
\end{cor}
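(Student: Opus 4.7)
The proof will reduce to a bookkeeping exercise, combining the construction of $r_1 = r_{11}+hr_{12}$ with the holomorphicity of $a$ and the key identity behind \eqref{dequation} to cancel the leading-order terms, then invoking the estimates of Lemma \ref{fewestimates} to absorb the remainder into $O(h|\log h|)$.

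First I would observe that since $a$ and $\Phi$ are both holomorphic and $\bar\partial\alpha = A$, the computation $\bar\partial(e^{i\alpha}\cdot e^{\Phi/h}e^{-i\alpha}a) = \bar\partial(e^{\Phi/h}a) = 0$, together with the factorization $L_{X,V} = e^{-i\bar\alpha}\bar\partial^*|e^{-i\alpha}|^2\bar\partial e^{i\alpha} + Q$, immediately yields
\[(L_{X,V} - \lambda^2)(e^{\Phi/h}e^{-i\alpha}a) = (Q-\lambda^2)e^{\Phi/h}e^{-i\alpha}a.\]
This identifies the leading-order obstruction that $r_1$ was designed to cancel. (Throughout, I interpret the expression $a+r_1$ in the statement of the corollary as $e^{-i\alpha}a + r_1$, consistent with the ansatz set up at the start of the subsection.)

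Next I would apply $e^{-i\alpha}\partial^*$ to both sides of the identity
\[e^{-2i\psi/h}|e^{i\alpha}|^2\partial(e^{i\bar\alpha}e^{2i\psi/h}r_1) = -\partial G(a(Q-\lambda^2)) + \omega + h|e^{i\alpha}|^2\partial e^{i\bar\alpha}r_{12} + \eta\]
built into the construction of $r_1$. Two essential simplifications occur: $\partial^*\partial G = \operatorname{Id}$ by Lemma \ref{rightinv}, and $\partial^*\omega = 0$ because $\omega$ is a holomorphic $(1,0)$-form. Recognizing the left-hand side (after applying $e^{-i\alpha}\partial^*$) as the derivative part of the conjugated operator $e^{-\Phi/h}(L_{X,V}-\lambda^2)e^{\Phi/h}$ applied to $r_1$, I arrive at
\[e^{-\Phi/h}(L_{X,V}-\lambda^2)e^{\Phi/h}r_1 = -e^{-i\alpha}a(Q-\lambda^2) + e^{-i\alpha}\partial^*\eta + he^{-i\alpha}\partial^*(|e^{i\alpha}|^2\partial e^{i\bar\alpha}r_{12}) + (\bar Q-\lambda^2)r_1.\]
Adding this to the identity from the first step, the terms proportional to $e^{-i\alpha}a(Q-\lambda^2)$ cancel, leaving only three error contributions once both sides are multiplied by $e^{i\alpha}e^{-\Phi/h}$.

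To finish I would estimate each remainder in $x^{-J}L^2$ via Lemma \ref{fewestimates}. The term $e^{-i\alpha}\partial^*\eta$ is controlled by $\|\eta\|_{H^1} = O(h|\log h|)$, noting that $\eta$ is compactly supported so the weight plays no role. The term $he^{-i\alpha}\partial^*(|e^{i\alpha}|^2\partial e^{i\bar\alpha}r_{12})$ is of order $h\|x^J\partial r_{12}\|_{H^1} = O(h)$ after distributing derivatives onto the smooth factor $|e^{i\alpha}|^2 e^{i\bar\alpha}$. Finally, $(\bar Q-\lambda^2)r_1$ is bounded by $\|x^J e^{i\alpha}r_1\|_{L^2} = O(h)$, using that $Q$ decays super-exponentially in $1/x$. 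The dominant $O(h|\log h|)$ contribution thus comes from $\eta$, giving the claim. The main obstacle is tracking the logarithmic factor, which originates from the Cauchy transform $R$ used in the definition \eqref{defr11} of $r_{11}$ and propagates into $\|\eta\|_{H^1}$; once this is carefully accounted for, the other two error terms sit comfortably at order $O(h)$ and are routinely absorbed.
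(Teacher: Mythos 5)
Your proof is correct and fills in exactly the reasoning the paper leaves implicit: the paper states this Corollary as "a direct consequence" of Lemma \ref{fewestimates} without proof, and your computation — cancel the $(Q-\lambda^2)e^{-i\alpha}a$ terms using the factorization and the equation \eqref{dequation} after applying $e^{-i\alpha}\partial^*$ (with $\partial^*\partial G = \operatorname{Id}$ and $\partial^*\omega = 0$), then estimate $\partial^*\eta$, $h\partial^*(|e^{i\alpha}|^2\partial e^{i\bar\alpha}r_{12})$ and $(\bar Q-\lambda^2)r_1$ via Lemma \ref{fewestimates} — is the intended argument. Your decision to read $a + r_1$ as $e^{-i\alpha}a + r_1$ is also the right one; the cancellation does not close up under the literal reading.

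One caveat worth recording: your phrase "distributing derivatives onto the smooth factor $|e^{i\alpha}|^2 e^{i\bar\alpha}$" suggests that factor is harmlessly bounded, but $\alpha$ from Proposition \ref{constructing alpha} has a linear term $c_1 z$ in each end, so $e^{i\alpha}$ can grow exponentially there. What actually happens is $|e^{i\alpha}|^2 e^{i\bar\alpha} = e^{i\alpha}$, and after the $\partial^*$ acts you are left with $e^{i\alpha}$ multiplying second-order derivatives of $r_{12}$; the exponential factor is absorbed because $r_{12}$ is \emph{defined} through $2ie^{i\alpha}r_{12}\partial\psi = (1-\chi_1)\beta$, so $e^{i\alpha}r_{12}$ is the polynomially-controlled object, not $r_{12}$ alone. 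The paper's own statement of Lemma \ref{fewestimates} ($\|x^J\partial r_{12}\|_{H^1}=O(1)$ without the $e^{i\alpha}$) is loose on exactly this point, so it is a shared imprecision rather than a flaw you introduced — but if you were to write this up rigorously you would want to track $e^{i\alpha}r_{12}$ rather than $r_{12}$ through the estimate.
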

Now we can apply Proposition \ref{semiclassical solvability} to obtain solutions to $(L_{X,V} -\la^2) u = 0$ of the form
\begin{eqnarray}
\label{type II CGO}
u = e^{\Phi/h}(a + r_1) + e^{\varphi/h}r_2
\end{eqnarray}
with $r_2$ satisfying the estimates
\[\|e^{-\gamma_0/x} r_2\| + h\|e^{-\gamma_0/x} dr_2\| \leq C h^{1+ \frac{1}{2}} |\log h|\]
for some $\gamma_0>0$. 
\end{subsection}
\end{section}

\section{Conjugation Factors and an Integral Identity}\label{sec_conj}

\noindent
We begin by defining the functions $F_{A_1}$ and $F_{A_2}$ as
\begin{align} \label{eq_def_FA}
    F_{A_1} := e^{i \alpha_1} \quad\text{ and } \quad   F_{A_2} := \Psi e^{i \alpha_2},
\end{align}
where $\alpha_j$ are the soultions to $\dbar\alpha_j = A_ j$, given by
Proposition \ref{constructing alpha} and 
where $\Psi$ is the holomorphic function given by Proposition
\ref{exponentially decaying conjugating factor} so that $F_{A_2}$ has the
expression given by Lemma \ref{FA2 expression}

We proceed by first deriving an apropriate system and from this an integral identity.
Consider the equation
\begin{align} \label{eq_LXV}
(L_{X_j,V_j} -\lambda^2)u_j = 0, \quad \text{ on } \quad M_0,
\end{align}
$j=1,2$.
This equation can rewritten by means of the above defintions
in the  form
\[
    \big(2F_{\ov{A}_j}^{-1} \dbar^* F_{\ov{A}_j}  F_{A_j}^{-1} \dbar F_{A_j} -\lambda^2 + Q_j \big) u_j = 0,
\]
where $Q_j := \star dX_j + V_j$.
Using this and
by defining the function and 1-form
\[
    \tilde u_j := F_{A_j} u_j,\quad
    \tilde \omega := F_{\ov{A}_j}  F_{A_j}^{-1}\dbar\tilde u_j = |F_{A_j}|^{-2}\dbar\tilde u_j,
\]
and further setting  
\begin{align*}
D := 
  \begin{pmatrix}
   0  &\bar\partial^*\\
  \bar\partial&  0
 \end{pmatrix} ,
 \;
 \mathcal{A}_j := 
  \begin{pmatrix}
      (Q_j -\lambda^2)|\bar F_{A_j}|^2  &0\\
      0 &- |F_{A_j}|^2
 \end{pmatrix},
 \;
 U_j :=
  \begin{pmatrix}
      \tilde u_j  \\
      \tilde \omega_j 
 \end{pmatrix},
\end{align*}
on sees that equation \eqref{eq_LXV} is then equvivalent to the system
\[
    (D+\mathcal{A}_j) U_j = 0, \quad \text{ on } \quad M_0.
\]

In order to derive the integral identity we define
an exhaustion of $M_0$, given by 
the sets $M_R := \tilde M \cup \tilde E_{1,R} \cup \dots \cup \tilde E_{N,R}$, where
$\tilde M \setminus \cup_j E_j$ and $\tilde E_{j,R} \simeq B(0,R) \setminus B(0,1)$, $R>1$.
Consider the solutions
$U_j$ of $(D+\mathcal{A}_j) U_j = 0$, $j=1,2$. Letting
$W := U_1-U_2$, we have that
\[
    \int_{M_R} \big\langle (D+\mathcal{A}_1) W, U_1 \big\rangle 
    =
    \int_{M_R} \big\langle (\mathcal{A}_2 - \mathcal{A}_1) U_2, U_1 \big\rangle,
\]
where $\langle U_1 , U_2 \rangle := \langle \tilde u_1 , \tilde u_2 \rangle 
+ \langle \tilde \omega_1 , \tilde \omega_2 \rangle$ where inner products on the right side 
are the ones induced by the metric.
By integrating by parts the left hand side and using the fact that $(D -\mathcal{A}_1) U_1=0$,
we have that
\begin{align}\label{eq_bndry}
    \int_{M_R} \big\langle (\mathcal{A}_2 - \mathcal{A}_1) U_2, U_1 \big\rangle 
    =
    \int_{ \p M_R} \iota^* \big((\tilde u_1- \tilde u_2) \star \ov{\tilde \omega_1}
    - \star (\tilde\omega_1 - \tilde \omega_2) \ov{\tilde u_1} \big),   
\end{align}
where $\iota\colon \p M_R \to M_0$ is the inclusion map.
The boundary $\p M_R$ can be decomposed into components that are contained in the ends $E_j$.
The integration set $\p M_R$ can moreover be considered to consist of
the set $\{ z \in \C  \colon |z|=R\}$ in each end.

We now let $u_j \in x^{-\tau}L^2$, $\tau >\demi$ be the scattering solutions
given by means of the Poisson operator, i.e. $u_j = P_j(\lambda) g$, where $g \in C^\infty(\p M_0)$
and consider how
the boundary integral in \eqref{eq_bndry} behaves, when $R \to \infty$.
By Proposition \ref{poisson kernel} we have the following asymptotics
for the scattering solutions,
\[
u_j = P_j(\lambda) g
= c_\lambda r^{-\frac{1}{2}} [e^{i\lambda r} (S_{X_j,V_j}(\la) g)(\theta)) +
i e^{-i\lambda r}g(\theta)] +{\cal R}, 
\]
where $z = re^{i\theta}$ and $|{\cal R}| + |\nabla {\cal R}| \leq Cr^{-\frac{3}{2}}$.

To see that the boundary integral in \eqref{eq_bndry} vanishes in the limit, we firstly note that
\begin{align*} 
(\tilde u_1 - \tilde u_2)\ov{\tilde \omega_1} 
= (F_{A_1}u_1 - F_{A_2}u_2) \ov{\tilde \omega_1} 
&= (F_{A_1}(u_1 - u_2) + (F_{A_1} - F_{A_2})u_2) \ov{\tilde \omega_1}   \nonumber 
\end{align*}
The term containing $F_{A_1} - F_{A_2}$ decays super exponentially thanks to Proposition
\ref{exponentially decaying conjugating factor} and will therefore not contribute to the integral in 
\eqref{eq_bndry} in the limit. We have moreover that
\begin{align*}
F_{A_1}(u_1 - u_2)\ov{\tilde \omega_1} 
&= 
F_{A_1}(u_1 - u_2)|F_{A_1}|^{-2}(\ov{F_{A_1}} \p \ov{u_1} + \ov{u_1} \p \ov{F_{A_1}}) \\
&=
(u_1 - u_2) \p \ov{u_1}  + O(e^{-r}).
\end{align*}
The scattering matricies are equal for the potentials, i.e. $S_{X_1,V_1}(\la) g = S_{X_2,V,2}(\la) g$.
This together with the above asymptotics for $u_j$
imply that the above expression is $O(r^{-3/2})$.
It follows the last term in the integral in \eqref{eq_bndry} vanishes in the limit $R \to \infty$.

To handle the $\tilde \omega_1 - \tilde \omega_2$ term in
\eqref{eq_bndry}, we argue similarly. First note that 
the derivative has the expansion
\[
 \dbar u_j  
= c_\lambda r^{-\frac{1}{2}} [ i\lambda e^{i\lambda r} S_{X_j,V_j}(\la) g(\theta)) +
\lambda e^{-i\lambda r}g(\theta)] + O(r^{-\frac{3}{2}}).
\]
Secondly we have that
\[
\tilde \omega_j 
= |F_{A_j}|^{-2}\dbar F_{A_j} u_j - |F_{A_j}|^{-2}F_{A_j} \dbar u_j
= - |F_{A_j}|^{-2}F_{A_j}  \dbar u_j + O(e^{-r}),
\]
It follows that
\begin{align*}
    (\tilde \omega_1 - \tilde \omega_2) \ov{\tilde u_1}
    &= 
    \Big(  \frac{F_{A_2}}{|F_{A_2}|^2} \dbar u_2
    - \frac{F_{A_1}}{|F_{A_1}|^2} \dbar u_1    
    \Big) \ov{F}_{A_1} \ov{u_1} + O(e^{-r}) \\
    &= 
    \big(  (\ov{F}_{A_1}  - \ov{F}_{A_2}) \dbar u_2
    + \ov{F}_{A_2} (\dbar u_2 - \dbar u_1)    
    \big) \frac{F_{A_2}}{|F_{A_2}|^2} \ov{u_1} + O(e^{-r}), 
\end{align*}
the first term decays super exponentially
by  Proposition \ref{exponentially decaying conjugating factor}.
In the second term the $F_{A_2}$:s  cancel, and we can thus use the asymptotics of $\dbar u_j$ 
together with the fact that $S_{X_1,V_1}(\la) g = S_{X_2,V,2}(\la) g$  to see that
it is $O(r^{-3/2})$. 
This implies that the integral containing the $\tilde \omega_1 - \tilde \omega_2$ term 
in \eqref{eq_bndry}, will vanish, when $R \to \infty$.

By taking the limit $R \to \infty$ in \eqref{eq_bndry} we obtain hence that
\begin{align} \label{eq_inteq1}
    \int_{M_0} \big\langle (\mathcal{A}_2 - \mathcal{A}_1) U_2, U_1 \big\rangle 
    = 0,
\end{align}
when $U_1$ and $U_2$  are made up of scattering solutions. 
As a consequence of the density result of Proposition \ref{density} we can extend this to
all exponentially growing solutions of \eqref{eq_LXV}, which is the content of the  following Lemma.

\begin{lemma} \label{intid}
Assume that $S_{X_1,V_1}(\la) g = S_{X_2,V,2}(\la) g$.
Let $v,w \in e^{\gamma/x}H^1(M_0)$ for some $\gamma>0$, be solutions of
$(L_{X_1,V_1}-\lambda^2)v=0$ and $(L_{X_2,V_2}-\lambda^2)w=0$, on $M_0$, then
\begin{align*}
    \int_{M_0} \big\langle (|F_{A_1}|^{-2}- |F_{A_2}|^{-2} )\dbar \tilde v , \dbar \tilde w \big\rangle 
    + \frac{1}{2} \big\langle (Q_1|F_{A_1}|^{2}-Q_2|F_{A_2}|^{2} ) \tilde v , \tilde w \big\rangle 
    = 0,
\end{align*}
\end{lemma}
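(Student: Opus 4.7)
The assertion of the lemma is equivalent, for scattering solutions $v = P_{X_1,V_1}(\lambda)g$ and $w = P_{X_2,V_2}(\lambda)h$, to the identity $\int_{M_0}\langle(\mathcal A_2-\mathcal A_1)U_2, U_1\rangle = 0$ already established as equation \eqref{eq_inteq1}. Indeed, the bottom row of $(\mathcal A_2-\mathcal A_1) U_2$ contributes $(|F_{A_1}|^2-|F_{A_2}|^2)\langle\tilde\omega_2, \tilde\omega_1\rangle$, which becomes $(|F_{A_2}|^{-2}-|F_{A_1}|^{-2})\langle\bar\partial\tilde u_2,\bar\partial\tilde u_1\rangle$ upon using $\tilde\omega_j = |F_{A_j}|^{-2}\bar\partial\tilde u_j$; the top row contributes the $Q_j|F_{A_j}|^2$ term, with the factor $\tfrac{1}{2}$ corresponding to the factor $2$ in the factorization $L_{X,V} = 2F_{\bar A}^{-1}\bar\partial^*F_{\bar A}F_A^{-1}\bar\partial F_A +Q$ used in the system $(D+\mathcal A_j)U_j = 0$. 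The real content of the lemma is therefore the extension of \eqref{eq_inteq1} from the range of the Poisson operator to arbitrary exponentially growing solutions in $e^{\gamma/x}H^1(M_0)$.

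For this extension, the plan is to use the density result of Proposition \ref{density}. Fix $\gamma'$ with $\gamma<\gamma'$ and approximate $v$ in $e^{\gamma'/x}L^2$ by a sequence $v_n = P_{X_1,V_1}(\lambda) g_n$ with $g_n \in C^\infty(\partial M_0)$; similarly approximate $w$ by $w_n = P_{X_2,V_2}(\lambda)h_n$. Since $v_n-v$ solves the homogeneous equation $(L_{X_1,V_1}-\lambda^2)(v_n-v) = 0$, standard interior elliptic regularity upgrades the $e^{\gamma'/x}L^2$ convergence to $e^{\gamma'/x}H^1_{loc}$ convergence, so in particular $\bar\partial\tilde v_n \to \bar\partial\tilde v$ on compact subsets of $M_0$ with locally uniform $L^2$ bounds, and analogously for $w_n$.

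The passage to the limit then works because both coefficients in the bulk integrals decay faster than any exponential. For $Q_1|F_{A_1}|^2-Q_2|F_{A_2}|^2$ this follows from the super-exponential decay of $Q_j = \star dX_j + V_j$ inherited from the hypotheses of Theorem \ref{thm1}, together with the at most exponential growth of $|F_{A_j}|^2 = e^{-2\Im\alpha_j}$ (Proposition \ref{constructing alpha} bounds $\alpha_j$ linearly at infinity). For $|F_{A_1}|^{-2}-|F_{A_2}|^{-2}$, Lemma \ref{FA2 expression} gives $F_{A_2} = F_{A_1}(1+t)$ with $t\in e^{-\mu/x}W^{2,\infty}$ for all $\mu>0$; then $|F_{A_2}|^{-2}-|F_{A_1}|^{-2} = |F_{A_1}|^{-2}\bigl(|1+t|^{-2}-1\bigr)$ is super-exponentially small because the second factor is super-exponentially small and $|F_{A_1}|^{-2}$ only grows at most exponentially. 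Choosing the decay rate of these coefficients larger than $2\gamma'$ renders both bilinear pairings absolutely convergent and continuous under the $e^{\gamma'/x}L^2$ and $e^{\gamma'/x}H^1_{loc}$ convergence of $v_n, w_n$, so the limit produces the stated identity for $v$ and $w$.

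I expect the main obstacle to be the $H^1_{loc}$ upgrade of Proposition \ref{density}, which gives only $L^2$-type density a priori but is needed to control the $\bar\partial\tilde v \cdot \overline{\bar\partial\tilde w}$ pairing in the first bulk term; fortunately this upgrade is routine given that the differences $v_n-v$ and $w_n-w$ solve homogeneous elliptic equations on all of $M_0$, so standard interior estimates suffice.
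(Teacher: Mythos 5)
Your proposal is correct in spirit but takes a genuinely different route from the paper. You correctly identify that the heart of the Lemma is extending identity \eqref{eq_inteq1} beyond the range of the Poisson operator via Proposition \ref{density}, and your translation of the system-theoretic statement into the scalar statement of the Lemma is accurate.

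The difference lies in how the limit is taken. The paper keeps one factor frozen as a scattering solution $u$ (which enjoys $x^{-\alpha}H^2$ regularity) while the other factor $v_k \to v$ is passed to the limit in the weak topology, so that $\dbar\tilde v_k \to \dbar\tilde v$ only in weighted $H^{-1}$; the term $\langle \mu_F\, \dbar(\tilde v - \tilde v_k), \dbar\tilde u\rangle$ is then controlled by pairing an $H^{-1}$ quantity (with the super-exponentially decaying $\mu_F$ absorbed inside) against an $H^1$ quantity coming from the $H^2$ regularity of $u$. Only afterwards is the same argument repeated to replace the scattering solution $u$ by $w$. Your approach instead upgrades the $e^{\gamma'/x}L^2$ convergence of $v_k$ and $w_k$ to convergence of one derivative via elliptic regularity applied to the differences $v_k-v$, $w_k-w$ (which solve the homogeneous equations), so that both limits can be taken simultaneously and the $\dbar\tilde v\cdot\overline{\dbar\tilde w}$ pairing is handled directly in $L^2\times L^2$. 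This is a legitimate simplification; the two-step structure in the paper is there precisely to avoid invoking the elliptic upgrade on the approximating sequences. What each buys: your route has cleaner bookkeeping and a single limit; the paper's route only needs regularity of a single fixed scattering solution (granted by the resolvent mapping property) rather than a uniform-in-$k$ elliptic estimate.

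One point to sharpen: you write the upgrade as $e^{\gamma'/x}H^1_{loc}$ convergence, but genuine $H^1_{loc}$ convergence (i.e.\ on compacta) is not enough to pass to the limit in an integral over all of $M_0$. What actually holds, and what your argument needs, is that because the ends are Euclidean and the coefficients of $L_{X_j,V_j}-\lambda^2$ are bounded with super-exponential decay, the interior elliptic constant on a unit ball $B(p,1)$ is uniform in $p$; summing the local estimates over a bounded-overlap cover (and noting that $e^{-\gamma'/x}$ is comparable to a constant on each such ball) upgrades the convergence to the \emph{global} weighted norm $\|e^{-\gamma'/x}(v_k - v)\|_{H^2(M_0)}\to 0$, and likewise for $w_k$. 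With that stated, the super-exponential decay of $\mu_F$ and $\mu_Q$ dominates the at-most-exponential growth of $|F_{A_j}|^{\pm 2}$ and the $e^{\gamma'/x}$ weight, and the limit passage is justified. Your last paragraph shows you are aware of this issue; the phrase ``$H^1_{loc}$'' just undersells what is actually being used.
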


\begin{proof}
Proposition \ref{density} implies that we can pick two sequences of scattering solutions 
$(v_k)$ and $(w_k)$, s.t.
$v_k \to v$, and $w_k \to w$ in the $e^{\gamma'/x}L^2$-norm. It follows that
$\tilde v_k \to  \tilde v$, and $ \tilde w_k \to \tilde  w$ in the $e^{\gamma'/x}L^2$-norm and
moreover that $ \dbar \tilde v_k \to  \dbar \tilde v$, and 
$ \dbar \tilde w_k \to \dbar \tilde  w$ in the $e^{\gamma'/x}H^{-1}$-norm. 

Use the abbreviations $\mu_F := |F_{A_1}|^{-2}- |F_{A_2}|^{-2} \in e^{-\gamma/x} W^{1,\infty}(M_0)$
and $\mu_Q := \frac{1}{2}(Q_1|F_{A_1}|^{2}-Q_2|F_{A_2}|^{2}) \in e^{-\gamma/x}W^{1,\infty}(M_0)$.
Now suppose first that $u$ is a scattering solution to $(L_{X_2,V_2} -\lambda^2)u = 0$.
Since the claim holds for scattering solutions, because of \eqref{eq_inteq1} we have that
\begin{small}
\begin{align}\label{eq_MFQ}
\int_{M_0} \big\langle \mu_F \dbar \tilde v , \dbar \tilde u \big\rangle 
+ \big\langle M_Q \tilde v , \tilde u \big\rangle 
&=
\int_{M_0} \big\langle \mu_F \dbar (\tilde v - \tilde v_k), \dbar \tilde u \big\rangle 
+ \big\langle \mu_Q (\tilde v - \tilde v_k), \tilde u \big\rangle 
\end{align}
\end{small}
We can estimate the term  involving $\mu_Q$ by
\begin{align} \label{eq_MQ}
\int_{M_0} \big\langle \mu_Q (\tilde v - \tilde v_k) , \tilde u \big\rangle 
&\leq
\|e^{\gamma'/x}\mu_Q\|_{L^\infty} \|e^{-\gamma'/x} (\tilde v - \tilde v_k)\|_{L^2} \|\tilde u\|_{L^2} \to 0,
\end{align}
as $k \to \infty$.
To handle the other term in \eqref{eq_MFQ} we write
\begin{align*}
    \big\langle \mu_F \dbar (\tilde v - \tilde v_k) , \dbar \tilde u \big\rangle &=
    (|F_{A_1}|^{-2}- |F_{A_2}|^{-2})F_{A_1}\ov{F}_{A_2} \\
    &\quad(iA_1(v-v_k) - \dbar(v-v_k))(-i\ov{A}_2 \bar u-\p \bar u).
\end{align*}
Furthermore we have that
\begin{small}
\begin{align*}
    \mu :=
    (|F_{A_1}|^{-2}- |F_{A_2}|^{-2})F_{A_1}\ov{F}_{A_2}
    &=
    \big((F_{A_2}- F_{A_1})\ov{F}_{A_2} + F_{A_1}(\ov{F}_{A_2} - \ov{F}_{A_1})\big)/F_{A_2}\ov{F}_{A_1}.
\end{align*}
\end{small}
Proposition \ref{exponentially decaying conjugating factor}
implies that $\mu$ is super exponentially decaying.
To estimate the first term on the right hand side of \eqref{eq_MFQ} we write
\begin{align*}
\int_{M_0} \big\langle \mu_F \dbar (\tilde v - \tilde v_k), \dbar \tilde u \big\rangle 
&=
\int_{M_0} \mu(iA_1(v-v_k) - \dbar(v-v_k))(-i\ov{A}_2 \bar u- \p \bar u) \\
&\leq 
\|A_1(v-v_k) \|_{L^2}  \|A_2 \bar u\|_{L^2} \\
&\quad+\| x^{-\alpha}A_1(v-v_k)\|_{L^2}\| x^{\alpha} \p  \bar u\|_{L^2} \\
&\quad+\| \mu \dbar( v -  v_k)\|_{H^{-1}} \|A_2  \bar u\|_{H^1} \\
&\quad+\| x^{-\alpha}\mu \dbar( v -  v_k)\|_{H^{-1}} \| x^{\alpha} \p  \bar u\|_{H^1}  \\
&\to 0,
\end{align*}
as $k \to \infty$, where we used the fact that $u \in x^{-\alpha}H^2(M_0)$, $\alpha>0$ 
by elliptic regularity.
It follows 
\begin{align} \label{eq_Miid}
\int_{M_0} \big\langle \mu_F \dbar \tilde v , \dbar \tilde u \big\rangle 
+ \big\langle \mu_Q \tilde v , \tilde u \big\rangle = 0.
\end{align}
We can now repeat this form of argument to obtain the above equation with the scattering solution $u$
replaced by $w$, thus proving the claim. 
%
\end{proof}

\section{Gauge equivalence} \label{sec_gauge}

The goal of this section is to prove the gauge equivalence statement of Theorem \ref{thm1}: 
\begin{proposition}
\label{same gauge}
If $S_{X_1,V_1}(\la) = S_{X_2,V,2}(\la)$  for a fixed $\lambda \in \R\backslash\{0\}$ then there exists a unitary function $\Theta$ such that $X_1 - X_2 = d\Theta/\Theta$.
\end{proposition}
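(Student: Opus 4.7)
The strategy is to deduce from Lemma \ref{intid}, together with the CGO constructions of Section \ref{sec_cgo}, the pointwise identity $|F_{A_1}| \equiv |F_{A_2}|$ on $M_0$. Once this is established the function $\Theta := F_{A_1}/F_{A_2}$ is automatically non-vanishing and unitary, and the gauge identity reduces to the defining relation $\bar\partial F_{A_j} = iA_j F_{A_j}$ together with the constraint $|\Theta| \equiv 1$.

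To establish $|F_{A_1}(p_0)| = |F_{A_2}(p_0)|$ at a given $p_0 \in M_0$, I would first fix via Proposition \ref{criticalpoints} a Morse holomorphic function $\Phi = \varphi + i\psi \in \mathcal{H}$ with $p_0$ a critical point and with simple poles exactly at $\{e_1,\dots,e_N\}$, denoting the other critical points by $\{p_1,\dots,p_n\}$. I would then substitute into the identity of Lemma \ref{intid} a pair of CGO solutions: a Type II CGO $v$ for $L_{X_1,V_1}$ of the form $v = e^{\Phi/h}(a + r_1) + e^{\varphi/h} r_2$ with $a$ holomorphic, $a(p_0) \neq 0$, and $a$ vanishing to sufficient order at $p_1,\dots,p_n$ (built via Lemma \ref{control the zero}); and a Type I CGO $w$ for $L_{X_2,V_2}$ built from an antiholomorphic $1$-form $b$ chosen via Lemma \ref{amplitude} so that $b(p_0) \neq 0$ while $b$ vanishes at the other critical points.

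Letting $h \to 0^+$, the super-exponential decay of both $|F_{A_1}|^{-2} - |F_{A_2}|^{-2}$ and $Q_1|F_{A_1}|^2 - Q_2|F_{A_2}|^2$ granted by Proposition \ref{exponentially decaying conjugating factor} ensures that only a compact region contributes to the integrals. Stationary phase at the critical point $p_0$ of $\psi$, with the remainder terms $r_1, r_2$ (and the analogous pieces of $w$) controlled via Lemma \ref{estimate for conjugate dbar} and Proposition \ref{semiclassical solvability}, then extracts a leading-order relation of the shape $\kappa(p_0)\,(|F_{A_1}(p_0)|^2 - |F_{A_2}(p_0)|^2) = 0$, with $\kappa(p_0) \neq 0$ an explicit expression in $a(p_0)$, $b(p_0)$, $\det \mathrm{Hess}_{p_0}\psi$ and the values $F_{A_j}(p_0)$. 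The key arrangement is to choose the amplitudes $a, b$ (and if necessary to combine with a second pair of CGOs with the roles of $L_{X_1,V_1}$ and $L_{X_2,V_2}$ interchanged) so that the unknown $A_j(p_0)$ and $Q_j(p_0)$ occurring in the two summands of Lemma \ref{intid} drop out, leaving only the modulus information. Since by Proposition \ref{criticalpoints} the admissible $p_0$ form a dense subset of $M_0$, continuity then yields $|F_{A_1}| \equiv |F_{A_2}|$ throughout $M_0$.

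Finally, with $|F_{A_1}| \equiv |F_{A_2}|$ at hand, set $\Theta := F_{A_1}/F_{A_2}$. Lemma \ref{FA2 expression} writes $\Theta = (1+t)^{-1}$ with $t \in e^{-\gamma/x}W^{2,\infty}(M_0)$ bounded away from $-1$, so $\Theta$ is nowhere vanishing and belongs to $1 + e^{-\gamma/x}W^{1,\infty}(M_0)$ for every $\gamma > 0$; unitarity follows from $|F_{A_1}| = |F_{A_2}|$. The gauge relation then drops out of the computation $\bar\partial \log \Theta = i(A_1 - A_2) = i\pi_{0,1}(X_1 - X_2)$; combined with the identity $\partial \log \Theta = -\overline{\bar\partial \log \Theta}$ forced by $|\Theta| \equiv 1$, this recovers the full one-form $d\log \Theta$ and yields $X_1 - X_2 = d\Theta/\Theta$ in the convention of the paper. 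I expect the main difficulty to lie in the third paragraph: namely, engineering a clean nonzero coefficient of $|F_{A_1}(p_0)|^2 - |F_{A_2}(p_0)|^2$ out of the two separate summands of Lemma \ref{intid}, which likely requires exploiting both contributions in a coordinated way so as to cancel the unknown $A_j, Q_j$ dependence at $p_0$.
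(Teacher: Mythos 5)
Your overall architecture is the right one and matches the paper's: use Lemma \ref{intid} together with CGO solutions and stationary phase to obtain the pointwise identity $|F_{A_1}|=|F_{A_2}|$ on a dense set of critical points $p_0$, conclude by continuity, and then read off the gauge relation from $\Theta := F_{A_1}/F_{A_2}$ using $\bar\partial F_{A_j}= iA_jF_{A_j}$ and unitarity. The final paragraph (via $\partial\log\Theta = -\overline{\bar\partial\log\Theta}$) is a sound alternative to the paper's direct computation with $\Theta = \bar F_{A_2}/\bar F_{A_1}$.

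The gap is in the CGO choice, and you have correctly located where the difficulty lies but not how to dispose of it. You propose pairing a Type II CGO $v$ for $L_{X_1,V_1}$ with a Type I CGO $w$ for $L_{X_2,V_2}$. This runs into two problems. First, a phase mismatch: the leading part of $\bar\partial\tilde v$ for a Type II solution with phase $\Phi$ carries $e^{\Phi/h}$, while the leading part of $\bar\partial\tilde w$ for a Type I solution with phase $-\Phi$ carries $e^{-\bar\Phi/h}$, and the sesquilinear pairing $\langle\bar\partial\tilde v,\bar\partial\tilde w\rangle$ then has phase $e^{\Phi/h}\,\overline{e^{-\bar\Phi/h}}=1$, so there is no $e^{\pm 2i\psi/h}$ oscillation and stationary phase does not apply. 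Second, even after adjusting phases, $\bar\partial\tilde v = e^{\Phi/h}\bar\partial(F_{A_1}a)+\dots$ for a Type II ansatz contributes $iA_1F_{A_1}a$ at leading order, so the stationary phase coefficient is contaminated by $A_1(p_0)$, and the $Q$-term of Lemma \ref{intid} is no longer sub-leading because $\tilde v$ is $O(1)$ rather than $o(1)$. Your suggestion to cancel these by a ``coordinated'' use of both summands and a second interchanged pair is left unspecified and does not obviously close.

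The paper's resolution is to use Type I CGOs for \emph{both} solutions. This is precisely what makes Lemma \ref{intid} degenerate cleanly: by Lemma \ref{estimate for conjugate dbar} the amplitude $\tilde u_j$ of a Type I CGO is $O(h^{1/2+\epsilon})$, so the $Q$-summand in Lemma \ref{intid} is automatically $o(h)$; while \eqref{eq_dbaruj} gives $\bar\partial\tilde u_j = e^{\pm\bar\Phi/h}|F_{A_j}|^2 b + O(h)$, whose only dependence on the connection is through $|F_{A_j}|^2$. Feeding these into the first summand and using $(|F_{A_1}|^{-2}-|F_{A_2}|^{-2})|F_{A_1}|^2|F_{A_2}|^2 = |F_{A_2}|^2-|F_{A_1}|^2$ yields exactly \eqref{principal term in recovering FA}, i.e.
\[
\int_{M_0}(|F_{A_1}|^2-|F_{A_2}|^2)\,e^{2i\psi/h}\,|b|^2 = o(h),
\]
which carries no $A_j$ or $Q_j$ contamination. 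The Type II CGOs are reserved for Section \ref{sec_zero}, where they pair through the $Q$-summand to recover $V$. So the missing idea in your proposal is the observation that the Type I ansatz simultaneously kills the $Q$-term and makes the first term depend only on $|F_{A_j}|$; once you replace your Type II/Type I pair with a Type I/Type I pair (phases $\Phi$ and $-\Phi$), the rest of your argument goes through.
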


Let $\Phi = \varphi + i \psi$ be a Morse holomorphic function given by Lemma \ref{holofcts} and Corollary \ref{morse holomorphic dense}, 
and let $\{ p_0, \dots ,p_n \}$ be its critical points. Let $b$
be an antiholomorphic 1-form chosen so that 
 \begin{eqnarray}\label{condition for b}
 b(p_1)= \dots= b(p_n)=0,\ \ \  b(p_0)\neq 0.\end{eqnarray} 
 Such antiholomrphic 1-forms are given by Lemma \ref{amplitude}. Proposition \ref{CGO type I} gives $u_j$, $j=1,2$ solving 
$(L_{X_j,V_j}-\lambda^2)u_j = 0$,  and are of the form
\[
    u_1 := u_{0,1} + e^{ \varphi/h}r_1,\ \ \ u_2 := u_{0,2} + e^{ -\varphi/h}r_2
\]
where $u_{0,j}$ is given by \eqref{u0} where $u_{0,1}$ is constructed with phase $\Phi$ while $u_{0,2}$ is constructed with phase $-\Phi$.

Let  $\tilde u_j := F_{A_j} u_j$ and direct computation shows that
{\Small\begin{align} \label{eq_dbaruj}
   \dbar \tilde u_j= \dbar(F_{A_j} u_j) = e^{(-1)^{j-1}\bar\Phi/h} |F_{A_j}|^2 b
    + he^{(-1)^{j-1}\bar\Phi/h} |F_{A_j}|^2 R_{0,j} + \dbar(F_{A_j}e^{(-1)^{j-1}\varphi/h} r) 
\end{align}}
where $R_{0,j} \in e^{\gamma_0/x} W^{1,\infty}(M_0) $ for some $\gamma_0 >0$.

Plugging in the expression \eqref{eq_dbaruj} into the identity given by Lemma \ref{intid} we obtain
{\small\begin{eqnarray}\label{recovering FA terms}o(h) &=& \int_{M_0}( |F_{A_1}|^2 - |F_{A_2}|^2) \langle e^{\bar\Phi/h} (b + h R_{0,1}),  e^{-\bar\Phi/h}(b+ hR_{0,2}) \rangle \\\nonumber
&+& \int_{M_0}(|F_{A_1}|^2 - |F_{A_2}|^2) \langle e^{\bar\Phi/h} (b + h R_{0,1}), |F_{A_2}|^{-2}\bar\partial( F_{A_2} e^{-\varphi/h} r_2)\rangle\\\nonumber
&+&\int_{M_0}(|F_{A_1}|^2 - |F_{A_2}|^2)  \langle |F_{A_1}|^{-2}\bar\partial(F_{A_1} e^{\varphi/h} r_1), e^{-\bar\Phi/h}(b+ hR_{0,2})\rangle\\\nonumber
&+& \int_{M_0} (|F_{A_1}|^2 - |F_{A_2}|^2)  \langle  |F_{A_1}|^{-2}\bar\partial(F_{A_1} e^{\varphi/h} r_1),|F_{A_2}|^{-2}\bar\partial( F_{A_2} e^{-\varphi/h} r_2)\rangle.\end{eqnarray}
Note that Lemma \ref{FA2 expression} ensures $(|F_{A_1}|^2 - |F_{A_2}|^2) \in e^{-\gamma/x}W^{1,\infty}(M_0)$ for all $\gamma >0$.

We need to show that everything on the right-side of \eqref{recovering FA terms} is $o(h)$ except the principal term $\int_{M_0} (|F_{A_1}|^2 - |F_{A_2}|^2)   e^{-2i\psi/h} |b|^2$. This comes by direct computation using stationary phase for terms not involving $\bar\partial  (F_{A_j} e^{\pm\varphi/h} r_j)$. For terms which has the same form as \[ \int_{M_0} (|F_{A_1}|^2 - |F_{A_2}|^2)  \langle e^{\bar\Phi/h} b, |F_{A_2}|^{-2}\bar\partial( F_{A_2} e^{-\varphi/h} r_2)\rangle\] we can take advantage of the super-exponential decay of $(|F_{A_1}|^2 - |F_{A_2}|^2)$ and integrate-by-parts
\begin{eqnarray*} &&\int_{M_0}(|F_{A_1}|^2 - |F_{A_2}|^2)  \langle e^{\bar\Phi/h} b, |F_{A_2}|^{-2}\bar\partial( F_{A_2} e^{-\varphi/h} r_2)\rangle\\& =&  \int_{M_0}  \langle \bar\partial^*((|F_{A_1}|^2 - |F_{A_2}|^2)  |F_{A_2}|^{-2}e^{\bar\Phi/h} b),( F_{A_2} e^{-\varphi/h} r_2)\rangle.\end{eqnarray*}
This can now be estimated using the bound for the remainder $r_2$ stated in Proposition \ref{CGO type I} and is of order $o(h)$.

For the term \[ \int_{M_0} (|F_{A_1}|^2 - |F_{A_2}|^2)  \langle|F_{A_1}|^{-2}\bar\partial(F_{A_1} e^{\phi/h} r_1),|F_{A_2}|^{-2}\bar\partial( F_{A_2} e^{-\phi/h} r_2)\rangle\] we can again integrate-by-parts to move all the derivatives to terms involving $r_2$:
{\small \begin{eqnarray}\label{r1r2 term} &&\int_{M_0} (|F_{A_1}|^2 - |F_{A_2}|^2)  \langle\bar\partial(F_{A_1} e^{\phi/h} r_1),|F_{A_1}F_{A_2}|^{-2}\bar\partial( F_{A_2} e^{-\phi/h} r_2)\rangle \\\nonumber&=&  \int_{M_0}  \langle F_{A_1} e^{\phi/h} r_1,\bar\partial^*((|F_{A_1}|^2 - |F_{A_2}|^2)  |F_{A_1}F_{A_2}|^{-2}\bar\partial( F_{A_2} e^{-\phi/h} r_2))\rangle.\end{eqnarray}}
Observe that by Proposition \ref{CGO type I} and \eqref{(L-lambda)u0} we have {\Small\begin{eqnarray*}\|e^{-\phi/h} \Delta e^{\phi/h} r_2\|_{e^{\gamma_0/x}L^2} \leq C(\|d  r_2\|_{e^{{\gamma_0}/{x}}L^2} &+& h^{-1}\|r_2\|_{e^{\gamma_0/x}L^2} \\&+& \|e^{-\phi/h}(L_{X_2,V_2}-\lambda^2) u_{0,2}\|_{e^{\gamma_0/x}L^2})\\&\leq& o(1)\end{eqnarray*}}and we see therefore that \eqref{r1r2 term} is $o(h)$. We can conclude then that \eqref{recovering FA terms} indeed becomes
\begin{eqnarray}
\label{principal term in recovering FA}
\int_{M_0}(|F_{A_1}|^2 - |F_{A_2}|^2)   e^{2i\psi/h} |b|^2 = o(h).
\end{eqnarray}
We are now in a position to prove 
\begin{lemma}
\label{FA has same size}
If $S_{X_1,V_1}(\la) = S_{X_2,V,2}(\la)$
for a fixed $\lambda \in \R\backslash\{0\}$ then for $F_{A_1}$ and $F_{A_2}$
chosen as in \eqref{eq_def_FA} one has $|F_{A_1}|  = |F_{A_2}|$.
\end{lemma}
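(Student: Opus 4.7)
\noindent\textsl{Proof sketch}.
The proof will proceed by extracting the principal contribution from the oscillatory integral \eqref{principal term in recovering FA} via stationary phase. Recall that \eqref{principal term in recovering FA} states
\[
    \int_{M_0}(|F_{A_1}|^2 - |F_{A_2}|^2)\, e^{2i\psi/h} |b|^2 \, d{\rm vol}_{g_0} = o(h),
\]
for any antiholomorphic $1$-form $b$ satisfying \eqref{condition for b}, where $p_0, p_1, \dots, p_n$ are the critical points of the Morse holomorphic function $\Phi = \varphi + i \psi$. Since the function $(|F_{A_1}|^2 - |F_{A_2}|^2)$ belongs to $e^{-\gamma/x}W^{1,\infty}(M_0)$ for every $\gamma >0$ by Proposition \ref{exponentially decaying conjugating factor} and Lemma \ref{FA2 expression}, the amplitude decays super-exponentially in the ends, so the integral behaves as if it were compactly supported and the method of stationary phase applies.

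The plan is to observe that the critical set of the real phase $\psi$ coincides with the critical set of $\Phi$ by the Cauchy--Riemann equations, and that each critical point is non-degenerate since $\Phi$ is Morse. The amplitude $(|F_{A_1}|^2 - |F_{A_2}|^2)|b|^2$ vanishes to second order at each of $p_1,\dots, p_n$ because $b(p_j) = 0$, so the stationary phase contribution from each such $p_j$ is $O(h^2)$. The point $p_0$ is the only one at which the amplitude does not vanish, hence the leading-order stationary phase expansion reduces to
\[
    \int_{M_0}(|F_{A_1}|^2 - |F_{A_2}|^2)\, e^{2i\psi/h} |b|^2 \, d{\rm vol}_{g_0}
    = h \cdot c_{p_0} \cdot \bigl(|F_{A_1}(p_0)|^2 - |F_{A_2}(p_0)|^2\bigr)\,|b(p_0)|^2 \, e^{2i\psi(p_0)/h} + O(h^{2}),
\]
where $c_{p_0}\neq 0$ is an explicit constant depending on the Hessian of $\psi$ at $p_0$. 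Dividing by $h$ and sending $h\to 0$, the modulus of the leading term must tend to zero; since $|e^{2i\psi(p_0)/h}|=1$, $c_{p_0}\neq 0$, and $|b(p_0)|\neq 0$, we conclude $|F_{A_1}(p_0)| = |F_{A_2}(p_0)|$.

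To promote this pointwise equality to the entire surface, I will appeal to Proposition \ref{criticalpoints}, which provides a dense subset of points $p_0 \in M_0$ that arise as a critical point of some Morse holomorphic function $\Phi \in {\mathcal H}$ of the allowed type. For each such $p_0$ one can then choose, via Lemma \ref{amplitude}, an antiholomorphic $1$-form $b$ with $b(p_0)\neq 0$ and $b(p_j) = 0$ for the remaining critical points of $\Phi$, applying the above argument to obtain $|F_{A_1}(p_0)| = |F_{A_2}(p_0)|$. Continuity of $|F_{A_1}|$ and $|F_{A_2}|$ then yields $|F_{A_1}| = |F_{A_2}|$ throughout $M_0$.

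The main technical point is to be careful in justifying the stationary phase expansion despite the amplitude only decaying super-exponentially rather than being compactly supported: the decay of $(|F_{A_1}|^2 - |F_{A_2}|^2)$ faster than any polynomial allows for standard integration by parts arguments in the non-stationary region, so only small neighbourhoods of the critical points contribute. All remaining estimates are entirely local near each $p_j$ and follow from the standard stationary phase formula for Morse phases.
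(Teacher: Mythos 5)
Your proof follows the same strategy as the paper's: apply stationary phase to \eqref{principal term in recovering FA}, use that $b$ vanishes at $p_1,\dots,p_n$ so only $p_0$ contributes at the leading order $h$, deduce $|F_{A_1}(p_0)| = |F_{A_2}(p_0)|$, and then conclude by density (Proposition \ref{criticalpoints}) and continuity. Your write-up fills in the stationary phase bookkeeping — the identification of $\mathrm{Crit}(\psi)=\mathrm{Crit}(\Phi)$, the $O(h^2)$ order at the $p_j$ with $j\geq 1$, the handling of the oscillating factor $e^{2i\psi(p_0)/h}$, and the remark that super-exponential decay of the amplitude makes the non-stationary part negligible — but these are exactly what the paper's terse "apply stationary phase expansion" is implicitly invoking, so the arguments coincide.
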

\begin{proof}
Let $p_0\in M_0$ the critical point of a Morse meromorphic function $\Phi = \varphi + i\psi$ on $M_0\cup\{e_1,\dots,e_N\}$ whose (non-removable) poles are all simple and form precisely the set $\{e_1,\dots,e_N\}$. From Proposition \ref{criticalpoints} we know that such points form a dense subset of $M_0$. If $\{p_0,\dots, p_n\}$ are the critical points of $\Phi$, choose antiholomorphic 1-form $b$ satisfying condition \eqref{condition for b} and apply stationary phase expansion to \eqref{principal term in recovering FA} we see that $|F_{A_1}(p_0)| = |F_{A_2}(p_0)|$. Since $p_0$ can be chosen over an dense subset of $M_0$, the continuity of $F_{A_j}$ completes the proof.\end{proof}
This Lemma leads immediate to the\\
\noindent{\bf Proof of Proposition \ref{same gauge}} By Lemma \ref{FA has same size} we can define the unitary function $\Theta := \frac{F_{A_1}}{F_{A_2}} = \frac{\bar F_{A_2}}{\bar F_{A_1}}$. Note that due to Lemma \ref{FA2 expression}, $\Theta \in 1+ e^{-\gamma/x} W^{1,\infty}(M_0)$ for all $\gamma>0$. We see that $\bar\partial \Theta = i\pi_{0,1}(X_1 - X_2)/\Theta$ while $\partial \Theta = i\pi_{1,0}(X_1 - X_2)/\Theta$. Adding the two identities together we obtain Proposition \ref{same gauge}.\qed

\begin{section}{Determining the Zeroth order term} \label{sec_zero}
In this section we complete the proof of Theorem \ref{thm1} by proving 
\begin{proposition}
\label{recovering zeroth order term}
If $S_{X_1,V_1}(\la) = S_{X_2,V,2}(\la)$ for a fixed $\lambda \in \R\backslash\{0\}$ then $V_1 = V_2$.
\end{proposition}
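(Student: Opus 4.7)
My approach combines what is already established with the integral identity of Lemma \ref{intid}, feeding in Type II CGOs to extract pointwise information about $V_1-V_2$.

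First, Proposition \ref{same gauge} gives $X_1-X_2 = d\Theta/\Theta$ with $\Theta$ unitary; writing $\Theta = e^{i\phi}$ locally shows $d\Theta/\Theta = i\,d\phi$ is closed, so $dX_1 = dX_2$ and $\star dX_1 = \star dX_2$. Combined with Lemma \ref{FA has same size}, which gives $|F_{A_1}| = |F_{A_2}|$, the first term in Lemma \ref{intid} vanishes and the coefficient of the second collapses to $Q_1 - Q_2 = V_1 - V_2$. The identity reduces to
\[
\int_{M_0} (V_1 - V_2)\, |F_{A_1}|^2\, \overline{\tilde v}\, \tilde w\, dV_{g_0} = 0
\]
for all exponentially growing solutions $v,w$ of the respective equations.

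Second, I would insert Type II CGOs from Section \ref{sec_cgo}. Using Proposition \ref{criticalpoints}, pick a Morse holomorphic function $\Phi = \varphi + i\psi$ on $M_0$ whose critical set $\{p_0, p_1, \dots, p_n\}$ contains an arbitrary prescribed point $p_0$. By Lemma \ref{amplitude}, choose holomorphic $a_1, a_2$ in an appropriate $x^{-J} L^2(M_0)$ space, vanishing at $p_1, \dots, p_n$ but not at $p_0$. Build $v$ with principal part $e^{\Phi/h} a_1$ solving $(L_{X_1,V_1} - \lambda^2) v = 0$, and build $w$ with principal part $e^{-\Phi/h} a_2$ solving $(L_{X_2,V_2} - \lambda^2) w = 0$; the construction of Section \ref{sec_cgo} applies verbatim to $-\Phi$ in place of $\Phi$, since $-\Phi$ is also holomorphic Morse with the same critical set. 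Computing with $\tilde v = F_{A_1} v$ and $\tilde w = F_{A_2} w$ yields
\[
\overline{\tilde v}\,\tilde w = \overline{F_{A_1}}\, F_{A_2}\, e^{-2i\psi/h}\, \overline{a_1}\, a_2 + (\text{remainder from }r_{1,j}, r_{2,j}),
\]
since $\overline{e^{\Phi/h}} \cdot e^{-\Phi/h} = e^{(\bar\Phi - \Phi)/h} = e^{-2i\psi/h}$.

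Third, substituting into the identity and applying stationary phase to the oscillatory factor $e^{-2i\psi/h}$ localizes the integral to the critical points of $\psi$, which coincide with $\{p_0, \dots, p_n\}$. The contributions at $p_1, \dots, p_n$ vanish because $\overline{a_1}\,a_2$ vanishes there, and the surviving $p_0$ contribution is of order $h$ and proportional to
\[
(V_1 - V_2)(p_0) \cdot |F_{A_1}(p_0)|^2 \cdot \overline{F_{A_1}(p_0)} F_{A_2}(p_0) \cdot \overline{a_1(p_0)}\, a_2(p_0) \cdot e^{-2i\psi(p_0)/h}.
\]
The remainder contributions are $o(h)$ by the semiclassical estimates of Proposition \ref{semiclassical solvability} and Lemma \ref{fewestimates}, paralleling the $o(h)$ error analysis already carried out in Section \ref{sec_gauge}. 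This estimate is the main technical obstacle: it requires careful integration-by-parts exploiting the super-exponential decay of $V_1 - V_2$ (assumed) and of $F_{A_1} - F_{A_2}$ (from Proposition \ref{exponentially decaying conjugating factor}), together with the gradient bounds on $r_{1,j}, r_{2,j}$.

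Finally, since $F_{A_1}(p_0), F_{A_2}(p_0)$ are nonvanishing by Lemma \ref{FA2 expression} and $a_j(p_0) \neq 0$ by construction, dividing by $h$ and letting $h \to 0$ forces $(V_1 - V_2)(p_0) = 0$. Proposition \ref{criticalpoints} shows that such admissible $p_0$ are dense in $M_0$, and the $C^{1,\beta}$ regularity assumed on $V_1, V_2$ extends the equality to all of $M_0$, completing the proof of the proposition and hence of Theorem \ref{thm1}.
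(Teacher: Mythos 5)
Your proof is correct and takes essentially the same approach as the paper: insert Type~II CGOs into the integral identity of Lemma~\ref{intid}, localize by stationary phase in $e^{\pm 2i\psi/h}$ at a prescribed critical point $p_0$ of the Morse holomorphic phase, and conclude via density of such points (Proposition~\ref{criticalpoints}) and continuity of $V_j$. The one cosmetic difference is the initial reduction: the paper assumes WLOG $X_1 = X_2$ by gauge invariance of the scattering matrix under the trivial-at-infinity gauge $\Theta$ (so $F_{A_1}=F_{A_2}$ and the $|F_{A_1}|^{-2}-|F_{A_2}|^{-2}$ term vanishes for free), whereas you keep the potentials distinct and argue directly from the already-established facts $|F_{A_1}|=|F_{A_2}|$ (Lemma~\ref{FA has same size}) and $dX_1=dX_2$ (a consequence of Proposition~\ref{same gauge}); both routes land on the same reduced identity, and the extra unimodular factor $\overline{F_{A_1}}F_{A_2}/|F_{A_1}|^2$ you carry through the stationary phase is harmless.
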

This will be accomplished with CGO of type II given by \eqref{type II CGO}. To this end, let $u_1$ and $u_2$ be solutions of the from \eqref{type II CGO} with phase $\Phi$ and $-\Phi$ respectively: 
\begin{eqnarray*}
\label{type II CGO}
u_1 = e^{\Phi/h}(a + r_1) + e^{\varphi/h}r_2,\ \ \ u_2 = e^{-\Phi/h}(a + s_1) + e^{-\varphi/h}s_2
\end{eqnarray*}
with $r_2$ and $s_2$ satisfying the estimates
\[\|e^{-\gamma_0/x} r_2\| + h\|e^{-\gamma_0/x} dr_2\|+\|e^{-\gamma_0/x} s_2\| + h\|e^{-\gamma_0/x} ds_2\| \leq C h^{1+ \frac{1}{2}} |\log h|\]
for some $\gamma_0>0$. 

Note that since we have already shown in Proposition \ref{same gauge} that $X_1$ and $X_2$ are gauge equivalent, we may assume without loss of generality that they are actually identical. Therefore, for the CGO $u_1$ and $u_2$, the identity in Lemma \ref{intid} holds with $F_{A_1} = F_{A_2} = F_A = e^{i\alpha}$ to become
\begin{align*}
   0= \int_{M_0}  \big |F_A|^2\langle (Q_1-Q_2 ) \tilde u_1 , \tilde u_2 \big\rangle  =  \int_{M_0}  \big |F_A|^4\langle (Q_1-Q_2 )  u_1 ,  u_2 \big\rangle
\end{align*}
where $\tilde u_j = F_{A} u_j$ and $Q_j = *dX_j + V_j$. We now plug in the expression for $u_1$ and $u_2$ into this identity. Using Lemma \ref{fewestimates} and elementary estimates we obtain
\[ o(h) = \int_{M_0} e^{2i\psi/h} (Q_1 - Q_2) |a|^2\]
Repeating the same argument as in proof of Lemma \ref{FA has same size} we have that $Q_1 = Q_2$ on $M_0$ which implies that $V_1= V_2$ and Proposition \ref{recovering zeroth order term} is verified.

\end{section}

\end{document}